\begin{document}

\title{\MakeUppercase{\sspacestitle}}
\date{\today}
\author{\textsc{Aaron Mazel-Gee}}

\begin{abstract}
Both simplicial sets and simplicial spaces are used pervasively in homotopy theory as presentations of spaces, where in both cases we extract the ``underlying space'' by taking geometric realization.  We have a good handle on the category of simplicial sets in this capacity; this is due to the existence of a suitable model structure thereon, which is particularly convenient to work with since it enjoys the technical properties of being \textit{proper} and of being \textit{cofibrantly generated}.  This paper is devoted to showing that, if one is willing to work $\infty$-categorically, then one can manipulate simplicial spaces exactly as one manipulates simplicial sets.  Precisely, this takes the form of a proper, cofibrantly generated model structure on the \textit{$\infty$-category} of simplicial spaces, the definition of which we also introduce here.
\end{abstract}

\maketitle

\papernum{1}

\setcounter{tocdepth}{1}
\tableofcontents

\setcounter{section}{-1}

\section{Introduction}

\subsection{Simplicial spaces and model $\infty$-categories}\label{subsection sspaces and model infty-cats}

A simplicial space can be thought of as a \textit{resolution} of a space, namely its homotopy colimit; in nice cases this is computed by its geometric realization, and we will henceforth blur the distinction.  The operation of taking a homotopy colimit, being homotopy invariant, descends to a functor $|{-}| : s\S \ra \S$ of \textit{$\infty$-categories}, from that of simplicial spaces to that of spaces.  The primary purpose of the present paper is to introduce a new framework for studying this functor.  In particular, we give \textit{$\infty$-categorical} criteria for determining
\begin{enumeratesmallsub}
\item\label{criterion for KQ weak equivalence} when a map of simplicial spaces becomes an equivalence upon geometric realization, and
\item\label{criterion for ho-p.b.} when a homotopy pullback of simplicial spaces remains a homotopy pullback upon geometric realization,
\end{enumeratesmallsub}
which we have found to be much easier to verify than their existing 1-categorical counterparts.  We hope that this encourages homotopy theorists grappling with simplicial spaces to work $\infty$-categorically: even if a map of simplicial spaces or a homotopy pullback of simplicial spaces began its life 1-categorically, these questions are homotopy invariant and hence inherently $\infty$-categorical, and thus should be approachable using the framework given here.

To set the stage, let us recall Quillen's theory of \bit{model categories}: given a category $\C$ equipped with a subcategory $\bW \subset \C$, a \textit{model structure} on the category $\C$ consists of additional data which provide an efficient and computable method of accessing its localization $\C[\bW^{-1}]$.  As a prime example, the \bit{Kan--Quillen model structure} on the category $s\Set$ of simplicial sets provides a combinatorial framework for studying the homotopy category $s\Set[\bW^{-1}] \simeq \Top[\bW^{-1}]$ of topological spaces.

Now, suppose that $\C$ is not merely a category but an \textit{$\infty$-category}, again equipped with a subcategory $\bW \subset \C$.  We can analogously localize the $\infty$-category $\C$ at the subcategory $\bW$, forming a new $\infty$-category $\loc{\C}{\bW}$ equipped with a functor $\C \ra \loc{\C}{\bW}$, which is initial among those functors from $\C$ which invert all the maps in $\bW$.\footnote{Note that even if $\C$ is just a 1-category, this will generally differ from the more crude and lossy 1-categorical localization $\C[\bW^{-1}]$: the canonical map $\loc{\C}{\bW} \ra \C[\bW^{-1}]$ is precisely the projection to the homotopy category $\ho(\loc{\C}{\bW}) \simeq \C[\bW^{-1}]$.}  A key example for us is $\loc{s\S}{\bWgr} \simeq \S$, where we denote by $\bWgr \subset s\S$ the subcategory spanned by those maps which become invertible upon geometric realization.\footnote{For an explanation of this equivalence, see item \ref{free vs left vs right localizations} of \cref{section conventions}.}

With this background in place, we can now explain the two goals of this paper.

\begin{enumerate}

\item Inspired by Quillen, we introduce the notion of a \bit{model $\infty$-category}: given an $\infty$-category $\C$ equipped with a subcategory $\bW \subset \C$, a \textit{model structure} on the $\infty$-category $\C$ consists of additional data which provide an efficient and computable method of accessing its localization $\loc{\C}{\bW}$.  This provides a theory of \textit{resolutions} which is native to the $\infty$-categorical context.  As indicated by the title of this paper, the theory of model $\infty$-categories is developed in subsequent work.

\item In precise analogy with the Kan--Quillen model structure on the category $s\Set$ of simplicial sets, we endow the $\infty$-category $s\S$ of simplicial spaces with a \bit{Kan--Quillen model structure}, whose subcategory of weak equivalences is exactly $\bWgr \subset s\S$, which allows us to access its $\infty$-categorical localization $\loc{s\S}{\bWgr} \simeq \S$.  This model structure is likewise \textit{proper}, and is \textit{cofibrantly generated} by the sets $I_\KQ , J_\KQ \subset s\Set$ of boundary inclusions $\partial \Delta^n \subset \Delta^n$ and of horn inclusions $\Lambda^n_i \subset \Delta^n$, considered as maps of simplicial spaces via the inclusion $s\Set \subset s\S$ of simplicial sets as the discrete simplicial spaces.

\end{enumerate}

By way of illustrating some typical applications of the Kan--Quillen model structure, we now return to the criteria promised above.

\begin{enumeratesub}

\item As the Kan--Quillen model structure on $s\S$ is cofibrantly generated, a map is an \textit{acyclic fibration} if it has the right lifting property against the set $I_\KQ$ of generating cofibrations.  In particular, $\rlp(I_\KQ) \subset \bWgr$.

\item As the Kan--Quillen model structure on $s\S$ is right proper, then a pullback in which at least one of the two maps is a fibration (i.e.\! has $\rlp(J_\KQ)$) is a \textit{homotopy pullback}: that is, it remains a pullback under geometric realization.

\end{enumeratesub}

However, far from providing just a few assorted tricks, the Kan--Quillen model structure on $s\S$ gives a robust and extensive framework for understanding simplicial spaces vis-\`{a}-vis their geometric realizations, which is further augmented by the general theory of model $\infty$-categories.  For instance, the theory of \textit{homotopy co/limits} in model $\infty$-categories furnishes criteria for comparing the co/limit of an arbitrary diagram $\I \ra s\S$ of simplicial spaces with the co/limit of the resulting diagram $\I \ra s\S \ra \S$ obtained by componentwise geometric realization.

\subsection{Relation to existing literature}

Simplicial spaces and their geometric realizations play a key role in a number of different areas of topology.  Their study appears to have been initiated in loopspace theory (see \cite{SegalCSSS}, \cite{SegalConfLoop}, \cite{SegalCatCohThy}, and \cite{MayGILS}).  Relatedly, they find much use in algebraic K-theory (see e.g.\! \cite{Wald1126} or \cite{WeibelK}).\footnote{Many deep results in algebraic K-theory rely crucially on commuting certain pullbacks of simplicial spaces with geometric realization.  This commutation is generally ``not purely formal'', but is instead based in delicate simplicial manipulations.  We are hopeful that our Kan--Quillen model structure will prove useful in enabling (or at least streamlining) such arguments.}  More recently, some of the results in the present paper provide key input to applications in algebraic L-theory (see \cite{Luriesspaces}) and in derived differential geometry (see \cite{BEBdBP}).

In addition to the references already mentioned, hints of the Kan--Quillen model structure abound in the literature.  For instance, there is already explicit mention of the ``two homotopy theories'' on the category of simplicial topological spaces -- the ``topological'' one corresponding to the interval object $\const([0,1])$, and the ``algebraic'' one corresponding to the interval object $\Delta^1$ -- as early as \cite{MayGILS}.  Moreover, various scattered results bear anywhere between passing and striking resemblances to aspects of the Kan--Quillen model structure.  To illustrate this, we compare
\begin{itemizesmall}
\item our criterion \ref{criterion for KQ weak equivalence} with analogous criteria arising
\begin{itemizesmall}
\item from the ``Moerdijk model structure'' (of \cite{Moer}) in \cref{compare with moerdijk}, and
\item from the Cegarra--Remedios ``$\ol{W}$ model structure'' (of \cite{CegRem}) in \cref{rem ceg-rem};
\end{itemizesmall}
\item our criterion \ref{criterion for ho-p.b.} with analogous criteria arising
\begin{itemizesmall}
\item from Bousfield--Friedlander's ``$\pi_*$-Kan condition'' (of \cite{BF}) and
\item from Anderson's notion of a simplicial groupoid being ``fully fibrant'' (of \cite{AndFibGeomReal})
\end{itemizesmall}
in \cref{compare with anderson and bousfield--friedlander}; and
\item our notion of a fibration
\begin{itemizesmall}
\item with Rezk's ``realization fibrations'' -- which sorts of maps have also at times been called ``sharp maps'', ``fibrillations'', ``right proper maps'', ``h-fibrations'', and ``Grothendieck W-fibrations'', and which are closely related to the classical notion of ``quasifibrations'' -- (of \cite{Rezk-pi-star-kan}) in \cref{compare with rezk's realization fibrations}, and
\item with both of Seymour's and Brown--Szczarba's related but distinct ``continuous'' notions of Kan fibrations (of \cite{Seymour} and \cite{BrownSzczarba} resp.)\! in \cref{compare with seymour and BS}.
\end{itemizesmall}
\end{itemizesmall}

\begin{rem}\label{model infty-cats are better than double model cats}
These sundry results make abundantly clear the vast superiority of the $\infty$-categorical perspective for the study of simplicial spaces and their geometric realizations.  The definition of the Kan--Quillen model structure on the $\infty$-category of simplicial spaces is both straightforward and economical.  By contrast, to describe it in purely model-categorical terms would require some sort of notion of a ``double model structure'', in which an ``external'' model structure would present the underlying $\infty$-category, and then the actual model structure of interest thereon would be presented by an ``internal'' model structure (whose distinguished classes of maps would then have to be invariant under the homotopy relation generated by ``external'' model structure; whose lifting axioms would need to be formulated in a homotopy-coherent way relative to the ``external'' model structure; etc.).  It seems unlikely that the full extent of the Kan--Quillen model structure -- or for that matter, anything comparable to the general theory of model $\infty$-categories -- would ever have been discovered in a purely model-categorical context.
\end{rem}

\subsection{Goerss--Hopkins obstruction theory for $\infty$-categories}\label{subsection GHOsT motivation}

As the present paper is the first in its series, we spend a moment describing the original motivation for the theory of model $\infty$-categories.

The overarching goal of this project is to generalize \bit{Goerss--Hopkins obstruction theory} (\cite{GH-moduli-spaces, GH-moduli-problems}), a powerful tool for obtaining existence and uniqueness results for $\bbE_\infty$ ring spectra via purely algebraic computations, to the equivariant and motivic settings.  However, the original obstruction theory is based in a model category of spectra satisfying a number of technical conditions, making it relatively difficult to generalize directly.  Relatedly, the foundations for its construction rely on various point-set considerations, which appear for the sake of simplification but play no real mathematical role.\footnote{In fact, to those unfamiliar with the more nuanced techniques of model categories, these considerations might even appear to amount to something like black magic.}  Thus, as the obstruction theory ultimately lives on the \textit{underlying $\infty$-category} of spectra anyways, we instead aim to generalize Goerss--Hopkins obstruction theory to an arbitrary (presentably symmetric monoidal) $\infty$-category $\C$; this will yield equivariant and motivic obstruction theories simply by specializing to the $\infty$-categories of equivariant and motivic spectra.

Now, Goerss--Hopkins obstruction theory is constructed in the \textit{resolution model structure} (a/k/a the ``$\Etwo$ model structure'') on simplicial spectra, originally introduced in \cite{DKS-E2}, which provides a theory of \textit{nonabelian projective resolutions}.\footnote{More precisely, it is actually constructed in a certain model category which is monadic over simplicial spectra, whose model structure is lifted along the defining adjunction.}  Correspondingly, suppose we are given a presentable $\infty$-category $\C$, along with a set $\G$ of generators which we assume (without real loss of generality) to be closed under finite coproducts.  Then, Goerss--Hopkins obstruction theory for $\C$ will take place in the \bit{nonabelian derived $\infty$-category} of $\C$, i.e.\! the $\infty$-category $\PS(\G) = \Fun_\Sigma(\G^{op},\S)$ of those presheaves on $\G$ that take finite coproducts in $\G$ to finite products in $\S$, originally introduced in \cite[\sec 5.5.8]{LurieHTT}.  (If $\C$ is the underlying $\infty$-category of an appropriately chosen model category, then $\PS(\G)$ will be the underlying $\infty$-category of the resolution model structure on the category of simplicial objects therein.)  This admits a natural functor
\[ s\C \ra \PS(\G) , \]
given by taking a simplicial object $Y_\bullet \in s\C$ to the functor
\[ S^\beta \mapsto | \hom^\lw_\C(S^\beta,Y_\bullet)| \]
(for any generator $S^\beta \in \G$, and writing ``$\lw$'' to denote ``levelwise'').  In fact, this functor is a localization: denoting by $\bW_\res \subset s\C$ the subcategory spanned by those maps which it inverts, it induces an equivalence
\[ \loc{s\C}{\bW_\res} \xra{\sim} \PS(\G) . \]

From here, we can explain our motivation for the theory of model $\infty$-categories: the definition of the $\infty$-category $\PS(\G)$ is extremely efficient, but its abstract universal characterizations alone are insufficient for making the actual computations \textit{within} this $\infty$-category that are necessary to set up the obstruction theory.  Rather, computations therein generally rely on choosing \textit{simplicial resolutions} of objects, i.e.\! preimages under the functor $s\C \ra \PS(\G)$, and then working in $s\C$ to deduce results back down in $\PS(\G)$.\footnote{An important example is the ``spiral exact sequence'', which is a key ingredient to setting up the obstruction theory (see e.g.\! \cite[Lemma 3.1.2(2)]{GH-moduli-problems}).}  Thus, in order to organize these computations, we will provide a \bit{resolution model structure} on the $\infty$-category $s\C$, giving an efficient and computable method of accessing the localization $\loc{s\C}{\bW_\res} \simeq \PS(\G)$.

\begin{rem}
In a sense, the $\infty$-categorical Goerss--Hopkins obstruction theory is actually \textit{easier} to set up than its classical counterpart.  As described above, in the latter, one must carefully choose an appropriate ``ground floor'' model category of spectra on which to build the relevant resolution model structure (which depends nontrivially on the ground floor model structure).  By contrast, in the former, the relevant resolution model structure on $s\C$ is built on the \textit{trivial} model structure on the $\infty$-category $\C$, in which every map is both a cofibration and a fibration and the weak equivalences are precisely the equivalences (see \cref{trivial model structure}).

We view this state of affairs as falling squarely in line with the core philosophy of $\infty$-categories, namely that they are meant to isolate and dispense with exactly those manipulations which ought to be purely formal.  The usage of $\infty$-categories (and model structures thereon) allows us to sidestep the point-set technicalities which were ultimately of no homotopical interest in the first place, and hence to address only the truly interesting aspect of the story: nonabelian projective resolutions.
\end{rem}

\begin{rem}\label{rem motivic ghost for motivic morava E-theories}
As a sample application of the $\infty$-categorical Goerss--Hopkins obstruction theory, we prove the following result in forthcoming joint work with David Gepner.  As background, recall that the first application of Goerss--Hopkins obstruction theory was to prove that the Morava $E$-theory spectra admit essentially unique $\bbE_\infty$ structures, and moreover that their spaces of $\bbE_\infty$ automorphisms are essentially discrete and are given by the corresponding Morava stabilizer groups (see \cite[\sec 7]{GH-moduli-spaces}).  Bootstrapping up their arguments, we use Goerss--Hopkins obstruction theory in the $\infty$-category of motivic spectra to prove
\begin{itemizesmall}
\item that the \textit{motivic} Morava $E$-theory spectra again admit essentially unique $\bbE_\infty$ structures,
\item that again their spaces of $\bbE_\infty$ automorphisms are essentially discrete, but
\item that they can admit ``exotic'' $\bbE_\infty$ automorphisms not seen in ordinary topology.
\end{itemizesmall}
(More precisely, their groups of $\bbE_\infty$ automorphisms will generally contain the corresponding Morava stabilizer groups as \textit{proper} subgroups.)
\end{rem}

\subsection{Conventions}

\refMIC

\tableofcodenames

\examplecodename

\citelurie \ \luriecodenames

\butinvt \ \seeappendix

\subsection{Outline}

We now provide a more detailed outline of the contents of this paper.

\begin{itemize}

\item In \cref{section model infty-cats defns}, we define \textit{model $\infty$-categories}, and we define the notions of \textit{Quillen adjunctions} and \textit{Quillen equivalences} between them.

\item In \cref{section model infty-cats exs}, we provide a host of examples of the objects introduced in \cref{section model infty-cats defns}.  We also speculate on the existence of other examples -- some of a foundational nature, some which would provide yet more models for the $\infty$-category of $\infty$-categories, and one related to $\bbE_n$ deformation theory -- whose verifications lie beyond the scope of the current project.

\item In \cref{section cofgen model infty-cats}, we define \textit{cofibrantly generated model $\infty$-categories} and provide recognition and lifting theorems analogous to the classical ones.

\item In \cref{section define kan--quillen model structure}, we assert the existence of the \textit{Kan--Quillen model structure} on the $\infty$-category $s\S$ of simplicial spaces.  The proof (which we only outline, leaving the real substance for \cref{section proof of kan--quillen model structure}) relies on the recognition theorem of \cref{section cofgen model infty-cats}.  We also define what it means for a model $\infty$-category to be \textit{proper}.

\item In \cref{section auxiliary results}, we collect some auxiliary results regarding spaces and simplicial spaces.  In particular, we state a particular result -- \cref{crazy lemma} -- which ultimately represents the key piece of not-totally-formal input that makes the entire story tick (but we defer its proof to \cref{section proof of crazy lemma}).

\item In \cref{section fibrations}, we prove some convenient properties enjoyed by the fibrant objects and the fibrations in the Kan--Quillen model structure on $s\S$, and we define a ``fibrant replacement'' endofunctor $\Ex^\infty$ analogous to the classical one.

\item In \cref{section proof of kan--quillen model structure}, we prove the main result: that the data described in \cref{section define kan--quillen model structure} do indeed define a proper, cofibrantly generated model structure on the $\infty$-category $s\S$ of simplicial spaces.

\item In \cref{section proof of crazy lemma}, we prove \cref{crazy lemma}, using the classical theory of model categories and ultimately some rather delicate arguments regarding bisimplicial sets.

\item In \cref{section conventions}, we carefully lay out the notation, terminology, and conventions that we will adopt in this sequence of papers.

\item In \cref{section notation index}, we provide an index of the notation used in this sequence of papers.

\end{itemize}

\subsection{Acknowledgments}

This paper has benefitted substantially from conversations with Clark Barwick, David Gepner, Saul Glasman, Zhen Lin Low, and Justin Noel; we warmly thank them for their generosity, both with their ideas and their time.  We are additionally grateful to Daniel Br\"{u}gmann, Rune Haugseng, Eric Peterson, and Adeel Khan Yusufzai for giving helpful feedback on a draft of this paper, to David Ayala for suggesting some of the proposed model structures of \cref{subsection speculative examples}, to Dmitri Pavlov for discussions surrounding \cref{other KQ model structures}, and to both the NSF graduate research fellowship program (grant DGE-1106400) and to UC Berkeley's geometry and topology RTG (grant DMS-0838703) for financial support during the time that this work was carried out.  Finally, it is our distinct pleasure to thank Saul Glasman for his deep and abiding faith in the veracity of \cref{crazy lemma}
, without which this entire project would be utterly defunct.

\section{Model $\infty$-categories: definitions}\label{section model infty-cats defns}

In this section, we define model $\infty$-categories, Quillen adjunctions, and Quillen equivalences.  We will provide numerous examples of all of these concepts in \cref{section model infty-cats exs}.

\subsection{The definition of a model $\infty$-category}

\begin{defn}\label{define model infty-category}
We say that three wide subcategories $\bW,\bC,\bF \subset \M$ -- called the subcategories of \bit{weak equivalences}, of \bit{cofibrations}, and of \bit{fibrations}, respectively, and with their morphisms denoted by the symbols $\we$, $\cofibn$, and $\fibn$, respectively -- make an $\infty$-category $\M$ into a \bit{model $\infty$-category} if they satisfy the following evident $\infty$-categorical analogs of the usual axioms for a model category.
\begin{enumeratesmall}
\item[\limitaxiom] \textit{(limit)} $\M$ is finitely bicomplete.
\item[\twooutofthreeaxiom] \textit{(two-out-of-three)} $\bW$ satisfies the two-out-of-three property.
\item[\retractaxiom] \textit{(retract)} $\bW$, $\bC$, and $\bF$ are all closed under retracts.
\item[\liftingaxiom] \textit{(lifting)} There exists a lift in any commutative square
\[ \begin{tikzcd}
x \arrow[tail]{d}[swap]{i} \arrow{r} & z \arrow[two heads]{d}{p} \\
y \arrow{r} \arrow[dashed]{ru} & w
\end{tikzcd} \]
in which ($i$ is a cofibration, $p$ is a fibration, and) either $i$ or $p$ is a weak equivalence.
\item[\factorizationaxiom] \textit{(factorization)} Every map in $\M$ factors via both $\bF \circ (\bW \cap \bC)$ and $(\bW \cap \bF) \circ \bC$.

\end{enumeratesmall}
To indicate that a morphism lies in one of these subcategories, we will use the symbols $\we$, $\cofibn$, and $\fibn$, respectively.  We call $\bW \cap \bC \subset \M$ the subcategory of \bit{acyclic cofibrations}, and we call $\bW \cap \bF \subset \M$ the subcategory of \bit{acyclic fibrations}.  Morphisms lying in these subcategories are denoted by the symbols $\wcofibn$ and $\wfibn$, respectively.
\end{defn}

\begin{notn}\label{notn for decorating model structure data}
In order to disambiguate our notation associated to various model $\infty$-categories, we introduce the following conventions.
\begin{itemize}

\item We will always subscript the data associated to a ``named'' model $\infty$-category with (an abbreviation of) its name.  For instance, we write $s\S_\KQ$ to denote the model $\infty$-category given by the Kan--Quillen model structure on the $\infty$-category $s\S$ of simplicial spaces (see \cref{define kan--quillen model structure on sspaces}), and we write $\bF_\KQ \subset s\S$ to denote its subcategory of fibrations.

\item On the other hand, for an ``unnamed'' model $\infty$-category, we may subscript its associated data with the name of the underlying $\infty$-category.  For instance, if $\M$ is a model $\infty$-category, we may write $\bC_\M \subset \M$ to denote its subcategory of cofibrations.

\item When two different $\infty$-categories have model structures with the same name, we may additionally superscript their associated data with the name of the ambient $\infty$-category.  For instance, we may write $\bW^{s\Set}_\KQ \subset s\Set$ to denote the subcategory of weak equivalences in the classical Kan--Quillen model structure on the category $s\Set$ of simplicial sets (see \cref{define kan--quillen model structure on ssets}).  However, when no ambiguity should arise, we will generally omit this superscript.
\end{itemize}
\end{notn}

\begin{defn}\label{define co/fibrant objects}
An object of a model $\infty$-category $\M$ is called
\begin{itemizesmall}
\item \bit{cofibrant} if its unique map from the initial object $\es_\M$ is a cofibration,
\item \bit{fibrant} if its unique map to the terminal object $\pt_\M$ is a fibration, and
\item \bit{bifibrant} if it is both cofibrant and fibrant.
\end{itemizesmall}
We denote the full subcategories of these objects by $\M^c \subset \M$, $\M^f \subset \M$, and $\M^{cf} \subset \M$, respectively.  More generally, we will use these same superscripts to denote the intersection of some other subcategory of $\M$ with the indicated subcategory just defined, so that e.g.\! $\bW^{cf} = \bW \cap \M^{cf} \subset \M$ denotes the subcategory of weak equivalences between bifibrant objects.
\end{defn}

\begin{rem}\label{remark fundamental theorem}
As in the classical case, one should think of cofibrant objects as being ``good for mapping out of'', and of fibrant objects as being ``good for mapping into''.  Indeed, the \textit{fundamental theorem of model $\infty$-categories} (\Cref{fundthm:fundamental theorem}) asserts that if $x \in \M^c$ and $y \in \M^f$, then the natural map
\[ \hom_\M(x,y) \ra \hom_{\loc{\M}{\bW}}(x,y) \]
is a surjection, and becomes an equivalence after applying either ``$\infty$-categorical equivalence relation'' of \textit{left homotopy} or of \textit{right homotopy} to the source (in a sense made precise in \cref{fundthm:section fundamental theorem}).

Moreover, as in the classical case, factorization axiom {\factorizationaxiom} guarantees that any object of $\M$ admits both
\begin{itemizesmall}
\item a cofibrant replacement by an acyclic fibration and
\item a fibrant replacement by an acyclic cofibration.
\end{itemizesmall}
Taken together, these imply that every object of $\loc{\M}{\bW}$ can even be represented by a \textit{bifibrant} object of the model $\infty$-category $\M$.
\end{rem}

\begin{rem}\label{absolute computations only need to know co/fibrant objects}
In view of \cref{remark fundamental theorem}, we see that if we are only interested in computing hom-spaces in $\loc{\M}{\bW}$, then it suffices to know only which objects of $\M$ are co/fibrant (as long as we also have some control over the left and right homotopy relations).  However, there are many other constructions that one might perform in an $\infty$-category besides extracting hom-spaces, and from this point of view we can think of the subcategories $\bC,\bF \subset \M$ as telling us which objects are \textit{relatively} co/fibrant, i.e.\! co/fibrant in some undercategory or overcategory (see \cref{slice of a model infty-category}).  Nevertheless, we may view the data of the subcategories $\bW, \M^c,\M^f \subset \M$ as a ``first approximation'' to the data of a model structure.  As it can at times be quite difficult to prove the existence of a model structure, in some examples below we will just content ourselves by producing choices of co/fibrant objects corresponding to a given class of weak equivalences.
\end{rem}

\begin{rem}
One of the most important constructions that one might perform in a (1- or $\infty$-)category is the extraction of co/limits.  Classically, the theory of \textit{homotopy co/limits} in a model 1-category gives a way of computing co/limits in its $\infty$-categorical localization (see e.g.\! Theorem T.4.2.4.1).  (In contrast with the ``first approximation'' of \cref{absolute computations only need to know co/fibrant objects}, these certainly require the full model structure!)  We provide a theory of homotopy co/limits in model $\infty$-categories in \cref{qadjns:subsection ho-co-lims}.
\end{rem}

\begin{rem}\label{lifting axiom is stronger than that on htpy cat}
The $\infty$-categorical lifting axiom {\liftingaxiom} encodes a homotopically coherent version of the usual lifting axiom.  It is strictly stronger than the usual lifting axiom for hom-sets in the homotopy category (see \cref{model structure on homotopy category}).
\end{rem}

\begin{rem}
Factorization systems in $\infty$-categories (in which fillers are unique) are studied in \sec T.5.2.8.  These are distinct from what we study here, which might be called \textit{weak} factorization systems in $\infty$-categories.
\end{rem}

\begin{rem}
When defining model categories, there are always the choices to be made
\begin{itemizesmall}
\item of whether to require that the factorizations be functorial, and
\item of whether to require bicompleteness or only finite bicompleteness.
\end{itemizesmall}
Of course, in defining model $\infty$-categories, these choices persist.  With regards to both of these, we have chosen the less restrictive option.
\end{rem}

\begin{rem}
Since the opposite of a model $\infty$-category is canonically a model $\infty$-category, many of the statements that we make throughout this series of papers have obvious duals.  For conciseness, we will often just make whichever of the pair of dual statements is more convenient, and then we will simply refer to its dual if and when we require it.
\end{rem}

\begin{rem}\label{obvious lifting criteria}
There are many basic facts about model categories which follow easily and directly from the definitions; these generally remain true for model $\infty$-categories.  For instance, we will repeatedly use the facts
\begin{itemizesmall}
\item that $\bC = \llp(\bW \cap \bF)$,
\item that $\bW \cap \bC = \llp(\bF)$,
\item that $\bF = \rlp(\bW \cap \bC)$, and
\item that $\bW \cap \bF = \rlp(\bC)$.
\end{itemizesmall}
(The proofs remains the same, see \cite[Proposition 7.2.3]{Hirsch}.)  We also note here that it follows from these characterizations that $\M^\simeq \subset \bW \cap \bC \cap \bF \subset \M$, i.e.\! that the subcategory $\M^\simeq \subset \M$ of equivalences is contained in all three defining subcategories of a model $\infty$-category.
\end{rem}

\subsection{The definitions of Quillen adjunctions and Quillen equivalences}

Model categories are useful not just in isolation, but in how they relate to one another.  Following the classical situation, we make the following definition.

\begin{defn}\label{define quillen adjunction}
Suppose that $\M$ and $\N$ are two model $\infty$-categories, and suppose that
\[ F : \M \adjarr \N : G \]
is an adjunction between their underlying $\infty$-categories.  We say that this adjunction is a \bit{Quillen adjunction} if any of the following equivalent conditions is satisfied:
\begin{itemizesmall}
\item $F$ preserves cofibrations and acyclic cofibrations;
\item $G$ preserves fibrations and acyclic fibrations;
\item $F$ preserves cofibrations and $G$ preserves fibrations;
\item $F$ preserves acyclic cofibrations and $G$ preserves acyclic fibrations.
\end{itemizesmall}
(That these conditions are indeed equivalent follows immediately from \cref{obvious lifting criteria}.)  In this situation, we call $F$ a \bit{left Quillen functor} and we call $G$ a \bit{right Quillen functor}.
\end{defn}

\begin{rem}\label{quillen adjunction induces derived adjunction}
We prove as \cref{qadjns:adjunction thm} that a Quillen adjunction
\[ F : \M \adjarr \N : G \]
induces a canonical adjunction
\[ \bbL F : \loc{\M}{\bW_\M} \adjarr \loc{\N}{\bW_\N} : \bbR G \]
on localizations, called its \textit{derived adjunction} (see \cref{qadjns:defn der functors of q adjn}).
\end{rem}

Of course, we also have the following special case of \cref{define quillen adjunction}.

\begin{defn}\label{define quillen equivalence}
We say that a Quillen adjunction $F : \M \adjarr \N : G$ is a \bit{Quillen equivalence} if for all $x \in \M^c$ and all $y \in \N^f$, the equivalence
\[ \hom_\M(x,G(y)) \simeq \hom_\N(F(x),y) \]
induces an equivalence of subspaces
\[ \hom_{\bW_\M}(x,G(y)) \simeq \hom_{\bW_\N}(F(x),y) . \]
(Note that this condition can be checked on path components.)
\end{defn}

\begin{rem}\label{remark derived adjunction of a quillen equivalence is an equivalence}
Extending \cref{quillen adjunction induces derived adjunction}, we prove as \cref{qadjns:cor quillen equivce} that the derived adjunction of a Quillen equivalence induces an \textit{equivalence} of localizations.
\end{rem}

\section{Model $\infty$-categories: examples}\label{section model infty-cats exs}

Having discussed the generalities of model $\infty$-categories, we now proceed to give a number of examples.  In \cref{subsection examples of MICs} we give some examples of model $\infty$-categories, in \cref{subsection examples of adjunctions} we give some examples of Quillen adjunctions and Quillen equivalences between model $\infty$-categories, and in \cref{subsection speculative examples} we give some speculative examples whose verifications lie beyond the scope of the current project.  This section may be safely omitted from a first reading.

\subsection{Examples of model $\infty$-categories}\label{subsection examples of MICs}

In this subsection, we give a plethora of examples of model $\infty$-categories.  They are organized into subsubsections based on their nature:
\begin{itemizesmall}
\item in \cref{subsubsection ex foundational} we begin with some general examples of model $\infty$-categories,
\item in \cref{subsubsection ex named} we list some more specific examples model $\infty$-categories,
\item in \cref{subsubsection ex diagrams} we give examples of model structures on $\infty$-categories of diagrams in a model $\infty$-category, and
\item in \cref{subsubsection ex exotic} we explore some more exotic examples of model $\infty$-categories (which we have included primarily for their intrinsic interest).
\end{itemizesmall}

\subsubsection{General examples of model $\infty$-categories}\label{subsubsection ex foundational}

\begin{ex}\label{model 1-cat is model infty-cat}
In the case that $\M$ is actually a 1-category considered as an $\infty$-category with discrete hom-spaces, then \cref{define model infty-category} recovers the classical definition of a model category.  Thus, any model 1-category gives an example of a model $\infty$-category.
\end{ex}

\begin{ex}\label{trivial model structure}
Any finitely bicomplete $\infty$-category $\M$ has a \textit{trivial} model structure, in which we set $\bW = \M^\simeq$ and $\bC = \bF = \M$.  We denote this model $\infty$-category by $\M_\triv$.
\end{ex}

\begin{ex}\label{slice of a model infty-category}
If $\M$ is a model $\infty$-category and $x \in \M$ is any object, then both the undercategory $\M_{x/}$ and the overcategory $\M_{/x}$ inherit the structure of a model $\infty$-category, where in both cases the three defining subcategories are created by the forgetful functor to $\M$.  Iterating this observation, for any morphism $x \ra y$ in $\M$ we obtain the structure of a model $\infty$-category on $\M_{x/ \! /y}$.
\end{ex}

\begin{ex}
The \textit{recognition theorem} for cofibrantly generated model $\infty$-categories (\ref{recognize cofgen}) gives general criteria for the existence of a cofibrantly generated model structure on an $\infty$-category with respect to given choices of weak equivalences, generating cofibrations, and generating acyclic cofibrations (see \cref{section cofgen model infty-cats}).
\end{ex}

\subsubsection{Specific examples of model $\infty$-categories}\label{subsubsection ex named}

\begin{ex}
The main purpose of this paper is to define the \textit{Kan--Quillen model structure} on $s\S$, which will be given as \cref{define kan--quillen model structure on sspaces}, and which is denoted by $s\S_\KQ$.
\end{ex}

\begin{ex}
The $\infty$-category $\Cati$ of $\infty$-categories admits a \textit{Thomason model structure}, described in \cref{quillen equivalence giving thomason model str}, which is denoted by $(\Cati)_\Thomason$.
\end{ex}

\begin{ex}\label{ex resolution model str}
In future work, given a model $\infty$-category $\M$ (with a chosen set of generators), following \cite{DKS-E2} and \cite{BousCosimp} we will define a \textit{resolution model structure} (a/k/a an ``$\Etwo$ model structure'') on the $\infty$-category $s\M$ which presents the corresponding \textit{nonabelian derived $\infty$-category} of $\M$, which is denoted $s\M_\res$.
\end{ex}

\subsubsection{Examples of model structures on functor $\infty$-categories}\label{subsubsection ex diagrams}

\begin{ex}\label{ex proj model str}
Given a model $\infty$-category $\M$ and an $\infty$-category $\C$, there is sometimes a \textit{projective model structure} on the $\infty$-category $\Fun(\C,\M)$ (see \cref{qadjns:subsection ho-co-lims}), which is denoted by $\Fun(\C,\M)_\projective$.
\end{ex}

\begin{ex}\label{ex inj model str}
Given a model $\infty$-category $\M$ and an $\infty$-category $\C$, there is sometimes a \textit{injective model structure} on the $\infty$-category $\Fun(\C,\M)$ (see \cref{qadjns:subsection ho-co-lims}), which is denoted by $\Fun(\C,\M)_\injective$.
\end{ex}

\begin{ex}
Given a model $\infty$-category $\M$ and a Reedy category $\C$ (see \cref{qadjns:define reedy cat}), there is always a \textit{Reedy model structure} on the $\infty$-category $\Fun(\C,\M)$ (see \cref{qadjns:subsection reedy model str}), which is denoted by $\Fun(\C,\M)_\Reedy$.
\end{ex}

\subsubsection{Exotic examples of model $\infty$-categories}\label{subsubsection ex exotic}

\begin{ex}\label{model structure on homotopy category}
If $\M$ is a model $\infty$-category and $\ho(\M)$ is finitely bicomplete, then the model structure on $\M$ descends to a model structure on $\ho(\M)$.  In fact, the two-out-of-three, retract, and factorization axioms in $\M$ are even verifiable in $\ho(\M)$.  Moreover, the map
\[
\lim \left(
\begin{tikzcd}
& \hom_\M(y,w) \arrow{d}{i^*} \\
\hom_\M(x,z) \arrow{r}[swap]{p_*} & \hom_\M(x,w)
\end{tikzcd} \right)
\ra
\lim \left(
\begin{tikzcd}
& \hom_{\ho(\M)}(y,w) \arrow{d}{i^*} \\
\hom_{\ho(\M)}(x,z) \arrow{r}[swap]{p_*} & \hom_{\ho(\M)}(x,w)
\end{tikzcd} \right)
\]
is a surjection, so if the lifting axiom holds in $\M$ then it must hold in $\ho(\M)$ as well.  Of course, this generalizes \cref{model 1-cat is model infty-cat}: if $\M$ is a 1-category, then the projection $\M \xra{\sim} \ho(\M)$ is an equivalence.  On the other hand, since for an arbitrary $\infty$-category $\M$ there is usually a complicated interplay between co/limits in $\M$ and co/limits in $\ho(\M)$, it does not appear possible to draw any general conclusions about the relationship between these two model $\infty$-categories, or about the induced map
\[ \loc{\M}{\bW} \ra \loc{\ho(\M)}{\ho(\bW)} \]
on localizations.  (Note that the functor $\M \ra \ho(\M)$ is the unit of the left localization $\ho : \Cati \adjarr \Cat : \forget_\Cat$, but it is not generally itself an adjoint.)
\end{ex}

As we will elaborate upon in \cref{model structure as simultaneous generalization of left and right localizations}, the following example illustrates a particularly ``one-sided'' sort of model $\infty$-category.

\begin{ex}\label{left localizations give model structures}
Suppose that $\M$ is a finitely bicomplete $\infty$-category and that $\leftloc : \M \adjarr \leftloc \M : \forget$ is a left localization.  This means that we may consider the right adjoint $\forget : \leftloc \M \hookra \M$ as the inclusion of the reflective subcategory of ``local'' objects, and that for any $x \in \M$ and any $y \in \leftloc\M \subset \M$, the localization map $x \ra \leftloc x$ induces an equivalence
\[ \hom_\M(\leftloc x , y) \xra{\sim} \hom_\M(x,y) . \]
This fits squarely into the ``first approximation'' rubric of \cref{absolute computations only need to know co/fibrant objects}: we should consider $\M^c = \M$ and $\M^f = \forget (\leftloc\M) \subset \M$ (with trivial left/right homotopy relations).  Hence, it is natural to guess that there might be a \textit{left localization} model structure $\M_\leftloc$ on $\M$, in which
\begin{itemizesmall}
\item $\bW_\leftloc \subset \M$ is the subcategory of morphisms that become equivalences in $\leftloc\M$,
\item $\bC_\leftloc = \M$, and
\item $\bF_\leftloc \subset \M$ is determined by the lifting condition $\bF_\leftloc = \rlp((\bW \cap \bC)_\leftloc) = \rlp(\bW_\leftloc)$;
\end{itemizesmall}
in this case, the left adjoint will coincide with the localization $\M \ra \loc{\M}{\bW_\leftloc} \simeq \leftloc\M$ (and moreover, the left homotopy relation will be visibly trivial (and hence the right homotopy relation will be trivial as well)).  What remains, then, is to check
\begin{itemizesmall}
\item the half of the lifting axiom asserting that $(\bW \cap \bF)_\leftloc \subset \rlp(\bC_\leftloc) = \rlp(\M)$, and
\item the half of the factorization axiom regarding $\bF_\leftloc \circ (\bW \cap \bC)_\leftloc = \bF_\leftloc \circ \bW_\leftloc$.
\end{itemizesmall}

Now, for a map to have $\rlp(\M)$ in particular means that it has the right lifting property against the identity map from itself, and this implies that the map is an equivalence (with inverse given by the guaranteed lift).  Thus, $\rlp(\M) \subset \M^\simeq$.  On the other hand, clearly $\M^\simeq \subset \rlp(\M)$, so $\rlp(\M) = \M^\simeq$, and hence to satisfy the lifting axiom it is both necessary and sufficient to have that $(\bW \cap \bF)_\leftloc \subset \M^\simeq$.  (Of course, the reverse containment follows from the definitions, so it is also necessary and sufficient to have that $(\bW \cap \bF)_\leftloc = \M^\simeq$.)

The factorization axiom here is more subtle, and there does not appear to be a general criterion for when this might hold.  One possibility is that we might try to use the factorization of an arbitrary map $x \ra y$ via the construction
\[ \begin{tikzcd}
x \arrow[bend left=15]{rrd} \arrow[bend right=15]{ddr} \arrow{rd} \\
& \leftloc x \underset{\leftloc y}{\times} y \arrow{r} \arrow{d} & y \arrow{d} \\
& \leftloc x \arrow{r} & \leftloc y ,
\end{tikzcd} \]
i.e.\! via the composite
\[ x \ra \leftloc x \underset{\leftloc y}{\times} y \ra y . \]
In the case that the left localization $\leftloc$ is additionally \textit{left exact}, i.e.\! in the case that it commutes with finite limits (see Remark T.5.3.2.3), then this does indeed produce a factorization as $\bF_\leftloc \circ \bW_\leftloc$:
\begin{itemizesmall}
\item the first map is in $\bW_\leftloc$ by the left exactness of $\leftloc$, and
\item the second map is in $\bF_\leftloc$ since it is a pullback of the map $\leftloc x \ra \leftloc y$, which is directly seen to have $\rlp(\bW_\leftloc)$.
\end{itemizesmall}
In fact, this does not require the full strength of left exactness, but is only using the weaker notion of $\leftloc$ being a \textit{locally cartesian} localization (see \cite[1.2]{GepnerKock}).\footnote{There are a number of results in the literature surrounding factorizations in 1-categories which, if suitably generalized to $\infty$-categories, would give a much wider class of left localizations in which the factorization axiom holds; a notable example is \cite[Corollary 3.4]{CHK}.  (Somewhat relatedly, factorization systems in stable $\infty$-categories are explored in \cite{Fosco}.)}

In a different direction, the proof of \cite[Proposition 2.9]{SalchBall} (which actually only uses \textit{finite} bicompleteness) appears to generalize directly to show that whenever the subcategory $\leftloc\M \subset \M$ is additionally ``strictly saturated'' (in the sense of \cite[Definition 2.7]{SalchBall}), then the model $\infty$-category $\M_\leftloc$ does indeed exist, and moreover
\begin{itemizesmall}
\item the subcategory $\bF_\leftloc \subset \M$ consists of precisely those maps which are pullbacks of maps in the subcategory $\leftloc\M \subset \M$, while
\item $(\bW \cap \bF)_\leftloc \simeq \M^\simeq$.
\end{itemizesmall}
More broadly, \cite{SalchBall} makes a detailed study of these left localization model category structures and their evident duals (see \cref{right localizations give model structures}), which should presumably generalize naturally to model $\infty$-categories.
\end{ex}

\begin{ex}\label{pi-0 model structure on spaces}
A particular case where \cref{left localizations give model structures} does indeed give a model structure is the left localization $\pi_0 : \S \adjarr \Set : \disc$.  Here, we have that $\bF_{\pi_0} = \rlp(\bW_{\pi_0})$ consists of precisely the \textit{\'{e}tale} maps of spaces, i.e.\! those maps which induce $\pi_{\geq 1}$-isomorphisms for every basepoint of the source.  (Note that this condition is independent from that of being $0$-connected; in particular, such a map may be neither injective nor surjective on $\pi_0$.)  It follows that $(\bW \cap \bF)_{\pi_0} = \S^\simeq$, and so we have the required lifting condition.  Moreover, the proposed factorization of \cref{left localizations give model structures} is in this case precisely the standard epi-mono factorization, and this gives the required factorization $\bF_{\pi_0} \circ \bW_{\pi_0}$.  As $\S$ is certainly finitely bicomplete, we obtain a model $\infty$-category $\S_{\pi_0}$ with localization $\S \ra \loc{\S}{\bW_{\pi_0}} \simeq \Set$.
\end{ex}

\begin{ex}\label{n-truncation model structure on spaces}
In fact, \cref{pi-0 model structure on spaces} generalizes to the left localizations $\tau_{\leq n} : \S \adjarr \S^{\leq n} : \forget_{\leq n}$.  Using the notation of \cref{pi-0 model structure on spaces is almost cofgen} below, we have that
\[ \bF_{\tau_{\leq n}} = \rlp((I^\S_\triv)_{\geq n+2}) \cap \rlp'(\{S^n \ra \pt_\S \}) \]
consists of precisely those maps which induce $\pi_{\geq n+1}$-isomorphisms for every basepoint in the source.  So again $(\bW \cap \bF)_{\tau_{\leq n}} = \S^\simeq$, and it is easy to check using the long exact sequence in homotopy for a pullback square that again the suggestion in \cref{left localizations give model structures} yields the factorization $\bF_{\tau_{\leq n}} \circ \bW_{\tau_{\leq n}}$.  (Alternatively, an easy spheres-and-disks construction yields this factorization as well.)  Hence, we obtain a model $\infty$-category $\S_{\tau_{\leq n}}$ with localization $\S \ra \loc{\S}{\bW_{\tau_{\leq n}}} \simeq \S^{\leq n}$.
\end{ex}

\begin{rem}
It is clear that \cref{n-truncation model structure on spaces} relies on the fact that there is a good theory of cellular approximation in $\S$ (e.g., the fact that attaching an $(n+2)$-cell to a space doesn't change its $n$-truncation).  Thus, it does not appear to immediately generalize to an arbitrary $\infty$-topos: it is important that the generators be suitably compatible with the truncation functors.
\end{rem}

\begin{rem}
The Kan--Quillen model structure on $s\S$ of \cref{define kan--quillen model structure on sspaces} has its weak equivalences created by the left adjoint $|{-}| : s\S \ra \S$, but it is \textit{not} a left localization model structure (in the sense of \cref{left localizations give model structures}).  Indeed, this left adjoint is certainly not left exact: the question of when limits commute with geometric realizations is generally very difficult to answer.  (However, in the case of pullbacks, this question is addressed to some extent by \cref{consequence of right properness of kan--quillen model structure} below; see also the surrounding Remarks \ref{observe that the fiber of a fibration of sspaces is homotopically correct}, \ref{compare with rezk's realization fibrations}, \ref{compare with anderson and bousfield--friedlander}, \and \ref{compare with result for levelwise-connected sspaces}.  The case of more general limits will also be addressed in \cref{qadjns:subsection ho-co-lims}.)
\end{rem}

\begin{ex}\label{right localizations give model structures}
Given a right localization $\forget : \rightloc \M \adjarr \M : \rightloc$ with $\M$ finitely bicomplete, we obtain a dual story to that of \cref{left localizations give model structures}.  Now we should think of every object as \textit{fibrant}, and of the left adjoint as the inclusion of the coreflective subcategory of \textit{cofibrant} objects.  In this case, the guess at a \textit{right localization} model structure $\M_\rightloc$ on $\M$ has that
\begin{itemizesmall}
\item $\bW_\rightloc \subset \M$ is the subcategory of morphisms that become equivalences in $\rightloc\M$,
\item $\bF_\rightloc = \M$, and
\item $\bC_\rightloc \subset \M$ is determined by the lifting condition $\bC_\rightloc = \llp(\bW_\rightloc)$.
\end{itemizesmall}
From here, it remains to check
\begin{itemizesmall}
\item the lifting condition $(\bW \cap \bC)_\rightloc \subset \llp(\bF_\rightloc) = \M^\simeq$, and
\item the factorization condition for $(\bW \cap \bF)_\rightloc \circ \bC_\rightloc = \bW_\rightloc \circ \bC_\rightloc$.
\end{itemizesmall}
Now, the factorization condition will be implied by the \textit{right exactness} of the right localization $\rightloc$ (or more generally, if the right localization is ``locally cocartesian'').
\end{ex}

\begin{rem}\label{model structure as simultaneous generalization of left and right localizations}
\cref{left localizations give model structures} reinterprets a left localization as describing a model $\infty$-category in which all objects are \textit{cofibrant}; dually, \cref{right localizations give model structures} reinterprets a right localization as describing a model $\infty$-category in which all objects are \textit{fibrant}.  Since in an arbitrary model $\infty$-category, not all the objects will be cofibrant and not all the objects will be fibrant, we should think of the notion of a model $\infty$-category as giving a \textit{simultaneous generalization} of the notions of left and right localizations.
\end{rem}

\begin{ex}\label{try to put model structure on bounded spectra}
As a simple case of \cref{model structure as simultaneous generalization of left and right localizations}, we can obtain a ``first approximation'' (as in \cref{absolute computations only need to know co/fibrant objects}) to a model structure on the $\infty$-category $\Sp$ of spectra which would present the $\infty$-category $\Sp^{[m,n]}$ of spectra that only have nontrivial homotopy groups in some interval $[m,n] \subset \bbZ$: the cofibrant objects would be $\Sp^{\geq m} \subset \Sp$, while the fibrant objects would be $\Sp^{\leq n} \subset \Sp$.  This example, though illustrative, is somewhat degenerate, since any weak equivalence between bifibrant objects is already an equivalence.  Indeed, the ``homotopy relations'' would all be trivial, corresponding to the fact that we have an inclusion $\Sp^{[m,n]} \subset \Sp$ which is a section to the projection $\tau_{\geq m} \circ \tau_{\leq n} \simeq \tau_{\leq n} \circ \tau_{\geq m} : \Sp \ra \Sp^{[m,n]}$.
\end{ex}

\begin{rem}\label{try to use n-cotruncation of pointed simply connected spaces to get a model structure}
Analogously to \cref{n-truncation model structure on spaces}, we might try to obtain a model structure as in \cref{right localizations give model structures} from the right localization $\forget_{\geq n} : \S^{\geq n}_* \adjarr \S^{\geq 1}_* : \tau_{\geq n}$.  However, the existence of such a model structure is much less clear.

We do still have the lifting condition.  Indeed, suppose that $x \ra y$ is in $(\bW \cap \bC)_{\tau_{\geq n}}$.  Since $x \ra y$ is in $\bW_{\tau_{\geq n}}$, it induces an isomorphism on $\pi_{\geq n}$.  On the other hand, obtaining a lift in the commutative square
\[ \begin{tikzcd}
x \arrow{r} \arrow{d} & \tau_{\leq n-1} x \arrow{d} \\
y \arrow{r} & \tau_{\leq n-1} y
\end{tikzcd} \]
(in which $\tau_{\geq n}$ applied to the right map yields $\id_{\pt_\S}$), we see that the map also induces isomorphisms on $\pi_{\leq n-1}$.

But the factorization condition is trickier.  Given a map $x \ra y$ in $\S^{\geq 1}_*$, the map
\[ \tau_{\geq n} y \ra \tau_{\geq n}y \amalg_{\tau_{\geq n} x} x \]
will not necessarily be a $\pi_{\geq n}$-isomorphism.  For instance, when $n \geq 2$ and $x$ is a 1-type, then this map will just be the inclusion $\tau_{\geq n} y \ra \tau_{\geq n}y \vee x$ of the first wedge summand, which will not generally induce a $\pi_{\geq n}$-isomorphism.  (Of course, this observation does not preclude the existence of suitable factorizations.)
\end{rem}

\begin{rem}
While not exactly an example of a model $\infty$-category, it seems worth observing that given a model $\infty$-category $\M$, the pair $(\M^c,\bW^c)$ gives an example of the evident $\infty$-categorical analog of Waldhausen's notion of a ``category with cofibrations and weak equivalences'' (see \cite[\sec 1.2]{Wald1126}); moreover, a left Quillen functor $\M \ra \N$ of model $\infty$-categories then gives rise to an ``exact functor'' of such.  Consequently, there is an evident definition of the \textit{algebraic K-theory} of these objects, visibly functorial for left Quillen functors (see \cite[\sec 1.3]{Wald1126}).

Note that this does \textit{not} coincide with Barwick's notion of a ``Waldhausen $\infty$-category'' given as \cite[Definition 2.7]{BarwickAKT}.  Rather, one recovers this latter notion as a special case of an ``$\infty$-category with cofibrations and weak equivalences'' in which the weak equivalences are just the equivalences.  On the other hand, it seems likely that the algebraic K-theory of $(\M^c,\bW^c)$ in the above sense would simply compute the algebraic K-theory of the localization $\loc{\M^c}{(\bW^c)} \simeq \loc{\M}{\bW}$ with respect to its \textit{maximal} pair structure in Barwick's sense (as this is true for model 1-categories, see \cite[Proposition 9.15 and Corollary 10.10.3]{BarwickAKT}).\footnote{This equivalence is given by (the dual of) \cref{qadjns:inclusion of fibts induces equivce on gpd-compln}.}
\end{rem}

\subsection{Examples of Quillen adjunctions and Quillen equivalences}\label{subsection examples of adjunctions}

\begin{ex}\label{quillen equivalence between kan--quillen model structures}
As will be immediate from Definitions \ref{define kan--quillen model structure on ssets} \and \ref{define kan--quillen model structure on sspaces}, the adjunction $\pi_0 : s\S_\KQ \adjarr s\Set_\KQ : \disc$ (see \cref{adjunction between sspaces and ssets}) between the Kan--Quillen model $\infty$-category structures on $s\S$ and $s\Set$ is a Quillen equivalence.
\end{ex}

\begin{ex}
In \cref{bonus properties of sd and Ex}, we will see that the ``subdivision'' and ``extension'' endofunctors of \cite{KanEx}, suitably extended from $s\Set$ to $s\S$ (see \cref{subsection Ex-infty}), define a Quillen equivalence $\sd : s\S_\KQ \adjarr s\S_\KQ : \Ex$.
\end{ex}

\begin{ex}\label{example lift cofgen to get quillen adjunction}
The \textit{lifting theorem} for cofibrantly generated model $\infty$-categories (\ref{lift cofgen}) gives general general criteria for constructing Quillen adjunctions by lifting a cofibrantly generated model structure along a left adjoint (see \cref{section cofgen model infty-cats}).
\end{ex}

\begin{ex}\label{bousfield localization of trivial model structure}
If a left localization $\leftloc : \M \adjarr \leftloc \M : \forget$ induces a left localization model structure $\M_\leftloc$ as in \cref{left localizations give model structures}, then we obtain a Quillen adjunction $\id_\M : \M_\triv \adjarr \M_\leftloc : \id_\M$, whose derived adjunction is precisely the original left localization $\leftloc : \M \adjarr \leftloc\M : \forget$.  This is of course closely related to the theory of \textit{left Bousfield localizations} of model categories (see e.g.\! \cite[\sec 3.3]{Hirsch}).  Dual statements apply to the right localization model structures of \cref{right localizations give model structures}.
\end{ex}

\begin{ex}
As a particular case of \cref{bousfield localization of trivial model structure}, the model $\infty$-category $\S_{\tau_{\leq n}}$ of \cref{n-truncation model structure on spaces} participates in a Quillen adjunction $\id_\S : \S_\triv \adjarr \S_{\tau_{\leq n}} : \id_\S$, whose derived adjunction is $\tau_{\leq n} : \S \adjarr \S^{\leq n} : \forget_{\leq n}$.
\end{ex}

\begin{ex}\label{quillen equivalence giving thomason model str}
Recall that the $\infty$-category $\CSS$ of \textit{complete Segal spaces} sits as a left localization $s\S \adjarr \CSS$, and moreover admits a natural equivalence $\CSS \simeq \Cati$ (see \cref{rnerves:section CSSs}).  We show as \cref{gr:Thomason model str on CSS} that, as a particular case of \cref{example lift cofgen to get quillen adjunction}, we can transfer the Kan--Quillen model structure of \cref{define kan--quillen model structure on sspaces} along the adjunction $s\S \adjarr \CSS \simeq \Cati$ to obtain the \textit{Thomason model structure} on the $\infty$-category $\Cati$ of $\infty$-categories.  This Quillen adjunction is in fact a Quillen equivalence, and hence \cref{remark derived adjunction of a quillen equivalence is an equivalence} implies that the Thomason model structure on $\Cati$ once again presents the $\infty$-category $\S$.  As explained in \cref{gr:different Thomasons}, this model structure resolves some of the less satisfying aspects of the classical Thomason model structure on $\strcat$ (which also presents $\S$).
\end{ex}

\begin{ex}
Given a model $\infty$-category $\M$ and an $\infty$-category $\C$, if both the projective and injective model structures on $\Fun(\C,\M)$ exist (see Examples \ref{ex proj model str} \and \ref{ex inj model str}), then the identity adjunction defines a Quillen equivalence
\[ \id_{\Fun(\C,\M)} : \Fun(\C,\M)_\projective \adjarr \Fun(\C,\M)_\injective : \id_{\Fun(\C,\M)} \]
between them (see \cref{qadjns:rem qeqs between inj proj and reedy}).
\end{ex}

\begin{ex}
Given a model $\infty$-category $\M$ and a Reedy category $\C$, if the projective model structure on $\Fun(\C,\M)$ exists (see \cref{ex proj model str}), then the identity adjunction defines a Quillen equivalence
\[ \id_{\Fun(\C,\M)} : \Fun(\C,\M)_\projective \adjarr \Fun(\C,\M)_\Reedy : \id_{\Fun(\C,\M)} \]
(see \cref{qadjns:rem qeqs between inj proj and reedy}).
\end{ex}

\begin{ex}
Given a model $\infty$-category $\M$ and a Reedy category $\C$, if the injective model structure on $\Fun(\C,\M)$ exists (see \cref{ex inj model str}), then the identity adjunction defines a Quillen equivalence
\[ \id_{\Fun(\C,\M)} : \Fun(\C,\M)_\Reedy \adjarr \Fun(\C,\M)_\injective : \id_{\Fun(\C,\M)} \]
(see \cref{qadjns:rem qeqs between inj proj and reedy}).
\end{ex}

\begin{ex}
Given a model $\infty$-category $\M$ and an $\infty$-category $\C$ such that $\M$ admits $\C$-shaped colimits, the adjunction
\[ \colim : \Fun(\C,\M) \adjarr \M : \const \]
is
\begin{itemizesmall}
\item always a Quillen adjunction with respect to the projective model structure on $\Fun(\C,\M)$ if it exists (see \cref{qadjns:rem q adjns for proj and inj model strs})
\item sometimes (but not always) a Quillen adjunction if $\C$ is additionally a Reedy category and we equip $\Fun(\C,\M)$ with the Reedy model structure, e.g.\! in the case that $\C$ has \textit{fibrant constants} (see \cref{qadjns:defn co/fibt constants}).
\end{itemizesmall}
\end{ex}

\begin{ex}
Given a model $\infty$-category $\M$ and an $\infty$-category $\C$ such that $\M$ admits $\C$-shaped limits, the adjunction
\[ \const : \M \adjarr \Fun(\C,\M) : \lim \]
is
\begin{itemizesmall}
\item always a Quillen adjunction with respect to the injective model structure on $\Fun(\C,\M)$ if it exists (see \cref{qadjns:rem q adjns for proj and inj model strs})
\item sometimes (but not always) a Quillen adjunction if $\C$ is additionally a Reedy category and we equip $\Fun(\C,\M)$ with the Reedy model structure, e.g.\! in the case that $\C$ has \textit{cofibrant constants} (see \cref{qadjns:defn co/fibt constants}).
\end{itemizesmall}
\end{ex}

\subsection{Speculative examples}\label{subsection speculative examples}

\begin{specul}\label{other KQ model structures}
Let us temporarily refer to the model structure of \cref{define kan--quillen model structure on sspaces} as the ``strong'' Kan--Quillen model structure on $s\S$: its subcategory $\bW_\KQ \subset s\S$ of weak equivalences is created by the geometric realization functor $|{-}| : s\S \ra \S$, and it is cofibrantly generated by the sets
\[ I_{\KQ_{\textup{strong}}} = \{ \partial \Delta^n \ra \Delta^n \}_{n \geq 0} \]
and
\[ J_{\KQ_{\textup{strong}}} = \{ \Lambda^n_i \ra \Delta^n \}_{0 \leq i \leq n \geq 1} . \]
Then, there should exist other Kan--Quillen model structures on $s\S$: these would have the same subcategory of weak equivalences, but would have more cofibrations.  For instance, we might define ``medium'' and ``weak'' Kan--Quillen model structures by extending the set of generating cofibrations to be given by
\[ I_{\KQ_{\textup{medium}}} = I_{\KQ_{\textup{strong}}} \cup \{ S^i \tensoring \partial \Delta^n \ra S^i \tensoring \Delta^n \}_{i \geq 1, n \geq 0} \]
and
\[ I_{\KQ_{\textup{weak}}} = I_{\KQ_{\textup{medium}}} \cup \{ S^i \tensoring \Delta^n \ra \pt_\S \tensoring \Delta^n \}_{i \geq 1, n \geq 0} , \]
with sets of generating acyclic cofibrations extended to match.  There would then exist Quillen equivalences
\[ s\S_{\KQ_{\textup{strong}}} \adjarr s\S_{\KQ_{\textup{medium}}} \adjarr s\S_{\KQ_{\textup{weak}}} \]
(in which all underlying functors are $\id_{s\S}$): moving to the right, more and more maps become cofibrations, while moving to the left, more and more maps become fibrations.  This explains the terminology: the geometric realization functor (being a colimit) already plays well with colimits, and hence it does not seem to be particularly useful to identify more maps as cofibrations.  On the other hand, these variants would enjoy certain features not shared by $s\S_{\KQ_{\textup{strong}}}$.
\begin{itemize}

\item The model $\infty$-category $s\S_{\KQ_{\textup{medium}}}$ would be obtained by closing up the generating sets under the tensoring, i.e.\! by performing an \textit{enriched} small object argument (see \cref{why unenriched soa}), which would easily provide \textit{functorial} factorizations.

\item The model $\infty$-category $s\S_{\KQ_{\textup{weak}}}$ would have all objects cofibrant, just as $s\Set_\KQ$.  However, it seems that the primary importance of this fact is that it implies left properness, so this may not be much of an advantage, since $s\S_{\KQ_{\textup{strong}}}$ is already left proper (for essentially trivial reasons).

\end{itemize}

The existence of these alternate Kan--Quillen model structures almost follows easily from the recognition theorem for cofibrantly generated model $\infty$-categories (\ref{recognize cofgen}) and our proof of \cref{kan--quillen model structure on sspaces} (the main theorem of this paper, which asserts the existence of $s\S_{\KQ_{\textup{strong}}}$); more precisely, using the results presented here, it is straightforward to verify all the conditions given in \cref{recognize cofgen} except for condition \ref{recognition theorem 3}.  On the other hand, it seems eminently plausible that Smith's recognition theorem for combinatorial model categories (see Proposition T.A.2.6.8), especially its simpler special case given by Lurie (see Proposition T.A.2.6.13), would admit a straightforward generalization to the model $\infty$-categorical setting.  From here, a version of Proposition T.A.2.6.13 would guarantee that \textit{any} set of maps $I \subset s\S$ containing $I_{\KQ_{\textup{strong}}}$ would constitute a set of generating cofibrations for a model structure on $s\S$ with subcategory of weak equivalences given by $\bW_\KQ \subset s\S$.  (Condition (1) would be satisfied by a combination of variants of Example T.A.2.6.11 and Corollary T.A.2.6.12, condition (2) would be true (even without the assumption that $I \supset I_{\KQ_{\textup{strong}}}$) because geometric realization (being a colimit) commutes with pushouts, and condition (3) would follow from the fact that $\rlp(I) \subset \rlp(I_{\KQ_{\textup{strong}}}) \subset \bW_\KQ$.)

\end{specul}

\begin{rem}
It seems that the model $\infty$-category $s\S_{\KQ_{\textup{weak}}}$ of \cref{other KQ model structures} would be closely related to the Moerdijk model structure on $ss\Set$ described in \cref{compare with moerdijk}.  Moreover, a putative set of generating acyclic cofibrations
\[ J_{\KQ_{\textup{strong}}} \cup \{ S^j \tensoring \Lambda^n_i \ra S^j \tensoring \Delta^n \}_{j \geq 1, 0 \leq i \leq n \geq 1} \]
seems closely related to our comparison with the $\pi_*$-Kan condition in \cref{compare with anderson and bousfield--friedlander}.
\end{rem}

\begin{specul}\label{KQ model structure on sspectra}
There should exist a Kan--Quillen model structure on the $\infty$-category $s\Sp$ of simplicial spectra.  However, the ``levelwise infinite loopspace'' functor $s\Omega^\infty : s\Sp \ra s\S_*$ isn't conservative, and so it wouldn't make much sense to lift this from a Kan--Quillen model structure on $s\S_*$.\footnote{On the other hand, this issue would vanish if we were to restrict to \textit{connective} spectra.}  (By contrast, the usual model structure on simplicial abelian groups is lifted from $s\Set_\KQ$.)  This should give rise to model structures e.g.\! on simplicial module spectra over a (simplicial) ring spectrum, and in other stable contexts.  Alternatively, the putative applications of such model structures might all be handled sufficiently by resolution model structures.
\end{specul}

\begin{specul}\label{model infty-cat from simp model cat}
Given a simplicial model category $\M_\bullet$, we can consider its underlying $s\Set$-enriched category as an $\infty$-category $\M_\bullet \in (\strcat_{s\Set})_\Bergner$ in its own right.  Closing up the defining simplicial subcategories $\bW_\bullet,\bC_\bullet,\bF_\bullet \subset \M_\bullet$ to subcategories in the $\infty$-categorical sense should then determine a model $\infty$-category structure (though there is some subtlety in ensuring that the model $\infty$-category axioms continue to hold in the $\infty$-categorical sense).

As an illustrative example of this apparent phenomenon, let use consider the case of the simplicial model category $(s\Set_\bullet)_\KQ$.  Let us write
\[ \bW_\he \subset \bW_\whe \subset s\Set \]
for the subcategories of \textit{homotopy equivalences} and \textit{weak homotopy equivalences}, respectively (with respect to the Kan--Quillen model structure (so that by definition, $\bW_\whe = \bW_\KQ$)).  Then, it is not hard to see that the canonical functor
\[ s\Set[\bW^{-1}_\he] \ra s\Set[\bW^{-1}_\whe] \simeq \ho(\S) \]
on homotopy categories is a left localization.  Moreover, every acyclic fibration in $s\Set_\KQ$ is actually a homotopy equivalence, so that \textit{every} map is simplicially homotopic to a cofibration.  Taking these two facts together, it seems reasonable to guess that this procedure yields a left localization model structure (in the sense of \cref{left localizations give model structures}) corresponding to a left localization adjunction
\[ \loc{s\Set}{\bW_\he} \adjarr \loc{s\Set}{\bW_\whe} \simeq \S . \]
\end{specul}

\begin{specul}\label{free resolutions model structure}
There should exist a model structure on simplicial objects in algebras over an operad which accounts for \textit{free resolutions}.  For example, this would recover as a special case the model structure on simplicial commutative rings alluded to in \cite[\sec 2]{Quillenhomcomm} (and laid out explicitly in \cite[\sec 3.1]{SchwedeSpectraAQ}), and would provide a framework organizing the ``prove it for a free simplicial resolution, then prove that it commutes with (sifted) colimits'' arguments involving ``$B$-structured $n$-disk algebras'' (e.g.\! $\bbE_n$ algebras) that appear throughout \cite{AFPKD}.
\end{specul}

\begin{specul}\label{joyal model str on sspaces}
There should exist a \textit{Joyal model structure} on $s\S$, whose fibrant objects are the ``homotopical quasicategories'', namely those $Y \in s\S$ such that for all $n \geq 0$ and all $0 < i < n$, the inner horn inclusion $\Lambda^n_i \ra \Delta^n$ induces a surjection
\[ Y_n \simeq \Match_{\Delta^n} (Y) \ra \Match_{\Lambda^n_i} (Y) \]
in $\S$.  This should moreover participate in a left Bousfield localization $s\S_\Joyal \adjarr s\S_{\KQ_{\textup{weak}}}$ with the ``weak'' Kan--Quillen model structure of \cref{other KQ model structures}.

The proof of the Joyal model structure on $s\S$ would presumably follow that of the one on $s\Set$ fairly closely; a short and streamlined exposition of the latter is given in \cite[Appendix C]{DuggerSpivak} (in contrast with the one given in \sec T.2.2.5, which proceeds by using the model category $(\strcat_{s\Set})_\Bergner$.)

There should similarly exist other model $\infty$-categories which present $\Cati$ based on model 1-categories that do, for instance
\begin{itemizesmall}
\item a Barwick--Kan model structure on the $\infty$-category of relative $\infty$-categories (see Definitions \Cref{rnerves:define rel infty-cat} \and \Cref{rnerves:define BarKan rel str on RelCati}),
\item a Bergner model structure on the $\infty$-category of $s\S$-enriched $\infty$-categories (see \cref{hammocks:define sspatial infty-cats}), and
\item a Bergner model structure on ``Segal pre-categories'' in $s\S$, i.e.\! those simplicial spaces whose $0\th$ space is discrete.
\end{itemizesmall}
\end{specul}

\begin{specul}\label{speculn derived koszul duality}
The central theorem regarding \textit{formal moduli problems} for $\bbE_n$ algebras, described in \cite{LurieICM} (and made precise in \cite{LurieDAGX}), posits an equivalence between the $\infty$-categories of formal $\bbE_n$ moduli problems and of augmented $\bbE_n$ algebras (see \cite[Theorem 6.20]{LurieICM}).  This equivalence takes an augmented $\bbE_n$ algebra to its associated \textit{Maurer--Cartan functor}.

The inverse equivalence is somewhat trickier to describe.  When the formal $\bbE_n$ moduli problem is \textit{affine} or \textit{pro-affine} (i.e.\! corepresented by a small $\bbE_n$ algebra or by a pro-object in such), then this inverse equivalence is implemented by \textit{Koszul duality} (see \cite[Example 8.5 and Remark 8.9]{LurieICM}).  However, more generally one must take a resolution of the given formal $\bbE_n$ moduli problem by a \textit{smooth hypercovering} consisting of pro-affine ones, apply Koszul duality levelwise to this simplicial object, and then take the colimit.  Thus, in general this inverse equivalence is implemented by \textit{the derived functor of Koszul duality} (in analogy with e.g.\! the statement that the cotangent complex is the derived functor of derivations).

Hence, there should then exist a model structure on the $\infty$-category of simplicial objects in formal $\bbE_n$ moduli problems (presumably related to the model structure of \cref{free resolutions model structure}): appropriate levelwise pro-affine objects would be cofibrant, smooth hypercoverings would be acyclic fibrations, and then e.g.\! for an arbitrary formal $\bbE_n$ moduli problem, \cite[Proposition 8.19]{LurieICM} would provide a cofibrant replacement by an acyclic fibration.
\end{specul}

\section{Cofibrantly generated model $\infty$-categories}\label{section cofgen model infty-cats}

In this section we discuss \textit{cofibrantly generated} model $\infty$-categories.  As in the classical situation, these are model structures which are determined by a relatively small amount of data, namely by a set of \textit{generating cofibrations} and a set of \textit{generating acyclic cofibrations}, which simultaneously
\begin{itemizesmall}
\item generate the subcategories $\bC$ and $\bW \cap \bC$, respectively, in a suitable sense (as their names suggest),
\item detect the subcategories $\bW \cap \bF$ and $\bF$, respectively, in accordance with \cref{obvious lifting criteria}, and
\item are suited for obtaining the factorizations required by \cref{define model infty-category}.
\end{itemizesmall}
In \cref{section define kan--quillen model structure}, we will use this setup to define the Kan--Quillen model structure on the $\infty$-category $s\S$ of simplicial spaces.

We begin with a sequence of definitions.  They are all direct generalizations of their 1-categorical counterparts (after replacing a set of maps with a set of homotopy classes of maps), and it is routine to verify that they enjoy completely analogous properties (see e.g.\! \cite[\sec 10.4-5]{Hirsch}).  We also point out once and for all that these definitions do not depend on choices of representatives for the elements of the given set $I$ of homotopy classes of maps.

\begin{defn}\label{injectives have rlp(I)}
Given a set $I$ of homotopy classes of maps in $\C$, the subcategory $I \dashinj \subset \C$ of \bit{$I$-injectives} is the subcategory of maps with $\rlp(I)$.
\end{defn}

\begin{defn}\label{cofibrations have llp(rlp(I))}
Given a set $I$ of homotopy classes of maps in $\C$,  the subcategory $I \dashcof \subset \C$ of \bit{$I$-cofibrations} is the subcategory of maps with $\llp(\rlp(I))$.
\end{defn}

\begin{defn}\label{rel cell cxes}
Assume that $\C$ admits pushouts and sequential colimits.  Given a set $I$ of homotopy classes of maps in $\C$, the subcategory $I \dashcell \subset \C$ of \bit{relative $I$-cell complexes} is the subcategory of maps that can be constructed as transfinite compositions of pushouts of elements of $I$.  An object is called an \bit{$I$-cell complex} if its unique map from $\es_\C$ is a relative $I$-cell complex.  Note that $(I \dashcell) \dashinj = I \dashinj$ and that $I \dashcell \subset I \dashcof$.
\end{defn}

\begin{defn}
Given a cardinal $\kappa$, an object $x \in \C$ is called \bit{$\kappa$-small relative to $I$} if for every regular cardinal $\lambda \geq \kappa$ and every $\lambda$-sequence $\{y_\beta\}_{\beta < \lambda}$ of relative $I$-cell complexes, $\colim_{\beta < \lambda} \hom_\C(x,y_\beta) \xra{\sim} \hom_\C(x,\colim_{\beta<\lambda}y_\beta)$.  An object of $\C$ is called \bit{small relative to $I$} if it is $\kappa$-small relative to $I$ for some cardinal $\kappa$.  An object of $\C$ is called \bit{$\kappa$-small} if it is $\kappa$-small relative to $\C$, and is called \bit{small} if it is $\kappa$-small for some cardinal $\kappa$.
\end{defn}

\begin{defn}
We say that a set $I$ of homotopy classes of maps in $\C$ \bit{permits the small object argument} if the sources of its elements are small relative to $I$.
\end{defn}

We now come to the key result on which the theory of cofibrantly generated model $\infty$-categories rests, which we refer to as the \bit{small object argument}.

\begin{prop}\label{small object argument}
Suppose that $\C$ is an $\infty$-category that admits pushouts and sequential colimits, and suppose that $I$ is a set of homotopy classes of maps in $\C$ which permits the small object argument.  Then every map in $\C$ admits a factorization into a relative $I$-cell complex followed by an $I$-injective.
\end{prop}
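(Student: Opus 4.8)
The plan is to mimic Quillen's classical small object argument, with the one essential modification being that every step must be carried out up to homotopy rather than on the nose, since we are working internally to an $\infty$-category. Fix a map $f : a \to b$ in $\C$. I would construct a transfinite tower under $a$ over $b$: choose a regular cardinal $\kappa$ large enough that every source of an element of $I$ is $\kappa$-small relative to $I$ (possible by the hypothesis that $I$ permits the small object argument), and build a $\kappa$-sequence $a = z_0 \to z_1 \to \cdots \to z_\beta \to \cdots$ equipped with compatible maps $z_\beta \to b$. At a successor stage, I would form the set $S_\beta$ of all commutative squares from some element $i : s \to t$ of $I$ (i.e., from a chosen representative of a homotopy class in $I$) to the current map $z_\beta \to b$, take the coproduct $\coprod_{S_\beta} s \to \coprod_{S_\beta} t$ of the corresponding maps, and define $z_{\beta+1}$ by the pushout
\[
\begin{tikzcd}
\coprod_{S_\beta} s \arrow{r} \arrow{d} & z_\beta \arrow{d} \\
\coprod_{S_\beta} t \arrow{r} & z_{\beta+1} ,
\end{tikzcd}
\]
using the universal property of the pushout to produce the structure map $z_{\beta+1} \to b$. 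At limit stages I would take the sequential colimit, and I would set $z_\infty = \colim_{\beta < \kappa} z_\beta$ with its induced map $p : z_\infty \to b$; the composite $a \to z_\infty$ is the desired relative $I$-cell complex by construction, so it remains only to check that $p$ is an $I$-injective.

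The heart of the argument is the lifting verification: given any element $i : s \to t$ of $I$ and any commutative square
\[
\begin{tikzcd}
s \arrow{r} \arrow{d}[swap]{i} & z_\infty \arrow{d}{p} \\
t \arrow{r} & b ,
\end{tikzcd}
\]
I want to produce a lift $t \to z_\infty$. Since $s$ is $\kappa$-small relative to $I$ and each $z_\beta \to z_{\beta+1}$ is a pushout of a map in $I$ (hence the whole tower is built from relative $I$-cell complexes), the canonical map $\colim_{\beta < \kappa} \hom_\C(s, z_\beta) \to \hom_\C(s, z_\infty)$ is an equivalence; concretely, the given map $s \to z_\infty$ factors (up to the relevant coherence) through some $z_\beta$ with $\beta < \kappa$. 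The resulting square from $i$ to $z_\beta \to b$ is then one of the squares indexed by $S_\beta$, so by the very definition of the pushout defining $z_{\beta+1}$ there is a canonical map $t \to z_{\beta+1}$ fitting into the appropriate diagram; composing with $z_{\beta+1} \to z_\infty$ and checking that the two triangles commute (up to coherent homotopy) gives the required lift. Since this works for every element of $I$, we conclude $p \in I\dashinj$, i.e., $p$ has $\rlp(I)$.

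The main obstacle — and the place where genuine $\infty$-categorical care is needed, as opposed to a verbatim transcription of the $1$-categorical proof — is the bookkeeping of homotopy coherences. In a $1$-category the tower and all the structure maps are literally well-defined, but in an $\infty$-category "constructing a $\kappa$-sequence" means producing a single functor out of the ordinal $\kappa$ (viewed as a poset), and the successor-stage pushouts and limit-stage colimits have to be assembled into such a coherent diagram; moreover "the square from $i$ to $z_\beta \to b$ lies in $S_\beta$" requires knowing that the factorization $s \to z_\beta$ through the colimit is compatible with the square data, not merely that it exists. I would handle this by invoking the standard machinery for building transfinite diagrams in $\infty$-categories — e.g. constructing the functor $\kappa \to \C$ by transfinite induction using the relative-pushout/relative-colimit formalism (as in the treatment of cellular constructions in HTT), and using the equivalence $\colim_\beta \hom_\C(s,z_\beta) \simeq \hom_\C(s,z_\infty)$ to transport the entire square (an object of an appropriate slice or arrow $\infty$-category) down to a finite stage rather than transporting just the map $s \to z_\infty$. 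With that coherence in hand, the rest is a routine generalization of the classical argument, exactly as the remarks preceding the statement lead one to expect.
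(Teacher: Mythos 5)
Your proposal takes exactly the approach the paper does: it simply cites Hirschhorn's classical proof of the small object argument and observes that it carries over with a single modification, which your write-up fleshes out.  The one place you should be more careful is your phrase ``the set $S_\beta$ of all commutative squares from some element $i$ of $I$ to $z_\beta \to b$'' --- in an $\infty$-category this collection is a \emph{space}, not a set, and the modification the paper singles out as the entire content of the proof is that the coproduct at each successor stage is taken over the set of \emph{homotopy classes} of such squares, with arbitrary representatives chosen when forming the pushout.  This is enough, because the lifting property you ultimately verify is a surjectivity condition on mapping spaces and so depends only on the homotopy class of the transported square; your third paragraph shows you already see that the square (not just the map $s \to z_\infty$) must be transported down to a finite stage, so the point is a small one, but it is worth crystallizing since it is precisely what distinguishes this unenriched small object argument from the $\S$-enriched variant that one might naively attempt (and deliberately does not, for the reasons explained in \cref{why unenriched soa}).
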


\begin{proof}
The proof runs identically to that of \cite[Proposition 10.5.16]{Hirsch}, except that we take a coproduct over \textit{homotopy classes} of commutative squares and we choose arbitrary representatives for these classes when forming the pushout.  (See also \cite[Proposition 1.4.7]{LurieDAGX}.)
\end{proof}

\begin{rem}\label{why unenriched soa}
Although the above definitions are analogous to the classical ones, there is one wrinkle that appears in the $\infty$-categorical case.  Namely, the classical small object argument is visibly functorial: one simply takes a coproduct over the \textit{set} of commutative squares (and no choices of representatives of homotopy classes is necessary).  In an $\infty$-category, however, we instead have a \textit{space} of commutative squares.  Thus, it would be more natural in some respects for us to instead carry out an ``$\S$-enriched small object argument'', which would then be similarly functorial.  However, this has two drawbacks for us.

First of all, to do so would shrink the right class of the associated weak factorization system.  (Indeed, in enriched category theory, passing from an unenriched lifting condition to an enriched lifting condition is equivalent to closing up the given set of maps under tensors with the enriching category.)  We will be making much use of fibrations and acyclic fibrations (for instance in the model $\infty$-category $s\S_\KQ$ of \cref{define kan--quillen model structure on sspaces}), and so it is in our best interest to keep these classes as large as possible; given that the stronger statement holds (i.e.\! that in our cases of interest, it suffices to check an unenriched lifting condition), it seems reasonable to incorporate it into the theory.

More crucially, however, a key feature of the resolution model structure is that, given a model $\infty$-category $\M$ with chosen set of generators $\G$, we will obtain a cofibrant replacement of an object $X \in \M$ by an object $Y_\bullet \in s\M_\res^c$ which is given in each level as a \textit{coproduct} of elements of $\G$ (see \cite[3.3 and 5.5]{DKS-E2}, Lemma T.5.5.8.13, and Proposition T.5.5.8.10(5)), as opposed to some more elaborate construction involving tensorings with spaces.  This is useful because, denoting by $F : \M \ra \A$ our topology-to-algebra functor of interest (for instance, a homology theory $E_*$), it affords us an approximation of $F(X) \in \A$ by $F^\lw(Y_\bullet) \in s\A$, which is only useful inasmuch as it is algebraically accessible (for instance, a levelwise-projective simplicial $E_*$-module).  If we were to use an $\S$-enriched small object argument to construct the cofibrantly generated resolution model structure, we would cease to have any control whatsoever over the value of the functor $F^\lw : s\M_\res \ra s\A$ on these cofibrant replacements.
\end{rem}

We now come to the main definition of this section.

\begin{defn}\label{define cofgen}
A \bit{cofibrantly generated model $\infty$-category} is a model $\infty$-category $\M$ such that there exist sets of homotopy classes of maps $I$ and $J$, respectively called the \bit{generating cofibrations} and the \bit{generating acyclic cofibrations}, both permitting the small object argument, such that $\bW \cap \bF = \rlp(I)$ and $\bF = \rlp(J)$.  (It follows from \cref{obvious lifting criteria} that also $\bC = I \dashcof$ and $\bW \cap \bC = J \dashcof$.)
\end{defn}

\begin{ex}
Let $\S_\triv$ denote the trivial model structure of \cref{trivial model structure} on the $\infty$-category $\S$ of spaces.  Combined with a few basic observations coming from the theory of CW complexes, \cref{detect equivalences of spaces} shows that $\S_\triv$ is cofibrantly generated by the sets $I^\S_\triv = \{ S^{n-1} \ra \pt_\S \}_{n\geq 0}$ and $J^\S_\triv = \{ \id_{\es_\S} \}$.
\end{ex}

\begin{ex}\label{pi-0 model structure on spaces is almost cofgen}
Recall the model structure $\S_{\pi_0}$ on the $\infty$-category $\S$ of \cref{pi-0 model structure on spaces}: $\bW_{\pi_0}$ is created by $\pi_0 : \S \ra \Set$, $\bC_{\pi_0} = \S$, and $\bF_{\pi_0} = \rlp(\bW_{\pi_0})$.  This model structure is not cofibrantly generated (or at least, not obviously so), but we nevertheless have that
\[ \bF_{\pi_0} = \rlp((I^\S_\triv)_{\geq 2}) \cap \rlp'(\{S^0 \ra \pt_\S \}), \]
where
\begin{itemizesmall}
\item we define
\[ (I^\S_\triv)_{\geq 2} = \{S^{n-1} \ra \pt_\S \}_{n \geq 2} = \{ S^1 \ra \pt_\S , S^2 \ra \pt_\S , \ldots\}, \] and
\item by $\rlp'(\{S^0 \ra \pt_\S \})$, we mean to only require a lift in those commutative squares for which the upper map factors through the terminal map $S^0 \ra \pt_\S$.
\end{itemizesmall}
This observation generalizes to the model $\infty$-category $\S_{\tau_{\leq n}}$ of \cref{n-truncation model structure on spaces}, as indicated there.
\end{ex}

We have the following \bit{recognition theorem} for cofibrantly generated model $\infty$-categories.

\begin{thm}\label{recognize cofgen}
Let $\M$ be an $\infty$-category which is both cocomplete and finitely complete, and let $\bW \subset \M$ be a subcategory which is closed under retracts and satisfies the two-out-of-three property.  Suppose $I$ and $J$ are sets of homotopy classes of maps in $\M$, both permitting the small object argument, such that
\begin{enumeratesmall}
\item\label{recognition theorem 1} $J \dashcof \subset (I \dashcof \cap \bW)$,
\item\label{recognition theorem 2} $I \dashinj \subset (J \dashinj \cap \bW)$, and
\item\label{recognition theorem 3} either
\begin{enumeratesmallsub}
\item\label{recognition theorem 3a} $(I \dashcof \cap \bW) \subset J\dashcof$ or
\item\label{recognition theorem 3b} $(J \dashinj \cap \bW) \subset I\dashinj$.
\end{enumeratesmallsub}
\end{enumeratesmall}
Then the sets $I$ and $J$ define a cofibrantly generated model structure on $\M$ whose weak equivalences are $\bW$.
\end{thm}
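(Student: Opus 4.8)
The plan is to adapt the classical recognition theorem (see, e.g., \cite[Theorem 11.3.1]{Hirsch}) more or less verbatim, using \cref{small object argument} in place of the $1$-categorical small object argument and checking that each formal manipulation survives the passage to $\infty$-categories. We set $\bC := I \dashcof$ and $\bF := J \dashinj$ and take $\bW$ as given, and we then verify the five axioms of \cref{define model infty-category}. The easy ones go quickly: axiom {\limitaxiom} holds because a cocomplete, finitely complete $\infty$-category is in particular finitely bicomplete; axiom {\twooutofthreeaxiom} is a hypothesis; and for axiom {\retractaxiom}, $\bW$ is closed under retracts by hypothesis, while $\bC$ and $\bF$ are closed under retracts because any class of the form $\llp(-)$ or $\rlp(-)$ is. (We also note $\M^\simeq \subset \rlp(I) = I \dashinj \subset \bW$ by hypothesis \ref{recognition theorem 2}, so that $\bW$ is indeed a legitimate subcategory of weak equivalences.)

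For the factorization axiom {\factorizationaxiom}, we apply \cref{small object argument} twice. Factoring an arbitrary map as a relative $I$-cell complex followed by an $I$-injective expresses it as a map in $I \dashcell \subset I \dashcof = \bC$ followed by a map in $I \dashinj \subset J \dashinj \cap \bW = \bF \cap \bW$ (using hypothesis \ref{recognition theorem 2}), which is the required factorization through $(\bW \cap \bF) \circ \bC$. Likewise, factoring a map as a relative $J$-cell complex followed by a $J$-injective expresses it as a map in $J \dashcell \subset J \dashcof \subset I \dashcof \cap \bW = \bC \cap \bW$ (using hypothesis \ref{recognition theorem 1}) followed by a map in $J \dashinj = \bF$, which is the required factorization through $\bF \circ (\bW \cap \bC)$.

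The lifting axiom {\liftingaxiom} is where hypothesis \ref{recognition theorem 3} enters and where essentially all of the content lies. It has two halves: that acyclic cofibrations lift against fibrations, and that cofibrations lift against acyclic fibrations. Exactly one of these is immediate: if \ref{recognition theorem 3a} holds then $\bW \cap \bC \subset J \dashcof = \llp(\bF)$, giving the first; if \ref{recognition theorem 3b} holds then $\bW \cap \bF \subset I \dashinj = \rlp(\bC)$, giving the second. The remaining half in each case is obtained by the usual retract argument. For instance, under \ref{recognition theorem 3a}, given an acyclic fibration $p$, factor it by \cref{small object argument} on $I$ as $p = q \circ j$ with $j \in I \dashcell$ and $q \in I \dashinj$; then $q \in \bW$ by \ref{recognition theorem 2}, hence $j \in \bW$ by two-out-of-three, hence $j \in \bW \cap \bC \subset J \dashcof = \llp(\bF)$, so $j$ lifts against $q \in \bF$, exhibiting $p$ as a retract of $q$; since $I \dashinj = \rlp(I)$ is closed under retracts, $p \in I \dashinj = \rlp(\bC)$, i.e., $p$ lifts against all cofibrations. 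The case \ref{recognition theorem 3b} is entirely symmetric, factoring an acyclic cofibration instead by \cref{small object argument} on $J$.

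I expect the main obstacle to be not conceptual but a matter of bookkeeping: one must confirm that ``retract'', and the closure of $\llp(-)$- and $\rlp(-)$-classes under retracts, pushouts, and transfinite composition, mean the correct things $\infty$-categorically and behave exactly as in the $1$-categorical case, so that the retract argument above really does produce $p$ as a coherent retract of $q$ in $\M$. As flagged in the discussion preceding \cref{small object argument}, this is routine. Everything else is then formal: in particular, the resulting model structure is cofibrantly generated by $I$ and $J$ in the sense of \cref{define cofgen}, since \cref{obvious lifting criteria} gives $\bW \cap \bF = \rlp(\bC) = \rlp(I)$ and $\bF = \rlp(\bW \cap \bC) = \rlp(J)$, and both $I$ and $J$ permit the small object argument by hypothesis.
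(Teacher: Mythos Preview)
Your proposal is correct and is exactly the approach the paper takes: the paper's entire proof reads ``With the small object argument in hand, the proof runs identically to that of \cite[Theorem 11.3.1]{Hirsch},'' and you have simply written out that argument. One small slip: in your retract argument under \ref{recognition theorem 3a}, the lift you need is of $j$ against $p$ (not against $q$), since it is the square with $j$ on the left and $p$ on the right whose diagonal $r$ satisfies $r\circ j=\id$ and $p\circ r=q$, thereby exhibiting $p$ as a retract of $q$.
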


\begin{proof}
With the small object argument in hand, the proof runs identically to that of \cite[Theorem 11.3.1]{Hirsch}.
\end{proof}

We also have the following \bit{lifting theorem} for cofibrantly generated model $\infty$-categories.

\begin{thm}\label{lift cofgen}
Let $\M$ be a cofibrantly generated model $\infty$-category with generating cofibrations $I$ and generating acyclic cofibrations $J$, and let $F:\M \adjarr \N:G$ be an adjunction with $\N$ finitely bicomplete.  If $FI$ and $FJ$ both permit the small object argument and $G$ takes relative $FJ$-cell complexes into $\bW_\M$, then $FI$ and $FJ$ define a cofibrantly generated model structure on $\N$ in which $\bW_\N$ is created by $G$.  Moreover, with respect to this lifted model structure, the adjunction $F \adj G$ becomes a Quillen adjunction.
\end{thm}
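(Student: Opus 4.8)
The plan is to deduce the theorem by applying the recognition theorem \cref{recognize cofgen} to $\N$ equipped with the sets $FI$ and $FJ$ and with the wide subcategory $\bW_\N \subseteq \N$ of those maps $f$ for which $G(f) \in \bW_\M$. Since $G$ preserves composition and --- being a functor of $\infty$-categories --- retracts (and equivalences), $\bW_\N$ inherits from $\bW_\M$ the property of being a wide subcategory closed under retracts and satisfying two-out-of-three. Granting that $\N$ admits the pushouts and sequential colimits needed to run the small object argument (implicit in the hypothesis that $FI$ and $FJ$ permit it), \cref{recognize cofgen} will then yield exactly a cofibrantly generated model structure on $\N$ whose weak equivalences are $\bW_\N$, i.e.\! are created by $G$ --- provided we verify its conditions \cref{recognition theorem 1}, \cref{recognition theorem 2}, and \cref{recognition theorem 3}.

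The engine for all three is the transposition of lifting problems across an adjunction, which holds $\infty$-categorically for the same reason as $1$-categorically: the adjunction equivalence of mapping spaces carries commutative squares to commutative squares and spaces of lifts to spaces of lifts, so $F(f)$ has the left lifting property against $g$ if and only if $f$ has it against $G(g)$. From this it follows formally that $F$ carries $I \dashcof$ into $FI \dashcof$ and $J \dashcof$ into $FJ \dashcof$, while $G$ carries $FI \dashinj$ into $I \dashinj$ and $FJ \dashinj$ into $J \dashinj = \bF_\M$. Because $J \subseteq (\bW \cap \bC)_\M \subseteq \bC_\M = I \dashcof$, we get $FJ \subseteq FI \dashcof$, whence $FJ \dashcof \subseteq FI \dashcof$ and $FI \dashinj \subseteq FJ \dashinj$; combining the latter with $G(FI \dashinj) \subseteq I \dashinj = (\bW \cap \bF)_\M \subseteq \bW_\M$ gives condition \cref{recognition theorem 2}. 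For condition \cref{recognition theorem 3}, I would verify option \cref{recognition theorem 3b}: if $p \in FJ \dashinj \cap \bW_\N$ then $G(p) \in \bF_\M \cap \bW_\M = \rlp(I) = I \dashinj$, so transposing back, $p \in \rlp(FI) = FI \dashinj$.

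That leaves only the second half of condition \cref{recognition theorem 1}, namely $FJ \dashcof \subseteq \bW_\N$, which is the \emph{one} genuinely non-formal step and the sole place where the hypothesis on $G$ does real work: by that hypothesis every relative $FJ$-cell complex lies in $\bW_\N$; by the small object argument \cref{small object argument} together with the retract argument (exactly as in the $1$-categorical case), every $FJ$-cofibration is a retract of a relative $FJ$-cell complex; and $\bW_\N$ is closed under retracts. Applying \cref{recognize cofgen} now produces the cofibrantly generated model structure on $\N$. Its fibrations are $\bF_\N = \rlp(FJ) = G^{-1}(\rlp(J)) = G^{-1}(\bF_\M)$, hence also created by $G$. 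Finally, in this new model structure $\bC_\N = FI \dashcof$ and $(\bW \cap \bC)_\N = FJ \dashcof$ by the parenthetical of \cref{define cofgen}, so the inclusions $F(\bC_\M) = F(I \dashcof) \subseteq FI \dashcof = \bC_\N$ and $F((\bW \cap \bC)_\M) = F(J \dashcof) \subseteq FJ \dashcof = (\bW \cap \bC)_\N$ noted above show that $F$ preserves cofibrations and acyclic cofibrations; hence $F \adj G$ is a Quillen adjunction by \cref{define quillen adjunction}.

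I expect the proof to be short and to parallel the classical Kan lifting theorem almost verbatim. The only obstacle, such as it is, is the step $FJ \dashcof \subseteq \bW_\N$ above; everything else is bookkeeping with lifting properties across the adjunction, and the single $\infty$-categorical wrinkle --- that this bookkeeping, the small object argument, and the retract argument all behave exactly as in the $1$-categorical case --- has already been dispatched by \cref{small object argument} and the remarks preceding \cref{injectives have rlp(I)}, so no new difficulty should arise.
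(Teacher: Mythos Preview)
Your proposal is correct and is exactly the classical argument the paper defers to: the paper's proof consists of the single sentence ``The proof runs identically to that of \cite[Theorem 11.3.2]{Hirsch},'' and what you have written is precisely that proof, spelled out and routed through \cref{recognize cofgen}. Your flag about $\N$ needing enough colimits for the small object argument (beyond mere finite bicompleteness) is well taken --- this is a looseness in the statement itself rather than in your argument.
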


\begin{proof}
The proof runs identically to that of \cite[Theorem 11.3.2]{Hirsch}.
\end{proof}

\section{The definition of the Kan--Quillen model structure}\label{section define kan--quillen model structure}

We are now in a position to state the main result of this paper (\cref{kan--quillen model structure on sspaces}), which gives a systematic way of manipulating simplicial spaces in their capacity as ``presentations of spaces'' via the geometric realization functor.  This sits in precise analogy with the 1-category of simplicial sets, and so we begin with the following recollection.

\begin{defn}\label{define kan--quillen model structure on ssets}
The \bit{Kan--Quillen model structure} on $s\Set$, denoted $s\Set_\KQ$, is the proper model structure which is cofibrantly generated by the sets $I^{s\Set}_\KQ = \{ \partial \Delta^n \ra \Delta^n \}_{n \geq 0}$ and $J^{s\Set}_\KQ = \{ \Lambda^n_i \ra \Delta^n \}_{0 \leq i \leq n \geq 1}$ (see e.g.\! \cite[Example 11.1.6 and Theorem 13.1.13]{Hirsch}).
\end{defn}

In order to be precise regarding model $\infty$-categories (and in case the reader has forgotten the classical definition), we also give the following.

\begin{defn}
A model $\infty$-category is called \bit{left proper} if its weak equivalences are preserved under pushout along cofibrations, and dually is called \bit{right proper} if its weak equivalences are preserved under pullback along fibrations.  A model $\infty$-category is called \bit{proper} if it is both left proper and right proper.
\end{defn}

The categories of simplicial sets and simplicial spaces are related in the following way.

\begin{notn}\label{adjunction between sspaces and ssets}
Recall the adjunction $\pi_0 : \S \adjarr \Set : \disc$.  Applying $\Fun(\bD^{op},-)$, this induces an adjunction which we again denote by $\pi_0 : s\S \adjarr s\Set : \disc$.  We will generally omit this right adjoint from the notation unless we mean to emphasize it.
\end{notn}

Using this terminology and notation, we can now state the main result of this paper.

\begin{thm}\label{kan--quillen model structure on sspaces}
The sets $I^{s\S}_\KQ = \disc(I^{s\Set}_\KQ)$ and $J^{s\S}_\KQ = \disc(J^{s\Set}_\KQ)$ define a proper, cofibrantly generated model structure on $s\S$, in which the weak equivalences are created by the geometric realization functor $|{-}|:s\S \ra \S$.
\end{thm}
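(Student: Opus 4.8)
The plan is to verify the hypotheses of the recognition theorem \cref{recognize cofgen} for the $\infty$-category $\M = s\S$ with $\bW = \bW_\KQ$ (the maps inverted by $|{-}|$) and the sets $I = I^{s\S}_\KQ$, $J = J^{s\S}_\KQ$. First I would check the elementary input: $s\S = \Fun(\bD^{op},\S)$ is presentable, hence cocomplete and finitely complete; $\bW_\KQ$ is closed under retracts and satisfies two-out-of-three because it is created by a functor ($|{-}|$) and equivalences in $\S$ have these properties. I would also check that $I$ and $J$ permit the small object argument: the sources $\disc(\partial\Delta^n)$, $\disc(\Lambda^n_i)$ are discrete simplicial spaces built from finitely many nondegenerate simplices, hence compact (in fact $\aleph_0$-small) in $s\S$, so smallness is immediate. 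This reduces everything to conditions \ref{recognition theorem 1}, \ref{recognition theorem 2}, and \ref{recognition theorem 3}.

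Next I would dispatch condition \ref{recognition theorem 3} by choosing the easier branch \ref{recognition theorem 3b}, namely $(J\dashinj \cap \bW) \subset I\dashinj$. The point is that $\disc : s\Set \ra s\S$ is fully faithful and preserves the relevant lifting problems, so the lifting properties $\rlp(I)$ and $\rlp(J)$ against \emph{discrete} maps can be analyzed componentwise, and the matching-object description of horn- and boundary-inclusions carries over. Concretely, one shows that a map $p : Y \ra Z$ in $s\S$ has $\rlp(J)$ iff each induced map $Y_n \ra \Match_{\Lambda^n_i}(Y) \times_{\Match_{\Lambda^n_i}(Z)} Z_n$ in $\S$ is surjective on $\pi_0$ (a "Kan fibration of simplicial spaces"), and similarly $\rlp(I)$ iff the corresponding boundary-matching maps are equivalences; then the classical argument that a $J$-injective weak equivalence is $I$-injective (i.e.\ that a Kan fibration which is also a weak equivalence is an acyclic fibration) goes through after replacing "weak homotopy equivalence of simplicial sets" with "$|{-}|$-equivalence of simplicial spaces". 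This last implication is precisely the place where genuinely homotopical (not purely formal) input enters, and I expect it to rest on the key auxiliary result advertised in the excerpt as \cref{crazy lemma} — this is the main obstacle, and the excerpt explicitly defers its proof to a later section.

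Condition \ref{recognition theorem 1}, that $J\dashcof \subset I\dashcof \cap \bW$, splits into two parts. The inclusion $J\dashcof \subset I\dashcof$ is formal: $J\dashinj \supset I\dashinj$ since every generating acyclic cofibration $\Lambda^n_i \ra \Delta^n$ lies in $I\dashcof$ (one retracts/builds it from boundary inclusions exactly as in $s\Set$, and $\disc$ preserves this), whence $\llp(\rlp(J)) \subset \llp(\rlp(I))$. The part requiring content is $J\dashcof \subset \bW$: every relative $J$-cell complex is a $\bW_\KQ$-equivalence. Here I would use that $|{-}| : s\S \ra \S$ is a left adjoint, hence commutes with the pushouts and transfinite compositions out of which $J$-cell complexes are built, and that $|\disc(\Lambda^n_i \ra \Delta^n)| \simeq |\Lambda^n_i \ra \Delta^n|$ is an equivalence of spaces; then one argues, as classically, that pushouts along and transfinite composites of equivalences of spaces are again equivalences (using that cofibrations of spaces — images of monomorphisms of simplicial sets — are "flat", so these pushouts compute homotopy pushouts). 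Closure of $\bW_\KQ$-equivalences under retracts then gives $J\dashcof \subset \bW$. Condition \ref{recognition theorem 2}, that $I\dashinj \subset J\dashinj \cap \bW$, follows from $I\dashinj \subset J\dashinj$ (again from $J \subset I\dashcof$) together with $I\dashinj \subset \bW$ — i.e.\ every acyclic fibration is a weak equivalence — which is the componentwise statement that a boundary-matching-equivalence induces an equivalence on geometric realizations, provable directly by induction up the skeleta using that $|{-}|$ commutes with the skeletal colimits and with the relevant pushouts.

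Finally, having invoked \cref{recognize cofgen} to produce the cofibrantly generated model structure, I would establish properness. Left properness: one shows the cofibrations are (contained in the) levelwise monomorphisms, so pushout along a cofibration computes a homotopy pushout in $s\S$, hence $|{-}|$ sends it to a homotopy pushout of spaces, and homotopy pushouts preserve equivalences — so $\bW_\KQ$ is preserved under pushout along cofibrations. (As the excerpt remarks, this is "essentially trivial.") Right properness: given a pullback square in $s\S$ with one leg a fibration $p$, one must show $|{-}|$ sends it to a pullback of spaces, i.e.\ that $p$ is a "realization fibration". This is again not formal — it is the content flagged in the introduction around criterion \ref{criterion for ho-p.b.} — and I would expect it to follow from \cref{crazy lemma} (or a close relative) together with the fibrant-replacement functor $\Ex^\infty$ of \cref{section fibrations}: reduce to the case of a fibration between fibrant objects, where the fiberwise analysis is available. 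Assembling these pieces yields the theorem.
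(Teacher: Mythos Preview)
Your overall architecture matches the paper's: invoke the recognition theorem \ref{recognize cofgen}, verify conditions \ref{recognition theorem 1}, \ref{recognition theorem 2}, \ref{recognition theorem 3}\ref{recognition theorem 3b}, check smallness, and then handle properness separately (left properness from $|{-}|$ being a left adjoint, right properness via the fiber-comparison argument). Your treatment of condition \ref{recognition theorem 1} and of smallness is essentially the paper's.

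There is, however, a genuine gap in your handling of conditions \ref{recognition theorem 2} and \ref{recognition theorem 3}\ref{recognition theorem 3b}. You assert that $p:Y\ra Z$ has $\rlp(I_\KQ)$ iff the induced maps $Y_n \ra \Match_{\partial\Delta^n}(Y)\times_{\Match_{\partial\Delta^n}(Z)}Z_n$ are \emph{equivalences} in $\S$. This is false: in the $\infty$-categorical lifting axiom (see item \ref{notn lifting properties} of \cref{section conventions}), $\rlp$ only demands that the map to the pullback be a \emph{surjection} in $\S$, i.e.\ a $\pi_0$-surjection. (You correctly state this for $J$, then inexplicably upgrade it to ``equivalence'' for $I$.) Consequently your proposed proof that $I_\KQ\dashinj \subset \bW_\KQ$ by ``induction up the skeleta'' collapses: a map whose relative matching maps are merely $\pi_0$-surjective need not be a levelwise equivalence, and there is no direct skeletal argument giving $|Y|\xra{\sim}|Z|$. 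Likewise, your sketch of \ref{recognition theorem 3}\ref{recognition theorem 3b} (``the classical argument that a Kan fibration which is a weak equivalence is an acyclic fibration goes through'') does not survive this correction: the classical argument in $s\Set$ uses that acyclic fibrations are detected by \emph{bijections} on matching sets, which has no analogue here.

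The paper's proofs of these two conditions (\cref{detect acyclic fibrations of sspaces} and \cref{acyclic fibrations have rlp(I)}) are correspondingly much more hands-on. For $I_\KQ\dashinj\subset\bW_\KQ$, one tests $|Y|\ra|Z|$ against spheres via \cref{detect equivalences of spaces}, uses \cref{crazy lemma} to present maps $S^{n-1}\ra|Y|$ by actual maps of simplicial sets into $Y$, passes through $\Ex^\infty(Z)$ (developed in \cref{subsection Ex-infty}) to model a nullhomotopy, and then exploits the adjunction $\sd^i\adj\Ex^i$ to pull everything back to a lifting problem against a map in $\bC^{s\Set}_\KQ$. For $(J_\KQ\dashinj\cap\bW_\KQ)\subset I_\KQ\dashinj$, one uses the small object argument for $I_\KQ$ to replace $Y$ and $Z$ by objects of $s\Set$ (now legitimately, since \cref{detect acyclic fibrations of sspaces} is already available), and then finishes inside $s\Set_\KQ$. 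In both cases the key non-formal ingredient is \cref{crazy lemma}, but it enters in a more specific way than ``the classical argument goes through''.
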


\begin{proof}
We appeal to \ref{recognize cofgen}, verifying
\begin{itemizesmall}
\item condition \ref{recognition theorem 1} in \cref{detect acyclic cofibrations of sspaces},
\item condition \ref{recognition theorem 2} in \cref{detect acyclic fibrations of sspaces},
\item condition \ref{recognition theorem 3}\ref{recognition theorem 3b} in \cref{acyclic fibrations have rlp(I)}, and
\item that $I^{s\S}_\KQ$ and $J^{s\S}_\KQ$ both permit the small object argument in \cref{permit the small object argument}.
\end{itemizesmall}
The weak equivalences are closed under retracts and satisfy the two-out-of-three property because they are pulled back from a class of equivalences.  Lastly, left properness is immediate since the weak equivalences are created by a left adjoint (which commutes with pushouts), and right properness is proved as \cref{kan--quillen model structure is right proper}.
\end{proof}

\begin{defn}\label{define kan--quillen model structure on sspaces}
In analogy with \cref{define kan--quillen model structure on ssets}, we also refer to the model structure on $s\S$ defined by \cref{kan--quillen model structure on sspaces} as the \bit{Kan--Quillen model structure}, denoted $s\S_\KQ$.
\end{defn}

\begin{rem}\label{one of the most useful consequences of right properness}
In the theory of model 1-categories, one of the most useful consequences of right properness is that a pullback in which just one of the maps is a fibration is already a homotopy pullback (and dually for left properness).  This remains true in the theory of model $\infty$-categories; with the theory of homotopy co/limits in model $\infty$-categories in hand (see \cref{qadjns:subsection ho-co-lims}), the proof runs essentially identically (see e.g.\! \cite[\sec 13.3]{Hirsch}).  However, for the sake of self-containment, we will also directly prove this consequence of the right properness of $s\S_\KQ$ as \cref{consequence of right properness of kan--quillen model structure}.  (In fact, we use this as an \textit{input} to the proof of right properness in \cref{kan--quillen model structure is right proper}.)  On the other hand, the dual corollary of the left properness of $s\S_\KQ$ is as trivial to verify as the left properness of $s\S_\KQ$ itself.
\end{rem}

\begin{rem}
In the theory of model 1-categories, there are many other adjectives that one might attach to a model structure: combinatorial, cellular, tractable, etc.  The model $\infty$-category $s\S_\KQ$ enjoys completely analogous properties to those enjoyed by $s\Set_\KQ$.  However, we won't need these observations for now, so we just leave them here as a remark.  (The model $\infty$-category $s\S_\KQ$ will also be a \textit{simplicio-spatial model $\infty$-category}, the analogous notion to that of a simplicial model 1-category (see \cref{qadjns:defn sspatial model str}).  (This is simply to say that it is a \textit{symmetric monoidal model $\infty$-category} (see \cref{qadjns:defn of symm monoidal model infty-cat} and \cref{qadjns:ex KQ is a monoidal model str}).))
\end{rem}

\begin{rem}
Note that if we apply the small object argument for $I^{s\S}_\KQ$ or $J^{s\S}_\KQ$ to a map in $s\S$ whose source is in $s\Set$, then the intermediate object will also be in $s\Set$.  In particular, by the usual transfinite induction arguments, $\rlp(I^{s\S}_\KQ) = \rlp(\disc(\bC^{s\Set}_\KQ))$ and $\rlp(J^{s\S}_\KQ) = \rlp(\disc((\bW \cap \bC)^{s\Set}_\KQ))$.   We will use these facts without further comment.
\end{rem}

\begin{rem}\label{rem could lift KQ but circular}
The adjunction $\pi_0 : s\S \adjarr s\Set : \disc$ could be used to lift the cofibrantly generated Kan--Quillen model structure on $s\S$ to the one on $s\Set$ via the lifting theorem (\ref{lift cofgen}), except of course that that would be totally circular: the construction of $s\S_\KQ$ takes the existence of $s\Set_\KQ$ as input.  As observed in \cref{quillen equivalence between kan--quillen model structures}, this adjunction is even a Quillen equivalence, whose derived adjunction is the identity adjunction on their common localization $\S$.
\end{rem}

\begin{rem}\label{rem sSet-KQ is model subcat of sS-KQ}
In fact, extending \cref{rem could lift KQ but circular}, note that all three defining subcategories of the model category $s\Set_\KQ$ are pulled back from the corresponding defining subcategories of the model $\infty$-category $s\S_\KQ$ along the inclusion $s\Set \subset s\S$.
\end{rem}

To codify the observation of \cref{rem sSet-KQ is model subcat of sS-KQ}, we introduce the following.

\begin{defn}\label{defn model subcat}
Let $\M$ be a model $\infty$-category.  We say that a model structure on a subcategory $\N \subset \M$ defines a \bit{model subcategory} of $\M$ if we have
\begin{itemizesmall}
\item $\bW_\N = \N \cap \bW_\M \subset \M$,
\item $\bC_\N = \N \cap \bC_\M \subset \M$, and
\item $\bF_\N = \N \cap \bF_\M \subset \M$.
\end{itemizesmall}
\end{defn}

\begin{notn}
As $s\Set_\KQ$ is a model subcategory of $s\S_\KQ$ by \cref{rem sSet-KQ is model subcat of sS-KQ}, it will usually be unambiguous to omit the superscripts $s\Set$ and $s\S$ from their defining subcategories: we will usually just write $\bW_\KQ$, $\bC_\KQ$, or $\bF_\KQ$, leaving the ambient $\infty$-category implicit unless we mean to draw specific attention to it, as promised in \cref{notn for decorating model structure data}.  (Of course, since the purpose of this paper is to construct the model $\infty$-category $s\S_\KQ$, one would be safest to assume that we mean to refer to the subcategories of $s\S$ instead of those of $s\Set$ when no superscript is included.)  Similarly, we will henceforth generally simply denote
\begin{itemizesmall}
\item both $I^{s\Set}_\KQ$ and $I^{s\S}_\KQ$ by $I_\KQ$, and
\item both $J^{s\Set}_\KQ$ and $J^{s\S}_\KQ$ by $J_\KQ$.
\end{itemizesmall}
\end{notn}

\begin{notn}
In the course of proving \cref{kan--quillen model structure on sspaces}, it will be important to take care in distinguishing which facts have been proved and which facts have not.  Otherwise, our arguments might appear to be circular (for example as discussed in \cref{circularity of potential proof of detect acyclic fibrations of sspaces}).  Thus, in order to be totally clear about this distinction, we take the following conventions regarding maps of simplicial sets and of simplicial spaces.
\begin{itemize}
\item We will only decorate our arrows if the corresponding property is relevant to the argument.
\item We will only use the decoration $\we$ if we've actually proved that the map becomes a weak equivalence upon geometric realization.
\item When working in $s\Set_\KQ \subset s\S_\KQ$, we will use all standard results and notation.
\item By the nature of the arguments, the only cofibrations in $s\S_\KQ$ that appear will actually lie in the subcategory $s\Set_\KQ$.  On the other hand, rather than write $\fibn$ or $\wfibn$ for the indicated maps in $s\S_\KQ$, we will instead label the arrows with their lifting properties (so $\rlp(J_\KQ)$ or $\rlp(I_\KQ)$, respectively).
\item For convenience, we will still write $s\S^f_\KQ$ for those objects whose terminal map has $\rlp(J_\KQ)$.  (On the other hand, we will simply have $s\S^c_\KQ = s\Set \subset s\S$.)
\end{itemize}
\end{notn}

\section{Auxiliary results on spaces and simplicial spaces}\label{section auxiliary results}

In this section, we collect some auxiliary results regarding spaces and simplicial spaces which will be necessary for our proof of \cref{kan--quillen model structure on sspaces} (the bulk of which will be given in \cref{section proof of kan--quillen model structure}).

We begin with an easy folkloric result, which gives a criterion for a map of spaces to be an equivalence.

\begin{lem}\label{detect equivalences of spaces}
Let $Y \xra{\varphi} Z$ be a map in $\S$.  Then $\varphi$ is $n$-connected iff it has $\rlp(\{S^{i-1} \ra \pt_\S\}_{0 \leq i \leq n})$.  In particular, $\varphi$ is an equivalence iff it has $\rlp(\{S^{n-1} \ra \pt_\S\}_{n\geq 0})$.
\end{lem}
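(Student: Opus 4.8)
The plan is to prove the statement via the standard obstruction-theoretic translation between connectivity and lifting properties against sphere inclusions, working entirely in the $\infty$-category $\S$ of spaces. Recall that a map $\varphi : Y \ra Z$ is $n$-connected precisely when, for every basepoint of $Z$, the homotopy fibers are $(n-1)$-connected in the classical sense; equivalently, $\varphi$ induces a surjection on $\pi_k$ for $k \leq n-1$ (for every compatible basepoint) and an isomorphism (equivalently, injection and surjection) on... — more uniformly, I would phrase it as: $\varphi$ is $n$-connected iff the relative homotopy sets $\pi_k(Z,Y)$ vanish for $k \leq n$. The lifting property against $S^{i-1} \ra \pt_\S$ for a single $i$ says exactly that any map $S^{i-1} \ra Y$ whose composite to $Z$ is nullhomotopic is itself nullhomotopic, together with (via $i=0$, taking $S^{-1} = \es_\S$) that $\varphi$ is surjective on $\pi_0$. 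So the two-sided translation I want is: $\rlp(\{S^{i-1} \ra \pt_\S\}_{0 \le i \le n})$ holds iff $\varphi$ is $n$-connected.

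First I would dispatch the easy direction: if $\varphi$ is $n$-connected, then the homotopy fiber of $\varphi$ over any point is $(n-1)$-connected, and a diagram-chase (lifting a map out of $S^{i-1}$ against $\varphi$, with a nullhomotopy of the composite downstairs) reduces to the statement that an $(n-1)$-connected space has trivial homotopy in degrees $\leq n-1$, hence that maps out of $S^{i-1}$ with $i-1 \leq n-1$ into it extend over the cone; one also handles $i=0$ separately using $\pi_0$-surjectivity. For the converse, I would argue contrapositively: if $\varphi$ fails to be $n$-connected, there is a least $i$ with $0 \le i \le n$ for which either surjectivity of $\pi_{i-1}$ or (for the top obstruction) the relevant homotopy-fiber condition fails, and one extracts from this failure an explicit square with top map $S^{i-1} \ra Y$, bottom-right $Z$, and a nullhomotopy downstairs, admitting no diagonal filler. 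The "in particular" clause then follows by letting $n \to \infty$: $\varphi$ is an equivalence iff it is $n$-connected for all $n$ iff it has the right lifting property against $\{S^{n-1} \ra \pt_\S\}_{n \ge 0}$.

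The main obstacle — really the only subtlety — is bookkeeping around basepoints and the role of $i = 0$. The inclusion $S^{-1} = \es_\S \hra \pt_\S$ contributes the $\pi_0$-surjectivity statement, and one must be careful that the lifting conditions for $1 \le i \le n$ are interpreted as ranging over \emph{all} choices of basepoint in $Y$ (which is automatic, since a map $S^{i-1} \ra Y$ carries its own basepoint data), so that one genuinely controls the homotopy fibers over every component of $Z$ and not just one. Since $\S$ has all the colimits (cones, suspensions) and the relevant mapping spaces needed to run these arguments, and connectivity of a map is detected on homotopy fibers, no $\infty$-categorical pathology intervenes; I expect this to be a short argument once the translation dictionary is set up cleanly, which is presumably why the paper calls it "folkloric."
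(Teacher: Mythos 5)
Your plan --- reducing the lifting property to the $(n{-}1)$-connectivity of the homotopy fibers $F_z$ of $\varphi$ --- is sound and is essentially equivalent to what the paper does (the paper works directly with $\pi_{i-1}$-injectivity and $\pi_i$-surjectivity at all basepoints of $Y$, which by the long exact sequence is the same as asking every $F_z$ to be $(n{-}1)$-connected).  However, the opening ``translation dictionary'' you write down is genuinely too weak, and the distinction is the whole content of the lemma.  The lifting property against $S^{i-1} \ra \pt_\S$ does \emph{not} say merely that a map $S^{i-1} \ra Y$ which becomes nullhomotopic in $Z$ is already nullhomotopic in $Y$.  In the $\infty$-categorical sense a commutative square carries its commutativity witness --- here, a specified nullhomotopy of the composite $S^{i-1} \ra Z$ --- as \emph{data}, and a lift must be compatible with that specified homotopy.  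This extra strength is exactly what supplies the $\pi_i$-surjectivity half of $n$-connectivity; the weakened condition you state captures only $\pi_{i-1}$-injectivity, together with the $\pi_0$-surjectivity from $i=0$.  A concrete witness: the map $\pt_\S \ra S^1$ satisfies your weakened condition at $i=1$ (the only map $S^0 \ra \pt_\S$ is nullhomotopic) and is $\pi_0$-surjective, yet it is not $1$-connected and does not have $\rlp(\{S^0 \ra \pt_\S\})$; the relevant comparison map is $\pt_\S \ra \Omega S^1$, which is not a surjection.

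The paper extracts the surjectivity prong precisely by unwinding that commutativity datum: for the constant map $S^{i-1} \ra Y$ at $y$, the square's commutativity witness is exactly a class $S^i \ra Z$ based at $\varphi(y)$, and the lift produces a preimage in $\pi_i(Y,y)$.  Your subsequent reduction to $F_z$ does implicitly carry the full data (a map $S^{i-1} \ra F_z$ records both the map to $Y$ and the chosen nullhomotopy), so both directions of your argument go through once the dictionary is corrected to read: $\rlp(\{S^{i-1} \ra \pt_\S\})$ is equivalent to every map $S^{i-1} \ra F_z$ being nullhomotopic, for every $z$ (nonemptiness of each $F_z$ coming from $i=0$), which is exactly the $(n{-}1)$-connectivity of the fibers.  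Your concern about basepoints is well-placed and is handled correctly by quantifying over all such squares.
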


\begin{proof}
First, note that if $Y \ra Z$ has $\rlp(\{S^{i-1} \ra \pt_\S\})$, then the map $[S^{i-1},Y]_\S \ra [S^{i-1},Z]_\S$ is an inclusion.  Since a map off of $S^{i-1}$ is basedly nullhomotopic iff it's freely so, this means that for any basepoint $y \in Y$, the map $\pi_{i-1}(Y,y) \ra \pi_{i-1}(Z,\varphi(y))$ is an inclusion.

On the other hand, considering the map $S^{i-1} \ra \pt_\S$ as the standard inclusion $S^{i-1} \ra D^i$, we see that if we begin with the constant map $S^{i-1} \ra Y$ at some point $y \in Y$, then an extension of the composite $S^{i-1} \ra Y \ra Z$ over $S^{i-1} \ra D^i$ really just selects a map $S^i \ra Z$ which is based at $\varphi(y)$.  So, if $Y \ra Z$ has $\rlp(\{S^{i-1} \ra \pt_\S\})$, then also $\pi_i(Y,y) \ra \pi_i(Z,\varphi(y))$ is a surjection for any basepoint $y \in Y$.

Combining these two consequences of $Y \ra Z$ having $\rlp(\{S^{i-1} \ra \pt_\S\})$ proves both claims.
\end{proof}

We now turn to simplicial spaces.  We begin with the necessary smallness results.

\begin{lem}\label{finite ssets are small sspaces}
An object of $s\Set$ with finitely many nondegenerate simplices is $\omega$-small as an object of $s\S$.
\end{lem}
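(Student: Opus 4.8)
The plan is to reduce the statement to a standard smallness fact about spaces, namely that a finite CW complex (equivalently, a space in the image of a finite simplicial set under geometric realization, or simply any $\omega$-compact object of $\S$) is $\omega$-small in $\S$. The key point is that in $s\S = \Fun(\bD^{op},\S)$, both limits and colimits — in particular sequential colimits and pushouts, which is all that enters the definition of ``small relative to $I$'' for $I = I_\KQ$ — are computed levelwise, and hom-spaces out of a simplicial set $X \in s\Set \subset s\S$ are built out of the values of the target in finitely many levels, weighted by the (finitely many) simplices of $X$.

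Concretely, first I would recall the coend/weighted-limit formula: for $X \in s\Set$ and $Y \in s\S$, one has $\hom_{s\S}(X,Y) \simeq \lim_{(\bD \downarrow X)^{op}} Y$, or more explicitly a finite limit assembled from the spaces $Y_n$ indexed by the nondegenerate simplices of $X$ together with their face maps — since $X$ has only finitely many nondegenerate simplices, this is a \emph{finite} limit of the spaces $\{Y_n\}$ over a finite diagram, involving only finitely many distinct levels $n$. Second, given a $\lambda$-sequence $\{Y^{(\beta)}\}_{\beta<\lambda}$ of relative $I_\KQ$-cell complexes in $s\S$ (or indeed any $\lambda$-sequence, since we want $\omega$-smallness relative to all of $s\S$ — wait, the claim is only $\omega$-smallness relative to $s\S$, so we really do get to use arbitrary $\lambda$-sequences) for $\lambda$ a regular cardinal $\geq \omega$, i.e. $\lambda$ uncountable, the colimit $\colim_\beta Y^{(\beta)}$ is computed levelwise in $\S$. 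Third, I would invoke the fact that each space $Y_n^{(\beta)}$ — and in fact every object of $\S$ — is $\omega$-compact is \emph{false}, so instead I must be more careful: the correct statement is that the sources of $I_\KQ$, being finite simplicial \emph{sets}, map into each $Y_n$, and the relevant smallness is that finite simplicial sets are $\omega$-small \emph{in $s\Set$}, hence finite colimits of their hom-spaces and the levelwise filtered colimits interact correctly. Let me restructure: the honest route is to observe $\hom_{s\S}(X,-)\colon s\S \to \S$ is a finite limit of evaluation functors $\mathrm{ev}_n\colon s\S \to \S$ (for the finitely many $n$ with a nondegenerate simplex of $X$), each of which preserves \emph{all} colimits (being computed levelwise); finite limits commute with $\omega$-filtered (indeed $\lambda$-filtered for $\lambda>\omega$) colimits in $\S$; therefore $\hom_{s\S}(X,-)$ preserves $\lambda$-filtered colimits for every regular $\lambda\geq\omega$, which is exactly $\omega$-smallness.

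So the steps in order are: (1) write down the weighted-limit formula $\hom_{s\S}(X,Y)\simeq\lim_{(\bD\downarrow X)^{op}}\mathrm{ev}_{(-)}(Y)$ and note the indexing category is essentially finite when $X$ has finitely many nondegenerate simplices, so this is a finite limit of evaluation functors; (2) recall that each $\mathrm{ev}_n\colon s\S\to\S$ preserves all colimits since colimits in a functor category are levelwise; (3) recall that in $\S$ finite limits commute with $\lambda$-filtered colimits for any regular $\lambda\geq\omega$ (in particular with sequential colimits indexed by regular $\lambda\geq\omega$), so the composite finite-limit-of-colimit-preserving-functors preserves $\lambda$-filtered colimits; (4) conclude that for any regular cardinal $\lambda\geq\omega$ and any $\lambda$-sequence $\{Y^{(\beta)}\}$, the map $\colim_\beta\hom_{s\S}(X,Y^{(\beta)})\to\hom_{s\S}(X,\colim_\beta Y^{(\beta)})$ is an equivalence, which is precisely the assertion that $X$ is $\omega$-small (relative to $s\S$, hence a fortiori relative to any $I\subset s\S$). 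The main obstacle — really the only non-formal point — is getting the finite-limit-of-evaluations presentation of $\hom_{s\S}(X,-)$ pinned down precisely and checking the bookkeeping that ``finitely many nondegenerate simplices'' genuinely yields a finite (or at least $\omega$-small) indexing diagram; once that is in hand, everything else is the standard interchange of finite limits with filtered colimits in an $\infty$-topos, and I would just cite the relevant statement (e.g. from \cite{LurieHTT}).
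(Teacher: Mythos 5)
Your high-level plan---realize $\hom_{s\S}(X,-)$ as a finite limit of functors that preserve the relevant colimits, then invoke commutation of finite limits with filtered colimits in $\S$---matches the paper's, and the two ingredients the paper cites (``finite sets are small in $\S$'' and ``the inductive definition of a map of simplicial objects'') are exactly the ones you are reaching for. The place you stop short is the one you flag yourself. The category of simplices $\bD \downarrow X$ is infinite even when $X$ has finitely many nondegenerate simplices (degeneracies abound), and it is \emph{not} generally true that the nondegenerate simplices form a cofinal subcategory of $(\bD\downarrow X)^{op}$ in the $\infty$-categorical sense; so the weighted-limit formula over all simplices does not by itself hand you a finite diagram.

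The clean resolution, and what the paper's phrase ``inductive definition of a map of simplicial objects'' is pointing at, is to replace the all-at-once weighted limit with an induction. One can present $X$ as a finite iterated pushout of boundary inclusions $\partial\Delta^n \hookrightarrow \Delta^n$ starting from $\es_{s\Set}$, so that $\hom_{s\S}(X,-)$ is a finite iterated pullback of the functors $\hom_{s\S}(\Delta^n,-) \simeq \ev_n$ (which preserve all colimits, being levelwise) and $\hom_{s\S}(\partial\Delta^n,-)$ (handled by a smaller induction). Equivalently, one can run a Reedy induction on simplicial degree: it terminates because $X$ is skeletal, and at the $n$th stage the new datum is a point of $\hom_\S(X_n^{\mathrm{nd}},Y_n)$---where $X_n^{\mathrm{nd}}$ is the finite set of nondegenerate $n$-simplices---lying over the appropriate finite-limit matching constraint. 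This is precisely where ``finite sets are small in $\S$'' enters. Once either induction is made explicit, your steps (3)--(4) finish the argument exactly as you describe.
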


\begin{proof}
This follows from the fact that finite sets are small in $\S$ and from the inductive definition of a map of simplicial objects.
\end{proof}

\begin{cor}\label{permit the small object argument}
The sets $I_\KQ$ and $J_\KQ$ permit the small object argument.
\end{cor}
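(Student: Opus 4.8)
The plan is to reduce the statement directly to \cref{finite ssets are small sspaces}. By \cref{define kan--quillen model structure on ssets} and \cref{kan--quillen model structure on sspaces}, the set $I_\KQ$ consists of the boundary inclusions $\partial \Delta^n \subset \Delta^n$ and the set $J_\KQ$ consists of the horn inclusions $\Lambda^n_i \subset \Delta^n$, each regarded as a map of simplicial spaces via the inclusion $s\Set \subset s\S$. In particular, the sources of the elements of $I_\KQ$ and $J_\KQ$ are the simplicial sets $\partial \Delta^n$ and $\Lambda^n_i$, every one of which has only finitely many nondegenerate simplices.

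Next I would invoke \cref{finite ssets are small sspaces} to conclude that each such source is $\omega$-small as an object of $s\S$, i.e.\! $\omega$-small relative to $s\S$ itself. It then remains only to observe that $\omega$-smallness relative to all of $s\S$ implies $\omega$-smallness relative to any set $I$ of homotopy classes of maps: indeed, every relative $I$-cell complex is a transfinite composition of pushouts of maps lying in $s\S$, hence is in particular a relative $s\S$-cell complex, so that every $\lambda$-sequence of relative $I$-cell complexes is a $\lambda$-sequence of relative $s\S$-cell complexes. (Here we use that $s\S = \Fun(\bD^{op},\S)$ is cocomplete, so that the pushouts and sequential colimits appearing in the relevant definitions exist.) Applying this with $I = I_\KQ$ and with $I = J_\KQ$ shows that the sources of $I_\KQ$ and of $J_\KQ$ are small relative to $I_\KQ$ and to $J_\KQ$ respectively, which is exactly the assertion that these sets permit the small object argument.

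I do not anticipate any real obstacle here; the only point requiring care is the bookkeeping among the various notions of smallness, and in particular the (entirely formal) monotonicity of the class of relative cell complexes in its generating set, which is what permits the passage from the absolute $\omega$-smallness furnished by \cref{finite ssets are small sspaces} to the relative $\omega$-smallness demanded by the definition of permitting the small object argument.
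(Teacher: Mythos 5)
Your proof is correct and follows exactly the same route as the paper, which simply cites \cref{finite ssets are small sspaces}: the sources of $I_\KQ$ and $J_\KQ$ are finite simplicial sets, hence $\omega$-small in $s\S$, and absolute $\omega$-smallness trivially implies $\omega$-smallness relative to the given set. Your elaboration of the monotonicity step is just making explicit what the paper leaves implicit.
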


\begin{proof}
This follows from \cref{finite ssets are small sspaces}.
\end{proof}

Many of the remaining results of this paper will rely critically on the following one.  Its proof is rather involved, and so we postpone it to \cref{section proof of crazy lemma}.  Roughly speaking, this result concerns the inductive construction of a presentation of a map in $\S$ by a map in $s\S$ whose source lies in $s\Set \subset s\S$.

\begin{lem}\label{crazy lemma}
Suppose we are given any $W \in s\S$, any $K \in s\Set$, and any pushout
\[ \begin{tikzcd}
\partial \Delta^n \arrow{r} \arrow{d} & K \arrow{d} \\
\Delta^n \arrow{r} & L
\end{tikzcd} \]
in $s\Set$.  Suppose further that we are given any point of the pullback
\[ \lim \left( \begin{tikzcd}
& \hom_{s\S}(K,W) \arrow{d} \\
\hom_\S(|L|,|W|) \arrow{r} & \hom_\S(|K|,|W|)
\end{tikzcd} \right) . \]
Then there exists some $i \geq 0$ such that if the front square in the cube
\[ \begin{tikzcd}
& \partial \Delta^n \arrow{rr} \arrow[tail]{dd} & & K \arrow{dd} \\
\sd^i(\partial \Delta^n) \arrow{ru}[sloped, pos=0.6]{\approx} \arrow[crossing over]{rr} \arrow[tail]{dd} & & K \arrow[equals]{ru} \\
& \Delta^n \arrow{rr} & & L \\
\sd^i(\Delta^n) \arrow{ru}[sloped, pos=0.6]{\approx} \arrow{rr} & & L' \arrow[leftarrow, crossing over]{uu} \arrow{ru}
\end{tikzcd} \]
is also a pushout in $s\Set$, then $L' \we L$ in $s\Set_\KQ$ and the map
\[ \hom_{s\S}(L',W) \ra \lim \left( \begin{tikzcd}
& & \hom_{s\S}(K,W) \arrow{d} \\
\hom_\S(|L'|,|W|) & \hom_\S(|L|,|W|) \arrow{r} \arrow{l}[swap]{\sim} & \hom_\S(|K|,|W|)
\end{tikzcd} \right) \]
in $\S$ is surjective onto the chosen point.
\end{lem}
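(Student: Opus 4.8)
The plan is to reformulate conclusion (2) as an existence‑of‑lift statement, reduce it to a purely combinatorial assertion about bisimplicial sets, and settle that by a relative simplicial‑approximation argument; conclusion (1) is a formal afterthought. For the reformulation, note that $\hom_{s\S}(-,W)$ carries the defining pushout of $L'$ to a pullback, so a point of $\hom_{s\S}(L',W)$ lying over the chosen datum is exactly: the given $\phi\colon K\to W$, a map $\sigma\colon\sd^i(\Delta^n)\to W$ in $s\S$, and a homotopy between $\sigma|_{\sd^i(\partial\Delta^n)}$ and the composite $\sd^i(\partial\Delta^n)\xra{\approx}\partial\Delta^n\to K\xra{\phi}W$, all of it realizing — after applying $|{-}|$ and invoking $|\sd^i(\partial\Delta^n)|\simeq|\partial\Delta^n|$ and $|\sd^i(\Delta^n)|\simeq|\Delta^n|\simeq\pt_\S$ — the prescribed map $|L|\to|W|$. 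Since $|\sd^i(\Delta^n)|\simeq\pt_\S$, the realization‑side datum to be matched is merely a point of $|W|$ together with a nullhomotopy of $|\partial\Delta^n|\to|K|\xra{|\phi|}|W|$ (the one extracted from the factorization of this map through $|\Delta^n|$ supplied by $|L|\to|W|$). So the lemma reduces to the claim that, after subdividing $\Delta^n$ finely enough, this homotopy‑coherent ``filled $n$‑cell'' in $|W|$ can be rectified to an honest map $\sd^i(\Delta^n)\to W$ in $s\S$ relative to the already‑strict boundary datum; note that only \emph{existence} of such a lift is asked for.

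Next I would pass to the classical model‑categorical setting: present $\S$ by $s\Set_\KQ$, so that $s\S=\Fun(\bD^{op},\S)$ is presented by the category $ss\Set$ of bisimplicial sets with the model structure that is Reedy in the external ($\bD^{op}$) direction and Kan--Quillen in the internal direction. Under this presentation $|{-}|\colon s\S\to\S$ is modeled by the diagonal $\mathrm{diag}\colon ss\Set\to s\Set$ (which computes the homotopy colimit over $\bD^{op}$, classically), a simplicial set $A\in s\Set\subset s\S$ corresponds to the Reedy‑cofibrant bisimplicial set $cA$ with $(cA)_{p,q}=A_p$ and $\mathrm{diag}(cA)=A$, and after choosing a Reedy‑fibrant model $\widetilde W$ of $W$ (so $\mathrm{diag}\,\widetilde W$ is a Kan complex modeling $|W|$) every mapping space occurring in the statement becomes an honest simplicial mapping space in $ss\Set$ while all the relevant pushouts persist. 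In these terms the reduced statement reads: for $i\gg 0$, the strict bisimplicial map $cK\to\widetilde W$, the simplicial map $L\to\mathrm{diag}\,\widetilde W$, and a compatibility between their restrictions to $K$ can be lifted to a bisimplicial map $c(\sd^i\Delta^n)\to\widetilde W$ extending $c(\sd^i\partial\Delta^n)\to cK\to\widetilde W$ and inducing the prescribed map on diagonals.

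The hard part is precisely this last, purely combinatorial, assertion — a \emph{relative simplicial approximation theorem in the bisimplicial setting} — and I expect it to be the main obstacle; it is presumably why the proof is deferred to \cref{section proof of crazy lemma}. One would argue by induction over the skeleta of the finite complex $\sd^i(\Delta^n)$, extending the already‑strict data given on the boundary, using (i) that iterated subdivision shrinks the ``mesh'' of the triangulation, so that a prescribed coherent family of maps into the columns $\widetilde W_{\bullet,q}$ can be assembled once $i$ is large, and (ii) Reedy fibrancy of $\widetilde W$ to produce the fillers realizing the inductive choices and the prescribed coherences. The genuinely delicate point is to carry this out \emph{relative} to the strict boundary datum and compatibly with the prescribed nullhomotopy, which forces careful bookkeeping with the nondegenerate simplices of iterated subdivisions (in the spirit of the combinatorics underlying classical simplicial approximation and $\Ex^\infty$).

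Finally, conclusion (1), that $L'\we L$ in $s\Set_\KQ$, is formal and independent of all of the above: the iterated last‑vertex maps $\sd^i(\partial\Delta^n)\to\partial\Delta^n$ and $\sd^i(\Delta^n)\to\Delta^n$ are weak equivalences of simplicial sets, while $\partial\Delta^n\to\Delta^n$ and $\sd^i(\partial\Delta^n)\to\sd^i(\Delta^n)$ are cofibrations, so by left properness of $s\Set_\KQ$ (the gluing lemma for pushouts along cofibrations) the induced map $L'=K\cup_{\sd^i(\partial\Delta^n)}\sd^i(\Delta^n)\to K\cup_{\partial\Delta^n}\Delta^n=L$ is again a weak equivalence.
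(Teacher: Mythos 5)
Your setup is essentially the paper's: present $s\S$ by $s(s\Set_\KQ)_\Reedy\;(= ss\Set$ with the Reedy/KQ structure$)$, model $|{-}|$ by the diagonal $\diag^*$, model $\disc : s\Set \to s\S$ by $\const$, take a Reedy-fibrant $\widetilde W$, and use subdivision to shrink the ``simplicial'' data until they can be matched in a finite stage of $\Ex^\infty$. Your treatment of conclusion~(1) via left properness is exactly what the paper does, and your reformulation of conclusion~(2) as a lifting problem in $ss\Set$ relative to the strict boundary datum is the right starting point.

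The gap --- and you flag it yourself --- is that you never prove the ``relative bisimplicial approximation'' claim; you gesture at ``induction over skeleta'' and ``careful bookkeeping with nondegenerate simplices of iterated subdivisions'' and stop. That is not a small omission: it is the entire mathematical content of the lemma. The paper does not proceed by a bare-hands skeleton-by-skeleton strictification. Instead it exploits the adjunction $\diag_! : s\Set \adjarr ss\Set : \diag^*$ in tandem with $\sd^i \adj \Ex^i$: it takes the model $\epsilon : L \to \Ex^\infty(\diag^*\widetilde W)$ of the prescribed extension, uses smallness of $\Delta^n$ to drop into $\Ex^i(\diag^*\widetilde W)$ for finite $i$, transposes across $\sd^i \adj \Ex^i$ to get a map $L' \to \diag^*\widetilde W$ in $s\Set$, transposes again across $\diag_! \adj \diag^*$ to get $\diag_!(L') \to \widetilde W$ in $ss\Set$, and then must compare $\diag_!(\sd^i\Delta^n)$ against $\const(\sd^i\Delta^n)$ via the counit $\diag_! \cong \diag_!\diag^*\const \to \const$. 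This isolates the real combinatorial input into one sharply stated fact (\cref{acyclic ssets have left kan extensions equivalent to const}): for any \emph{acyclic} $M \in s\Set_\KQ$, the counit $\diag_!(M) \to \const(M)$ is a weak equivalence in $s(s\Set_\KQ)_\Reedy$. That lemma is proved by transfinite induction over pushouts of horn inclusions, using left properness of $s(s\Set_\KQ)_\Reedy$ and the explicit description of $\diag_!(\Lambda^n_i)_{j,k}$; this is the precise ``bookkeeping with nondegenerate simplices'' you anticipated but did not carry out. Without that lemma --- or an equivalent combinatorial argument --- your proposal remains a plan, not a proof.
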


\begin{rem}\label{crazy lemma is not as strong as one might hope}
\cref{crazy lemma} is not as strong as one might hope.  First of all, it would be nice if the last map in its statement were actually a surjection, but to deduce this we would need to be able to bound the number $i$ as we run through the path components of the pullback, which does not appear to be possible.  But even more seriously, if instead we have a cofibration $K \cofibn L$ in $s\Set_\KQ$ which can only be obtained through multiple pushouts of maps in $I_\KQ$, then \cref{crazy lemma} cannot be made to guarantee the existence of an extension $L' \ra W$ in $s\S$ (for some $L' \we L$ in $s\Set_\KQ$) modeling the chosen extension $|L| \ra |W|$ in $\S$.

For instance, suppose that $L = \Delta^1 \times \Delta^1$, and that the map $K \ra L$ is the inclusion of its boundary (so that $K$ is a simplicial square, and the map $K \ra L$ in $s\Set_\KQ$ presents the map $S^1 \ra \pt_\S$ in $\S$).  The minimal way to present this as a composition of pushouts of maps in $I_\KQ$ is as
\[ K \cofibn M \cofibn N \cofibn L , \]
where $M$ is the 1-skeleton of $L$ and the latter two maps are each obtained by attaching a 2-simplex.  However, when we attempt to extend our given map $K \ra W$ along $K \ra M$ using \cref{crazy lemma}, we may need to subdivide the 1-simplex that we're attaching, and so we only obtain an extension $M' \ra W$ in $s\S$ for some factorization $K \ra M' \we M$ in $s\Set_\KQ$.  If \textit{any} such subdivisions are required, then the two remaining holes to be filled in $M'$ will now have at least four edges each, and so we are no closer to ``filling the hole'' than when we started.

On the other hand, in the case that the map $K \ra L$ is $\es_{s\Set} \ra \Delta^0$, then no subdivisions are required (indeed, subdivision preserves both $\es_{s\Set}$ and $\Delta^0$, or alternatively we can see this from the fact that the map $W_0 \ra |W|$ is a surjection).  Thus, if we are given any $W \in s\S$ and any map $S^n \ra |W|$ in $\S$, we can first present the composite $\pt_\S \ra S^n \ra |W|$ in $\S$ by some map $\Delta^0 \ra W$ in $s\S$, and then taking $K = \Delta^0$ and forming the pushout
\[ \begin{tikzcd}
\partial \Delta^n \arrow{r} \arrow{d} & \Delta^0 \arrow{d} \\
\Delta^n \arrow{r} & L
\end{tikzcd} \]
in $s\Set$, we are guaranteed a factorization $\Delta^0 \ra L' \we L$ in $s\Set_\KQ$ and a map $L' \ra W$ in $s\S$ presenting the chosen map $|L| \simeq S^n \ra |W|$ in $\S$.  Since so many arguments in $\S$ go by considering arbitrary maps into a space from a sphere (e.g.\! recall \cref{detect equivalences of spaces}), the existence of such a minimal model $\Delta^n / \partial \Delta^n \in s\Set_\KQ$ for the object $S^n \in \S$ seems like a real stroke of luck.

In any case, we do not expect \cref{crazy lemma} to be particularly useful in the long run: it is effectively supplanted by the fundamental theorem of model $\infty$-categories (\Cref{fundthm:fundamental theorem}).  (In particular, see \cref{fibrant sspaces are fibrant} (and \cref{special case of fibrant sspaces are fibrant}).)
\end{rem}

\section{Fibrancy, fibrations, and the $\Ex^\infty$ functor}\label{section fibrations}

In this section, we undertake a study of
\begin{itemizesmall}
\item the subcategory $s\S_\KQ^f \subset s\S_\KQ$ of fibrant objects in \cref{subsection fibrancy in KQ model str on sspaces},
\item the subcategory $\bF_\KQ \subset s\S_\KQ$ of fibrations in \cref{subsection fibrations in KQ model str}, and
\item an $\Ex^\infty$ endofunctor on $s\S$ in \cref{subsection Ex-infty}.
\end{itemizesmall}
As we will see, all of these behave quite analogously to their classical counterparts in $s\Set_\KQ$.

\subsection{Fibrancy}\label{subsection fibrancy in KQ model str on sspaces}

We begin by studying fibrant objects.  First of all, \cref{crazy lemma} admits a much cleaner analog when $W \in s\S_\KQ$ is fibrant.

\begin{lem}\label{reduce crazy lemma for fibrant W}
In \cref{crazy lemma}, if $W \in s\S_\KQ^f$ then we may take $i=0$.  Hence, the map
\[ \hom_{s\S}(L,W) \ra \lim \left( \begin{tikzcd}
& \hom_{s\S}(K,W) \arrow{d} \\
\hom_\S(|L|,|W|) \arrow{r} & \hom_\S(|K|,|W|)
\end{tikzcd} \right) \]
is a surjection.
\end{lem}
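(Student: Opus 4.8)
The plan is to deduce this directly from \cref{crazy lemma}, using only the bare fibrancy hypothesis on $W$ — namely that $W \ra \pt_{s\S}$ has $\rlp(J_\KQ)$ — together with the \emph{classical} Kan--Quillen model structure on $s\Set$; I will take care not to invoke the model structure on $s\S$, which is not yet available at this point. So fix a point of the pullback $\lim(\hom_{s\S}(K,W) \ra \hom_\S(|K|,|W|) \leftarrow \hom_\S(|L|,|W|))$, represented by a pair $(\alpha : K \ra W,\ \beta : |L| \ra |W|)$ with $|\alpha|$ suitably homotopic to the restriction of $\beta$ along $|K| \ra |L|$. Applying \cref{crazy lemma} to this point produces some $i \geq 0$, the associated pushout $L' = K \amalg_{\sd^i(\partial \Delta^n)} \sd^i(\Delta^n)$ together with the induced map $j : L' \ra L$, which sits under $K$ and is a weak equivalence in $s\Set_\KQ$, and a map $\tilde f : L' \ra W$ in $s\S$ representing the chosen point of the corresponding (equivalent) pullback for $L'$; in particular $\tilde f|_K \simeq \alpha$ and $|\tilde f| \simeq \beta \circ |j|$ in $\S$. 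The goal is to ``un-subdivide'' $\tilde f$ into a map $L \ra W$ representing the original chosen point, which is precisely the assertion that one may take $i = 0$.

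Next I would factor $j : L' \ra L$ \emph{inside} $s\Set_\KQ$ — via the classical small object argument for $J_\KQ$ — as $L' \xrightarrow{q} \hat L \xrightarrow{p} L$, where $q$ is a relative $J_\KQ$-cell complex (hence an acyclic cofibration) and $p$ is a Kan fibration; since $j$ and $q$ are weak equivalences, so is $p$, and hence $p$ is an acyclic Kan fibration. Note $\hat L$ again lies in $s\Set \subset s\S$. Because $W \ra \pt_{s\S}$ has $\rlp(J_\KQ)$, it has the right lifting property against every relative $J_\KQ$-cell complex, in particular against $q$; so $\tilde f$ extends along $q$ to a map $\hat f : \hat L \ra W$ in $s\S$, which still restricts to $\alpha$ on $K$. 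On the other side, since $K \ra L$ is a monomorphism (it is a pushout of $\partial \Delta^n \ra \Delta^n$) hence a cofibration, and $p$ is an acyclic fibration in $s\Set_\KQ$, the lifting axiom furnishes a section $\sigma : L \ra \hat L$ of $p$ that agrees with $q \circ (K \ra L')$ along $K \ra L$.

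Then I would set $\gamma := \hat f \circ \sigma : L \ra W$ and check it represents the chosen point. On $K$ one has $\gamma|_K = \hat f \circ \sigma \circ (K \ra L) = \hat f \circ q \circ (K \ra L') = \tilde f|_K \simeq \alpha$. For the realization, applying $|{-}|$ to $\hat f \circ q = \tilde f$ gives $|\hat f| \circ |q| \simeq \beta \circ |j| = \beta \circ |p| \circ |q|$ in $\S$; since $|q|$ is an equivalence, this forces $|\hat f| \simeq \beta \circ |p|$, whence $|\gamma| = |\hat f| \circ |\sigma| \simeq \beta \circ |p| \circ |\sigma| = \beta \circ |p \circ \sigma| = \beta$. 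Thus $\gamma$ hits (the component of) the chosen point, and as that point was arbitrary, the map $\hom_{s\S}(L,W) \ra \lim(\dots)$ is a surjection — the claimed $i = 0$ statement.

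The real obstacle has already been surmounted: it is \cref{crazy lemma} itself, which I am treating as a black box. Granting it, the only genuinely delicate aspects of this deduction are bookkeeping ones: keeping every model-categorical manipulation — the factorization, the extension of $\tilde f$, the section $\sigma$ — strictly inside $s\Set_\KQ$ (reinterpreting maps in $s\S$ only afterwards), so as not to presuppose the model structure on $s\S$ that \cref{kan--quillen model structure on sspaces} is meant to establish; and, relatedly, checking that the several homotopies implicit in ``representing the chosen point'' fit together compatibly over $|K|$, which is routine.
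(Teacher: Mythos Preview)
Your argument is correct and reaches the same conclusion by a genuinely different construction than the paper's. Both proofs share the same core idea: invoke \cref{crazy lemma} to obtain $\tilde f : L' \ra W$, then use the fibrancy hypothesis on $W$ --- equivalently, the fact that $W \ra \pt_{s\S}$ has $\rlp$ against all acyclic cofibrations of $s\Set_\KQ$, as noted in the paper just after \cref{define kan--quillen model structure on sspaces} --- to transport $\tilde f$ to a map out of $L$. The difference lies in how this transport is arranged. The paper builds an explicit ``roof'' $L''$ in $s\Set_\KQ$ via a hand-crafted cube of pushouts (through auxiliary objects $M$, $M'$), arranged so that both $L$ and $L'$ admit acyclic cofibrations into $L''$; it then extends $\tilde f$ along $L' \wcofibn L''$ and restricts along $L \wcofibn L''$. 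You instead factor $j : L' \ra L$ in $s\Set_\KQ$ as an acyclic cofibration $q$ followed by an acyclic fibration $p$, extend $\tilde f$ along $q$ using fibrancy, and descend to $L$ via a section $\sigma$ of $p$ obtained from the cofibration $K \cofibn L$. Your route is more economical and uses only off-the-shelf factorization-and-lifting moves in $s\Set_\KQ$; the paper's route is more explicit but requires tracking a larger diagram. The homotopy-coherence bookkeeping you flag at the end (compatibility of the paths over $|K|$) is indeed routine: since $\sigma|_K = q \circ (K \ra L')$, the restriction to $|K|$ of your constructed homotopy $|\gamma| \simeq \beta$ reduces to the restriction of the given homotopy $|\tilde f| \simeq \beta \circ |j|$, and the original compatibility datum from \cref{crazy lemma} carries over directly.
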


\begin{proof}
We will argue using the diagram in $s\Set_\KQ$ shown in \cref{diagram in ssets for the proof that reduces the crazy lemma for fibrant W}, in which some of the objects and morphisms have yet to be constructed.
\begin{figure}[h]
\[ \begin{tikzcd}
& & & \Delta^n \arrow{rr} \arrow[tail]{dd}[sloped, anchor=south, pos=0.65]{\approx} & & L \arrow[tail]{dd}[sloped, anchor=south]{\approx} \\
\sd^i(\partial \Delta^n) \arrow{rr}{\approx} \arrow[tail]{dd} & & \partial \Delta^n \arrow[tail]{ru} \arrow[crossing over]{rr} \arrow[tail]{dd} & & K \arrow[tail]{ru} \\
& & & M' \arrow{rr} & & L'' \\
\sd^i(\Delta^n) \arrow{rr}[swap]{\approx} & & M \arrow{rr} \arrow[tail]{ru}[sloped, swap, pos=0.4]{\approx} & & L' \arrow[leftarrowtail, crossing over]{uu} \arrow[tail]{ru}[sloped, swap, pos=0.4]{\approx}
\end{tikzcd} \]
\caption{The diagram in $s\Set_\KQ$ used in the proof of \cref{reduce crazy lemma for fibrant W}.}\label{diagram in ssets for the proof that reduces the crazy lemma for fibrant W}
\end{figure}
First of all, recall that the top square of the cube is a pushout by the definition of $L$.  Also, observe that we can also build $L'$ via the iterated pushout in the front two squares, where we have $\sd^i(\Delta^n) \we M$ since $s\Set_\KQ$ is left proper.  Next, choose any acyclic object $M' \in s\Set_\KQ$ admitting a cofibration from $M \coprod_{\partial \Delta^n} \Delta^n$, and use it to form the left face of the cube.  (The maps from $M$ and $\Delta^n$ to this pushout are cofibrations, which is why the maps from $M$ and $\Delta^n$ to $M'$ are also cofibrations.)  Then, define $L''$ by declaring that the bottom square of the cube is a pushout; by an easy diagram chase, the back square of the cube is therefore a pushout as well.

Now, since by assumption $W \ra \pt_{s\S}$ has $\rlp(J_\KQ)$, we are guaranteed an extension
\[ \begin{tikzcd}
K \arrow{rr} \arrow{rd} \arrow{dd} & & W \\
& L' \arrow{ru} \arrow[tail]{d}[sloped, anchor=north]{\approx} \\
L \arrow{r}[swap]{\approx} & L'' \arrow[dashed, bend right=10]{ruu}
\end{tikzcd} \]
in $s\S$, and the composite map $L \ra L'' \ra W$ satisfies the same hypotheses as were required of the map $L' \ra W$.  Thus, we may take $i=0$, as claimed.

Carrying out this same argument for all path components of the pullback implies that the indicated map is indeed a surjection.
\end{proof}

\begin{cor}\label{fibrant sspaces are fibrant}
If $W \in s\S_\KQ^f$, then for any $K \cofibn L$ in $s\Set_\KQ$, the map
\[ \hom_{s\S}(L,W) \ra \lim \left( \begin{tikzcd}
& \hom_{s\S}(K,W) \arrow{d} \\
\hom_\S(|L|,|W|) \arrow{r} & \hom_\S(|K|,|W|)
\end{tikzcd} \right) \]
is a surjection.
\end{cor}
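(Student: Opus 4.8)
The plan is to deduce the statement from \cref{reduce crazy lemma for fibrant W} by a cellular induction on the cofibration $K \cofibn L$. Recall that the cofibrations of $s\Set_\KQ$ are precisely the monomorphisms of simplicial sets, and that any such monomorphism is built from $K$ by successively attaching the nondegenerate simplices of $L$ not already in $K$, organized by dimension: writing $L^{(-1)} = K$ and letting $L^{(m)}$ be the union of $K$ with the $m$-skeleton of $L$, we obtain a sequential (hence filtered) colimit $L = \colim_m L^{(m)}$ in which each inclusion $L^{(m-1)} \cofibn L^{(m)}$ is a single pushout
\[ \begin{tikzcd}
\coprod_\alpha \partial \Delta^m \arrow{r} \arrow{d} & L^{(m-1)} \arrow{d} \\
\coprod_\alpha \Delta^m \arrow{r} & L^{(m)}
\end{tikzcd} \]
(one cell for each nondegenerate $m$-simplex of $L$ not in $K$). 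Since the inclusion $s\Set \subset s\S$ preserves filtered colimits and $|{-}| : s\S \ra \S$ preserves all colimits, while $\hom_{s\S}(-,W)$ and $\hom_\S(-,|W|)$ convert them into the corresponding limits, both the source $\hom_{s\S}(L,W)$ and the target pullback of the map in question are sequential limits over $m$ of the analogous data for the $L^{(m)}$.

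The single stage of the induction is supplied by a mild coproduct-of-cells variant of \cref{reduce crazy lemma for fibrant W}, proved by the identical argument: for $W$ fibrant, whenever $L^{(m)}$ is obtained from $L^{(m-1)}$ by a pushout as above, the gap map
\[ \hom_{s\S}(L^{(m)}, W) \ra \hom_{s\S}(L^{(m-1)},W) \times_{\hom_\S(|L^{(m-1)}|,|W|)} \hom_\S(|L^{(m)}|,|W|) \]
is surjective — equivalently, any compatible pair consisting of a map $L^{(m-1)} \ra W$ in $s\S$ together with a map $|L^{(m)}| \ra |W|$ in $\S$ restricting to its realization extends to a map $L^{(m)} \ra W$ realizing the chosen map. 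It is exactly the fibrancy of $W$ that lets us take $i = 0$ in \cref{reduce crazy lemma for fibrant W}, so that \emph{no subdivision} is required; this is precisely what removes the obstruction described in \cref{crazy lemma is not as strong as one might hope}, which is what blocks an analogous induction in the non-fibrant case.

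Finally one assembles the stages: given a point of the target pullback for $K \cofibn L$, restrict it to compatible points of the stagewise pullbacks, and lift these inductively — starting from the given map $K \ra W$ — to maps $L^{(m)} \ra W$ that are compatible with one another and with the chosen realization, and then pass to the limit over $m$. The one delicate point, and the main thing to get right, is this last step: \cref{reduce crazy lemma for fibrant W} only produces compatibility of successive lifts up to homotopy, so assembling them into a genuine point of $\hom_{s\S}(L,W) = \lim_m \hom_{s\S}(L^{(m)},W)$ mapping to the prescribed point of the target requires the standard input that for a tower of spaces the map $\pi_0(\lim) \ra \lim \pi_0$ is surjective, together with some bookkeeping to carry the coherence data along the induction. (At the level of detail adopted elsewhere in the paper, one could alternatively simply say that the statement follows from \cref{reduce crazy lemma for fibrant W} by a routine cellular induction.)
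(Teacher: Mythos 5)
Your proof is correct and follows the paper's approach: induct over a cellular presentation of $K \cofibn L$, applying \cref{reduce crazy lemma for fibrant W} at each attaching step, and assemble at the limit. The paper presents $K \cofibn L$ as an arbitrary transfinite composition of pushouts along \emph{single} boundary inclusions, whereas your skeletal filtration keeps the tower $\omega$-indexed at the price of a coproduct-of-cells variant of the lemma; that variant does hold, but it is deduced from the single-cell case (since $\hom_{s\S}(-,W)$ and $\hom_\S(|{-}|,|W|)$ convert coproducts to products and surjections of spaces are stable under products and pullbacks) rather than being ``proved by the identical argument.'' Your worry about compatibility of the stagewise lifts at the limit stage is well-placed, and the resolution you sketch is the right one, made cleanest by applying the $\lim^1$ argument to the tower of fibers over the prescribed point of the target pullback (whose transition maps are pullbacks of the stagewise gap maps and hence surjective) rather than to the tower of hom-spaces itself; the paper's appeal to ``the universal property of the colimit'' is eliding the same point.
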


\begin{proof}
First, we present the map $K \cofibn L$ as a transfinite composition of pushouts of maps in $I_\KQ$.  Then, the result follows by transfinite induction, applying \cref{reduce crazy lemma for fibrant W} at each successor ordinal and using the universal property of the colimit (which here is a colimit both in $s\Set$ and in $s\S$) at each limit ordinal.
\end{proof}

\begin{rem}\label{special case of fibrant sspaces are fibrant}
In the special case that $K = \es_{s\Set}$, \cref{fibrant sspaces are fibrant} reduces to the statement that for any $L \in s\Set = s\S^c_\KQ$ and any $W \in s\S^f_\KQ$, the map $\hom_{s\S}(L , W) \ra \hom_\S(|L|,|W|)$ is a surjection.  This is a hint of the fundamental theorem of model $\infty$-categories (\Cref{fundthm:fundamental theorem}) as applied to $s\S_\KQ$ (recall \cref{remark fundamental theorem}).
\end{rem}

\begin{rem}
\cref{fibrant sspaces are fibrant} provides a basis for \cite[30.10]{BEBdBP}, which gives a complete characterization of the subcategory $\bW_\KQ^f \subset s\S$ of weak equivalences between fibrant objects (and which is in turn the crucial ingredient of that paper).
\end{rem}

\subsection{Fibrations}\label{subsection fibrations in KQ model str}

We now turn from fibrant objects to fibrations:
\begin{itemizesmall}
\item in \cref{subsubsection fibns and geom realizns} we lay out some general results on the interplay between fibrations and geometric realizations,
\item in \cref{subsubsection necessary condition for fibn} we show that the left Quillen equivalence $\pi_0 : s\S_\KQ \ra s\Set_\KQ$ preserves fibrations, and
\item in \cref{subsubsection fibration comparisons} we give some comparisons with existing literature.
\end{itemizesmall}

\subsubsection{Fibrations and geometric realizations}\label{subsubsection fibns and geom realizns}

The following result is crucial, and provides a basis for many of the convenient properties enjoyed by the model $\infty$-category $s\S_\KQ$.  Its proof is relatively straightforward, though somewhat long (although not as long as it looks, since it contains so many diagrams).

\begin{prop}\label{the fiber of a fibration of sspaces is homotopically correct}
Suppose the map $Y \ra Z$ in $s\S$ has $\rlp(J_\KQ)$, and suppose we are given any point $\pt_{s\S} \xra{z} Z$.  Let $F_z \in s\S$ be the fiber of $Y \ra Z$ over $z$, and let $F_{|z|} \in \S$ be the fiber of $|Y| \ra |Z|$ over $|z|$.  Then the natural map $|F_z| \ra F_{|z|}$ is an equivalence in $\S$.
\end{prop}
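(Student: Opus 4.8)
The plan is to show directly that the natural comparison map $g : |F_z| \to F_{|z|}$ is an equivalence in $\S$ by means of \cref{detect equivalences of spaces}: it suffices to prove that $g$ has $\rlp(\{S^{k-1} \to \pt_\S\}_{k \geq 0})$, equivalently that $g$ is a bijection on $\pi_0$ and an isomorphism on $\pi_k$ at every basepoint for every $k \geq 1$. The map $g$ itself arises by applying $|{-}|$ — which, being a colimit, preserves coproducts and pushouts — to the pullback square defining $F_z = Y \times_Z \pt_{s\S}$ and using $|\pt_{s\S}| \simeq \pt_\S$; it fits into $|F_z| \xra{g} |Y| \times_{|Z|} \pt_\S = F_{|z|} \to |Y|$ with the composite being $|F_z \to Y|$.

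The crucial preliminary observation is that \emph{$F_z$ is fibrant}: since $Y \to Z$ has $\rlp(J_\KQ)$ it is a fibration in $s\S_\KQ$; fibrations (being $\rlp(\bW \cap \bC)$ by \cref{obvious lifting criteria}) are stable under pullback, so $F_z \to \pt_{s\S}$ is a fibration; and $\pt_{s\S}$ is terminal, hence fibrant. This is exactly what rescues the argument from the pathology of \cref{crazy lemma is not as strong as one might hope}: because $F_z$ is fibrant, \cref{fibrant sspaces are fibrant} applies to it, so any map into $|F_z|$ out of a finite simplicial set — in particular out of the standard model $\partial\Delta^k$ of $S^{k-1}$, or out of $\Delta^0 \cofibn \partial\Delta^k$ to pin a basepoint — can be presented by an honest map in $s\S$, \emph{with no subdivision}.

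I would then dispatch the lifting problems for $g$ along a single template. For $\pi_0$-surjectivity: a point of $F_{|z|}$ is a point $w$ of $|Y|$ together with a path in $|Z|$ from its image to $|z|$; present $w$ by a point of $Y_0$ (realization is a colimit, so $Y_0 \to |Y|$ is surjective on $\pi_0$), present the induced path in $|Z|$ by a chain of $1$-simplices of $Z$ with matching vertices via the $n = 1$ case of \cref{crazy lemma} (the subdivision there only affects $1$-simplices, where it is harmless), and lift this edge-chain through $Y \to Z$ using its right lifting property against the $1$-dimensional horns in $J_\KQ$; the far endpoint is a point of $Y_0$ lying \emph{strictly} over $z_0$, hence a point of $(F_z)_0$, and a ``path-and-its-reverse'' argument shows its image in $F_{|z|}$ is connected to $w$. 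The cases of $\pi_0$-injectivity and of $\pi_k$ for $k \geq 1$ run the same way: given a lifting problem for $g$ against $S^{k-1} \to \pt_\S$, present the attaching map $S^{k-1} \to |F_z|$ and its basepoint by a map $\partial\Delta^k \to F_z$ with no subdivision (using the fibrancy of $F_z$ and \cref{fibrant sspaces are fibrant} with $\Delta^0 \cofibn \partial\Delta^k$); compose with $F_z \to Y$ to get $\partial\Delta^k \to Y$ whose composite to $Z$ is constant at $z$; present the null-homotopy data encoded by the commutativity of the square by (possibly subdivided, but only in the auxiliary directions) simplicial data in $Y$ and $Z$ via \cref{crazy lemma}; assemble all of this into a commutative square in $s\S$ from an acyclic cofibration $A \wcofibn B$ of finite simplicial sets into $Y \to Z$ — the left map being acyclic because it fills a cell in $Y$ while simultaneously contracting its image in $Z$ — and invoke $\rlp(J_\KQ)$ to obtain a lift $B \to Y$ over $Z$; realizing this lift and tracing back through the identifications solves the original problem. \cref{detect equivalences of spaces} then gives the claim.

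The main obstacle is the construction in that last step: arranging the null-homotopy data so that the resulting extension problem over $Y \to Z$ is a lifting problem against an \emph{acyclic} cofibration of simplicial sets, and checking that the subdivisions forced by \cref{crazy lemma} can be absorbed harmlessly — the point being that they touch only the auxiliary ``contraction'' directions in $Y$ and $Z$, never the sphere $\partial\Delta^k$, which already maps into the fibrant object $F_z$ without subdivision. This is where the fibrancy of $F_z$ is indispensable, and where the bulk of the diagrams in the actual proof will live.
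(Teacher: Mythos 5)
Your overall strategy matches the paper's for the uniform lifting argument: check the lifting criterion of \cref{detect equivalences of spaces}, exploit the fibrancy of $F_z$ (which the paper also observes at the outset, via closure of $\rlp(J_\KQ)$ under pullback) to apply \cref{fibrant sspaces are fibrant} and present the sphere as a map $\partial\Delta^n \to F_z$ without subdivision, model the remaining geometric data via \cref{crazy lemma}, and produce the desired lift in $\S$ from a lift through $Y \to Z$ in $s\S$. The separate hands-on $\pi_0$-surjectivity argument is unnecessary, though — the general argument already handles $n=0$ as the case $\partial\Delta^0 = \es_{s\Set}$.

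But there is a real gap, and I want to flag that your stated intuition for closing it would lead you astray. You propose a lifting problem against an acyclic cofibration $A \wcofibn B$, with ``the left map being acyclic because it fills a cell in $Y$ while simultaneously contracting its image in $Z$.'' Filling a cell — a cofibration of the form $\partial\Delta^n \cofibn (\text{filled disk})$ — is \emph{never} acyclic for $n \geq 1$, since the source is not contractible; if you set $A = \partial\Delta^n$ the lift against $Y \to Z$ is simply unavailable. The paper's move is to first spend the null-homotopy of $|\partial\Delta^n| \to |Y|$ (which you extract from the commutative square via $F_{|z|} \to |Y|$) on an application of \cref{crazy lemma}, yielding $\partial\Delta^n \cofibn (\Delta^n)' \we \Delta^n$ in $s\Set_\KQ$ together with a map $(\Delta^n)' \to Y$. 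This contractible $(\Delta^n)'$ is the source $A$. The target $B = (\Delta^n)'''$ is obtained by attaching a subdivided whisker to a chosen vertex of $(\Delta^n)'$, terminating in a free endpoint $\Delta^0$; the subdivision is forced by a second application of \cref{crazy lemma}, to $(\Delta^n)' \sqcup \Delta^0 \to Z$ (given by $(\Delta^n)' \to Y \to Z$ and by $z$) together with a path in $|Z|$ extracted from the square. Now $(\Delta^n)' \wcofibn (\Delta^n)'''$ is acyclic because both ends are contractible and the whisker deformation retracts away — no cell-filling involved. The guaranteed lift $(\Delta^n)''' \to Y$ restricts on $\Delta^0$ to a point lying strictly over $z$, hence to a map $\Delta^0 \to F_z$; realizing the resulting diagram produces the desired lift against $S^{n-1} \to \pt_\S$. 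This is exactly the bookkeeping your proposal defers, and the conceptual error about where the acyclicity comes from is worth straightening out before attempting it.
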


\begin{proof}
We use the criterion of \cref{detect equivalences of spaces}.  So, suppose that
\[ \begin{tikzcd}
S^{n-1} \arrow{r} \arrow{d} & |F_z| \arrow{d} \\
\pt_\S \arrow{r} & F_{|z|}
\end{tikzcd} \]
is any commutative square in $\S$, for any $n \geq 0$.  Since $F_z \ra \pt_{s\S}$ also has $\rlp(J_\KQ)$ as this property is closed under pullbacks, by \cref{fibrant sspaces are fibrant} we may present the upper map in the above diagram as a map $\partial \Delta^n \ra F_z$ in $s\S$.

From here, our argument will play back and forth between the diagrams shown in Figures \ref{diagram in sspaces for the proof that the fiber of a fibration of sspaces is homotopically correct} \and \ref{diagram in spaces for the proof that the fiber of a fibration of sspaces is homotopically correct}.  The former takes place in $s\S_\KQ$, while the latter takes place in $\S$; in both, many of the objects (and all of the dotted arrows) have yet to be constructed.
\begin{figure}[h]
\[ \begin{tikzcd}[row sep=2cm, column sep=2cm]
& & \partial \Delta^n \arrow{r} \arrow{ld} & F_z \arrow{r} \arrow[leftarrow, dashed, bend left=10]{llddd} & Y \arrow{ddd}{\rlp(J_\KQ)} \\
& (\Delta^n)' \arrow{r}[swap]{\approx} \arrow[tail]{dl}[sloped, swap, pos=0.4]{\approx} \arrow[dashed, bend right=10]{rrru} & \Delta^n \arrow[leftarrow, crossing over]{u} \\
(\Delta^n)''' \arrow{r}{\approx} \arrow[dashed, bend left=5]{rrrrd} \arrow[dashed, bend right=15]{rrrruu} & (\Delta^n)'' \arrow[leftarrow, crossing over]{u}[sloped, anchor=north]{\approx} \\
& \Delta^0 \arrow{lu}[sloped, pos=0.4]{\approx} \arrow[crossing over]{u}[sloped, anchor=north]{\approx} \arrow{rr}[swap]{\sim} & & \pt_{s\S} \arrow{r}[swap]{z} \arrow[leftarrow, crossing over]{uuu} & Z
\end{tikzcd} \]
\caption{The diagram in $s\S_\KQ$ used in the proof of \cref{the fiber of a fibration of sspaces is homotopically correct}.}
\label{diagram in sspaces for the proof that the fiber of a fibration of sspaces is homotopically correct}
\[ \begin{tikzcd}[row sep=2cm, column sep=2cm]
& & |\partial \Delta^n| \arrow{r} \arrow{d} \arrow{ld} & |F_z| \arrow{r} \arrow{d} \arrow[dashed, leftarrow, bend left=10]{llddd} & |Y| \arrow{ddd} \\
& |(\Delta^n)'| \arrow{r}[swap]{\sim} \arrow{d}[sloped, anchor=south]{\sim} \arrow{dl}[sloped, swap, pos=0.4]{\sim} & |\Delta^n| \arrow[crossing over]{r} & F_{|z|} \arrow{ru} \\
|(\Delta^n)'''| \arrow{r}{\sim} & |(\Delta^n)''| \arrow[dashed, bend left=10]{rrrd} \\
& |\Delta^0| \arrow{lu}[sloped, pos=0.4]{\sim} \arrow{u}[sloped, anchor=north]{\sim} \arrow{rr}[swap]{\sim} & & |\pt_{s\S}| \arrow{r}[swap]{|z|} \arrow[leftarrow, crossing over]{uu} & |Z| ,
\end{tikzcd} \]
\caption{The diagram in $\S$ used in the proof of \cref{the fiber of a fibration of sspaces is homotopically correct}.}
\label{diagram in spaces for the proof that the fiber of a fibration of sspaces is homotopically correct}
\end{figure}
For clarity, we proceed in steps.

\begin{enumerate}

\item 

Given the composite map $\partial \Delta^n \ra F_z \ra Y$ in $s\S$ and its chosen extension
\[ \begin{tikzcd}
|\partial \Delta^n| \arrow{r} \arrow{d} & |F_z| \arrow{r} \arrow{d} & |Y| \\
|\Delta^n| \arrow{r} & F_{|z|} \arrow{ru}
\end{tikzcd} \]
in $\S$, by \cref{crazy lemma} there exists a factorization $\partial \Delta^n \ra (\Delta^n)' \we \Delta^n$ in $s\Set_\KQ$ and a dotted arrow $(\Delta^n)' \ra Y$ as in \cref{diagram in sspaces for the proof that the fiber of a fibration of sspaces is homotopically correct} which models this extension in $\S$.

\item

For expository convenience, we consider the object $\Delta^0 \in s\S$ with its unique map $\Delta^0 \xra{\sim} \pt_{s\S}$ as selecting a composite map $|\Delta^0| \xra{\sim} |\pt_{s\S}| \xra{|z|} |Z|$.

\item

Choose any vertex of $(\Delta^n)'$, and use this to define $(\Delta^n)'' \in s\Set$ by the pushout diagram
\[ \begin{tikzcd}
\partial \Delta^1 \arrow{r}{\approx} \arrow[tail]{d} & (\Delta^n)' \sqcup \Delta^0 \arrow[tail]{d} \\
\Delta^1 \arrow{r}[swap]{\approx} & (\Delta^n)''
\end{tikzcd} \]
in $s\Set_\KQ$.  Observe that both induced maps $\Delta^0 \ra (\Delta^n)''$ and $(\Delta^n)' \ra (\Delta^n)''$ are in $\bW_\KQ$.

\item\label{item subdiagram in spaces}

Now, we have the solid commutative diagram in $\S$ of \cref{subdiagram in spaces for the proof that the fiber of a fibration of sspaces is homotopically correct},
\begin{figure}[h]
\[ \begin{tikzcd}
& & & |Y| \arrow{ddd} \\
|(\Delta^n)'| \arrow{r}{\sim} \arrow{d}[sloped, anchor=north]{\sim} & |\Delta^n| \arrow{r} & F_{|z|} \arrow{ru} \\
|(\Delta^n)''| \arrow[dashed, bend left=10]{rrrd} \\
|\Delta^0| \arrow{u}[sloped, anchor=south]{\sim} \arrow{rr}[swap]{\sim} & & |\pt_{s\S}| \arrow{r}[swap]{|z|} \arrow[leftarrow, crossing over]{uu} & |Z|
\end{tikzcd} \]
\caption{The subdiagram of the diagram in $\S$ of \cref{diagram in spaces for the proof that the fiber of a fibration of sspaces is homotopically correct} used in part \cref{item subdiagram in spaces} of the proof of \cref{the fiber of a fibration of sspaces is homotopically correct}.}
\label{subdiagram in spaces for the proof that the fiber of a fibration of sspaces is homotopically correct}
\end{figure}
and hence we can obtain a dotted arrow $|(\Delta^n)''| \ra |Z|$ therein making the entire diagram commute, as in \cref{diagram in spaces for the proof that the fiber of a fibration of sspaces is homotopically correct}.

\item

Thus, we have a map $(\Delta^n)' \sqcup \Delta^0 \ra Z$ in $s\S$ and a chosen extension
\[ \begin{tikzcd}
|(\Delta^n)'| \sqcup |\Delta^0| \arrow{r} \arrow{d} & |Z| \\
|(\Delta^n)''| \arrow[dashed]{ru}
\end{tikzcd} \]
in $\S$.  (Note that the geometric realization functor $|{-}| : s\S \ra \S$ commutes with colimits (being a left adjoint), and in particular with coproducts.)  So by \cref{crazy lemma}, there is a factorization $(\Delta^n)' \sqcup \Delta^0 \ra (\Delta^n)''' \we (\Delta^n)''$ in $s\Set_\KQ$ and a dotted arrow $(\Delta^n)''' \ra Z$ as in \cref{diagram in sspaces for the proof that the fiber of a fibration of sspaces is homotopically correct} which models this extension in $\S$.

\item

It is easy to see that that in fact, we have $(\Delta^n)' \wcofibn (\Delta^n)'''$ in $s\Set_\KQ$ (for instance because $s\Set_\KQ$ is left proper, and so the defining map to $(\Delta^n)'''$ from some subdivision of $\Delta^1$ is in $\bW_\KQ$).  Since the map $Y \ra Z$ in $s\S$ has $\rlp(J_\KQ)$, it follows that there exists a lift $(\Delta^n)''' \ra Y$ as in \cref{diagram in sspaces for the proof that the fiber of a fibration of sspaces is homotopically correct}.

\item\label{item subdiagram in sspaces}

We now have the solid commutative diagram in $s\S$ of \cref{subdiagram in sspaces for the proof that the fiber of a fibration of sspaces is homotopically correct},
\begin{figure}[h]
\[ \begin{tikzcd}
& & \partial \Delta^n \arrow{r} \arrow{ddll} & F_z \arrow{r} \arrow[dashed, leftarrow, bend left=10]{llddd} & Y \arrow{ddd} \\
\\
(\Delta^n)''' \arrow[bend right=15, crossing over]{rrrruu} \\
& \Delta^0 \arrow{lu} \arrow{rr} & & \pt_{s\S} \arrow{r}[swap]{z} \arrow[leftarrow, crossing over]{uuu} & Z
\end{tikzcd} \]
\caption{The subdiagram of the diagram in $s\S$ of \cref{diagram in sspaces for the proof that the fiber of a fibration of sspaces is homotopically correct} used in part \cref{item subdiagram in sspaces} of the proof of \cref{the fiber of a fibration of sspaces is homotopically correct}.}
\label{subdiagram in sspaces for the proof that the fiber of a fibration of sspaces is homotopically correct}
\end{figure}
and so by the universal property of the pullback we can obtain a dotted arrow $\Delta^0 \ra F_z$ making the entire diagram commute, as in \cref{diagram in sspaces for the proof that the fiber of a fibration of sspaces is homotopically correct}.

\item

Now, taking geometric realization gives the entire diagram in $\S$ of \cref{diagram in spaces for the proof that the fiber of a fibration of sspaces is homotopically correct} (including the dotted arrows).  In particular, we obtain the desired lift
\[ \begin{tikzcd}
| \partial \Delta^n| \arrow{r} \arrow{d} & |F_z| \arrow{d} \\
|\Delta^n| \arrow{r} \arrow[dashed]{ru} & F_{|z|}
\end{tikzcd} \]
in $\S$.  (It is straightforward to see that this does indeed commute, as $F_{|z|}$ is defined as a pullback.)
\qedhere

\end{enumerate}

\end{proof}

\begin{rem}\label{observe that the fiber of a fibration of sspaces is homotopically correct}
In either $s\Set$ or $s\S$, one can always take the fiber of a map over a given point in its target.  However, inasmuch as we are interested in simplicial sets and simplicial spaces as presenting spaces via geometric realization, we can view \cref{the fiber of a fibration of sspaces is homotopically correct} as saying that in either case, this fiber is only ``homotopically meaningful'' -- that is, it only computes the fiber in $\S$ -- if the original map is a \textit{fibration} in the corresponding Kan--Quillen model structure.
\end{rem}

While \cref{the fiber of a fibration of sspaces is homotopically correct} only addresses the question of when taking fibers commutes with geometric realization, we can use it to address the same question regarding more general pullbacks.

\begin{cor}\label{consequence of right properness of kan--quillen model structure}
Suppose the map $Y \ra Z$ in $s\S$ has $\rlp(J_\KQ)$.  Then for any map $W \ra Z$ in $s\S$, the natural map
\[ |W \times_Z Y| \ra |W| \times_{|Z|} |Y| \]
is an equivalence in $\S$, i.e.\! the pullback of $Y \ra Z$ along any map commutes with geometric realization.
\end{cor}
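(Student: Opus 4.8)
The plan is to reduce the statement to the case already settled in \cref{the fiber of a fibration of sspaces is homotopically correct} by a colimit argument. Fix $Y \ra Z$ with $\rlp(J_\KQ)$. Both assignments $W \mapsto |W \times_Z Y|$ and $W \mapsto |W| \times_{|Z|} |Y|$ define functors $s\S_{/Z} \ra \S$, and the map in question --- obtained by applying the (colimit-preserving, hence \emph{not} limit-preserving) functor $|{-}|$ to the defining pullback square and invoking the universal property of $|W| \times_{|Z|} |Y|$ in $\S$ --- is a natural transformation between them. First I would check that \emph{both} functors preserve colimits in $W$: the functor $W \mapsto W \times_Z Y$ is colimit-preserving because $s\S = \Fun(\bD^{op},\S)$ is an $\infty$-topos and so its colimits are universal, and because $|{-}| : s\S \ra \S$ is a left adjoint; dually, $W \mapsto |W|$ preserves colimits as a functor $s\S_{/Z} \ra \S_{/|Z|}$, and $- \times_{|Z|} |Y| : \S_{/|Z|} \ra \S$ preserves colimits since $\S$ is an $\infty$-topos. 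Since every object of $s\S_{/Z}$ is a colimit of objects of the form $(\Delta^n \ra Z)$ (co-Yoneda, the $\Delta^n$ being the representables of $s\S$), and a colimit of equivalences is an equivalence, it suffices to prove the corollary when $W = \Delta^n$ for some $n \geq 0$ and some chosen map $\Delta^n \ra Z$.

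For this case I would choose a vertex $\Delta^0 \ra \Delta^n$, let $z : \pt_{s\S} \ra Z$ be the resulting point, and set $Y' = \Delta^n \times_Z Y$, noting that $Y' \ra \Delta^n$ again has $\rlp(J_\KQ)$ (this property is stable under base change) and that $F_z = \Delta^0 \times_{\Delta^n} Y' = \Delta^0 \times_Z Y$. In the diagram
\[ \begin{tikzcd} F_z \arrow{r} \arrow{d} & Y' \arrow{r} \arrow{d} & Y \arrow{d} \\ \Delta^0 \arrow{r} & \Delta^n \arrow{r} & Z \end{tikzcd} \]
the left-hand square and the outer square are both pullbacks; applying \cref{the fiber of a fibration of sspaces is homotopically correct} to $Y' \ra \Delta^n$ over the chosen vertex and to $Y \ra Z$ over $z$ shows that the corresponding comparison maps $|F_z| \ra |\Delta^0| \times_{|\Delta^n|} |Y'|$ and $|F_z| \ra |\Delta^0| \times_{|Z|} |Y|$ are equivalences in $\S$. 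Since the comparison map of a pasted pullback square is the pasting of the comparison maps of its two pieces (the standard identification $|\Delta^0| \times_{|Z|} |Y| \simeq |\Delta^0| \times_{|\Delta^n|} \bigl( |\Delta^n| \times_{|Z|} |Y| \bigr)$), it follows that the comparison map $\gamma : |Y'| \ra |\Delta^n| \times_{|Z|} |Y|$ of the right-hand square becomes an equivalence after base change along $|\Delta^0| \ra |\Delta^n|$. But $|\Delta^n| \simeq \pt_\S$, so $|\Delta^0| \ra |\Delta^n|$ is an equivalence, and base change along an equivalence reflects equivalences; hence $\gamma$ is an equivalence, which is exactly the assertion of the corollary for $W = \Delta^n$.

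The only genuinely non-formal ingredient here is \cref{the fiber of a fibration of sspaces is homotopically correct} itself, which we are permitted to assume; I expect everything else to be routine bookkeeping. The point needing the most care is the interaction between the colimit decomposition of $W$ and the comparison transformation --- namely, that the comparison map out of $\colim_\alpha W_\alpha$ really is the colimit of the comparison maps for the $W_\alpha$ --- but this is automatic once one knows the two functors are colimit-preserving and the transformation is natural in $W \in s\S_{/Z}$. A secondary bookkeeping point is the compatibility of comparison maps with pasting of pullback squares, used to pass from the left and outer squares to the right-hand one.
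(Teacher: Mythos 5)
Your proof is correct, and its kernel --- two applications of \cref{the fiber of a fibration of sspaces is homotopically correct} to the two squares of a pasted pullback rectangle, combined with compatibility of comparison maps under pasting --- is exactly the paper's. Where you diverge is in the initial reduction. The paper observes that the comparison map $|W \times_Z Y| \ra |W| \times_{|Z|} |Y|$ lives over $|W|$ and checks it fiberwise over points of $|W|$; to feed those fibers into the fiber lemma it must lift each point of $|W|$ to a point of $W$, which it does by appealing to \cref{crazy lemma} (or to the surjectivity of $W_0 \ra |W|$). You instead decompose $W$ as a colimit of representables $\Delta^n$ in $s\S_{/Z}$, verify that both sides of the comparison preserve this colimit (universality of colimits in $s\S$ and in $\S$, plus $|{-}|$ being a left adjoint), and then for $W = \Delta^n$ use contractibility of $|\Delta^n|$ to carry out the same two fiber-lemma applications. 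Both reductions are valid; yours trades the pointwise lift into $W$ for a bit of colimit-preservation bookkeeping, and has the mild advantage of using nothing about $|{-}|$ beyond left-adjointness and universality of colimits, whereas the paper's route quietly relies on points of $|W|$ being presented by points of $W$.
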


\begin{proof}
It suffices to show that in the diagram
\[ \begin{tikzcd}
|W \times_Z Y| \arrow{r} \arrow{d} & |Y| \arrow{d} \\
|W| \arrow{r} & |Z|
\end{tikzcd} \]
in $\S$, for every point of $|W|$ the upper map induces an equivalence on the corresponding fibers of the vertical maps.

To begin, note that by \cref{crazy lemma} (or since the map $W_0 \ra |W|$ is a surjection), every point $\pt_\S \ra |W|$ in $\S$ is represented by a point $\pt_{s\S} \simeq \Delta^0 \ra W$ in $s\S$.  For such a point $\pt_{s\S} \xra{w} W$, denote by $F_w \in s\S$ the fiber of the map $W \times_Z Y \ra W$ over $w$.

Now, as $\rlp(J_\KQ)$ is closed under pullbacks, then the map $W \times_Z Y \ra W$ also has $\rlp(J_\KQ)$.  So in the diagram
\[ \begin{tikzcd}
F_w \arrow{r} \arrow{d} & W \times_Z Y \arrow{r} \arrow{d}{\rlp(J_\KQ)} & Y \arrow{d}{\rlp(J_\KQ)} \\
\pt_{s\S} \arrow{r}[swap]{w} & W \arrow{r} & Z
\end{tikzcd} \]
in $s\S_\KQ$, both the left square and the large rectangle are pullbacks, and by \cref{the fiber of a fibration of sspaces is homotopically correct} these both remain pullbacks when we apply $|{-}| : s\S \ra \S$.  This proves the indicated sufficient condition.
\end{proof}

As stated in \cref{one of the most useful consequences of right properness}, \cref{consequence of right properness of kan--quillen model structure} already spells out one of the most useful consequences of the right properness of $s\S_\KQ$.  However, for the sake of completeness, since we have not actually proved that this model structure is right proper, we do so now.

\begin{cor}\label{kan--quillen model structure is right proper}
$\bW_\KQ$ is preserved under pullback along maps that have $\rlp(J_\KQ)$.
\end{cor}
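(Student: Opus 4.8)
The plan is to deduce this directly from \cref{consequence of right properness of kan--quillen model structure}, exactly as anticipated in \cref{one of the most useful consequences of right properness}. Consider a pullback square
\[ \begin{tikzcd}
A \arrow{r}{k} \arrow{d} & B \arrow{d}{g} \\
C \arrow{r}{f} & D
\end{tikzcd} \]
in $s\S$ in which $g$ has $\rlp(J_\KQ)$ and $f \in \bW_\KQ$; I must show that $k \in \bW_\KQ$, i.e.\ that $|k|$ is an equivalence in $\S$.

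Applying $|{-}| : s\S \ra \S$ and invoking \cref{consequence of right properness of kan--quillen model structure} (with the fibration ``$Y \ra Z$'' there taken to be $g : B \ra D$ and with ``$W \ra Z$'' taken to be $f : C \ra D$), I identify $|A| = |C \times_D B|$ with the pullback $|C| \times_{|D|} |B|$ in $\S$, and this identification is compatible with the respective projections to $|B|$. Under it, $|k|$ becomes the projection $|C| \times_{|D|} |B| \ra |B|$, which is the base change of $|f|$ along $|g|$. Since $f \in \bW_\KQ$, the map $|f|$ is an equivalence, and equivalences in $\S$ (indeed in any $\infty$-category admitting the relevant pullbacks) are stable under base change; hence $|k|$ is an equivalence, as desired.

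I do not expect any real obstacle here: all of the substantive work has already been carried out in \cref{the fiber of a fibration of sspaces is homotopically correct} and \cref{consequence of right properness of kan--quillen model structure}. The only points that need to be checked are the compatibility of the comparison equivalence $|C \times_D B| \simeq |C| \times_{|D|} |B|$ with the projection to $|B|$ (immediate from the construction of the natural comparison map in \cref{consequence of right properness of kan--quillen model structure}) and the stability of equivalences in $\S$ under pullback (standard). This also completes the verification of the last unproved assertion in \cref{kan--quillen model structure on sspaces}, namely the right properness of $s\S_\KQ$.
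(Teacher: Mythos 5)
Your proof is correct and follows essentially the same route as the paper's: both apply \cref{consequence of right properness of kan--quillen model structure} to identify $|W \times_Z Y|$ with the pullback $|W| \times_{|Z|} |Y|$ in $\S$, and then conclude by the stability of equivalences under base change. The only cosmetic difference is that you spell out the base-change step explicitly, whereas the paper states it more tersely.
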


\begin{proof}
Suppose that we have a diagram
\[ \begin{tikzcd}
& Y \arrow{d}{\rlp(J_\KQ)} \\
W \arrow{r}[swap]{\approx} & Z
\end{tikzcd} \]
in $s\S_\KQ$.  By \cref{consequence of right properness of kan--quillen model structure}, the induced diagram
\[ \begin{tikzcd}
|W \times_Z Y| \arrow{r} \arrow{d} & |Y| \arrow{d} \\
|W| \arrow{r}[swap]{\sim} & |Z|
\end{tikzcd} \]
in $\S$ is a pullback square.  But this implies that the upper map is an equivalence in $\S$, i.e.\! that the map $W \times_Z Y \ra Y$ is in $\bW_\KQ$.
\end{proof}

\subsubsection{Fibrations are preserved by $\pi_0$}\label{subsubsection necessary condition for fibn}

We now proceed to give a necessary condition (\cref{fibn implies pi-0 is a fibn}) for a map to be a fibration in $s\S_\KQ$.

\begin{lem}\label{map to pi-0 is a fibn}
For any $Y \in s\S$ and any $K \wcofibn L$ in $s\Set_\KQ$, the canonical map $Y \ra \pi_0(Y)$ in $s\S$ induces a $\pi_0$-isomorphism
\[ \Match_L(Y) \ra \lim \left( \begin{tikzcd}
& \Match_K(Y) \arrow{d} \\
\Match_L(\pi_0(Y)) \arrow{r} & \Match_K(\pi_0(Y))
\end{tikzcd} \right) \]
in $\S$.  In particular, for any $Y \in s\S$, the canonical map $Y \ra \pi_0(Y)$ has $\rlp(J_\KQ)$.
\end{lem}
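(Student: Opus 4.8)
The plan is to deduce the ``in particular'' clause from the main assertion, and then to prove the main assertion by bootstrapping from the generating acyclic cofibrations. The ``in particular'' clause is immediate: taking $K = \Lambda^n_i$ and $L = \Delta^n$ (so that $K \wcofibn L$ in $s\Set_\KQ$), the displayed comparison map becomes $Y_n \ra \Match_{\Lambda^n_i}(Y) \times_{\Match_{\Lambda^n_i}(\pi_0 Y)} (\pi_0 Y)_n$, and to say that $Y \ra \pi_0 Y$ has $\rlp(J_\KQ)$ is precisely to say that each of these maps is surjective on $\pi_0$; but a $\pi_0$-isomorphism certainly is. So it remains to prove the main assertion.

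For the main assertion, I would fix $Y$ and consider the class $\mathcal{S}$ of those monomorphisms $K \ra L$ of simplicial sets for which the displayed comparison map is a $\pi_0$-isomorphism. As is classical, every acyclic cofibration of $s\Set_\KQ$ is a retract of a transfinite composite of pushouts of horn inclusions (cf.\ \cref{obvious lifting criteria} and \cref{small object argument}), so it suffices to show that $\mathcal{S}$ contains the horn inclusions $\Lambda^n_i \ra \Delta^n$ and is closed under retracts, pushout, and transfinite composition. The closure properties rest on three facts: $\Match_{(-)}(Y)$ and $\Match_{(-)}(\pi_0 Y)$ carry colimits of simplicial sets to limits of spaces; $\Match_T(\pi_0 Y) = \hom_{s\Set}(T,\pi_0 Y)$ is a discrete space for every simplicial set $T$; and $\pi_0$ and the formation of homotopy fibers commute with filtered colimits in $\S$. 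The one step warranting care is closure under pushout, since $\pi_0$-isomorphisms are \emph{not} stable under arbitrary base change; but the squares arising when one attaches a cell are levelwise assembled from the truncation maps $Y_m \ra \pi_0(Y_m)$ and ultimately reduce to base changes against discrete objects, along which the relevant pullback splits as a coproduct over a discrete set and the $\pi_0$-isomorphism property is preserved summand-by-summand.

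This leaves the horn inclusion $\Lambda^n_i \ra \Delta^n$, where the comparison map is
\[ Y_n \ra \lim \left( \begin{tikzcd} & \Match_{\Lambda^n_i}(Y) \arrow{d} \\ (\pi_0 Y)_n \arrow{r} & \Match_{\Lambda^n_i}(\pi_0 Y) \end{tikzcd} \right), \]
sending a simplex to its $i\th$ horn together with its path component. Injectivity on $\pi_0$ is formal, since post-composing with the projection to $(\pi_0 Y)_n = \pi_0(Y_n)$ recovers the quotient $Y_n \ra \pi_0(Y_n)$, so two simplices with a common image in the pullback already lie in a common path component of $Y_n$. Surjectivity on $\pi_0$ is the heart of the matter, and the step I expect to be the main obstacle: a point of the target consists of a horn $u \in \Match_{\Lambda^n_i}(Y)$ and a component $v \in \pi_0(Y_n)$ whose images in $\Match_{\Lambda^n_i}(\pi_0 Y)$ agree, and choosing a representative $y' \in Y_n$ of $v$, the horn of $y'$ has the same image in $\Match_{\Lambda^n_i}(\pi_0 Y)$ as $u$; once one knows that these two horns in fact lie in a common path component of $\Match_{\Lambda^n_i}(Y)$, one can lift a path between them along $Y_n \ra \Match_{\Lambda^n_i}(Y)$ (using only the good behavior of homotopy fibers in $\S$) to produce a simplex $y$ with $[y] = v$ whose horn lies in the component of $u$. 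The crux is therefore to control the comparison $\pi_0 \Match_{\Lambda^n_i}(Y) \ra \Match_{\Lambda^n_i}(\pi_0 Y)$; I would establish the needed injectivity by induction on $n$, using the cell structure of $\Lambda^n_i$ (assembled from simplices of dimension $< n$ by attaching boundary inclusions) to reduce to the analogous control on the ordinary matching spaces $\Match_{\partial\Delta^m}(-)$, which is where \cref{crazy lemma} is brought to bear.
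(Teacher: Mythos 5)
Your reduction to the horn inclusions and your observation that $\pi_0$-injectivity of the comparison map is formal both agree with the paper, but your plan for $\pi_0$-surjectivity contains a genuine gap.  You propose to show that $\pi_0 \Match_{\Lambda^n_i}(Y) \ra \Match_{\Lambda^n_i}(\pi_0 Y)$ is injective, but this is false in general: a matching object is a limit, and $\pi_0$ does not commute with pullbacks over non-discrete bases, so already for $\Lambda^2_0$ the comparison $\pi_0(Y_1 \times_{Y_0} Y_1) \ra \pi_0(Y_1) \times_{\pi_0(Y_0)} \pi_0(Y_1)$ can fail to be injective (the discrepancy is controlled by $\pi_1$ of the constituent spaces).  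What the lemma implicitly asserts is only the much weaker statement that this map is injective after restriction to the image of $\pi_0(Y_n)$, and that conditional statement cannot be recovered by the cell-by-cell reduction to $\Match_{\partial \Delta^m}(-)$ that you describe -- those boundary inclusions are not acyclic cofibrations, and the analogous comparison for $\Match_{\partial\Delta^m}$ is neither injective nor surjective.  The invocation of \cref{crazy lemma} is also a red herring: that result is about presenting maps of spaces by maps from simplicial sets up to subdivision, whereas the present lemma is a purely formal statement holding for arbitrary $Y \in s\S$ that requires no such machinery.

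The paper's treatment of the horn case is far more economical and sidesteps your difficulty entirely.  Let $J$ denote the class of maps $K \ra L$ for which the displayed comparison is a $\pi_0$-isomorphism.  One first checks that any map $\Delta^i \ra \Delta^j$ lies in $J$: here $\Match_{\Delta^m}(-) \simeq (-)_m$ is just evaluation, and since $\pi_0(Y_j)$ and $\pi_0(Y_i)$ are discrete, the pullback defining the target is over a discrete base, along which $\pi_0$ commutes with limits.  One then observes that $J$ is closed under pushout and satisfies the two-out-of-three property.  The induction is now short: the composite $\Delta^{\{i\}} \ra \Lambda^n_i \ra \Delta^n$ lies in $J$, and $\Delta^{\{i\}} \ra \Lambda^n_i$ is an iterated pushout of horn inclusions of dimension $< n$ and hence lies in $J$ by the inductive hypothesis and closure under pushout, so two-out-of-three places $\Lambda^n_i \ra \Delta^n$ in $J$.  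No direct analysis of $\pi_0 \Match_{\Lambda^n_i}(Y)$ is ever required, which is precisely why the argument closes.
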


\begin{proof}
Letting $J$ denote the collection of maps $K \ra L$ in $s\S$ for which this induced map in $\S$ is a $\pi_0$-isomorphism, it suffices to prove that $J$ contains the set $J_\KQ = \{ \Lambda^n_i \ra \Delta^n \}_{0 \leq i \leq n \geq 1}$.  Note first that any map $\Delta^i \ra \Delta^j$ is contained in $J$, since we have an equivalence $\Match_{\Delta^n}(-) \simeq (-)_n$ in $\Fun(s\S,\S)$ and pullbacks over discrete spaces commute with $\pi_0 : \S \ra \Set$.  Note too that $J$ is closed under pushouts and has the two-out-of-three property.

We now argue by induction: writing $J_{ \leq n} = \{ \Lambda^m_j \ra \Delta^m \}_{0 \leq j \leq m \leq n} \subset J_\KQ$, we will show that $J_{\leq n} \subset J$ for all $n \geq 1$.  We have already shown that $J_{\leq 1} \subset J$, since both maps $\Lambda^1_i \ra \Delta^1$ are of the form $\Delta^0 \ra \Delta^1$.  So, suppose that $J_{\leq (n-1)} \subset J$, and let $\Lambda^n_i \ra \Delta^n$ be any map in $J_{\leq n} \backslash J_{\leq (n-1)}$.  Observe that the composite $\Delta^{\{i\}} \ra \Lambda^n_i \ra \Delta^n$ lies in $J$, and observe that the first map can be constructed as an iterated pushout of maps in $J_{\leq (n-1)}$ (note that pushouts along cofibrations in $s\Set_\KQ$ are also pushouts in $s\S$) and hence by assumption also lies in $J$ since it is closed under pushouts.  Since $J$ also has the two-out-of-three property, it follows that the second map lies in $J$ as well.  This proves the claim.
\end{proof}

\begin{prop}\label{fibn implies pi-0 is a fibn}
If a map $Y \ra Z$ in $s\S$ has $\rlp(J_\KQ)$, then so does $\pi_0(Y) \ra \pi_0(Z)$.
\end{prop}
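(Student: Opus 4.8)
The plan is to reduce a horn-filling problem for $\pi_0(f) \colon \pi_0(Y) \to \pi_0(Z)$ to one in $s\S$, solve the latter using the hypothesis on $f$ together with \cref{map to pi-0 is a fibn}, and then push the solution back down by applying $\pi_0$. Write $\eta_Y \colon Y \to \pi_0(Y)$ and $\eta_Z \colon Z \to \pi_0(Z)$ for the canonical maps; recall from \cref{map to pi-0 is a fibn} that both of these have $\rlp(J_\KQ)$. So, fix $n \geq 1$ and $0 \leq i \leq n$ and a commutative square in $s\S$ with $a \colon \Lambda^n_i \to \pi_0(Y)$ along the top, $b \colon \Delta^n \to \pi_0(Z)$ along the bottom, the horn inclusion on the left, and $\pi_0(f)$ on the right; we must produce a lift $\Delta^n \to \pi_0(Y)$.

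First I would lift $a$ through $\eta_Y$ to a map $\tilde{a} \colon \Lambda^n_i \to Y$ with $\eta_Y \circ \tilde{a} = a$. To do this one first lifts the value of $a$ at some vertex of $\Lambda^n_i$ through the surjection $Y_0 \to \pi_0(Y_0) = (\pi_0 Y)_0$; then, since $\Delta^0 \hookrightarrow \Lambda^n_i$ is an acyclic cofibration in $s\Set_\KQ$ (this uses $n \geq 1$), the $\pi_0$-surjectivity furnished by \cref{map to pi-0 is a fibn} applied to $\Delta^0 \wcofibn \Lambda^n_i$ produces the desired $\tilde{a}$. Next, $f \circ \tilde{a} \colon \Lambda^n_i \to Z$ satisfies $\eta_Z \circ f \circ \tilde{a} = \pi_0(f) \circ \eta_Y \circ \tilde{a} = \pi_0(f) \circ a = b|_{\Lambda^n_i}$ by naturality of the units, so I would apply the $\rlp(J_\KQ)$ of $\eta_Z$ against the horn inclusion $\Lambda^n_i \to \Delta^n$ itself to obtain $\tilde{b} \colon \Delta^n \to Z$ with $\tilde{b}|_{\Lambda^n_i} = f \circ \tilde{a}$ and $\eta_Z \circ \tilde{b} = b$. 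Now $(\tilde{a},\tilde{b})$ is a commutative square with the horn inclusion on the left and $f$ on the right, so the hypothesis that $f$ has $\rlp(J_\KQ)$ yields a lift $\ell \colon \Delta^n \to Y$ with $\ell|_{\Lambda^n_i} = \tilde{a}$ and $f \circ \ell = \tilde{b}$. Finally I claim $\eta_Y \circ \ell \colon \Delta^n \to \pi_0(Y)$ is the sought lift: restricting to $\Lambda^n_i$ gives $\eta_Y \circ \tilde{a} = a$, and post-composing with $\pi_0(f)$ gives $\pi_0(f) \circ \eta_Y \circ \ell = \eta_Z \circ f \circ \ell = \eta_Z \circ \tilde{b} = b$, again by naturality of the units.

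All the substance is packaged into \cref{map to pi-0 is a fibn}; everything else is a formal diagram chase, and there is no circularity with \cref{kan--quillen model structure on sspaces}, since neither \cref{map to pi-0 is a fibn} nor the classical facts about $s\Set_\KQ$ invoke the existence of $s\S_\KQ$. The one point requiring a little care is the first lifting step: one cannot directly lift an arbitrary map $\Lambda^n_i \to \pi_0(Y)$ through $\eta_Y$, because $\es_{s\Set} \hookrightarrow \Lambda^n_i$ is not a $J_\KQ$-cofibration; this is precisely why one must first lift a single vertex (always possible, as $Y_0 \to \pi_0(Y_0)$ is surjective) so as to bring the genuinely anodyne inclusion $\Delta^0 \hookrightarrow \Lambda^n_i$ into play. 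I expect this to be the only real subtlety, the remainder being bookkeeping with the units of the localization $\pi_0 \colon s\S \adjarr s\Set \colon \disc$.
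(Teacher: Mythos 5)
Your proof is correct, and it takes a genuinely different route from the paper's. The paper forms the cube with vertices $Y_n$, $Z_n$, $\Match_{\Lambda^n_i}(Y)$, $\Match_{\Lambda^n_i}(Z)$ and their images under the unit $\eta \colon \id \to \disc \circ \pi_0$, invokes \cref{map to pi-0 is a fibn} to see that all four horizontal comparison maps are $\pi_0$-isomorphisms, then passes to the pullbacks of the side faces and finishes by observing that the canonical map $\pi_0\bigl({\lim}^\S(-)\bigr) \to {\lim}^\Set\bigl(\pi_0^\lw(-)\bigr)$ is a componentwise surjection. You instead unwind the same surjectivity by hand: lift the horn datum $a$ to $\tilde a \colon \Lambda^n_i \to Y$ (by first lifting a vertex through the surjection $Y_0 \to \pi_0(Y_0)$, then applying \cref{map to pi-0 is a fibn} to the anodyne inclusion $\Delta^0 \wcofibn \Lambda^n_i$), lift $b$ to $\tilde b \colon \Delta^n \to Z$ extending $f \tilde a$ (via the ``in particular'' clause that $\eta_Z$ has $\rlp(J_\KQ)$), fill using the hypothesis on $f$, and post-compose with $\eta_Y$. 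Everything lands correctly at the end because the hom-spaces into the discrete simplicial spaces $\pi_0(Y)$ and $\pi_0(Z)$ are themselves discrete, so the requisite commutativities collapse to equalities of set elements. Your flagged subtlety --- that $\es_{s\Set} \hookra \Lambda^n_i$ is not anodyne, forcing one to first pick a vertex --- is exactly the right thing to watch for, and it parallels the role that the inclusion $\Delta^{\{i\}} \to \Lambda^n_i$ plays in the proof of \cref{map to pi-0 is a fibn} itself. The trade-off: the paper's packaging makes the obstruction to the converse immediately visible (the comparison map is a surjection but not an isomorphism, which is exactly what \cref{failure of pi-0 fibn to imply fibn} exploits), whereas your argument is more elementary and needs nothing beyond the lemma and a diagram chase, at the cost of making that converse-direction failure less transparent.
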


\begin{proof}
A map $\Lambda^n_i \ra \Delta^n$ in $J_\KQ$ gives rise to a diagram
\[ \begin{tikzcd}
& Z_n \arrow{rr} \arrow{dd} & & \pi_0(Z_n) \arrow{dd} \\
Y_n \arrow{ru} \arrow[crossing over]{rr} \arrow{dd} & & \pi_0(Y_n) \arrow{ru} \arrow{dd} \\
& \Match_{\Lambda^n_i}(Z) \arrow{rr} & & \Match_{\Lambda^n_i}(\pi_0(Z)) \\
\Match_{\Lambda^n_i}(Y) \arrow{ru} \arrow{rr} & & \Match_{\Lambda^n_i}(\pi_0(Y)) \arrow[leftarrow, crossing over]{uu} \arrow{ru}
\end{tikzcd} \]
in $\S$.  All four horizontal maps in this cube are $\pi_0$-isomorphisms by \cref{map to pi-0 is a fibn}, and moreover all of their targets are discrete since the inclusion $\Set \subset \S$ commutes with limits (being a right adjoint).  Taking pullbacks of the cospans contained in the left and right faces, we obtain a commutative square
\[ \begin{tikzcd}[row sep=1.5cm, column sep=1.5cm]
Y_n \arrow{r} \arrow{d} & \pi_0(Y_n) \arrow{d} \\
\Match_{\Lambda^n_i}(Y) \underset{\Match_{\Lambda^n_i}(Z)}{\times} Z_n \arrow{r} &  \Match_{\Lambda^n_i}(\pi_0(Y)) \underset{\Match_{\Lambda^n_i}(\pi_0(Z))}{\times} \pi_0(Z_n)
\end{tikzcd} \]
in which the upper map is a $\pi_0$-isomorphism and the lower map is a component of the canonical comparison map
\[ \pi_0 \left( {\lim}^\S(-) \right) \ra {\lim}^\Set \left( \pi_0^\lw(-) \right) \]
in $\Fun ( \Fun(\Nerve^{-1}(\Lambda^2_2) , \S),\Set )$.  As this comparison map is a componentwise surjection, the surjectivity of the left map in the commutative square implies that of its right map.  This proves the claim.
\end{proof}

\begin{rem}\label{failure of pi-0 fibn to imply fibn}
The proof of \cref{fibn implies pi-0 is a fibn} clearly illustrates the reason that its converse fails: the canonical comparison map
\[ \pi_0 \left( {\lim}^\S(-) \right) \ra {\lim}^\Set \left( \pi_0^\lw(-) \right) \]
in $\Fun ( \Fun(\Nerve^{-1}(\Lambda^2_2) , \S),\Set )$ is a componentwise surjection but not a componentwise isomorphism.  On the other hand, if its component at the object
\[ \left( \Match_{\Lambda^n_i}(Y) \ra \Match_{\Lambda^n_i}(Z) \la Z_n \right) \in \Fun(\Nerve^{-1}(\Lambda^2_2),\S) \]
happens to be an isomorphism (for instance, if $\Match_{\Lambda^n_i}(Z) \in \Set \subset \S$), then $\pi_0(Y) \ra \pi_0(Z)$ having the right lifting property against the map $\Lambda^n_i \ra \Delta^n$ implies that $Y \ra Z$ has it as well.  Assembling this observation over all maps in $J_\KQ$ then gives a partial converse to \cref{fibn implies pi-0 is a fibn}.
\end{rem}

\subsubsection{Comparisons with existing literature}\label{subsubsection fibration comparisons}

\begin{rem}\label{compare with rezk's realization fibrations}
In the unpublished note \cite{Rezk-pi-star-kan}, Rezk defines a \textit{realization fibration} to be a map $Y \ra Z$ in $s\S$ such that all pullbacks commute with geometric realization (see \cite[Definition 1.1]{Rezk-pi-star-kan}), and he completely characterizes them as being detected by iterated pullbacks along all possible composites $\Delta^{\{i\}} \ra \Delta^n \ra Z$ (see \cite[Proposition 5.10]{Rezk-pi-star-kan}).  In this language, we can restate \cref{consequence of right properness of kan--quillen model structure} as asserting that all maps in $\bF_\KQ \subset s\S$ are realization fibrations.  On the other hand, as geometric realization (being a sifted colimit) commutes with finite products in $\S$, every terminal map $Y \ra \pt_{s\S}$ is a realization fibration, but it clearly need not be in $\bF_\KQ$ in general.
\end{rem}

\begin{rem}\label{compare with anderson and bousfield--friedlander}
There are two results, both of which appeared in the literature in 1978, which are strongly reminiscent of \cref{consequence of right properness of kan--quillen model structure}:
\begin{itemize}

\item one introduced by Bousfield--Friedlander in \cite[Appendix B]{BF} (and recapitulated in \cite[Chapter IV, \sec 4]{GJ}) based on the notion of a simplicial space satisfying the \textit{$\pi_*$-Kan condition},

\item the other introduced by Anderson in \cite{AndFibGeomReal} based on the notion of a simplicial groupoid being \textit{fully fibrant}.

\end{itemize}
In fact, these two results are extremely similar to one another, and both rely on a common construction: given a space $Y \in \S$, a Grothendieck construction (see e.g.\! \cref{gr:section gr}) applied to the resulting the functor
\[ \Pi_1(Y) \xra{ \underset{n \geq 2}{\prod} \pi_n(Y,-)} \Grp \]
(from the fundamental (1-)groupoid of $Y$ to the category of groups) yields a new groupoid, which we will denote by $\Pi_{\geq 1}(Y) \in \strgpd$.  Using this, we can describe the conditions appearing in these results as follows.

\begin{itemize}

\item The \textit{$\pi_*$-Kan condition} on a simplicial space $Y \in s\S$ demands of the simplicial groupoid $\Pi_{\geq 1}(Y_\bullet) \in s\strgpd$ that for every horn inclusion $\Lambda^n_i \wcofibn \Delta^n$ in $J_\KQ$, the induced map
\[ \Pi_{\geq 1}(Y_n) \cong \Match_{\Delta^n} \left( \Pi_{\geq 1}(Y) \right) \ra \Match_{\Lambda^n_i} \left( \Pi_{\geq 1}(Y) \right) \]
is full.

\item The demand that the simplicial groupoid $\Pi_{\geq 1}(Y)_\bullet \in s\strgpd$ be \textit{fully fibrant} amounts to the additional requirement that for every boundary inclusion $\partial \Delta^n \cofibn \Delta^n$ in $I_\KQ$, the induced map
\[ \Pi_{\geq 1}(Y_n) \cong \Match_{\Delta^n} \left( \Pi_{\geq 1}(Y) \right) \ra \Match_{\partial \Delta^n} \left( \Pi_{\geq 1}(Y) \right) = \Match_n \left( \Pi_{\geq 1}(Y) \right) \]
is full.\footnote{This additional requirement can be rephrased as requiring that the simplicial groupoid be fibrant in $s(\strgpd_\can)_\Reedy$, the Reedy model structure built on the canonical model structure on $\strgpd$ (which explains the presence of the word ``fibrant'' in the terminology ``fully fibrant'').  In turn, the canonical model structure, which in fact seems to have first appeared in \cite[\sec 5]{AndFibGeomReal}, has that $\bW_\can \subset \strgpd$ consists of the equivalences of groupoids, $\bC_\can \subset \strgpd$ consists of those maps that are injective on objects, and $\bF_\can \subset \strgpd$ consists of the isofibrations (i.e.\! those maps satisfying an ``isomorphism lifting property'').}

\end{itemize}
Of course, a map in $\strgpd$ is full precisely if the induced maps on automorphism groups are all surjective, and so these conditions ultimately boil down to certain lifting criteria among the various homotopy groups $\{ \pi_i(Y_j) \}_{j \geq 0, i \geq 1}$.

Then, \cite[Theorem B.4]{BF} (resp.\! the main theorem of \cite{AndFibGeomReal}) asserts that if a map $Y \ra Z$ in $s\S$ has that
\begin{itemizesmall}
\item the induced map $\pi_0(Y) \ra \pi_0(Z)$ lies in $\bF_\KQ \subset s\Set$ and
\item both $Y$ and $Z$ satisfy the $\pi_*$-Kan condition (resp.\! both $\Pi_{\geq 1}(Y)$ and $\Pi_{\geq 1}(Z)$ are fully fibrant),
\end{itemizesmall}
then the map $Y \ra Z$ is a realization fibration (in the sense of \cref{compare with rezk's realization fibrations}).

In light of \cref{fibn implies pi-0 is a fibn} and \cref{failure of pi-0 fibn to imply fibn}, it appears quite likely that these results are actually somehow secretly asking for the map $Y \ra Z$ to be a fibration between fibrant objects, i.e.\! to lie in $\bF^f_\KQ \subset s\S$.\footnote{Anderson's notion of an \textit{epifibration} (introduced in \cite[\sec 6]{AndFibGeomReal}) provides a model-categorical counterpart to our notion of a fibration in $s\S_\KQ$ (recall \cref{model infty-cats are better than double model cats}), via which observation \cite[Theorem 6.2]{AndFibGeomReal} appears to more-or-less imply \cref{consequence of right properness of kan--quillen model structure}.  Unfortunately, there seem to be a number of issues with the proof given there.  For instance, the two paragraphs following the statement imply that epifibrations present maps in $s\S$ which have not just $\rlp(J_\KQ)$ but also have $\rlp(I_\KQ)$.  And then, the last sentence of the fourth paragraph of the proof of \cite[Lemma 6.5]{AndFibGeomReal} has a counterexample given by the inclusion $\Delta^{\{i\}} \subset \Lambda^n_i$.}

Both Bousfield--Friedlander and Anderson give classes of examples where their respective criteria hold:
\begin{itemizesmall}
\item on the one hand, $Y \in s\S$ satisfies the $\pi_*$-Kan condition
\begin{itemizesmall}
\item if each $Y_n \in \S$ is connected, or
\item if each $Y_n \in \S$ is simple and for all $i \geq 1$ the map
\[ [S^i,Y_\bullet]^\lw_\S \ra [\pt_\S,Y_\bullet]^\lw_\S \cong \pi_0(Y) \]
lies in $\bF_\KQ \subset s\Set$, so in particular
\begin{itemizesmall}
\item if it can be presented by a bisimplicial group,
\end{itemizesmall}
\end{itemizesmall}
while
\item on the other hand, $Y \in s\S$ has that $\Pi_{\geq 1}(Y)$ is fully fibrant
\begin{itemizesmall}
\item if each $Y_n \in \S$ is connected,
\item if it lies in $s\Set \subset s\S$, or
\item if it can be presented by a simplicial topological group.
\end{itemizesmall}
\end{itemizesmall}
\end{rem}

\begin{rem}\label{compare with result for levelwise-connected sspaces}
It is well known that for a simplicial space which is levelwise connected, taking loopspaces (with respect to any compatible choices of basepoints) commutes with geometric realization.  (This follows from the results discussed in \cref{compare with anderson and bousfield--friedlander}, see e.g.\! \cite[Chapter IV, Corollary 4.11]{GJ}.)  In fact, \textit{any} pullback in $s\S$ in which the common target in the cospan is levelwise connected commutes with geometric realization (see Lemma A.5.5.6.17).  Of course, there are many interesting simplicial spaces which are not levelwise connected -- for instance, any nontrivial simplicial set -- and so this result is of somewhat limited use when manipulating simplicial spaces and their geometric realizations.\end{rem}

\begin{rem}\label{compare with seymour and BS}
There are two papers which study certain classes of maps which are closely related to our notion of a fibration in $s\S_\KQ$.

\begin{itemize}

\item In \cite{Seymour}, Seymour studies a \textit{continuous} lifting condition -- that is, an enriched lifting condition with respect to the enrichment of $s\Top$ over $\Top$ -- declaring that a map $x \xra{i} y$ has the ``left lifting property'' with respect to a map $z \xra{p} w$ if the induced map
\[ \enrhom_{s\Top}(y,z) \to \enrhom_{\Fun([1],s\Top)}(i,p) \]
admits a section (instead of just being surjective).  A ``Kan fibration'' in this sense -- which for disambiguation we'll refer to as an ``S-Kan fibration'' -- is then defined to be a map in $s\Top$ which satisfies this cotinuous right lifting property against the usual set of horn inclusions $J_\KQ = \{ \Lambda^n_i \ra \Delta^n \}_{0 \leq i \leq n \geq 1}$ in $s\Set \subset s\Top$.  Thus, aside from issues of homotopy coherence (which can presumably be handled using an appropriate model structure on $s\Top$ (recall \cref{model infty-cats are better than double model cats})), it appears that these morphisms present a strict subset of those in the subcategory $\bF_\KQ \subset s\S$ of fibrations.

The main result is then that S-Kan fibrations are stable under taking the internal hom into them from any other object; taking that source object to be $\Delta^1 \in s\Set \subset s\Top$, a ``covering homotopy theorem'' immediately follows (see \cite[Theorems 4.1 and 4.2]{Seymour}).  Morally speaking, this is the case because the ``acyclic cofibrations'' in this setup are closed under taking the product with any identity map, a fact which is not true in $s\S_\KQ$ (but see \cref{other KQ model structures} for an explanation of why this feature is in a certain sense undesirable).

\item In \cite{BrownSzczarba}, Brown--Szczarba study a continuous lifting condition which is similar to that of \cite{Seymour} but is yet more restrictive: they additionally require lifting for all ``sub-horns'' (see \cite[Definition 6.1]{BrownSzczarba}).  Thus, it appears that their resulting ``fibrations'' -- which for disambiguation we'll refer to as ``BS-Kan fibrations'' -- present a strict subset of even those morphisms in $s\S$ which are presented by S-Kan fibrations.

First of all, Brown--Szczarba prove an analogous result to Seymour's (see \cite[Theorem 6.2]{BrownSzczarba}).  Moreover, given a BS-Kan object $Y_\bullet \in s\Top$ equipped with a basepoint $\pt_{s\Top} \xra{y} Y_\bullet$, they define its ``homotopy groups'' to be those of the underlying pointed simplicial set, so that $\pi_n(Y,y)$ is a quotient of the subset
\[ \{ y_n \in Y_n : \delta^n_0 (y_n) = \delta^n_1(y_n) = \cdots = \delta^n_n(y_n) = y \} \subset Y_n , \]
but is additionally topologized via the quotient topology.  With respect to these homotopy groups, they obtain a ``continuous long exact sequence associated to a fibration of Kan simplicial topological spaces'' (in which the (strict) fiber is also a BS-Kan object) (see \cite[Theorem 6.5 and Proposition 6.6]{BrownSzczarba}).  They also develop notions of continuous (singular and de Rham) cohomology and of real homotopy type.  Of course, all but the first of these accomplishments lie outside of the scope of what we seek to achieve here (and a comparison of the first with the present work is no different from that given above).

\end{itemize}
\end{rem}

\subsection{The $\Ex^\infty$ functor}\label{subsection Ex-infty}

We now return to the general theory.  In the proof of \cref{detect acyclic fibrations of sspaces} we will need to have a version of the $\Ex^\infty$ functor for simplicial spaces, so we take a moment to develop that now.  For simplicial sets, this was originally defined and explored in \cite[\sec 3-4]{KanEx}; it is developed in more modern terminology in \cite[Chapter III, \sec 4]{GJ}.

\begin{defn}\label{define sd on sspaces}
Recall that any $\Delta^n \in s\Set$ admits a \bit{subdivision}, denoted $\sd(\Delta^n) \in s\Set$; this is the nerve of its poset of nondegenerate simplices.  Recall further that this admits a map $\sd(\Delta^n) \ra \Delta^n$, called the \bit{last vertex} map, induced by the map of posets given by taking a simplex to its last vertex.  Recall still further that we can extend this definition to any $K \in s\Set$ by defining
\[ \sd(K) = \colim_{(\Delta^n \ra K) \in \left( \bD \underset{s\Set}{\times} s\Set_{/K} \right)} \sd(\Delta^n) , \]
and that we obtain an induced last vertex map $\sd(K) \ra K$.  We now extend this even further to any $Y \in s\S$ by defining
\[ \sd(Y) = \colim_{(\Delta^n \ra Y) \in \left( \bD \underset{s\S}{\times} s\S_{/Y} \right)} \sd(\Delta^n) ; \]
in the same way, this also admits a last vertex map $\sd(Y) \ra Y$.  Note that this does indeed extend the functor $\sd : s\Set \ra s\Set$, as $s\Set \subset s\S$ is a full subcategory (so that for any $K \in s\Set \subset s\S$, we have an equivalence
\[ \bD \underset{s\Set}{\times} s\Set_{/K} \xra{\sim} \bD \underset{s\S}{\times} s\S_{/K} \]
of $\infty$-categories).  This clearly defines a functor $\sd:s\S \ra s\S$.
\end{defn}

\begin{defn}\label{define Ex on sspaces}
We define the \bit{extension} of $Y \in s\S$ to be the object $\Ex(Y) \in s\S$ defined by $\Ex(Y)_n = \hom_{s\S}(\sd(\Delta^n),Y)$, with simplicial structure maps corepresented by the cosimplicial structure maps of $\sd(\Delta^\bullet) \in c(s\Set)$.  This defines a functor $\Ex : s\S \ra s\S$, which extends the usual functor $\Ex : s\Set \ra s\Set$ (again since $s\Set \subset s\S$ is a full subcategory).
\end{defn}

\begin{notn}\label{iterate sd and Ex}
For any $i \geq 0$, we write $\sd^i = \sd^{\circ i}$ and $\Ex^i = \Ex^{\circ i}$ for the iterated composites of the indicated endofunctors on $s\S$ of \cref{define sd on sspaces}.
\end{notn}

\begin{lem}\label{sd and Ex are adjoint}
The functors $\sd$ and $\Ex$ define an adjunction $\sd : s\S \adjarr s\S : \Ex$, and hence the functors $\sd^i$ and $\Ex^i$ define an adjunction $\sd^i : s\S \adjarr s\S : \Ex^i$ for any $i \geq 0$.
\end{lem}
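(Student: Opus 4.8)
The plan is to establish the basic adjunction $\sd : s\S \adjarr s\S : \Ex$ by a direct computation of mapping spaces in $s\S$, and then deduce the iterated statement formally from the fact that a composite of adjunctions is again an adjunction. The only ingredients needed are the defining colimit formula for $\sd$ (\cref{define sd on sspaces}), the defining formula for $\Ex$ (\cref{define Ex on sspaces}), the Yoneda lemma in $s\S = \Fun(\bD^{op},\S)$, and the canonical presentation of an arbitrary object of $s\S$ as a colimit of representables.

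First I would fix $Y, Z \in s\S$ and run the following chain of natural equivalences. Using the defining colimit formula $\sd(Y) \simeq \colim_{(\Delta^n \ra Y)} \sd(\Delta^n)$ (the colimit over the $\infty$-category $\bD \times_{s\S} s\S_{/Y}$ of simplices of $Y$) together with the universal property of colimits, and then the defining formula $\Ex(Z)_n = \hom_{s\S}(\sd(\Delta^n), Z)$, we get
\[ \hom_{s\S}(\sd(Y),Z) \simeq \lim_{(\Delta^n \ra Y)} \hom_{s\S}(\sd(\Delta^n),Z) \simeq \lim_{(\Delta^n \ra Y)} \Ex(Z)_n . \]
On the other hand, the Yoneda lemma gives $\hom_{s\S}(\Delta^n, \Ex(Z)) \simeq \Ex(Z)_n$, and the canonical colimit presentation $Y \simeq \colim_{(\Delta^n \ra Y)} \Delta^n$ of a presheaf by representables (see \sec T.5.1.5) yields
\[ \lim_{(\Delta^n \ra Y)} \Ex(Z)_n \simeq \lim_{(\Delta^n \ra Y)} \hom_{s\S}(\Delta^n, \Ex(Z)) \simeq \hom_{s\S}(Y, \Ex(Z)) . \]
Splicing these together gives the required equivalence $\hom_{s\S}(\sd(Y), Z) \simeq \hom_{s\S}(Y, \Ex(Z))$.

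The step I expect to be the main obstacle — really the only non-formal point — is checking that this chain is natural in both $Y$ and $Z$ and genuinely assembles into an adjunction, rather than merely an objectwise equivalence of mapping spaces: one must track the identifications through the Yoneda lemma and match up the variance in $[n] \in \bD$ of the indexing $\infty$-category $\bD \times_{s\S} s\S_{/Y}$ with that of the cosimplicial object $\sd(\Delta^\bullet)$, which is precisely what the simplicial structure on $\Ex(Z)$ (declared to be corepresented by the cosimplicial structure maps of $\sd(\Delta^\bullet)$) encodes. Alternatively, and more cleanly, I would package this coherence abstractly: since $s\S = \Fun(\bD^{op},\S)$ is freely generated under colimits by $\bD$ (Theorem T.5.1.5.6), the cosimplicial object $\sd(\Delta^\bullet)$, viewed as a functor $\bD \ra s\Set \subset s\S$, extends uniquely to a colimit-preserving endofunctor of $s\S$, which by the formula of \cref{define sd on sspaces} is $\sd$; as $s\S$ is presentable, the adjoint functor theorem (Corollary T.5.5.2.9) then supplies a right adjoint, and the computation above identifies it with $\Ex$.

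Finally, for the iterated statement: a composite of adjunctions is an adjunction, so composing the adjunction $\sd : s\S \adjarr s\S : \Ex$ with itself $i$ times exhibits $\sd^i = \sd^{\circ i}$ as left adjoint to $\Ex^i = \Ex^{\circ i}$ (in the notation of \cref{iterate sd and Ex}), giving the adjunction $\sd^i : s\S \adjarr s\S : \Ex^i$ for every $i \geq 0$.
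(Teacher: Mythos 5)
Your proof is correct and takes essentially the same approach as the paper's: the paper likewise deduces the adjunction from the colimit presentation $Y \simeq \colim_{(\Delta^n \ra Y)} \Delta^n$ together with the defining formulas for $\sd$ and $\Ex$, and obtains the iterated statement by composing the adjunction $i$ times. The only difference is that the paper leaves the chain of equivalences (and the naturality bookkeeping you flag) implicit, and does not mention the adjoint-functor-theorem repackaging you offer as an alternative.
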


\begin{proof}
The first statement follows directly from the definitions and the fact that, as in any presheaf category, any $Y \in s\S = \Fun(\bD^{op},\S)$ is recoverable as a colimit
\[ Y \simeq \colim_{(\Delta^n \ra Y) \in \left( \bD \underset{s\S}{\times} s\S_{/Y} \right) } \Delta^n \]
of representable presheaves.  The second statement is obtained by composing the adjunction $i$ times.
\end{proof}

\begin{notn}\label{notn Ex-infty}
By \cref{sd and Ex are adjoint}, for any $Y \in s\S$ the last vertex map $\sd(Y) \ra Y$ is adjoint to a map $Y \ra \Ex(Y)$.  We write
\[ \Ex^\infty(Y) = \colim(Y \ra \Ex(Y) \ra \Ex^2(Y) \ra \cdots) . \]
This defines an endofunctor $\Ex^\infty : s\S \ra s\S$.
\end{notn}

\begin{rem}
Using the classical theory of $s\Set_\KQ$, we can see that $\Ex^\infty$ cannot be a right adjoint.  For instance, it does not commute with the countably infinite product of copies of the ``simplicial infinite line'', i.e.\! the nerve of the poset $(\bbZ,\leq)$.  (This product is not acyclic, but the countably infinite product of any acyclic Kan complexes is again acyclic.)
\end{rem}

We now give the result which we will need in the proof of \cref{detect acyclic fibrations of sspaces}.

\begin{prop}\label{Ex-infty is fibrant}
For any $Y \in s\S$, $\Ex^\infty(Y) \in s\S_\KQ^f$.
\end{prop}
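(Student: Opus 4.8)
The statement asks that the terminal map $\Ex^\infty(Y) \ra \pt_{s\S}$ have $\rlp(J_\KQ)$. Since $J_\KQ = \{\Lambda^n_i \ra \Delta^n\}_{0 \leq i \leq n \geq 1}$ and the codomain of any such lifting problem is terminal, this unwinds to the assertion that for each such $n$ and $i$ the restriction map
\[ \hom_{s\S}(\Delta^n, \Ex^\infty(Y)) \ra \hom_{s\S}(\Lambda^n_i, \Ex^\infty(Y)) \]
is surjective on path components. (As $\rlp$ does not depend on the chosen representatives of the classes in $J_\KQ$, it suffices to treat one representative of each.) So I would fix a map $f : \Lambda^n_i \ra \Ex^\infty(Y)$ in $s\S$ and produce an extension of it along $\Lambda^n_i \ra \Delta^n$.

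The first move is a compactness argument. By \cref{finite ssets are small sspaces}, $\Lambda^n_i$ is $\omega$-small as an object of $s\S$, and $\Ex^\infty(Y) = \colim_k \Ex^k(Y)$ is a sequential colimit (\cref{notn Ex-infty}), so $f$ factors --- after enlarging $k$ --- through some $f_k : \Lambda^n_i \ra \Ex^k(Y)$. Transposing along the adjunction $\sd^k : s\S \adjarr s\S : \Ex^k$ of \cref{sd and Ex are adjoint} turns $f_k$ into a map $\hat f_k : \sd^k(\Lambda^n_i) \ra Y$; note that $\sd^k(\Lambda^n_i) \in s\Set \subset s\S$, and that under these adjunctions the transition maps $\Ex^k(Y) \ra \Ex^{m}(Y)$ of the defining colimit correspond to precomposition with the iterated last-vertex maps $\sd^{m-k}(A) \ra A$.

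The essential input is now the classical combinatorics underlying Kan's proof that $\Ex^\infty$ of a simplicial set is a Kan complex, all of which takes place inside $s\Set$: for each $n \geq 1$ and $0 \leq i \leq n$ there is an $N = N(n,i) \geq 0$ such that the inclusion $\sd^N(j) : \sd^N(\Lambda^n_i) \ra \sd^N(\Delta^n)$ (where $j : \Lambda^n_i \ra \Delta^n$) admits a retraction $r : \sd^N(\Delta^n) \ra \sd^N(\Lambda^n_i)$ in $s\Set$; this is where the real work lies, and it can be obtained e.g.\ via a (relative) simplicial approximation to the evident PL retraction, or by explicit subdivision formulas (see \cite[Ch.\ III, \sec 4]{GJ} and \cite[\sec 3-4]{KanEx}). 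Granting this, I would set $m = k + N$ and define $g : \sd^m(\Delta^n) = \sd^k(\sd^N(\Delta^n)) \ra Y$ to be the composite
\[ \sd^k(\sd^N(\Delta^n)) \xra{\sd^k(r)} \sd^k(\sd^N(\Lambda^n_i)) = \sd^m(\Lambda^n_i) \xra{\ell} \sd^k(\Lambda^n_i) \xra{\hat f_k} Y , \]
where $\ell$ denotes the iterated last-vertex map. Because $\sd^k(r) \circ \sd^m(j) = \sd^k(r \circ \sd^N(j)) = \id$, the transpose $\bar g : \Delta^n \ra \Ex^m(Y)$ of $g$ restricts along $j$ to the transpose of $\hat f_k \circ \ell$, which --- by the bookkeeping of the previous paragraph --- is precisely the composite $\Lambda^n_i \xra{f_k} \Ex^k(Y) \ra \Ex^m(Y)$. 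Postcomposing $\bar g$ with the canonical map $\Ex^m(Y) \ra \Ex^\infty(Y)$ then gives a map $\Delta^n \ra \Ex^\infty(Y)$ whose restriction to $\Lambda^n_i$ agrees, in $\pi_0 \hom_{s\S}(\Lambda^n_i, \Ex^\infty(Y))$, with $f$ --- since $f$ was assumed to factor through $f_k$. This furnishes the required lift.

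The main obstacle is clearly the combinatorial retraction lemma in $s\Set$: once it is available, the passage to simplicial spaces is entirely formal, relying only on the $\sd^k \dashv \Ex^k$ adjunctions, the universal property of the sequential colimit defining $\Ex^\infty$, and the $\omega$-smallness of finite simplicial sets in $s\S$ from \cref{finite ssets are small sspaces}. A further wrinkle worth recording is the bookkeeping that identifies the transition maps of the colimit with precomposition by iterated last-vertex maps; this is routine but must be gotten right, so that $\bar g$ genuinely extends $f_k$ rather than some other lift of it.
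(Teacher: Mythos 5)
Your overall architecture is exactly the paper's: use the $\omega$-smallness from \cref{finite ssets are small sspaces} to factor the given horn through a finite stage $\Ex^k(Y)$, transpose via the adjunction of \cref{sd and Ex are adjoint}, resolve the lifting problem combinatorially in $s\Set$, and transpose back; the adjunction bookkeeping is also correct. However, the combinatorial lemma you flag as ``the essential input'' --- that some iterated barycentric subdivision $\sd^N(\Lambda^n_i) \hookra \sd^N(\Delta^n)$ admits a simplicial retraction --- is false for $n \geq 2$. Take $\Lambda^2_1 \subset \Delta^2$ and $N = 1$: the barycenter vertex $d = \{012\}$ of $\sd(\Delta^2)$ lies in the $2$-simplices $\{0 < 01 < d\}$, $\{1 < 01 < d\}$, $\{1 < 12 < d\}$, $\{2 < 12 < d\}$, each of which a retraction $r$ must carry into the $1$-dimensional complex $\sd(\Lambda^2_1)$; since $r$ fixes $0,1,2,01,12$, this forces $r(d) \in \{0,01\} \cap \{1,01\} \cap \{1,12\} \cap \{2,12\} = \emptyset$. (The case $N=0$ fails even more easily, as the nondegenerate $2$-simplex of $\Delta^2$ has three distinct vertices all fixed by $r$.) Relative simplicial approximation does not rescue you: it produces a simplicial retraction off a subdivision of $\Delta^n$ taken \textit{relative to} $\Lambda^n_i$, which is neither iterated barycentric nor valued in the subdivided horn.

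What Kan's argument actually supplies --- and what the paper invokes via \cite[Chapter III, Lemma 4.7]{GJ} --- is an extension that \textit{drops a level of subdivision}: for any $Y$ and any $\Lambda^n_i \ra \Ex(Y)$, there is an extension $\Delta^n \ra \Ex^2(Y)$, which after transposition amounts to a simplicial map $\sd^2(\Delta^n) \ra \sd(\Lambda^n_i)$ whose restriction to $\sd^2(\Lambda^n_i)$ is the last-vertex map rather than the identity. This is constructed by a universal diagram chase on $\Lambda^n_i \hookra \Delta^n$ alone, which is precisely why the paper can assert that it applies verbatim for $Y \in s\S$. If you substitute this for your retraction claim and adjust the bookkeeping accordingly (so $g$ has source $\sd^{k+2}(\Delta^n)$ and factors through $\sd^{k+1}(\Lambda^n_i)$ via a single last-vertex map before reaching $\sd^k(\Lambda^n_i)$), the rest of your argument goes through and coincides with the paper's proof.
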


\begin{proof}
The proof of \cite[Chapter III, Lemma 4.7]{GJ}, given as it is by a universal computation involving the map $\Lambda^n_i \ra \Delta^n$, works equally well in our setting to show that for any $Y \in s\S$ and for any map $\Lambda^n_i \ra \Ex(Y)$, there exists an extension
\[ \begin{tikzcd}
\Lambda^n_i \arrow{r} \arrow{d} & \Ex(Y) \arrow{d} \\
\Delta^n \arrow[dashed]{r} & \Ex^2(Y) .
\end{tikzcd} \]
By \cref{permit the small object argument}, it follows that $\Ex^\infty(Y) \ra \pt_{s\S}$ is in $\rlp(J_\KQ)$, i.e.\! that $\Ex^\infty(Y) \in s\S_\KQ^f$.
\end{proof}

\begin{rem}\label{bonus properties of sd and Ex}
Many of the usual results regarding the classical endofunctors $\sd : s\Set \ra s\Set$ and $\Ex : s\Set \ra s\Set$ extend to our setting.

For instance, the functor $\Ex^\infty : s\S \ra s\S$ (with its canonical map from $\id_{s\S}$) is a fibrant replacement functor in $s\S_\KQ$.  To see this, in light of \cref{Ex-infty is fibrant} it suffices to show that the map $Y \ra \Ex^\infty(Y)$ is in $\bW_\KQ$.  Since the subcategory $\bW_\KQ \subset s\S$ is closed under transfinite composition, it suffices to show that the map $Y \ra \Ex(Y)$ is in $\bW_\KQ$.  For this, we first use the small object argument to produce a map $Y' \ra Y$ in $s\S$ that has $\rlp(I_\KQ)$ with $Y' \in s\Set \subset s\S$.  Then, we see that the map $\Ex(Y') \ra \Ex(Y)$ also has $\rlp(I_\KQ)$ since a commutative square
\[ \begin{tikzcd}
\partial \Delta^n \arrow{r} \arrow[tail]{d} & \Ex(Y') \arrow{d} \\
\Delta^n \arrow{r} & \Ex(Y)
\end{tikzcd} \]
is adjoint to a commutative square
\[ \begin{tikzcd}
\sd(\partial \Delta^n) \arrow{r} \arrow[tail]{d} & Y' \arrow{d}{\rlp(I_\KQ)} \\
\sd(\Delta^n) \arrow{r} & Y ,
\end{tikzcd} \]
and there is always a lift in the latter square (which is equivalent to a lift in the former square).  It follows from \cref{detect acyclic fibrations of sspaces} below that we have both $Y' \we Y$ and $\Ex(Y') \we \Ex(Y)$, and hence from the diagram
\[ \begin{tikzcd}
Y' \arrow{r}{\approx} \arrow{d}[sloped, anchor=north]{\approx} & \Ex(Y') \arrow{d}[sloped, anchor=south]{\approx} \\
Y \arrow{r} & \Ex(Y)
\end{tikzcd} \]
we deduce that also $Y \we \Ex(Y)$ since $\bW_\KQ$ satisfies the two-out-of-three property.  On the other hand, our choice to use an unenriched lifting condition (and hence to be relatively restrictive about which maps are cofibrations) means that the map $Y \ra \Ex(Y)$ is not generally in $\bC_\KQ$.

From here, it is not hard to see that in fact we have a Quillen equivalence $\sd : s\S_\KQ \adjarr s\S_\KQ : \Ex$.  Indeed, it is straightforward to check that $\sd : s\S \ra s\S$ preserves both $I_\KQ \dashcell$ and $J_\KQ \dashcell$, so that this adjunction is a Quillen adjunction.  Then, the condition for being a Quillen equivalence follows from the facts
\begin{itemizesmall}
\item that $\sd(K) \we K$ for any $K \in s\Set = s\S^c_\KQ$,
\item that $Y \we \Ex(Y)$ for any $Y \in s\S$ (as shown above), and
\item that $\bW_\KQ \subset s\S$ satisfies the two-out-of-three property.
\end{itemizesmall}

One can similarly verify using standard arguments that $\Ex^\infty$ preserves $\bF_\KQ$, finite limits, filtered colimits, and $0\th$ spaces.
\end{rem}

\section{The proof of the Kan--Quillen model structure}\label{section proof of kan--quillen model structure}

We now turn to the components of the proof of the main result of this paper, \cref{kan--quillen model structure on sspaces}.  Recall that this appeals to the recognition theorem for cofibrantly generated model $\infty$-categories (\ref{recognize cofgen}); we verify the various criteria in turn, as itemized in the proof of \cref{kan--quillen model structure on sspaces} above.

\begin{prop}\label{detect acyclic cofibrations of sspaces}
$J_\KQ \dashcof \subset (I \dashcof \cap \bW)_\KQ$.
\end{prop}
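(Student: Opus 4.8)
The plan is to establish the two containments $J_\KQ \dashcof \subset I_\KQ\dashcof$ and $J_\KQ \dashcof \subset \bW_\KQ$ separately. For the first, I would note that each horn inclusion $\Lambda^n_i \ra \Delta^n$, viewed as a map of \emph{simplicial sets}, is a finite relative $I^{s\Set}_\KQ$-cell complex (one simply attaches the missing faces), and that pushouts along cofibrations in $s\Set$ remain pushouts in $s\S$ (as used repeatedly above, e.g.\ in the proof of \cref{map to pi-0 is a fibn}). Hence $J_\KQ \subset I_\KQ\dashcell \subset I_\KQ\dashcof$ as maps of simplicial spaces. Consequently $\rlp(I_\KQ) \subset \rlp(J_\KQ)$, and therefore $J_\KQ\dashcof = \llp(\rlp(J_\KQ)) \subset \llp(\rlp(I_\KQ)) = I_\KQ\dashcof$.

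For the second containment, the point is that $\bW_\KQ$ is the preimage $|{-}|^{-1}(\S^\simeq)$ of the equivalences under the colimit-preserving (hence also retract-preserving) functor $|{-}| : s\S \ra \S$. In $\S$ the equivalences are closed under retracts, under pushout (a pushout of an equivalence is an equivalence), and under transfinite composition (a sequential colimit of equivalences is an equivalence); since $|{-}|$ preserves all of these, the same closure properties hold for $\bW_\KQ \subset s\S$. By the small object argument (\cref{small object argument}, together with \cref{permit the small object argument}) and the retract argument, every map in $J_\KQ\dashcof$ is a retract of a transfinite composite of pushouts of elements of $J_\KQ$, so it suffices to show that each generator $\Lambda^n_i \ra \Delta^n$ itself lies in $\bW_\KQ$. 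But the restriction of $|{-}|$ along $s\Set \subset s\S$ is the classical geometric realization functor (see \cref{section conventions}), so $|\Delta^n|$ and $|\Lambda^n_i|$ are both contractible and the induced map between them is an equivalence in $\S$. Assembling the closure properties then yields $J_\KQ\dashcof \subset \bW_\KQ$.

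The only step that is not pure formal manipulation is the identification of $|{-}|$ restricted to $s\Set$ with the classical geometric realization (equivalently, the assertion $\loc{s\Set}{\bW_\KQ} \simeq \S$ together with its compatibility with the functor $|{-}| : s\S \ra \S$), and here I would lean entirely on \cref{section conventions}; note that this fact concerns only $s\Set$ and $\S$, so invoking it introduces no circularity with the construction of the Kan--Quillen model structure on $s\S$. Everything else reduces to the standard interplay between lifting properties and cell complexes and to elementary closure properties of equivalences in $\S$.
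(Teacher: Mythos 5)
Your proposal is correct and takes essentially the same approach as the paper: it splits the containment into $J_\KQ\dashcof \subset I_\KQ\dashcof$ (via $J_\KQ \subset I_\KQ\dashcell$) and $J_\KQ\dashcof \subset \bW_\KQ$, reduces the latter to $J_\KQ\dashcell$ via the small object argument plus the retract argument, and then handles cells using the fact that $|{-}|$ preserves pushouts and transfinite composites and that pushouts of equivalences in $\S$ are equivalences. The paper verifies the pushout step by drawing the explicit square in $\S$ and invoking the same closure property of equivalences, so the two arguments coincide in substance; yours is merely slightly more explicit about packaging the closure properties of $\bW_\KQ = |{-}|^{-1}(\S^\simeq)$ before applying them.
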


\begin{proof}
First, since $J_\KQ \subset I_\KQ \dashcell$, then $J_\KQ \dashinj \supset (I_\KQ \dashcell) \dashinj = I_\KQ \dashinj$, so $J_\KQ \dashcof \subset I_\KQ \dashcof$.  So it remains to show that $J_\KQ \dashcof \subset \bW_\KQ$.

To show that $J_\KQ \dashcof \subset \bW_\KQ$, we claim that it suffices to show that $J_\KQ \dashcell \subset \bW_\KQ$; more precisely, we claim that any map in $J_\KQ \dashcof$ is a retract of a map in $J_\KQ \dashcell$, and so the result follows from the fact that $\bW_\KQ$ is closed under retracts.  Indeed, by \cref{permit the small object argument}, we can apply the small object argument for $J_\KQ$ to any map in $J_\KQ \dashcof$ to factor it as a map in $J_\KQ \dashcell$ followed by a map in $J_\KQ \dashinj$.  Then, by the retract argument (in the form of \cite[Proposition 7.2.2(1)]{Hirsch}, whose proof for 1-categories carries over verbatim to $\infty$-categories), it follows that the original map in $J_\KQ \dashcof$ is a retract of the map in $J_\KQ \dashcell$.

Finally, to see that $J_\KQ \dashcell \subset \bW_\KQ$, since a sequential colimit of equivalences is an equivalence, by transfinite induction it suffices to show that if we have a pushout square
\[ \begin{tikzcd}
\Lambda^n_i \arrow{r} \arrow{d} & Y \arrow{d} \\
\Delta^n \arrow{r} & Z
\end{tikzcd} \]
in $s\S$, then $|Y| \xra{\sim} |Z|$ in $\S$.  But this follows from the fact that geometric realization (being a colimit) commutes with pushouts, so the induced square
\[ \begin{tikzcd}
|\Lambda^n_i| \arrow{r} \arrow{d}[sloped, anchor=north]{\sim} & |Y| \arrow{d} \\
|\Delta^n| \arrow{r} & |Z|
\end{tikzcd} \]
is a pushout in $\S$.
\end{proof}

\begin{prop}\label{detect acyclic fibrations of sspaces}
$I_\KQ \dashinj \subset (J \dashinj \cap \bW)_\KQ$.
\end{prop}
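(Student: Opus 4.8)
Two inclusions are being asserted: $I_\KQ\dashinj \subset J_\KQ\dashinj$ and $I_\KQ\dashinj \subset \bW_\KQ$. The first is immediate from \cref{detect acyclic cofibrations of sspaces}, which gives $J_\KQ\dashcof \subset I_\KQ\dashcof$; since $J_\KQ \subset J_\KQ\dashcof$, any map with $\rlp(I_\KQ)$ then also has $\rlp(J_\KQ)$. So the real content is that a map $p : Y \ra Z$ in $s\S$ with $\rlp(I_\KQ)$ becomes an equivalence in $\S$ upon geometric realization. My plan is to never demand that $Y$ or $Z$ be fibrant, but instead to pass to fibers, where fibrancy becomes automatic.

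Since $p$ has $\rlp(I_\KQ) \subset \rlp(J_\KQ)$, for any point $z : \Delta^0 \ra Z$ the map $F_z \ra \pt_{s\S}$, with $F_z = Y \times_Z \Delta^0$, is a base change of $p$; as the right lifting property is stable under base change, $F_z \ra \pt_{s\S}$ again has $\rlp(I_\KQ)$, and in particular $F_z$ is fibrant. By \cref{the fiber of a fibration of sspaces is homotopically correct} (applicable as $p$ has $\rlp(J_\KQ)$), the homotopy fiber of $|p|$ over $|z|$ is $|F_z|$. Since $Z_0 \ra |Z|$ is surjective, every path component of $|Z|$ contains such a point $|z|$, so once I show $|F_z|$ is contractible for all $z$, every homotopy fiber of $|p|$ is contractible and hence $|p|$ is an equivalence. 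Thus the whole proposition reduces to the claim: \emph{if $F \in s\S$ has $F \ra \pt_{s\S}$ in $\rlp(I_\KQ)$, then $|F|$ is contractible.}

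To prove this claim I would use \cref{detect equivalences of spaces}: it suffices to show that for every $n \geq 0$, every map $S^{n-1} \ra |F|$ extends over the inclusion $S^{n-1} \hookra D^n$. For $n = 0$ this just says $|F|$ is nonempty, which holds by lifting $\es_{s\S} \cofibn \Delta^0 \in I_\KQ$ against $F \ra \pt_{s\S}$ and realizing. For $n \geq 1$, model $S^{n-1} \hookra D^n$ by $\partial\Delta^n \cofibn \Delta^n$: present the given map $S^{n-1} \simeq |\partial\Delta^n| \ra |F|$ by a map $\partial\Delta^n \ra F$ in $s\S$ using \cref{special case of fibrant sspaces are fibrant} — legitimate precisely because $F$ is fibrant, so none of the subdivision trouble of \cref{crazy lemma is not as strong as one might hope} arises — then lift this map against $F \ra \pt_{s\S}$ along $\partial\Delta^n \cofibn \Delta^n \in I_\KQ$ to obtain $\Delta^n \ra F$, and apply $|{-}|$ to get the desired extension $D^n \ra |F|$.

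The genuinely non-formal work has all been discharged upstream, in \cref{crazy lemma} and its consequences \cref{the fiber of a fibration of sspaces is homotopically correct} and \cref{fibrant sspaces are fibrant}. Granting those, the main point to get right is the reduction step: one must notice that the fiber $F_z$ inherits the \emph{full} lifting property $\rlp(I_\KQ)$, not merely $\rlp(J_\KQ)$, since it is this that powers the cell-by-cell lifting against $\partial\Delta^n \cofibn \Delta^n$. (Alternatively one could reduce to the fibrant case by replacing $Y, Z$ with $\Ex^\infty Y, \Ex^\infty Z$, which are fibrant by \cref{Ex-infty is fibrant} and still receive maps with $\rlp(I_\KQ)$ via the $\sd \dashv \Ex$ adjunction; but that route additionally requires knowing $Y \ra \Ex^\infty Y \in \bW_\KQ$, so the fiber argument is cleaner.)
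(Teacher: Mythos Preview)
Your proof is correct and takes a genuinely different route from the paper's. The paper attacks $|Y| \ra |Z|$ directly via \cref{detect equivalences of spaces}: given a square with $S^{n-1} \ra |Y|$ and $\pt_\S \ra |Z|$, it presents the upper map by some finite $K \ra Y$ (via \cref{crazy lemma}), encodes the chosen nullhomotopy in $|Z|$ as an extension problem over $M = (K \times \Delta^1)/(K \times \Delta^{\{1\}})$, and then---since $Z$ need not be fibrant---passes to $\Ex^\infty(Z)$ (fibrant by \cref{Ex-infty is fibrant}) to present the nullhomotopy via \cref{fibrant sspaces are fibrant}, factors through a finite stage $\Ex^i(Z)$ by smallness, adjoints over to $\sd^i(M) \ra Z$, and finally lifts back to $Y$ using $\rlp(I_\KQ)$. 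Your argument sidesteps the non-fibrancy of $Z$ altogether by passing to fibers: since $F_z$ inherits the full $\rlp(I_\KQ)$ under base change it is in particular fibrant, so \cref{special case of fibrant sspaces are fibrant} applies directly and no subdivision or $\Ex^\infty$ apparatus is needed. The trade-off is that you invoke \cref{the fiber of a fibration of sspaces is homotopically correct}, which the paper's proof of this proposition does not (though the paper needs it anyway for right properness in \cref{kan--quillen model structure is right proper}); granting it, your route is shorter and shows that the $\Ex^\infty$ development of \cref{subsection Ex-infty} is not strictly needed for establishing the model structure. Your closing parenthetical is also on point: the naive ``replace $Y,Z$ by $\Ex^\infty Y, \Ex^\infty Z$'' strategy would be circular exactly as you say (cf.\ \cref{bonus properties of sd and Ex}), but the paper avoids this by using $\Ex^\infty(Z)$ only as a \emph{target} for presenting nullhomotopies, never needing $Z \we \Ex^\infty(Z)$.
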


\begin{proof}
First, since $J_\KQ \subset I_\KQ \dashcell$, then $I_\KQ \dashinj = (I_\KQ \dashcell) \dashinj \subset J_\KQ \dashinj$.  So it remains to show that $I_\KQ \dashinj \subset \bW_\KQ$.

So, suppose that the map $Y \ra Z$ is in $I_\KQ \dashinj$, i.e.\! that it has $\rlp(I_\KQ)$.  By \cref{detect equivalences of spaces}, it suffices to show that any diagram
\[ \begin{tikzcd}
S^{n-1} \arrow{r} \arrow{d} & |Y| \arrow{d} \\
\pt_\S \arrow{r} & |Z|
\end{tikzcd} \]
in $\S$ admits a lift, for any $n \geq 0$.  Using the observation in \cref{crazy lemma is not as strong as one might hope}, by \cref{crazy lemma} there exists a factorization $\es_{s\Set} \ra K \we (\Delta^{n-1}/\partial \Delta^{n-1})$ in $s\Set_\KQ$ and a map $K \ra Y$ in $s\S$ presenting the upper map in this diagram, where $K$ has only finitely many nondegenerate simplices.

Now, the above diagram gives us a nullhomotopy of the composite $S^{n-1} \simeq |K| \ra |Y| \ra |Z|$ in $\S$, and we would like to extend the composite $K \ra Y \ra Z$ over a cofibration from $K$ into a acyclic object of $s\Set_\KQ$ in a way which presents this nullhomotopy.  To do this, we write $M = (K \times \Delta^1) /( K \times \Delta^{\{1\}})$ (with its natural inclusion $K \cong K \times \Delta^{\{0\}} \cofibn M$ in $s\Set_\KQ$), and then by \cref{Ex-infty is fibrant} and \cref{fibrant sspaces are fibrant} we conclude that there must exist an extension
\[ \begin{tikzcd}
K \arrow{r} \arrow[tail]{d} & Z \arrow{r} & \Ex^\infty(Z) \\
M \arrow[dashed]{rru}
\end{tikzcd} \]
in $s\S_\KQ$ modeling the above nullhomotopy.  However, since $M$ also has only finitely many nondegenerate simplices, then by \cref{finite ssets are small sspaces} there must exist a factorization
\[ \begin{tikzcd}
K \arrow{r} \arrow[tail]{d} & L \arrow{r} & \Ex^i(Z) \arrow{r} & \Ex^\infty(Z) \\
M \arrow[dashed]{rru} \arrow{rrru}
\end{tikzcd} \]
for some $i<\infty$.  Via the adjunction $\sd^i : s\S \adjarr s\S : \Ex^i$ of \cref{sd and Ex are adjoint}, the above extension yields the extension
\[ \begin{tikzcd}
\sd^i(K) \arrow{r}{\approx} \arrow[tail]{d} & K \arrow{d} \\
\sd^i(M) \arrow[dashed]{r} & Z
\end{tikzcd} \]
in $s\S_\KQ$, and plugging this back into the original diagram gives us the diagram
\[ \begin{tikzcd}
\sd^i(K) \arrow{r}{\approx} \arrow[tail]{d} & K \arrow{r} & Y \arrow{d} \\
\sd^i(M) \arrow{rr} & & Z
\end{tikzcd} \]
in $s\S_\KQ$.  Now, since by assumption $Y \ra Z$ has $\rlp(I_\KQ)$, then there must exist a lift
\[ \begin{tikzcd}
\sd^i(K) \arrow{r}{\approx} \arrow[tail]{d} & K \arrow{r} & Y \arrow{d} \\
\sd^i(M) \arrow{rr} \arrow[dashed, bend right=10]{rru} & & Z
\end{tikzcd} \]
in $s\S_\KQ$, and upon extracting the outer rectangle and taking geometric realizations, this yields the desired lift
\[ \begin{tikzcd}
S^{n-1} \arrow{r} \arrow{d} & |Y| \arrow{d} \\
\pt_\S \arrow{r} \arrow[dashed]{ru} & |Z|
\end{tikzcd} \]
in $\S$.
\end{proof}

\begin{rem}\label{compare with moerdijk}
In the same spirit as \cref{compare with anderson and bousfield--friedlander}, the criterion of \cref{detect acyclic fibrations of sspaces} is comparable to the lifting criterion coming from the generating cofibrations in the \textit{Moerdijk model structure} on $ss\Set$ (originally introduced in \cite[\sec 1]{Moer}, but see also \cite[Chapter IV, \sec 3.3]{GJ}).  However, to actually make a direct comparison requires a bit of care, and so we explain this in some detail.

First of all, the diagonal functor $\diag : \bD^{op} \ra \bD^{op} \times \bD^{op}$ induces an adjunction $\diag_! : s\Set \adjarr ss\Set : \diag^*$, and the Moerdijk model structure on $ss\Set$ is induced by the standard lifting theorem (on which \cref{lift cofgen} is based) applied to the model structure $s\Set_\KQ$; in fact, this yields a Quillen equivalence $\diag_! : s\Set_\KQ \adjarr ss\Set_\Moer : \diag^*$.  Denoting the external product by $- \extprod - : s\Set \times s\Set \ra ss\Set$, the generating cofibrations for this model structure are thus given by
\[ I^{ss\Set}_\Moer = \{ \diag_!(\partial \Delta^n \ra \Delta^n) \}_{n \geq 0} = \{ \partial \Delta^n \extprod \partial \Delta^n \ra \Delta^n \extprod \Delta^n \}_{n \geq 0} , \]
and we have that $\rlp(I^{ss\Set}_\Moer) \subset \bW^{ss\Set}_\Moer$.

Next, recall that we can present the $\infty$-category $s\S$ using the model category $s(s\Set_\KQ)_\Reedy$.  Thus, any map $Y \ra Z$ in $s\S$ can be presented as a map $\ttY \ra \ttZ$ in $s(s\Set_\KQ)_\Reedy$.  If the latter map happens to have $\rlp(I^{ss\Set}_\Moer)$, then we will have that the induced map $\diag^*(\ttY) \ra \diag^*(\ttZ)$ will be in $\bW^{s\Set}_\KQ$; since in $s(s\Set_\KQ)_\Reedy$ the diagonal always computes the homotopy colimit (see e.g.\! \cite[Chapter IV, Exercise 1.6]{GJ} or Example T.A.2.9.31), then this map in $s\Set_\KQ$ presents the map $|Y| \ra |Z|$ in $\S$, which is therefore an equivalence.

Now, observe that the maps in $I^{ss\Set}_\Moer$ are cofibrations (between cofibrant objects) when considered in the model category $s(s\Set_\KQ)_\Reedy$.  Moreover, note that if
\begin{itemizesmall}
\item the maps $A \xra{i} B$ and $Y \xra{p} Z$ in $s\S$ are respectively presented by the maps $\ttA \stackrel{\tti}{\cofibn} \ttB$ and $\ttY \stackrel{\ttp}{\fibn} \ttZ$  in $s(s\Set_\KQ)_\Reedy$,
\item $\ttZ \in s(s\Set_\KQ)_\Reedy^f$, and
\item $p \in \rlp(\{i\})$ in $s\S$,
\end{itemizesmall}
then $\ttp \in \rlp(\{\tti\})$ in $s(s\Set)$.  (This follows from the fact that under these hypotheses, the model category $(s(s\Set_\KQ)_\Reedy)_{\ttA /\! / \ttZ}$ presents the $\infty$-category $s\S_{A/ \! /Z}$.)  Together, these imply that if the map $Y \ra Z$ in $s\S$ has the right lifting property against the set
\[ I^{s\S}_\Moer = \{ S^{n-1} \tensoring \partial \Delta^n \ra \pt_\S \tensoring \Delta^n \}_{n \geq 0} = \{ S^{n-1} \tensoring \partial \Delta^n \ra \Delta^n \}_{n \geq 0} , \]
then it can be presented by a map $\ttY \ra \ttZ$ in the model category $s(s\Set_\KQ)_\Reedy$ which has $\rlp(I^{ss\Set}_\Moer)$, and therefore $|Y| \xra{\sim} |Z|$ in $\S$.  Hence, we obtain that $\rlp(I^{s\S}_\Moer) \subset \bW_\KQ$.  This, finally, allows us to make a direct comparison.

Of course, what we come to see now is that it appears much easier to have $\rlp(I^{s\S}_\KQ)$ than to have $\rlp(I^{s\S}_\Moer)$.  The sets $I^{s\S}_\Moer$ and $I^{s\S}_\KQ$ of homotopy classes of maps in $s\S$ are illustrated in \cref{picture of Moer and KQ sets}, in which the various shapes and their positions are meant to vaguely indicate the different simplicial levels at which these spaces live as well as the simplicial structure maps between them:
\begin{figure}[h]
\begin{tikzpicture}[scale=2]


\draw (0,0) circle (2pt);
\draw (1,0) circle (2pt);
\draw (0.5,0.866) circle (2pt);
\draw (0.5,0) ellipse (6pt and 1.5pt);
\draw [rotate around={60:(0.25,0.433)}] (0.25,0.433) ellipse (6pt and 1.5pt);
\draw [rotate around={120:(0.75,0.433)}] (0.75,0.433) ellipse (6pt and 1.5pt);

\path (1.25,0.5) edge[style={-stealth}] node[above] {$I^{s\S}_\Moer$} (1.75,0.5);

\draw [fill] (2,0) circle (2pt);
\draw [fill] (3,0) circle (2pt);
\draw [fill] (2.5,0.866) circle (2pt);
\draw [fill] (2.5,0) ellipse (6pt and 1.5pt);
\draw [rotate around={60:(2.25,0.433)}, fill] (2.25,0.433) ellipse (6pt and 1.5pt);
\draw [rotate around={120:(2.75,0.433)}, fill] (2.75,0.433) ellipse (6pt and 1.5pt);
\draw [fill] (2.413,0.239) -- (2.587,0.239) -- (2.5,0.389) -- (2.413, 0.239);


\draw [fill] (4.5,0) circle (2pt);
\draw [fill] (5.5,0) circle (2pt);
\draw [fill] (5,0.866) circle (2pt);
\draw [fill] (5,0) ellipse (6pt and 1.5pt);
\draw [rotate around={60:(4.75,0.433)}, fill] (4.75,0.433) ellipse (6pt and 1.5pt);
\draw [rotate around={120:(5.25,0.433)}, fill] (5.25,0.433) ellipse (6pt and 1.5pt);

\path (5.75,0.5) edge[style={-stealth}] node[above] {$I^{s\S}_\KQ$} (6.25,0.5);

\draw [fill] (6.5,0) circle (2pt);
\draw [fill] (7.5,0) circle (2pt);
\draw [fill] (7,0.866) circle (2pt);
\draw [fill] (7,0) ellipse (6pt and 1.5pt);
\draw [rotate around={60:(6.75,0.433)}, fill] (6.75,0.433) ellipse (6pt and 1.5pt);
\draw [rotate around={120:(7.25,0.433)}, fill] (7.25,0.433) ellipse (6pt and 1.5pt);
\draw [fill] (6.913,0.239) -- (7.087,0.239) -- (7,0.389) -- (6.913, 0.239);


\node at (0.5,-0.3) {$S^{n-1} \tensoring \partial \Delta^n$};
\node at (2.5,-0.3) {$\Delta^n$};

\node at (5,-0.3) {$\partial \Delta^n$};
\node at (7,-0.3) {$\Delta^n$};

\end{tikzpicture}
\caption{The maps in $I^{s\S}_\Moer$ and $I^{s\S}_\KQ$ at $n=2$.}\label{picture of Moer and KQ sets}
\end{figure}
\begin{itemizesmall}
\item the maps in $I^{s\S}_\Moer$ are obtained by simultaneously coning off a $\partial \Delta^n$ worth of $(n-1)$-spheres and adding a new nondegenerate point in the $n^{\rm th}$ simplicial level, while
\item the maps in $I^{s\S}_\KQ$ are simply maps of discrete simplicial spaces.
\end{itemizesmall}
In particular, the maps in $I^{s\S}_\Moer$ have real homotopical content, and thus it appears that checking that a map has $\rlp(I^{s\S}_\Moer)$ is indeed much more difficult than checking that it has $\rlp(I^{s\S}_\KQ)$.
\end{rem}

\begin{rem}\label{rem ceg-rem}
There is also the ``$\ol{W}$ model structure'' on $ss\Set$ of \cite{CegRem}, which admits a left Quillen equivalence to $ss\Set_\Moer$ (see \cite[Theorem 9]{CegRem}).  However, this is of course also inherently 1-categorical, and hence any $\infty$-categorical lifting criteria that come of it will likewise necessarily contain far more geometric content than the maps in $I_\KQ$ (compare with Remarks \ref{compare with moerdijk} \and \ref{ssets in infty-cat of sspaces know about more than just levelwise 0-simplices}).
\end{rem}

\begin{rem}\label{ssets in infty-cat of sspaces know about more than just levelwise 0-simplices}
If one thinks of the 1-category of topological spaces or of simplicial sets in place of the $\infty$-category $\S$, then \cref{detect acyclic fibrations of sspaces} may seem somewhat implausible.  For instance, the functor $\disc : s\Set \ra s\S$ of $\infty$-categories is modeled in $\strrelcat_\BarKan$ by an evident functor $\const : s\Set_\triv \ra s(s\Set_\KQ)_\Reedy$.  On underlying 1-categories, this functor participates in an adjunction $\const : s\Set \adjarr ss\Set : ((-)_0)^\lw$ with the ``levelwise 0-simplices'' functor, which takes each constituent simplicial set to its set of 0-simplices.  This right adjoint clearly doesn't know anything about the higher homotopical information in the bisimplicial set, and in particular cannot recover its geometric realization.  Hence, one might deduce that asking for the right lifting property in $s\S$ against maps in the image of $\disc : s\Set \ra s\S$ could not possibly tell us about the functor $|{-}| : s\S \ra \S$.  However, asking for the 0-simplices of a simplicial set isn't a homotopical operation in $s\Set_\KQ$, and so we cannot expect this maneuver to tell us anything $\infty$-categorical.  Indeed, the above right adjoint is certainly not a relative functor, nor is it even a right Quillen functor (with respect to the indicated model structures), corresponding to the fact that the functor $\disc : s\Set \ra s\S$ isn't a left adjoint.
\end{rem}

\begin{ex}
For a concrete nonexample of \cref{detect acyclic fibrations of sspaces}, we show that the map $\const(S^1) \ra \const(\pt_\S) \simeq \pt_{s\S}$ in $s\S$ (which is not in $\bW_\KQ$) does not have $\rlp(\{ \partial \Delta^2 \ra \Delta^2 \})$.  This illustrates the capability of ``simplicial'' spheres to detect ``geometric'' spheres in $s\S_\KQ$.

The quickest way to proceed is to use the adjunction $|{-}| = \colim : s\S \adjarr \S : \const$.  This gives a canonical commutative square
\[ \begin{tikzcd}
\partial \Delta^2 \arrow{r} \arrow{d} & \const(S^1) \arrow{d} \\
\Delta^2 \arrow{r} & \const(\pt_\S)
\end{tikzcd} \]
in $s\S$ which corresponds to the evident commutative square
\[ \begin{tikzcd}
|\partial \Delta^2| \arrow{r}{\sim} \arrow{d} & S^1 \arrow{d} \\
|\Delta^2| \arrow{r}[swap]{\sim} & \pt_\S
\end{tikzcd} \]
in $\S$.  Moreover, a lift in either square yields a lift in the other, but a lift in the latter diagram would imply that its vertical maps are also equivalences, which is clearly false.

But we can also describe the above commutative square in $s\S$ more explicitly.  Namely, we can define the upper map $\partial \Delta^2 \ra \const(S^1)$ in $s\S$ by giving a \textit{weak} natural transformation of simplicial topological spaces, as illustrated in \cref{weak natural transformation} (using the same schematics as were employed in \cref{picture of Moer and KQ sets}).
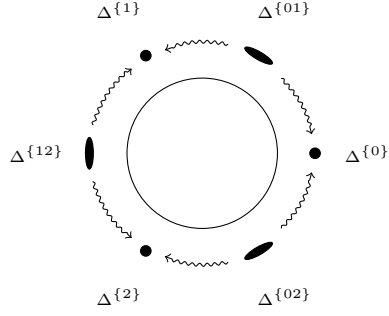
\begin{figure}[h]
\begin{tikzpicture}


\draw (0,0) circle (1cm);


\draw [fill] (1*1.5,0*1.5) circle (2pt);
\draw [fill] (-0.5*1.5,0.866*1.5) circle (2pt);
\draw [fill] (-0.5*1.5,-0.866*1.5) circle (2pt);


\draw [rotate around={-30:(0.5*1.5,0.866*1.5)}, fill] (0.5*1.5,0.866*1.5) ellipse (6pt and 1.5pt);
\draw [rotate around={90:(-1*1.5,0*1.5)}, fill] (-1*1.5,0*1.5) ellipse (6pt and 1.5pt);
\draw [rotate around={30:(0.5*1.5,-0.866*1.5)}, fill] (0.5*1.5,-0.866*1.5) ellipse (6pt and 1.5pt);


\draw (0.7*1.5,0.666*1.5) edge[snake arrow, bend left=15] (0.97*1.5,0.17*1.5);

\draw (0.23*1.5,0.966*1.5) edge[snake arrow, bend right=15] (-0.33*1.5,0.916*1.5);

\draw (-0.97*1.5,0.25*1.5) edge[snake arrow, bend left=15] (-0.626*1.5,0.75*1.5);


\draw (0.7*1.5,-0.666*1.5) edge[snake arrow, bend right=15] (0.97*1.5,-0.17*1.5);

\draw (0.23*1.5,-0.966*1.5) edge[snake arrow, bend left=15] (-0.33*1.5,-0.916*1.5);

\draw (-0.97*1.5,-0.25*1.5) edge[snake arrow, bend right=15] (-0.626*1.5,-0.75*1.5);


\node at (1*2.2,0*2.2) {\tiny $\Delta^{\{0\}}$};
\node at (0.5*2.2,0.866*2.2) {\tiny $\Delta^{\{01\}}$};
\node at (-0.5*2.2,0.866*2.2) {\tiny $\Delta^{\{1\}}$};
\node at (-1*2.2,0*2.2) {\tiny $\Delta^{\{12\}}$};
\node at (-0.5*2.2,-0.866*2.2) {\tiny $\Delta^{\{2\}}$};
\node at (0.5*2.2,-0.866*2.2) {\tiny $\Delta^{\{02\}}$};

\end{tikzpicture}
\caption{A weak natural transformation of simplicial topological spaces from $\partial \Delta^2$ to $\const(S^1)$.}\label{weak natural transformation}
\end{figure}
Let us parametrize the circle as the group-theoretic quotient $\mathbb{R} / \mathbb{Z}$.  Then, we begin at level 0 by sending $\Delta^{\{0\}}$ to $0$, $\Delta^{\{1\}}$ to $\nicefrac{1}{3}$, and $\Delta^{\{2\}}$ to $\nicefrac{2}{3}$.  Since $\partial \Delta^2$ is 1-skeletal, it remains to fill in the commutative diagram
\[ \begin{tikzcd}
L_1(\partial \Delta^2) \arrow{r} \arrow{d} & (\partial \Delta^2)_1 \arrow{r} \arrow[dashed]{d} & M_1(\partial \Delta^2) \arrow{d} \\
L_1(\const(S^1)) \arrow{r} & \const(S^1)_1 \arrow{r} & M_1(\const(S^1))
\end{tikzcd} \]
in $\S$.  To do this, we map the degenerate elements of $(\partial \Delta^2)_1$ so that the left square commutes on the nose.  Then, we map the nondegenerate elements of $(\partial \Delta^2)_1$ to $\const(S^1)_1 = S^1$ by sending $\Delta^{\{01\}}$ to $\nicefrac{1}{6}$, sending $\Delta^{\{12\}}$ to $\nicefrac{1}{2}$, and sending $\Delta^{\{02\}}$ to $\nicefrac{5}{6}$.  To select a homotopy witnessing the homotopy commutativity of the right square, for $i \not= j$ we choose the evident paths of length $\nicefrac{1}{6}$ from the image of each $\Delta^{\{ij\}}$ to the images of $\Delta^{\{i\}}$ and $\Delta^{\{j\}}$ (as indicated by the squiggly arrows in \cref{weak natural transformation}).  Again, it is clear that this map cannot be extended over $\Delta^2$.\footnote{This can also be realized as an actual natural transformation of simplicial topological spaces if we're willing to use a fatter model for the object $\partial \Delta^2 \in s\S$, despite the fact that the simplicial topological space which is constant at the circle isn't actually fibrant in the corresponding Reedy model structure.}
\end{ex}

\begin{rem}\label{circularity of potential proof of detect acyclic fibrations of sspaces}
In the proof of \cref{detect acyclic fibrations of sspaces} above, one might be tempted to apply the small object argument for $I_\KQ$ to the map $K \ra Z$ to obtain a factorization $K \ra L \ra Z$ with the map $K \ra L$ in $I_\KQ \dashcell$ (so that $L \in s\Set \subset s\S$) and with the map $L \ra Z$ in $I_\KQ \dashinj$; then, we could proceed with the proof using standard techniques in $s\Set_\KQ$, using $L \in s\Set_\KQ$ as a replacement for $Z \in s\S_\KQ$.  If this worked, it would allow us to sidestep the extension of the functor $\Ex^\infty$ from $s\Set$ to $s\S$.  However, such an argument would be circular: we can certainly obtain such a factorization $K \ra L \ra Z$, but to conclude that the map $L \ra Z$ is in $\bW_\KQ$ because it is in $I_\KQ \dashinj$ uses precisely the result that we are trying to prove.
\end{rem}

\begin{rem}
Whereas \cref{detect acyclic fibrations of sspaces} only allows us to detect acyclic fibrations in $s\S_\KQ$, the ``weak equivalence criterion'' of \cite[30.10]{BEBdBP} gives a complete characterization of the subcategory $\bW_\KQ^f \subset s\S$ of weak equivalences between fibrant objects (and thence also a complete (albeit rather abstract) characterization of the entire subcategory $\bW_\KQ \subset s\S$ of weak equivalences).
\end{rem}

In contrast with \cref{circularity of potential proof of detect acyclic fibrations of sspaces}, now that we have \cref{detect acyclic fibrations of sspaces} in hand, we can use this technique of reducing to $s\Set_\KQ$.  We employ it in proving the following result, the last of this section.

\begin{prop}\label{acyclic fibrations have rlp(I)}
$(J \dashinj \cap \bW)_\KQ \subset I_\KQ\dashinj$.
\end{prop}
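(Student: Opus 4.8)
The plan is to prove the equivalent assertion that every map $p : Y \ra Z$ in $s\S$ which has $\rlp(J_\KQ)$ and lies in $\bW_\KQ$ also has $\rlp(I_\KQ)$. So I would fix such a $p$ together with a commutative square against a map $\iota : \partial \Delta^n \ra \Delta^n$ of $I_\KQ$ -- that is, maps $a : \partial \Delta^n \ra Y$ and $b : \Delta^n \ra Z$ and a homotopy between $p \circ a$ and $b \circ \iota$ -- and construct a lift. The first move is to reduce to the case of base $\Delta^n$: form the pullback $V := Y \times_Z \Delta^n$ along $b$, with projections $\mathrm{pr}_Y : V \ra Y$ and $q : V \ra \Delta^n$. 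Since $\rlp(J_\KQ)$ is stable under pullback, $q$ has $\rlp(J_\KQ)$; and since $\Delta^n \in s\Set$ is a Kan complex while every map in $J_\KQ$ is the image under $\disc$ of a horn inclusion of simplicial sets, the terminal map of $\Delta^n$ -- hence, by composition, that of $V$ -- also has $\rlp(J_\KQ)$, so $V \in s\S_\KQ^f$. By \cref{consequence of right properness of kan--quillen model structure} we have $|V| \simeq |Y| \times_{|Z|} |\Delta^n|$, which is equivalent to $|\Delta^n| \simeq \pt_\S$ because $|p|$ is an equivalence; thus $|V| \simeq \pt_\S$. The data of the given square assembles into a map $\tilde{a} : \partial \Delta^n \ra V$ with $\mathrm{pr}_Y \circ \tilde{a} \simeq a$ and $q \circ \tilde{a} \simeq \iota$, and a solution to the original lifting problem is obtained as $\mathrm{pr}_Y \circ \sigma$ for any section $\sigma : \Delta^n \ra V$ of $q$ with $\sigma \circ \iota \simeq \tilde{a}$.

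To construct such a $\sigma$: since $|V| \simeq \pt_\S$, the space $\hom_\S(|\Delta^n|,|V|)$ is contractible, so the statement of \cref{fibrant sspaces are fibrant} -- applied to the fibrant object $V$ and the cofibration $\iota$ -- collapses to the assertion that $\hom_{s\S}(\Delta^n, V) \ra \hom_{s\S}(\partial \Delta^n, V)$ is surjective on $\pi_0$; hence there is some $\sigma : \Delta^n \ra V$ with $\sigma \circ \iota \simeq \tilde{a}$. The subtle point is that such a $\sigma$ can moreover be taken to be a \emph{section} of $q$. Here I would use that $\hom_{s\S}(\Delta^n,\Delta^n)$ and $\hom_{s\S}(\partial \Delta^n, \Delta^n) = \Match_n(\Delta^n)$ are both \emph{discrete} -- the latter being a limit of discrete spaces -- and that the restriction map between them is injective, since an $n$-simplex of the simplicial set $\Delta^n$ is determined by its boundary. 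Indeed, $q \circ \sigma$ is a point of the discrete set $\hom_{s\S}(\Delta^n, \Delta^n)$ whose restriction along $\iota$ equals -- postcompose the homotopy $\sigma \circ \iota \simeq \tilde{a}$ with $q$ and use discreteness of the target -- the point $q \circ \tilde{a} = \iota$; as $\id_{\Delta^n}$ has the same restriction along $\iota$, injectivity forces $q \circ \sigma = \id_{\Delta^n}$. So $\sigma$ is a section of $q$ restricting to $\tilde{a}$, which is exactly what was needed. (When $n = 0$ this is the degenerate statement that $Y_0 \ra Z_0$ is surjective, with $V$ now the fiber of $p$ over $b$; the argument is unchanged, and weak contractibility of $|V|$ follows equally from \cref{the fiber of a fibration of sspaces is homotopically correct}.)

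The formal parts of this argument are routine; the two genuinely non-formal inputs are, first, the weak contractibility of $|V|$, which rests on \cref{consequence of right properness of kan--quillen model structure} and hence ultimately on \cref{crazy lemma}, and second, the rigidity observation in the second paragraph, which is where I expect the real obstacle to lie. The issue is that \cref{fibrant sspaces are fibrant} is not a statement relative to a base, so it only produces an extension of $\tilde{a}$ over $\Delta^n$ with no a priori control over its composite with $q$; it is precisely the discreteness of the mapping spaces out of $\partial \Delta^n$ and $\Delta^n$ into the discrete simplicial space $\Delta^n$ that allows one to pin this extension down to an honest section of $q$, which is what makes reducing to a base of the form $\Delta^n$ an effective move.
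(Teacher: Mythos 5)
Your reduction to a pullback over $\Delta^n$ is an appealing idea, and the rigidity observation you flag as the likely weak point is actually fine: $\hom_{s\S}(\Delta^n,\Delta^n)$ and $\hom_{s\S}(\partial\Delta^n,\Delta^n)$ are indeed discrete (both objects lie in the full subcategory $s\Set \subset s\S$), and the restriction between them is injective because an $n$-simplex of the nerve of a poset is determined by its faces. The genuine gap is earlier: you assert that $\Delta^n$ is a Kan complex in order to conclude that $\Delta^n \ra \pt_{s\S}$ has $\rlp(J_\KQ)$ and hence, by composition with $q$, that $V := Y \times_Z \Delta^n$ is fibrant. But $\Delta^n$ is not a Kan complex for $n \geq 1$: it is the nerve of the poset $[n]$, which is a quasicategory but not a Kan complex since $[n]$ is not a groupoid. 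Concretely, the map $\Lambda^2_0 \ra \Delta^1$ sending the edge $\Delta^{\{01\}}$ to the nondegenerate edge and $\Delta^{\{02\}}$ to the degenerate edge at $0$ has no filler, so $\Delta^1$ already fails the Kan condition. Consequently $V$ need not be fibrant, and the application of \cref{fibrant sspaces are fibrant} -- which is what produces the extension $\sigma$ -- is not licensed. This is not a cosmetic issue: the obvious repairs all sacrifice the feature you correctly identify as essential. Replacing $\Delta^n$ by a Kan fibrant replacement (e.g.\ $\Ex^\infty(\Delta^n)$) destroys the discreteness of the base, on which the rigidity argument depends; pre-composing with a fibrant replacement $V \ra \Ex^\infty(V)$ produces an extension landing in $\Ex^\infty(V)$ rather than $V$, with no map back to $\Delta^n$ enjoying the needed rigidity; and pulling back only over a vertex (the one case where $\Delta^0$ \textit{is} Kan, which is what \cref{the fiber of a fibration of sspaces is homotopically correct} exploits) no longer captures the original lifting problem.

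For comparison, the paper's proof avoids the fibrancy issue entirely by a different reduction: it repeatedly applies the small object argument for $I_\KQ$ and $J_\KQ$ to replace $Y$ and $Z$ by objects of $s\Set \subset s\S$ together with maps to them having $\rlp(I_\KQ)$, uses \cref{detect acyclic fibrations of sspaces} and two-out-of-three to deduce that the resulting map of simplicial sets $Y'' \ra Z'$ is an acyclic fibration in $s\Set_\KQ$, and then invokes the \textit{classical} identity $(\bW \cap \bF)^{s\Set}_\KQ = \rlp(I^{s\Set}_\KQ)$ to obtain the lift. That route sidesteps any need to identify fibrant objects of $s\S_\KQ$ in the first place.
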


\begin{proof}
Suppose that the map $Y \ra Z$ in $s\S$ has $\rlp(J_\KQ)$ and geometrically realizes to an equivalence in $\S$.  We must show that $Y \ra Z$ also has $\rlp(I_\KQ)$, i.e.\! that any commutative square
\[ \begin{tikzcd}
\partial \Delta^n \arrow{r} \arrow{d} & Y \arrow{d} \\
\Delta^n \arrow{r} & Z
\end{tikzcd} \]
in $s\S$ admits a lift, for any $n \geq 0$.  We argue by constructing the diagram (and in particular the dotted arrow, which solves the above lifting problem) in $s\S_\KQ$ given in \cref{diagram in the proof that acyclic fibrations have rlp(I)}, beginning with only the outermost square.
\begin{figure}[h]
\[ \begin{tikzcd}[row sep=0.5cm, column sep=2cm]
\partial \Delta^n \arrow{rrr} \arrow[tail]{dr} \arrow[tail]{ddddddd} & & & Y \arrow{ddddddd}{\tiny \rlp(J_\KQ)}[swap, sloped, anchor=north]{\approx} \\
& Y' \arrow[tail]{dr}[sloped, pos=0.45]{\approx} \arrow[bend right=10]{rru}[sloped, pos=0.55]{\approx}[sloped, swap, pos=0.4]{\rlp(I_\KQ)} \\
& & Y'' \arrow[two heads]{dddd}[sloped, anchor=south]{\approx} \arrow[bend right=10]{ruu}[sloped, swap, pos=0.4]{\approx} \arrow[leftarrow, dashed]{dddddll} \\ \\ \\ \\
& & Z' \arrow{dr}[sloped, pos=0.25]{\rlp(I_\KQ)}[sloped, swap, pos=0.6]{\approx} \arrow[leftarrowtail, crossing over]{uuuuul} \\
\Delta^n \arrow{rrr} \arrow{rru} & & & Z
\end{tikzcd} \]
\caption{The diagram in $s\S_\KQ$ used in the proof of \cref{acyclic fibrations have rlp(I)}.}\label{diagram in the proof that acyclic fibrations have rlp(I)}
\end{figure}
For clarity, we proceed in steps.

\begin{enumerate}

\item 

We use the small object argument for $I_\KQ$ to obtain the the factorization $\partial \Delta^n \cofibn Y' \ra Y$ in $s\S_\KQ$, where $Y' \in s\Set \subset s\S$ and the latter map has $\rlp(I_\KQ)$; by \cref{detect acyclic fibrations of sspaces}, this latter map is also in $\bW_\KQ$.

\item

We use the small object argument for $I_\KQ$ to obtain the factorization $Y' \cofibn Z' \ra Z$ in $s\S_\KQ$ of the composite map $Y' \ra Y \ra Z$, where $Z' \in s\Set \subset s\S$ and the latter map has $\rlp(I_\KQ)$; again by \cref{detect acyclic fibrations of sspaces}, this latter map is also in $\bW_\KQ$.

\item

Since the map $Z' \ra Z$ has $\rlp(I_\KQ)$, we are guaranteed a lift in the square
\[ \begin{tikzcd}
\partial \Delta^n \arrow{r} \arrow[tail]{d} & Y' \arrow{r} & Z' \arrow{d}{\rlp(I_\KQ)} \\
\Delta^n \arrow{rr} \arrow[dashed, bend right=10]{rru} & & Z .
\end{tikzcd} \]

\item

We use the small object argument for $J_\KQ$ to obtain the factorization $Y' \wcofibn Y'' \fibn Z'$ in $s\Set_\KQ \subset s\S_\KQ$ of the map $Y' \ra Z'$.

\item

Since the map $Y \ra Z$ has $\rlp(J_\KQ)$, we are guaranteed a lift in the square
\[ \begin{tikzcd}
Y' \arrow{rr}{\approx} \arrow[tail]{d}[sloped, anchor=north]{\approx} & & Y \arrow{d}{\rlp(J_\KQ)} \\
Y'' \arrow{r} \arrow[dashed, bend right=10]{rru} & Z' \arrow{r} & Z .
\end{tikzcd} \]
Since $\bW_\KQ$ has the two-out-of-three property, then this lift is in $\bW_\KQ$.

\item

Again since $\bW_\KQ$ has the two-out-of-three property, the map $Y'' \ra Z'$ must be in $\bW_\KQ$.

\item

In $s\Set_\KQ$ we have that $(\bW \cap \bF)^{s\Set}_\KQ = \rlp(I^{s\Set}_\KQ)$, so we must have a lift in the square
\[ \begin{tikzcd}
\partial \Delta^n \arrow{r} \arrow[tail]{d} & Y' \arrow{r} & Y'' \arrow[two heads]{d}[sloped, anchor=south]{\approx} \\
\Delta^n \arrow{rr} \arrow[dashed, bend right=10]{rru} & & Z' ,
\end{tikzcd} \]
which gives us the dotted arrow in the diagram in \cref{diagram in the proof that acyclic fibrations have rlp(I)}.
\qedhere

\end{enumerate}

\end{proof}

\section{The proof of \cref{crazy lemma}}\label{section proof of crazy lemma}

We now give the proof of \cref{crazy lemma}, completing the proof of \cref{kan--quillen model structure on sspaces}.

\begin{proof}[Proof of \cref{crazy lemma}]
We begin by observing that we have $L' \we L$ in $s\Set_\KQ$ (for any choice of $i \geq 0$) because this model category is left proper, so that these objects are both homotopy pushouts.

To prove the rest of the statement, we proceed in steps for clarity.  To fix notation, suppose that the chosen point of the pullback selects a pair of maps
\[ (\varphi, \varepsilon) \in \hom_{s\S}(K,W) \times \hom_\S(|L|,|W|) . \]
We present the $\infty$-category $s\S$ via the model category $s(s\Set_\KQ)_\Reedy$, and we denote by $\const : s\Set_\triv \ra s(s\Set_\KQ)_\Reedy$ the evident right Quillen functor modeling the right adjoint $\disc:s\Set \ra s\S$ (though we will continue to suppress the latter).

\begin{enumerate}

\item\label{choose representatives} We choose an arbitrary fibrant representative $\ttW \in s(s\Set_\KQ)^f_\Reedy$ for the object $W \in s\S$, and then we choose an arbitrary map $\const(K) \xra{\phi} \ttW$ in $s(s\Set_\KQ)_\Reedy$ which presents the map $K \xra{\varphi} W$ in $s\S$.

\item Recall that the diagonal functor $\diag : \bD^{op} \ra \bD^{op} \times \bD^{op}$ induces an adjunction
\[ \diag_! : s\Set \adjarr ss\Set : \diag^* .\]
Applying its right adjoint to $\phi$ yields a map
\[ K \cong \diag^*(\const(K)) \xra{\diag^*(\phi)} \diag^*(\ttW) \]
in $s\Set$.  Since this right adjoint $\diag^* : s(s\Set_\KQ)_\Reedy \ra s\Set_\KQ$ is a relative functor (see e.g.\! \cite[Chapter IV, Proposition 1.9]{GJ}) and considered in $\strrelcat_\BarKan$ models the functor $|{-}| : s\S \ra \S$ (see e.g.\! \cite[Chapter IV, Exercise 1.6]{GJ} or Example T.A.2.9.31), then the map $\diag^*(\phi)$ in $s\Set_\KQ$ models the map $|K| \xra{|\varphi|} |W|$ in $\S$.

\item\label{choose map from L to fibt replacement of diagonal of W'} By assumption, we have an extension
\[ \begin{tikzcd}
|K| \arrow{r}{|\varphi|} \arrow{d} & |W| \\
|L| \arrow[dashed]{ru}[swap]{\varepsilon}
\end{tikzcd} \]
in $\S$.  Since the $\infty$-category $\S_{|K|/}$ is presented by the model category $(s\Set_{K/})_\KQ$, then this can be modeled as an extension
\[ \begin{tikzcd}
\partial \Delta^n \arrow{r} \arrow{d} & K \arrow{r}{\diag^*(\phi)} \arrow{d} & \diag^*(\ttW) \arrow{d}[sloped, anchor=south]{\approx} \\
\Delta^n \arrow{r} & L \arrow[dashed]{r}[swap]{\epsilon} & \Ex^\infty(\diag^*(\ttW))
\end{tikzcd} \]
in $s\Set_\KQ$.  To simplify our diagrams we will henceforth omit $L$, since it's defined as a pushout anyways.

\item\label{factor through a finite stage of Ex} Recall that the map $\diag^*(\ttW) \we \Ex^\infty(\diag^*(\ttW))$ is defined as a transfinite composition
\[ \diag^*(\ttW) \we \Ex(\diag^*(\ttW)) \we \Ex^2(\diag^*(\ttW)) \we \cdots \]
in $s\Set_\KQ$.  Since $\Delta^n$ is small as an object of $s\Set_{\partial \Delta^n/}$, there must exist a factorization
\[ \begin{tikzcd}
\partial \Delta^n \arrow{r} \arrow{d} & K \arrow{r}{\diag^*(\phi)} & \diag^*(\ttW) \arrow{d}[sloped, anchor=south]{\approx} \\
\Delta^n \arrow[dashed]{rr} \arrow{rrd} & & \Ex^i(\diag^*(\ttW)) \arrow{d}[sloped, anchor=south]{\approx} \\
& & \Ex^\infty(\diag^*(\ttW))
\end{tikzcd} \]
in $s\Set_\KQ$ for some $i < \infty$.

\item\label{use adjunction to get from Ex to sd} Via the adjunction $\sd^i : s\Set \adjarr s\Set : \Ex^i$, the extension in step \ref{factor through a finite stage of Ex} is equivalent to the extension
\[ \begin{tikzcd}
\sd^i(\partial \Delta^n) \arrow{r} \arrow{d} & \partial \Delta^n \arrow{r} & K \arrow{rr}{\diag^*(\phi)} & & \diag^*(\ttW) \\
\sd^i(\Delta^n) \arrow{r} \arrow[dashed]{rrrru} & \Delta^n , \arrow[leftarrow, crossing over]{u}
\end{tikzcd} \]
i.e.\! an extension
\[ \begin{tikzcd}
K \arrow{rr}{\diag^*(\phi)} \arrow{d} & & \diag^*(\ttW) . \\
L' \arrow[dashed]{rru}
\end{tikzcd} \]

\item\label{use adjunction induced by diagonal} Via the adjunction $\diag_! : s\Set \adjarr ss\Set : \diag^*$, the extension in step \ref{use adjunction to get from Ex to sd} is equivalent to the extension
\[ \begin{tikzcd}
\diag_!(\sd^i(\partial \Delta^n)) \arrow{r} \arrow{d} & \diag_!(\partial \Delta^n) \arrow{r} & \diag_!(K) \arrow{rr}{\diag^*(\phi)^\sharp} & & \ttW . \\
\diag_!(\sd^i(\Delta^n)) \arrow{r} \arrow[dashed, bend right=3]{rrrru} & \diag_!(\Delta^n) . \arrow[leftarrow, crossing over]{u}
\end{tikzcd} \]

\item\label{extend to a diagram including constant sssets} The diagram in step \ref{use adjunction induced by diagonal} extends to a diagram
\[ \begin{tikzcd}[column sep=0cm, row sep=1.5cm]
\diag_!(\sd^i(\partial \Delta^n)) \arrow{rr} \arrow{dd} \arrow{rd} & & \diag_!(\partial \Delta^n) \arrow{rr} \arrow{rd} & & \diag_!(K) \arrow{rr}{\diag^*(\phi)^\sharp} & & \ttW \arrow[leftarrow, dashed, bend left=15]{ddllllll} \\
& \const(\sd^i(\partial \Delta^n)) \arrow{rr} \arrow[crossing over]{dd} & & \const(\partial \Delta^n) \arrow[crossing over]{rr} \arrow[crossing over]{dd} & & \const(K) \arrow{ru}[swap, pos=0.6]{\phi} \arrow[leftarrow, crossing over]{lu} \\
\diag_!(\sd^i(\Delta^n)) \arrow[crossing over]{rr} \arrow{rd} & & \diag_!(\Delta^n) \arrow[leftarrow, crossing over]{uu} \arrow{rd} \\
& \const(\sd^i(\Delta^n)) \arrow{rr} & & \const(\Delta^n)
\end{tikzcd} \]
in $s(s\Set_\KQ)_\Reedy$, in which all unlabeled oblique arrows are components of the natural transformation
\[ \diag_! \cong \diag_! \diag^* \const \ra \const \]
in $\Fun(s\Set,ss\Set)$ induced by the counit of the adjunction $\diag_! \adj \diag^*$; indeed, the counit is initial among maps of the form $\diag^*(-)^\sharp$.

\item The map $\diag_!(\sd^i(\Delta^n)) \ra \const(\sd^i(\Delta^n))$ in the diagram of step \ref{extend to a diagram including constant sssets} is a weak equivalence in $s(s\Set_\KQ)_\Reedy$ by \cref{acyclic ssets have left kan extensions equivalent to const} below.  Hence, upon applying the localization $s(s\Set_\KQ)_\Reedy \ra s\S$ to that diagram, we obtain the desired extension
\[ \begin{tikzcd}
\sd^i(\partial \Delta^n) \arrow{r} \arrow{d} & \partial \Delta^n \arrow{r} & K \arrow{r}{\varphi} & W \\
\sd^i(\Delta^n) \arrow{r} \arrow[dashed, bend right=5]{rrru} & \Delta^n \arrow[leftarrow, crossing over]{u}
\end{tikzcd} \]
in $s\S$.  (Working back through the proof, it is clear that this does indeed model the extension
\[ \begin{tikzcd}
& |K| \arrow{r}{|\varphi|} \arrow{d} \arrow{dl} & |W| \\
|L'| \arrow{r}[swap]{\sim} & |L| \arrow[dashed]{ru}[swap]{\varepsilon}
\end{tikzcd} \]
in $\S$.)
\qedhere

\end{enumerate}

\end{proof}

\begin{lem}\label{acyclic ssets have left kan extensions equivalent to const}
For any acyclic object $M \in s\Set_\KQ$, the component
\[ \diag_!(M) \cong \diag_!(\diag^*(\const(M))) \ra \const(M) \]
of the counit of the adjunction $\diag_! : s\Set \adjarr ss\Set : \diag^*$ is a weak equivalence in $s(s\Set_\KQ)_\Reedy$.
\end{lem}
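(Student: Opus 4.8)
The plan is to unwind the notion of weak equivalence in $s(s\Set_\KQ)_\Reedy$ and then to propagate a triviality along the skeletal filtration of $M$. Since the weak equivalences of any Reedy model structure are exactly the levelwise ones, it suffices to prove that for each $n \geq 0$ the component of the counit at level $n$, a map of simplicial sets $(\diag_!(M))_{n,\bullet} \ra (\const(M))_{n,\bullet}$, lies in $\bW_\KQ$. Recall that $\const(K)$ is the bisimplicial set $([p],[q]) \mapsto K_p$, so that $(\const(K))_{n,\bullet}$ is the constant (discrete) simplicial set on the set $K_n$, and that $\diag^* \const(K) = K$. As $\diag_!$ is a left adjoint, hence cocontinuous, and evaluation at a bidegree is cocontinuous, the functor $D_n := (\diag_!({-}))_{n,\bullet} \colon s\Set \ra s\Set$ is cocontinuous, with $D_n(\Delta^k) = (\diag_!(\Delta^k))_{n,\bullet} \cong \coprod_{\sigma \in (\Delta^k)_n} \Delta^k$; and chasing the adjunction $\diag_! \dashv \diag^*$ shows that the component of the counit at $\Delta^k$, evaluated at level $n$, is a coproduct of the collapse maps $\Delta^k \ra \Delta^0$, hence lies in $\bW_\KQ$. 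In other words, the natural transformation $c \colon D_n \Rightarrow (\const({-}))_{n,\bullet}$ is a weak equivalence on every representable.

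I would then deduce that $c$ is a weak equivalence on \emph{every} simplicial set by induction along skeleta, $M = \colim_d M^{(d)}$. The one non-formal input is that $\diag_!$ — and therefore each $D_n$ — preserves monomorphisms: $D_n$ is cocontinuous and carries the generating monomorphisms $\partial \Delta^k \subset \Delta^k$ to monomorphisms (one checks directly, using the explicit descriptions of $\diag_!(\Delta^k)$ and $\diag_!(\partial \Delta^k)$ as the bisimplicial sets generated by a simplex and its boundary, that $D_n(\partial \Delta^k)$ is the evident subobject of $D_n(\Delta^k)$), while monomorphisms in the presheaf topos $s\Set$ are closed under pushout and transfinite composition and every monomorphism of simplicial sets is a transfinite composite of pushouts of the generating ones. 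Hence a cell attachment $M^{(d)} = M^{(d-1)} \amalg_{\coprod \partial \Delta^d} \coprod \Delta^d$ is carried by $D_n$ (resp.\ by $(\const({-}))_{n,\bullet}$) to a pushout along a cofibration, hence to a homotopy pushout, since $s\Set_\KQ$ is left proper; as $c$ is a weak equivalence on each of the three source corners — on $\coprod \Delta^d$ by the previous paragraph, and on $M^{(d-1)}$ and $\coprod \partial \Delta^d$ by the inductive hypothesis, these being of strictly smaller dimension — the gluing lemma gives that $c$ is a weak equivalence on $M^{(d)}$. Finally, since $D_n$ and $(\const({-}))_{n,\bullet}$ send the skeletal inclusions to cofibrations and weak equivalences of simplicial sets are stable under transfinite composition along cofibrations, the finite-dimensionality restriction is removed by passing to the colimit. (This argument makes no use of the acyclicity of $M$; acyclicity is only what the application in the proof of \cref{crazy lemma} requires.)

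The main obstacle is precisely that one cannot shortcut this cell-by-cell argument via Ken Brown's lemma: $\diag_!$ is \emph{not} a left Quillen functor from $s\Set_\KQ$ to $s(s\Set_\KQ)_\Reedy$, since $\diag^*$ does not preserve Reedy-fibrant objects — for any $K \in s\Set$ the object $\const(K)$ is Reedy fibrant (its matching object is a limit of discrete simplicial sets, so its matching maps are maps of discrete simplicial sets, hence Kan fibrations), yet $\diag^*(\const(K)) = K$ need not be a Kan complex. So the skeletal induction must be run by hand, and its only genuinely non-formal ingredient is the mono-preservation of $\diag_!$ recorded above; the rest is homotopy-pushout bookkeeping. (Equivalently, one can obtain the conclusion by identifying $(\diag_!(M))_{n,\bullet}$ with a homotopy colimit of contractible simplicial sets indexed by the category of pairs $(\text{a simplex } \Delta^k \ra M,\ \text{a map } [n] \ra [k])$, which decomposes over $M_n$ as a disjoint union of categories each possessing an initial object.)
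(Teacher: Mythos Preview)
Your argument is correct and takes a genuinely different route from the paper's. The paper inducts along a presentation of the acyclic $M$ as a transfinite composite of pushouts of horn inclusions $\Lambda^n_i \hookrightarrow \Delta^n$; this is why the paper needs acyclicity, and its one substantive step is verifying that each map $\diag_!(\Lambda^n_i) \to \const(\Lambda^n_i)$ is a levelwise weak equivalence, which it does via an explicit bidegree-by-bidegree description of $\diag_!(\Lambda^n_i)$ followed by a combinatorial argument identifying the fibers of the level-$j$ counit as certain contractible subcomplexes of $\Delta^n$. Your skeletal induction instead reduces everything to two easier facts---that the counit is a weak equivalence on each $\Delta^k$ (immediate) and that $\diag_!$ preserves monomorphisms (which you rightly isolate as the only non-formal input). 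This is both cleaner and, as you note, strictly more general: the acyclicity hypothesis is never invoked. Your parenthetical claim is in fact correct; the paper's subsequent remark that the lemma ``fails drastically'' for non-acyclic $M$ rests on a misidentification of $\diag_!(\partial \Delta^n)$ with $\partial \Delta^n \extprod \partial \Delta^n$, and these already differ for $n=1$ (two points versus four). The trade-off is that the paper's horn computation produces a concrete formula for $(\diag_!(\Lambda^n_i))_{j,k}$ that may be of independent use, whereas your argument yields the reusable structural fact that $\diag_!$ preserves monomorphisms.
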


\begin{proof}
We begin by choosing a presentation of $M$ as a transfinite composition of pushouts of the generating acyclic cofibrations $J_\KQ = \{ \Lambda^n_i \ra \Delta^n \}_{0 \leq i \leq n \geq 1}$.  Note that $\diag_!$ is a left adjoint, so it commutes with pushouts; thus, this also gives us a presentation of $\diag_!(M)$ as a transfinite composition of pushouts of maps in $\diag_!(J_\KQ) = \{ \diag_!(\Lambda^n_i) \ra \diag_!(\Delta^n)\}_{0 \leq i \leq n \geq 1}$.

We now argue by transfinite induction.  Clearly the result holds if $M = \Delta^0$.  To obtain the inductive step at any successor ordinal, we will show below that we have a commutative square
\[ \begin{tikzcd}
\diag_!(\Lambda^n_i) \arrow{r}{\approx} \arrow[tail]{d} & \const(\Lambda^n_i) \arrow[tail]{d} \\
\diag_!(\Delta^n) \arrow{r}[swap]{\approx} & \const(\Delta^n)
\end{tikzcd} \]
in $s(s\Set_\KQ)_\Reedy$.  Then, since $s(s\Set_\KQ)_\Reedy$ is left proper (for instance because all its objects are cofibrant), the induced map between the pushouts of the front and back faces in the diagram
\[ \begin{tikzcd}
& \const(\Lambda^n_i) \arrow{rr} \arrow[tail]{dd} & & \const(M) \\
\diag_!(\Lambda^n_i) \arrow{ru}[sloped, pos=0.6]{\approx} \arrow[crossing over]{rr} \arrow[tail]{dd} & & \diag_!(M) \arrow{ru}[sloped, pos=0.6]{\approx} \\
& \const(\Delta^n) \\
\diag_!(\Delta^n) \arrow{ru}[sloped, pos=0.6]{\approx}
\end{tikzcd} \]
in $s(s\Set_\KQ)_\Reedy$ will again be a weak equivalence.  To obtain the inductive step at any limit ordinal, we observe that both colimits and weak equivalences in $s(s\Set_\KQ)_\Reedy$ are defined levelwise, and that weak equivalences in $s\Set_\KQ$ are closed under transfinite composition (for instance by arguments in the style of steps \ref{choose map from L to fibt replacement of diagonal of W'}-\ref{use adjunction to get from Ex to sd} in the proof of \cref{crazy lemma} above).

So, it only remains to show that we have a commutative square in $s(s\Set_\KQ)_\Reedy$ as claimed above.  We verify the illustrated assertions in turn.

\begin{itemize}

\item

Both vertical maps are monomorphisms and hence are cofibrations in $s(s\Set_\KQ)_\Reedy$.

\item

We have an isomorphism $\diag_!(\Delta^n) \cong \Delta^n \extprod \Delta^n$, under which identification the lower map is given by $\Delta^n \extprod \Delta^n \ra \Delta^n \extprod \Delta^0$.  In level $j$ this is just the map $\coprod_{(\Delta^n)_j} \Delta^n \ra \coprod_{(\Delta^n)_j} \Delta^0$, which is a weak equivalence in $s\Set_\KQ$.  So the lower map is indeed a weak equivalence in $s(s\Set_\KQ)_\Reedy$.

\item

To see that the upper map is also a weak equivalence, we recall the explicit description of $\diag_!(\Lambda^n_i)$ (given both in the proof of \cite[Lemma 1.3]{Moer} and in the text leading up to \cite[Chapter IV, Lemma 3.10]{GJ}), that
\[ \diag_!(\Lambda^n_i)_{j,k} \cong \left\{ (\alpha,\beta) \in \hom_\bD([j],[n]) \times \hom_\bD([k],[n]) :
\begin{array}{c}
\textup{there exists some } l \in [n] \textup{ with } l \not= i  \\ \textup{which is not in the image of } \alpha \textup{ or } \beta
\end{array}
\right\} . \]
For any fixed $j \geq 0$, we must show that the map $\diag_!(\Lambda^n_i)_j \ra \const(\Lambda^n_i)_j$ is a weak equivalence in $s\Set_\KQ$.  The latter object is discrete (i.e.\! its only nondegenerate simplices are 0-simplices), and so this is equivalent to showing that the preimage of each such 0-simplex is acyclic in $s\Set_\KQ$.  Such a 0-simplex is precisely the datum of a map $\alpha \in \hom_\bD([j],[n]) \cong \hom_{s\Set}(\Delta^j,\Delta^n)$ whose image on 0-simplices does not cover $(\Delta^n)_0 \backslash \{ \Delta^{\{i\}} \}$.  Define the subset $T \subset (\Delta^n)_0$ to be the union of $\{\Delta^{\{i\}}\}$ and the image of $\alpha$, and let $T^c \subset (\Delta^n)_0$ be its complement.  Then, the preimage of the 0-simplex of $\const(\Lambda^n_i)_j$ corresponding to $\alpha$ is the subobject of $\Delta^n \in s\Set$ consisting of those simplices whose 0-simplices do not contain all of $T^c \subset (\Delta^n)_0$, which is indeed acyclic in $s\Set_\KQ$ since $T^c$ is nonempty.
\qedhere
\end{itemize}
\end{proof}

\begin{rem}
\cref{acyclic ssets have left kan extensions equivalent to const} fails drastically if we do not assume that $M \in s\Set_\KQ$ is acyclic.  In fact, in \cref{compare with moerdijk}, the stark difference between the sets of homotopy classes of maps $I^{s\S}_\Moer$ and $I^{s\S}_\KQ$ in $s\S$ is precisely due to the difference between (the weak equivalences classes of) the objects $\diag_!(\partial \Delta^n) \cong \partial \Delta^n \extprod \partial \Delta^n$ and $\const(\partial \Delta^n)$ in $s(s\Set_\KQ)_\Reedy$.
\end{rem}

\begin{rem}
Using the arguments of \cref{compare with moerdijk}, one can use \cref{acyclic ssets have left kan extensions equivalent to const} to give an alternative proof of \cref{consequence of right properness of kan--quillen model structure}.  The key point is that we have a diagram
\[ \begin{tikzcd}
\diag_!(\Lambda^n_i) \arrow{r}{\approx} \arrow[tail]{d} & \const(\Lambda^n_i) \arrow[tail]{d} \\
\diag_!(\Delta^n) \arrow{r}[swap]{\approx} & \const(\Delta^n)
\end{tikzcd} \]
in $s(s\Set_\KQ)_\Reedy$.  Hence, if the map $Y \ra Z$ in $s\S$ has $\rlp(J_\KQ)$, then it can be presented by a fibration $\ttY \fibn \ttZ$ in $s(s\Set_\KQ)_\Reedy$ that additionally has $\rlp(\diag_!(J^{s\Set}_\KQ))$, i.e.\! that is also in $\bF^{ss\Set}_\Moer$.  Since both $s(s\Set_\KQ)_\Reedy$ and $ss\Set_\Moer$ are right proper, it follows that all pullbacks of the map $\ttY \ra \ttZ$ in $ss\Set$ simultaneously compute homotopy pullbacks in both model structures.
\end{rem}

\appendix

\section{Notation, terminology, and conventions}\label{section conventions}

As this is the first paper in its series, in this appendix we take the opportunity to spell out once and for all the precise foundations on which the project is built.

\subsection{}\label{subsection foundations}

We begin with our philosophy surrounding the semantics of the signifier ``$\infty$-category''.

\begin{enumerate}

\item\label{ground in qcats}

For definiteness, we ground ourselves in the theory of \textit{quasicategories}: an $\infty$-category is a quasicategory.  We will refer to these as ``quasicategories'' only when we mean to make specific reference to their properties or manipulation as such, which we will avoid doing to the largest extent possible.  We use \cite{LurieHTT} as our primary reference, but we note that many of the ideas given there have their origins in \cite{Joyal-qcats-1, Joyal-qcats-2, Joyal-notes-on-qcats}.

In order to proceed with the enumeration of our foundations, we must immediately lay out the following basic conventions.

\begin{enumeratesub}

\item

We will be ignoring all set-theoretic issues.  They are irrelevant to our aims, and in any case can be dispensed with by appealing to the usual device of \textit{Grothendieck universes} (see e.g.\! \sec T.1.2.15).

\item\label{initial and terminal objs}

If an $\infty$-category $\C$ has an initial (resp.\! terminal) object, we will write $\es_\C$ (resp.\! $\pt_\C$) for any such object, or we will simply write $\es$ (resp.\! $\pt$) if the ambient $\infty$-category $\C$ is clear from the context.  For $\infty$-categories of co/pointed objects, we will make the abbreviations $\C_{\es} = \C_{/\es}$ and $\C_* = \C_{\pt/}$.

\item\label{conventions on spaces}

We write $\S$ for the $\infty$-category of spaces.  Up to equivalence, we can take this to be either $\loc{\Top}{\bW_\whe}$ or $\loc{s\Set}{\bW_\whe}$, where in both cases the symbol $\bW_\whe$ denotes the \textit{weak homotopy equivalences}.\footnote{Of course, by $\Top$ we mean to denote any ``convenient'' category of topological spaces.}\footnote{More invariantly, one can also characterize the $\infty$-category of spaces as the free cocompletion of the terminal $\infty$-category (see item \ref{yoneda}).}  In particular, by ``space'' we will mean an object of $\S$; when we mean to refer to an object of $\Top$, we will instead use the term ``topological space''.  The $\infty$-category $\S$ of spaces plays the same fundamental role in the theory of $\infty$-categories that the category $\Set$ of sets plays in the theory of categories: whereas categories are naturally enriched in sets, $\infty$-categories are naturally enriched in spaces (see item \ref{twisted arrow gives hom bifunctor}).

We adopt the following conventions regarding $\S$.

\begin{itemize}

\item

A map in $\S$ is called

\begin{itemizesmall}

\item \textit{\'{e}tale} if it induces a $\pi_{\geq 1}$-isomorphism for every basepoint of the source;

\item a \textit{monomorphism} (or, more informally, the inclusion of a \textit{subspace}) if it is \'{e}tale and additionally induces a $\pi_0$-monomorphism;

\item a \textit{surjection} if it induces a $\pi_0$-surjection.\footnote{Surjections are also called ``effective epimorphisms'', but note that they are \textit{not} generally epimorphisms in $\S$ (see item \ref{mono and epi}).  Rather, the only epimorphisms in $\S$ are the equivalences.}

\end{itemizesmall}

\item

There is an evident adjunction $\pi_0 : \S \adjarr \Set : \disc$, and we call a space \textit{discrete} if it is in the image of the right adjoint (see item \ref{discrete object}).  We will only include this right adjoint in the notation if we mean to emphasize it.

\item

More generally, for any $n \geq 0$ we have a \textit{truncation} adjunction $\tau_{\leq n} : \S \adjarr \S^{\leq n} : \forget_{\leq n}$ and a \textit{cotruncation} adjunction $\forget_{\geq n} : \S^{\geq n}_* \adjarr \S^{\geq 1}_* : \tau_{\geq n}$.  (In the special case that $n=0$, the truncation adjunction reduces to the adjunction $\pi_0 : \S \adjarr \Set : \disc$ given above.)

\item

We write $\S^\fin \subset \S$ for the full subcategory on the \textit{finite} spaces.\footnote{A space is finite exactly when it can be presented either by a finite CW complex or by a finite simplicial set (i.e.\! a simplicial set with finitely many nondegenerate simplices).  More invariantly, one can also characterize $\S^\fin$ as the initial $\infty$-category admitting all \textit{finite} colimits.}

\item

We will refer to spaces as \textit{$\infty$-groupoids} when we mean to emphasize the fact that they are just particular examples of $\infty$-categories.

\end{itemize}

\item\label{conventions on infty-cats}

We write $\Cati$ for the $\infty$-category of $\infty$-categories.  We adopt the following conventions regarding $\Cati$.

\begin{itemize}

\item

A functor $\C \xra{F} \D$ of $\infty$-categories is an \textit{equivalence} precisely if it is (homotopically) fully faithful and surjective: that is,
\begin{itemizesmall}
\item for all $x,y \in \C$, the induced map
\[ \hom_\C(x,y) \ra \hom_\D(F(x),F(y)) \]
is an equivalence in $\S$, and moreover
\item for every $z \in \D$ there is some $w \in \C$ and an equivalence $F(w) \simeq z$ in $\D$.\footnote{This is essentially Definition T.1.1.5.14, which makes use of the left Quillen equivalence $s\Set_\Joyal \ra (\strcat_{s\Set})_\Bergner$ of Theorem T.2.2.5.1.  (Note that all objects of $s\Set_\Joyal$ are cofibrant.)}
\end{itemizesmall}

\item

To say that an $\infty$-category $\C$ is a \textit{subcategory} of some other $\infty$-category $\D$ means, in the most invariant possible language, that we have a chosen functor $\C \xra{F} \D$ which is (homotopically) faithful: that is, for all $x,y \in \C$, the induced map
\[ \hom_\C(x,y) \ra \hom_\D(F(x),F(y)) \]
is a monomorphism in $\S$.  We will call the functor $F$ the \textit{inclusion} of a subcategory, but we will usually suppress it from the notation and simply write $\C \subset \D$ as shorthand.\footnote{Note that these are not quite the monomorphisms in $\Cati$ (see item \ref{mono and epi}).  Rather, the monomorphisms are precisely the \textit{pseudomonic} functors, i.e.\! the inclusions of subcategories which are full on equivalences.  This is perhaps most easily seen by appealing to the equivalence $\Cati \simeq \CSS$ of item \ref{different model cats}\ref{CSS as infty-cat} below: as the inclusion $\CSS \subset s\S$ preserves limits (being a right adjoint), a map in $\CSS$ is a monomorphism precisely if it is a monomorphism when considered in $s\S$.}  A subcategory $\C \subset \D$ is uniquely specified by the resulting subcategory $\ho(\C) \subset \ho(\D)$ of its homotopy category.

\item More generally, if $I$ is a class of maps in $\S$, then a functor in $\Cati$ is called a \textit{local $I$} if all the induced maps on hom-spaces are in $I$.  (So for instance, the inclusion of a subcategory might otherwise be called a \textit{local monomorphism}.)

\item

An $\infty$-category will be called a \textit{category}, or sometimes a \textit{1-category} for emphasis, if its hom-spaces are discrete, i.e.\! they lie in the full subcategory $\Set \subset \S$.  These form a full subcategory $\Cat \subset \Cati$, the inclusion of which we will denote by $\forget_\Cat : \Cat \hookra \Cati$.  This inclusion is the right adjoint in an adjunction
\[ \ho : \Cati \adjarr \Cat : \forget_\Cat \]
whose left adjoint is given by the \textit{homotopy category} functor.  Given an $\infty$-category $\C$ and any pair of objects $c,d \in \C$, we will sometimes write
\[ [c,d]_\C = \hom_{\ho(\C)}(c,d) \]
for the corresponding hom-set in the homotopy category $\ho(\C)$ of $\C$.  By definition, the map
\[ \hom_\C(c,d) \ra \hom_{\ho(\C)}(c,d) \]
in $\S$ induced by projection $\C \ra \ho(\C)$ (i.e.\! the unit of the adjunction $\ho \adj \forget_\Cat$) is precisely the projection to the set of path components (i.e.\! the unit of the adjunction $\pi_0 \adj \disc$).

\item We write $\forget_\S : \S \hookra \Cati$ for the inclusion of spaces as $\infty$-groupoids.
\begin{itemize}
\item This inclusion is the left adjoint in an adjunction
\[ \forget_\S : \S \adjarr \Cati : (-)^\simeq \]
whose right adjoint is given by the \textit{maximal subgroupoid} functor.\footnote{The adjunction $\forget_\S : \S \adjarr \Cati : (-)^\simeq$ is presented by an adjunction of $s\Set$-enriched categories between that of Kan complexes and that of quasicategories, whose left adjoint is the canonical inclusion and whose right adjoint takes a quasicategory to the largest Kan complex that it contains (see Proposition T.1.2.5.3 and Corollary T.5.2.4.5).}
\item This inclusion is the right adjoint in an adjunction
\[ (-)^\gpd : \Cati \adjarr \S : \forget_\S \]
whose left adjoint is given by the (\textit{$\infty$-})\textit{groupoid completion} functor.\footnote{The adjunction $(-)^\gpd : \Cati \adjarr \S : \forget_\S$ is presented by the Quillen adjunction $\id_{s\Set} : s\Set_\Joyal \adjarr s\Set_\KQ : \id_{s\Set}$ (see item \ref{quillen adjunction induces underlying adjunction} (and Remark T.1.2.5.6)).}
\end{itemize}

\item We write $\Fun(\C,\D) \in \Cati$ for the $\infty$-category of functors from $\C$ to $\D$.  This is the internal hom in $(\Cati,\times)$, and admits a canonical equivalence $\Fun(\C,\D)^\simeq \simeq \hom_\Cati(\C,\D)$ in $\S$.\footnote{As the model category $s\Set_\Joyal$ is cartesian (as can easily be seen from Corollary T.2.2.5.4), the $\infty$-category of functors is presented therein by the internal hom in $(s\Set,\times)$.}

\item We write $(-)^{op} : \Cati \xra{\sim} \Cati$ for the involution given by taking opposites.

\end{itemize}

\end{enumeratesub}

\item\label{different model cats}

Despite our grounding declared in item \ref{ground in qcats}, our notion of ``$\infty$-category'' is nevertheless a rather flexible one: over the course of these papers, we will interchange fluidly between a number of distinct but essentially equivalent notions thereof.  In accordance with current best practices, those that we will employ all appear naturally as objects in various model categories.  For the reader's convenience, we itemize these notions and their ambient model categories here, and we give some indication of the roles that they will play in this series of papers.

\begin{enumeratesub}

\item\label{qcat as infty-cat}

The notion of a \textit{quasicategory} plays a distinguished role in these papers, as indicated in item \ref{ground in qcats}.  These are precisely the bifibrant objects in $s\Set_\Joyal$, the category of simplicial sets equipped with the \textit{Joyal model structure} of Theorem T.2.2.5.1.  We view these as the most convenient of the notions to employ as an ambient framework, which advantage is surely in large part due to the abundance of theory that has been built up around them.

\item\label{sset-enr cat as infty-cat}

The notion which most closely adheres to the intuition of a ``category enriched in spaces'' is that of a \textit{category enriched in simplicial sets}, or simply a \textit{$s\Set$-enriched category} for short.  These organize into the model category $(\strcat_{s\Set})_\Bergner$ under the \textit{Bergner model structure} of \cite[Theorem 1.1]{Bergner} (or see Proposition T.A.3.2.4 for a generalization).  Just as when one uses simplicial sets to present spaces one should generally be working with Kan complexes, when considering a $s\Set$-enriched category as an $\infty$-category one should generally be working with a category which is in fact enriched in Kan complexes: indeed, these are precisely the fibrant objects of $(\strcat_{s\Set})_\Bergner$ (which sits in a Quillen equivalence $\mf{C} : s\Set_\Joyal \adjarr (\strcat_{s\Set})_\Bergner : \Nervehc$ (see Theorem T.2.2.5.1)), though note that not all objects are cofibrant.  This model category provides an explicit bridge from $\strrelcat_\BarKan$ to $s\Set_\Joyal$ (see subitem \ref{different model cats}\ref{relcat as infty-cat}).

\item\label{CSS as infty-cat}

The notion which is most ``homotopy invariant'' is that of a \textit{complete Segal space}.  These are actually bisimplicial sets, thought of as simplicial spaces via choices of distinguished ``simplicial'' and ``geometric'' directions.  They are precisely the bifibrant objects in $ss\Set_\Rezk$, the category of bisimplicial sets equipped with the \textit{Rezk model structure} of \cite[Theorem 7.2]{RezkCSS} (there called the ``complete Segal space'' model structure).

However, it is also fruitful to consider a theory of complete Segal spaces \textit{internally} to the world of $\infty$-categories, i.e.\! to define them as a subcategory $\CSS \subset s\S$ of the $\infty$-category of simplicial spaces.\footnote{This perspective is explored in detail (and in greater generality) in \cite[\sec 1]{LurieGoo}.}  From this viewpoint, a complete Segal space can be thought of as a homotopical analog of the \textit{nerve} of a category: the equivalence $\Nervei : \Cati \xra{\sim} \CSS$ takes an $\infty$-category $\C$ to its \textit{$\infty$-categorical nerve}, namely the simplicial space
\[ \Nervei(\C)_\bullet = \hom^\lw_{\Cati}([\bullet],\C) \]
(i.e.\! the levelwise hom-space from the standard cosimplicial category $[\bullet] : \bD \hookra \Cat$).\footnote{Indeed, a simplicial set is the nerve of a category precisely if it satisfies the Segal condition.}\footnote{Note that the $\infty$-categorical nerve of a 1-category does \textit{not} generally coincide with its 1-categorical nerve.}\footnote{Throughout \cref{section conventions}, for the sake of clarity we will exclusively refer to this construction as the ``$\infty$-categorical nerve''.  However, it appears quite frequently in the remaining papers in this sequence (beginning with its reintroduction in \cref{rnerves:section CSSs}), and so for brevity we will omit the modifier ``$\infty$-categorical'' in those sequels except when we mean to emphasize the distinction.}  The inverse equivalence takes a complete Segal space $Y_\bullet \in \CSS$ to the $\infty$-category
\[ \int^{[n] \in \bD} Y_n \times [n] , \]
(where we implicitly consider $Y_n \in \Cati$ via the inclusion $\forget_\S : \S \hookra \Cati$).\footnote{This formula follows from \cite[Corollary 4.3.15]{LurieGoo}, but note that this is ultimately just an instance of the ``generalized nerve/realization'' Quillen equivalence first proved as \cite[Theorem 3.1]{DKSingReal}.}  In fact, this inclusion is the right adjoint in an adjunction $\leftloc_\CSS : s\S \adjarr \CSS : \forget_\CSS$.\footnote{This adjunction is presented by the left Bousfield localization $\id_{ss\Set} : s(s\Set_\KQ)_\Reedy \adjarr ss\Set_\Rezk : \id_{ss\Set}$ (see item \ref{quillen adjunction induces underlying adjunction}).}  It is fruitful to think of the resulting composite adjunction
\[ \begin{tikzcd}[column sep=1.5cm]
s\S 
{\arrow[transform canvas={yshift=0.7ex}]{r}{\leftloc_\CSS}[swap, transform canvas={yshift=0.25ex}]{\scriptstyle \bot} \arrow[transform canvas={yshift=-0.7ex}, hookleftarrow]{r}[swap]{\forget_\CSS}}
& \CSS
{\arrow[transform canvas={yshift=0.7ex}]{r}{\Nervei^{-1}}[swap, transform canvas={yshift=0.05ex}]{\sim} \arrow[transform canvas={yshift=-0.7ex}, leftarrow]{r}[swap]{\Nervei}}
& \Cati
\end{tikzcd} \]
as being a homotopical analog of the usual ``nerve/homotopy category'' adjunction
\[ \begin{tikzcd}[column sep=1.5cm]
s\Set
{\arrow[transform canvas={yshift=0.7ex}]{r}{\leftloc_\strcatsup}[swap, transform canvas={yshift=0.25ex}]{\scriptstyle \bot} \arrow[transform canvas={yshift=-0.7ex}, hookleftarrow]{r}[swap]{\Nerve}}
& \strcat
\end{tikzcd} \]
(see subitem \ref{work invariantly with qcats}\ref{strict cats}).


\item\label{relcat as infty-cat}

Finally, the simplest notion is that of a \textit{relative category}.  These organize into the model category $\strrelcat_\BarKan$ under the \textit{Barwick--Kan model structure} of \cite[Theorem 6.1]{BK-relcats}.  We write $\hamd : \strrelcat_\BarKan \ra (\strcat_{s\Set})_\Bergner$ for the hammock localization functor, which is a relative functor (see \cite[Theorem 1.8]{BK-simploc}); in fact, it is even a weak equivalence in $\strrelcat_\BarKan$ (see \cite[Theorem 1.7]{BK-simploc} and item \ref{equivalence of notions}).\footnote{In fact, the Rezk nerve functor $\NerveRezk : \strrelcat_\BarKan \ra ss\Set_\Rezk$ (see \cite[3.3]{RezkCSS}, where it is called the ``classification diagram'' functor) \textit{also} creates the weak equivalences by \cite[Theorem 6.1(i)]{BK-relcats}.  However, the objects in its image are not generally fibrant, even up to weak equivalence in $s(s\Set_\KQ)_\Reedy$ (see \cite{LowMG}).}\footnote{The letter $\delta$ in the notation $\hamd$ stands for ``discrete'': in \cite{MIC-hammocks} we study an $\infty$-categorical version of this functor, which we denote simply by $\ham$.}

We mainly use relative categories (and the Barwick--Kan model structure) as a technical device that allows us to make rigorous sense of the \textit{underlying $\infty$-category} of a relative category (in particular of a model category).  In this situation, we say that the model category gives a \textit{presentation} of its underlying $\infty$-category.  See \cref{subsection model cats present infty-cats} for details regarding our usage of model categories as presentations of $\infty$-categories.

\end{enumeratesub}

\item\label{equivalence of notions}

The assertion made in item \ref{different model cats} that these various notions of $\infty$-categories are all ``essentially equivalent'' is rather multifaceted.  We therefore give a careful account of this assertion.  Our perspective is espoused in a number of relatively recent papers, notably \cite{BSP} (from the introduction of which this item is more or less directly lifted), and seems to represent the emerging consensus among practitioners of higher category theory.

First of all, these four model categories are all connected by a diagram of Quillen equivalences along with the weak equivalence $\hamd : \strrelcat_\BarKan \we (\strcat_{s\Set})_\Bergner$ in $\strrelcat_\BarKan$ (see \cite[Figure 1]{BSP} and the references cited therein).  Thus, any homotopically meaningful manipulations that we might make using one of these notions can equally well be made using any other notion.

However, there is still cause for potential concern: the diagram of \cite[Figure 1]{BSP} does not commute, even up to natural isomorphism.  However, a moment's reflection should reassure us that this is a stronger request than we should really be making: after all, we would generally like to consider objects of a model category up to \textit{weak equivalence}, not up to isomorphism. Thus, it is helpful to reinterpret this diagram \textit{within} one of the given model categories.  Rather than choose a particular one, we will simply refer to objects of this model category as `$\infty$-categories' (with scare-quotes) for the remainder of the item; as explained in item \ref{quillen adjunction induces underlying adjunction}, Quillen equivalences between model categories induce weak equivalences of underlying `$\infty$-categories'.

This conceptual leap leads us to the alternative point of view that what we are looking at is a not-necessarily-commutative diagram of weak equivalences of `$\infty$-categories'.  This may not seem like an improvement in and of itself, but in fact we are saved by the following remarkable facts (\cite[Th\'{e}or\`{e}me 6.3]{Toen}, reproved as \cite[Theorem 4.4.1]{LurieGoo} and generalized as \cite[Theorem 8.2]{BSP}), originally stated within the model category $ss\Set_\Rezk$ of complete Segal spaces.
\begin{itemize}
\item The `$\infty$-category' of complete Segal spaces (i.e.\! the `$\infty$-category' corresponding to $ss\Set_\Rezk$) -- and hence any `$\infty$-category' weakly equivalent to it -- has a \textit{discrete} derived automorphism space, which is equivalent as a group to $\bbZ/2$.
\item Furthermore, the unique nontrivial derived automorphism of this `$\infty$-category' is given by the involution of taking opposites, and is therefore detected by considering its restriction to the full subcategory generated by the objects $[0],[1] \in \Cat$ (considered as objects of each of these various model categories).
\end{itemize}
It now follows readily that the diagram of \cite[Figure 1]{BSP} commutes \textit{as a diagram in an $\infty$-category}: more precisely, as a diagram internal to the quasicategory corresponding to the ambient model category of `$\infty$-categories'.

\end{enumerate}

\subsection{}

We now establish some conventions surrounding our usage of $\infty$-categories.

\begin{enumerate}[resume]

\item\label{work invariantly with qcats}

As a rule, the statements we make will generally be invariant under equivalence of $\infty$-categories.  In fact, when we make statements about $\infty$-categories, we will generally mean to be working \textit{in the $\infty$-category of $\infty$-categories}.  However, this is only a matter of taste: the sufficiently motivated reader should readily be able to turn our invariant arguments about $\infty$-categories into simplex-by-simplex arguments about quasicategories and model-categorical arguments in $s\Set_\Joyal$.

The choice of such a foundational regime compels us to lay out the following related conventions.

\begin{enumeratesub}

\item\label{omit essential}

When working $\infty$-categorically, we will omit the modifier ``essential'' (and its variants) wherever it might be used in its technical capacity.  For instance, we simply say \textit{unique} where one might otherwise say ``essentially unique'': in the invariant world, the adjective ``unique'' has no other possible meaning.

\item\label{reserve equals sign}

We reserve the symbol $=$ to indicate nothing other than
\begin{itemizesmall}
\item that some equivalence holds by definition, or
\item the equality of two elements of a set, and in particular
\begin{itemizesmall}
\item the equivalence of two subobjects of a given object (see item \ref{mono and epi}).
\end{itemizesmall}
\end{itemizesmall}
Along these same lines, whereas we generally use the symbol $\simeq$ to denote an equivalence in an arbitrary $\infty$-category, if that $\infty$-category is in fact a 1-category then we may instead write $\cong$ (and refer to the equivalence as an \textit{isomorphism}).

\item\label{strict cats}

We will have to be slightly careful with our definition of ordinary categories: for instance, we will sometimes want to refer to the nerve of a 1-category, but this is not a well-defined operation on the full subcategory $\Cat \subset \Cati$: for example, the notion of ``the set of objects'' is not invariant under equivalence of categories.

Thus, we will use the term \textit{strict category} (or even \textit{strict 1-category}) to mean a simplicial set satisfying the Segal condition.  These assemble into a full subcategory $\strcat \subset s\Set$.  This 1-category of categories now admits a nerve functor
\[ \strcat \xra{\Nerve} s\Set , \]
although this is entirely cosmetic: according to our definition, it is simply the defining inclusion.  Note that this inclusion sits as the right adjoint in a left localization adjunction
\[ \begin{tikzcd}[column sep=1.5cm]
s\Set
{\arrow[transform canvas={yshift=0.7ex}]{r}{\leftloc_\strcatsup}[swap, transform canvas={yshift=0.25ex}]{\scriptstyle \bot} \arrow[transform canvas={yshift=-0.7ex}, hookleftarrow]{r}[swap]{\Nerve}}
& \strcat ,
\end{tikzcd} \]
and hence commutes with limits.

The 1-category of strict categories also admits a functor $\strcat \xra{\forget_\strcatsupsup} \Cat$, namely the factorization of the composite functor
\[ \strcat \xhookra{\Nerve} s\Set \ra \loc{s\Set}{(\bW_\Joyal)} \simeq \Cati \]
through its image, but this is \textit{not} the inclusion of a subcategory. 
On the other hand, the \textit{gaunt} objects of $\strcat$ -- that is, those in which every isomorphism is in fact an identity morphism -- do include as a full subcategory of $\Cat$.  In fact, the map $\hom_\strcatsup(\C,\D) \ra \hom_\Cat(\C,\D)$ is an equivalence in $\S$ whenever $\D$ is gaunt.  Note in particular that we obtain full inclusions
\[ \begin{tikzcd}
\bD \arrow[hook]{rr} \arrow[hook]{rd} & & \Cat . \\
& \strcat \arrow{ru}
\end{tikzcd} \]
In fact, note further that we can consider $\bD$ \textit{itself} as a strict category: in such situations, we will take $\bD$ to be \textit{skeletal}, i.e.\! to be the full subcategory $\bD \subset \strcat$ on the gaunt categories $[n] \in \strcat$ (rather than the full subcategory on \textit{all} finite nonempty totally ordered sets).

The functor $\strcat \ra \Cat$ does not preserve monomorphisms.  At the risk of confusion, we will nevertheless use the same notation $\C \subset \D$ to indicate a monomorphism in $\strcat$. 

In contrast with subitem \ref{work invariantly with qcats}\ref{omit essential}, we \textit{will} use terms such as ``essentially surjective'' to refer to maps in $\strcat$, since otherwise the meaning would be ambiguous.

We will similarly speak of \textit{strict groupoids}, \textit{strict relative categories}, etc., likewise shrinking the capital letters in their names as $\strgpd$, $\strrelcat$, etc.\! to indicate the 1-categories of these.  (For us, $s\Set$-enriched categories will \textit{always} be strict; the 1-category of these is correspondingly denoted by $\strcat_{s\Set}$.)  We will also write
\[ \strfun(-,- ) : \strcat^{op} \times \strcat \ra \strcat \]
for the internal hom bifunctor in $(\strcat,\times)$, which can be computed by the internal hom bifunctor in $(s\Set,\times)$; since $s\Set_\Joyal$ is cartesian, for any $\C,\D \in \strcat$ we have a canonical equivalence
\[ \forget_\strcatsup ( \strfun(\C,\D)) \xra{\sim} \Fun(\forget_\strcatsup (\C),\forget_\strcatsup (\D)) \]
in $\Cat \subset \Cati$.

However, now that we have carefully clarified this distinction, we will often simply ignore it (e.g.\! referring to strict categories just as ``categories'') rather than overburden our terminology, unless it warrants emphasis.  Our meaning should always be clear from context.

\end{enumeratesub}

\item

An object $c \in \C$ defines a functor $\pt_\Cati \ra \C$.  For convenience, we will generally denote this functor by $\{ c \} \hookra \C$ (even though it will not generally be a monomorphism!); that is, we take the notation $\{ c \}$ to denote a terminal object of $\Cati$ which is equipped with a preferred map to $\C$.

\item\label{source and target}

We will generally write $s,t : \Fun([1],\C) \ra \C$ for the \textit{source} and \textit{target} maps, i.e.\! for the evaluation maps at $0 \in [1]$ and $1 \in [1]$ respectively.  Relatedly, as various flavors of categories will sometimes be defined as simplicial objects (recall e.g.\! subitems \ref{different model cats}\ref{CSS as infty-cat} and \ref{work invariantly with qcats}\ref{strict cats}), for such a simplicial object $Y_\bullet \in s\C$ we may write the two structure maps $Y_1 \rra Y_0$ in $\C$ as $\delta_1 = s$ and $\delta_0 = t$.

\item\label{evaluation notation}

In general, given an object $c \in \C$, we will write $\Fun(\C,\D) \xra{\ev_c} \D$ for the functor given by evaluation at $c$ (i.e.\! the pullback along the functor $\{ c \} \hookra \C$).

\item\label{constant diagram functor}

For $\infty$-categories $\I$ and $\C$, we will generally write $\const_\I : \C \ra \Fun(\I,\C)$ for the \textit{constant diagram} functor.  However, when it is clear from context, we may omit the subscript and simply write $\const : \C \ra \Fun(\I,\C)$.

\item\label{diagonal map}

For an object $c \in \C$, we write $\diag : c \ra c \times c$ for the diagonal map (if it exists).  This will usually be applied in the case that $\C = \Cati$.

\item\label{notation for adjunct bifunctor}

We will sometimes want to identify a bifunctor $\C \times \D \ra \E$ with its adjunct $\C \ra \Fun(\D,\E)$.  For clarity, if the original bifunctor is denoted by $F(-,-)$, then this adjunct will be denoted by $F(-,=)$.  That is, we will use the symbols $-$ and $=$ to respectively indicate the slot being filled first and the slot being considered as a free variable.  We will use similar notation for adjuncts of multivariable functors.

\item\label{colimit notn}

Given a functor $\I \xra{F} \C$, we will generally denote its colimit (if it exists), an object of $\C$, by
\begin{itemizesmall}
\item $\colim ( \I \xra{F} \C )$, or
\item $\colim_\I(F)$, or
\item ${\colim}^\C_\I(F)$ if we'd like to emphasize the $\infty$-category $\C$ in which the colimit is being taken, or
\item $\colim_{i \in \I}F(i)$ if we'd like to emphasize the functoriality of $F$ for $i \in \I$, or
\item ${\colim}^\C_{i \in \I}F(i)$ to combine the previous two notations.
\end{itemizesmall}
Dually, we will denote its limit (if it exists), also an object of $\C$, by $\lim(\I \xra{F} \C)$, or $\lim_\I(F) \in \C$, or $\lim^\C_\I(F) \in \C$, or $\lim_{i \in \I}F(i)$, or $\lim^\C_{i \in \I}F(i)$.

For convenience, we will often write $|{-}| = \colim_{\bD^{op}}(-)$ for any colimit functor $s\C = \Fun(\bD^{op},\C) \ra \C$ and refer to it as \textit{geometric realization}.  Similarly, we will often write $\|{-}\| = \colim_{\bD^{op} \times \bD^{op}}(-)$ for any colimit functor $ss\C = \Fun(\bD^{op} \times \bD^{op} , \C) \ra \C$.

\item\label{pushout/pullback and co/product notation}

Given a span $d \xla{\varphi} c \xra{\psi} e$ in an $\infty$-category $\C$, we may denote by
\[ d \coprod_{\varphi , c , \psi} e \]
its colimit (i.e.\! its pushout).  Dually, given a cospan $d \xra{\varphi} c \xla{\psi} e$ in an $\infty$-category $\C$, we may denote by
\[ d \underset{\varphi,c,\psi}{\times} e \]
its limit (i.e.\! its limit).  On the other hand, we may omit either or both of the maps from the subscript if they are clear from context.  Meanwhile, in the absolute cases, given a set of objects $\{ c_i \in \C \}_{i \in I}$, we may write
\[ \coprod_{i \in I} c_i \]
for their coproduct and
\[ \prod_{i \in I} c_i \]
for their product.

\item\label{left kan extn}

Given a functor $\I \xra{F} \J$ and an $\infty$-category $\C$, restriction along $F$ induces a functor $\Fun(\J,\C) \xra{F^*} \Fun(\I,\C)$.  In many cases (for instance if $\C$ is cocomplete) this admits a left adjoint, which we denote by $F_! : \Fun(\I,\C) \ra \Fun(\J,\C)$ and refer to as the \textit{left Kan extension} (along $F$) functor.  (See \sec T.4.3.)

\item\label{notation for co/ends}

We will occasionally use the theory of \textit{coends} and \textit{ends}: given a functor $\C^{op} \times \C \xra{F} \D$, we will denote its coend by
\[ \int^{c \in \C} F(c,c) \in \D \]
and its end by
\[ \int_{c \in \C} F(c,c) \in \D . \]
(We refer the reader to \cite[\sec 2]{GHN} for a brief review of the theory of co/ends in the $\infty$-categorical setting.)

\item

Suppose we are given a bifunctor $\I \times \J \xra{F} \C$.  Then, \textit{Fubini's theorem for colimits} asserts that we have a canonical equivalence
\[ \colim^\C_{(i,j) \in \I \times \J}F(i,j) \simeq \colim^\C_{i \in \I} \left( \colim^\C_{j \in \J} F(i,j) \right) \]
in $\C$, if either side exists.  This can be proved by the juggling of iterated coends (which explains the name), but it is really just a consequence of the observation that the composite
\[ \C \xhookra{\const_\J} \Fun(\J,\C) \xhookra{\const_\I} \Fun(\I , \Fun(\J,\C)) \simeq \Fun(\I \times \J,\C) \]
coincides with the functor
\[ \C \xhookra{\const_{\I \times \J}} \Fun(\I \times \J,\C) \]
(at least when $\C$ is cocomplete, or else embedding $\C$ into its free cocompletion for precisely those colimits that it lacks).

\item\label{natural equivalences are equivalences in functor category}

We will often implicitly use the fact (which is proved as \cite[Chapter 5, Theorem C]{Joyal-qcats-2} (combined with \cite[Proposition 4.8]{Joyal-qcats-2})) that a natural transformation between functors of $\infty$-categories is an equivalence precisely if it is a componentwise equivalence.

\item\label{limits and colimits in a functor category computed objectwise}

We will often implicitly use the fact (which is proved as Corollary T.5.1.2.3 and its dual) that co/limits in a functor $\infty$-category are computed objectwise.

\item

Given a functor $\I \ra \C$ that factors through the inclusion $\C^\simeq \subset \C$ of the maximal subgroupoid (i.e.\! that takes all maps in $\I$ to equivalences in $\C$), there exists a unique induced extension
\[ \begin{tikzcd}
\I \arrow{r} \arrow{d} \arrow{rd} & \C \\
\I^\gpd \arrow[dashed]{r} & \C^\simeq \arrow[hook]{u}
\end{tikzcd} \]
in $\Cati$ over the canonical projection $\I \ra \I^\gpd$ to the groupoid completion.  In other words, restriction induces an equivalence
\[ \Fun(\I^\gpd,\C^\simeq) \xra{\sim} \Fun(\I,\C^\simeq) \hookra \Fun(\I,\C) \]
onto the (non-full) subcategory of $\Fun(\I,\C)$ on such functors (and natural equivalences between them).  In particular, if $\I^\gpd \simeq \pt_\S$, then this subcategory can be canonically identified with the subcategory of \textit{constant} functors (and the natural equivalences between them), which of course is canonically equivalent to $\C^\simeq$ itself.

\item

An $\infty$-category $\I$ is called \textit{sifted} if it is nonempty and its diagonal map $\diag : \I \ra \I \times \I$ is final (see Definition T.5.5.8.1, \cref{gr:define final}, and \cref{gr:rem final is cofinal}).  The most important single example of a sifted $\infty$-category is $\bD^{op}$ (see Lemma T.5.5.8.4), but note too that all filtered $\infty$-categories are also sifted (see Example T.5.5.8.3).

The following facts regarding sifted $\infty$-categories will be important to us.
\begin{itemize}
\item If $\I$ is a sifted $\infty$-category, then $\I^\gpd \simeq \pt_\S$ (see Proposition T.5.5.8.7).
\item If
\begin{itemizesmall}
\item $\I$ is a sifted $\infty$-category,
\item $\C$ is an $\infty$-category admitting finite products and $\I$-indexed colimits, and
\item the product bifunctor $\C \times \C \xra{- \times -} \C$ preserves $\I$-indexed colimits separately in each variable,
\end{itemizesmall}
then $\colim : \Fun(\I,\C) \ra \C$ preserves finite products (see Lemma T.5.5.8.11).  In particular, this holds when $\C = \S$, or more generally when $\C$ is an $\infty$-topos (see Remark T.5.5.8.12).
\end{itemize}

\item\label{free vs left vs right localizations}

Given a relative $\infty$-category $(\R,\bW) \in \RelCati$, its localization $\loc{\R}{\bW} \in \Cati$ might be more carefully termed its \textit{free} localization.  This construction is left adjoint to the functor $\Cati \ra \RelCati$ taking an $\infty$-category $\C$ to its corresponding \textit{minimal} relative $\infty$-category $(\C,\C^\simeq)$, and can hence be constructed explicitly as the pushout
\[ \loc{\R}{\bW} \simeq \colim \left( \begin{tikzcd}
\bW \arrow{r} \arrow{d} & \bW^\gpd \\
\R
\end{tikzcd} \right) . \]
(These notions are all discussed in detail in \cref{rnerves:section rel-infty-cats and localizns}.)

We warn the reader that this notion does \textit{not} generally agree with the definition of ``localization'' studied in \sec T.5.2.7 (see Warning T.5.2.7.3), namely a functor admitting a fully faithful right adjoint.  When we discuss it, we will refer to this latter notion as a \textit{left} localization; its right adjoint may then be referred to as the inclusion of a \textit{reflective} subcategory.  We will denote general such adjunctions by $\leftloc \adj \forget$ (with additional decorations in specific instances).  These are actually a special case of free localizations (see Proposition T.5.2.7.12 or \cref{rnerves:rem re-prove that a left localization is a free localization}).\footnote{Somewhat confusingly, accessible left localizations of presentable $\infty$-categories additionally satisfy a universal property among \textit{left adjoint} functors (see Proposition T.5.5.4.20).}

Of course, there is the dual notion of a \textit{right} localization (into a \textit{coreflective} subcategory), although due to the overall handedness of mathematics (boiling down to the fact that we're generally more comfortable thinking about $\Set$ than about $\Set^{op}$), this arises less frequently in practice and in particular does not appear anywhere in \cite{LurieHTT} (hence the unambiguity of the terminology ``localization'' used there).  We will similarly denote general such adjunctions by $\forget \adj \rightloc$.

Note that a (free) localization which is neither a left nor a right localization can nevertheless admit a section; see for instance \cref{try to put model structure on bounded spectra}.

\item\label{twisted arrow gives hom bifunctor}

Many of our arguments will implicitly rely on the existence of a hom bifunctor
\[ \C^{op} \times \C \xra{\hom_\C(-,-)} \S \]
for an arbitrary $\infty$-category $\C$.  This is achieved by the \textit{twisted arrow $\infty$-category} construction (see Proposition A.5.2.1.11).  If $\C$ is an enriched $\infty$-category, we will write $\ul{\hom}_\C(-,-)$ for the enriched hom-object and continue to write $\hom_\C(-,-)$ for its underlying hom-spaces.  (For the most part, the enriched categories we will encounter will be of the particularly special sort described in item \ref{enriched and bitensored via two-variable adjunction}.)

\item\label{mono and epi}

A morphism $c \ra d$ in an $\infty$-category $\C$ is called a \textit{monomorphism} if for any other object $e \in \C$, the induced map $\hom_\C(e,c) \ra \hom_\C(e,d)$ is a monomorphism in $\S$: these are precisely the morphisms for which it is merely a condition (as opposed to requiring additional data) for there to exist a factorization
\[ \begin{tikzcd}
e \arrow{rd} \arrow[dashed]{d} \\
c \arrow{r} & d
\end{tikzcd} \]
of a given map $e \ra d$ in $\C$.  This is equivalent to the requirement that the commutative square
\[ \begin{tikzcd}
c \arrow{r} \arrow{d} & c \arrow{d} \\
c \arrow{r} & d
\end{tikzcd} \]
in $\C$ is a pullback.

Dually, a morphism $c \ra d$ in an $\infty$-category $\C$ is called an \textit{epimorphism} if for any other object $e \in \C$, the induced map $\hom_\C(d,e) \ra \hom_\C(c,e)$ is a monomorphism in $\S$: similarly, these are precisely the morphisms for which it is merely a condition for there to exist an extension
\[ \begin{tikzcd}
c \arrow{r} \arrow{rd} & d \arrow[dashed]{d} \\
& e
\end{tikzcd} \]
of a given map $c \ra e$ in $\C$.  This is equivalent to the requirement that the commutative square
\[ \begin{tikzcd}
c \arrow{r} \arrow{d} & d \arrow{d} \\
d \arrow{r} & d
\end{tikzcd} \]
in $\C$ is a pushout.

\item\label{yoneda}

For any $\infty$-category $\C$, we will write
\[ \Yo_\C = \hom_\C(=,-) : \C \ra \Fun(\C^{op},\S) \]
for the \textit{Yoneda functor}, namely the indicated adjunct to the hom bifunctor of item \ref{twisted arrow gives hom bifunctor}; we may simply write $\Yo$ if the $\infty$-category $\C$ is clear from context.\footnote{Pronounced ``yo'', the character $\Yo$ is the first letter of ``Yoneda'' in Hiragana.}  (If $\C$ is a 1-category, we may also write $\Yo_\C : \C \ra \Fun(\C^{op},\Set)$ for its factorization through the subcategory of \textit{discrete} objects (see item \ref{discrete object}).)  We generally write
\[ \P(\C) = \Fun(\C^{op},\S) \]
for the $\infty$-category of \textit{presheaves} (of spaces) on $\C$, the target of the Yoneda functor.  An $\infty$-categorical version of \textit{Yoneda's lemma} (see Proposition T.5.1.3.1) asserts that, just as in ordinary category theory, this functor is fully faithful; we therefore will also refer to it as the \textit{Yoneda embedding}.  We will also use the fact (proved as Theorem T.5.1.5.6) that the Yoneda embedding models the \textit{free cocompletion}, i.e.\! that for any cocomplete $\infty$-category $\D$, restriction along the Yoneda embedding defines an equivalence
\[ \Fun^{\colim}(\P(\C),\D) \xra[\sim]{(\Yo_\C)^*} \Fun(\C,\D) \]
(where we write $\Fun^{\colim}$ to denote the full subcategory of the functor $\infty$-category on those functors which preserve colimits), with inverse given by left Kan extension along $\Yo_\C$.

\item\label{discrete object}

An object $c \in \C$ of an $\infty$-category $\C$ is called \textit{discrete} if the functor
\[ \Yo_\C(c) = \hom_\C(-,c) : \C^{op} \ra \S \]
factors through the subcategory $\Set \subset \S$.\footnote{This condition is equivalent to requiring that the diagonal map $\Yo_\C(c) \ra \Yo_\C(c) \times \Yo_\C(c)$ in $\P(\C)$ be a monomorphism.  If the product $c \times c$ exists in $\C$, then we have an equivalence $\Yo_\C(c \times c) \simeq \Yo_\C(c) \times \Yo_\C(c)$ in $\P(\C)$ and it follows easily that this condition can also be checked in $\C$.}  For instance, $\Set \subset \S$ is the inclusion of the full subcategory of discrete objects.  It is not hard to see that in a presheaf $\infty$-category $\P(\D) = \Fun(\D^{op},\S)$, an object $F \in \P(\D)$ is discrete if and only if it itself factors through $\Set$: that is, discreteness is determined objectwise.  Thus, for instance, $s\Set \subset s\S$ is likewise the inclusion of the full subcategory of discrete objects.

\item\label{adjunction}

Given two $\infty$-categories $\C,\D \in \Cati$, an adjunction $F : \C \adjarr \D : G$ is uniquely determined by a bifunctor
\[ \C^{op} \times \D \xra{A} \S , \]
where $A \simeq \hom_\C(-,G(-)) \simeq \hom_\D(F(-),-)$.\footnote{That this agrees with Definition T.5.2.2.1 follows easily from the formalism of \textit{correspondences} (see \sec T.2.3.1 and \sec T.5.2.1 (and the model-independent theory of co/cartesian fibrations laid out in \cite{grjl})).}  In fact, we can \textit{define} an adjunction to be an arbitrary bifunctor $\C^{op} \times \D \xra{A} \S$ which is ``co/representable in each slot''.  More precisely, this means that for any $c \in \C$ the functor $\D \xra{A(c,-)} \S$ must be corepresentable, while for any $d \in \D$ the functor $\C^{op} \xra{A(-,d)} \S$ must be representable.  Since the Yoneda embedding is fully faithful, we recover the adjoint functors via the unique factorizations
\[ \begin{tikzcd}[column sep=1.5cm]
\C^{op} \arrow{r}{A(-,=)} \arrow[dashed]{rd}[swap]{F^{op}} & \P(\D^{op}) \\
& \D^{op} \arrow[hook]{u}[swap]{\Yo_{\D^{op}}}
\end{tikzcd} \]
and
\[ \begin{tikzcd}[column sep=1.5cm]
\D \arrow{r}{A(=,-)} \arrow[dashed]{rd}[swap]{G} & \P(\C) \\
& \C . \arrow[hook]{u}[swap]{\Yo_\C}
\end{tikzcd} \]
Following standard conventions, in our diagrams that involve adjunctions, we keep left adjoints above and/or to the left of their right adjoints to whatever extent possible.  (In-line adjunctions will \textit{always} have their left adjoints on top.)  For added clarity, we often use the ``turnstile'' symbol $\perp$, which sits on the right adjoint and points towards the left adjoint.  Even in the absence of an ambient diagram, we write $F \adj G$ to indicate that $F$ is left adjoint to $G$.

We define the $\infty$-category of \textit{adjunctions from $\C$ to $\D$} to be the full subcategory
\[ \Adjn(\C;\D) \subset \Fun(\C^{op} \times \D , \S) \]
on those bifunctors that define adjunctions.  If the objects $A , A' \in \Adjn(\C;\D)$ determine adjunctions $F \adj G$ and $F' \adj G'$, then a map $A \ra A'$ in $\Adjn(\C;\D)$ is uniquely determined by either datum of a morphism $F' \ra F$ in $\Fun(\C,\D)$ or a morphism $G \ra G'$ in $\Fun(\D,\C)$.\footnote{One might say that these two natural transformations are \textit{conjugates} with respect to the given adjunctions (as in \cite[Chapter IV, \sec 7]{MacLaneCWM}, except that our variance is reversed).}  We will write $\LAdjt(\C,\D) \subset \Fun(\C,\D)$ for the full subcategory on those functors which are left adjoints, and we will write $\RAdjt(\D,\C) \subset \Fun(\D,\C)$ for the full subcategory on those functors which are right adjoints.  We therefore have equivalences
\[ \LAdjt(\C,\D)^{op} \xla{\sim} \Adjn(\C;\D) \xra{\sim} \RAdjt(\D,\C) \]
by the uniqueness of adjoints.

We also note here that given an adjunction $\C \adjarr \D$ and any $\infty$-category $\E$, applying the functor $\Fun(\E,-) : \Cati \ra \Cati$ yields a canonical adjunction $\Fun(\E,\C) \adjarr \Fun(\E,\D)$.  (This follows easily by combining Proposition T.5.2.2.8 with \cite[Proposition 2.3]{SaulHodge} (or with \cite[Proposition 5.1]{GHN}).)

\item\label{multivar adjns}

More generally, we define an \textit{adjunction of $i$ contravariant variables and $j$ covariant variables} to be a multifunctor
\[ (\C_1)^{op} \times \cdots \times (\C_i)^{op} \times \D_1 \times \cdots \times \D_j \xra{A} \S \]
satisfying the condition that fixing all but any one of the slots yields a co/representable functor.\footnote{The 1-categorical version of this notion (in the case $i=1$ and $j=n$, called there an ``adjunction of $n$ variables'') is defined in \cite[Definition 2.1]{CGR-multivar-adjns} (see \cite[Theorem 2.2]{CGR-multivar-adjns}).}  Note that by definition, fixing any number of slots in such an adjunction yields an adjunction in the remaining free variables.\footnote{Thus, by convention, an adjunction in a single variable in just a functor to $\S$, and an adjunction in zero variables is just an object of $\S$.}  We similarly define a full subcategory
\[ \Adjn(\C_1,\ldots,\C_i;\D_1,\ldots,\D_j) \subset \Fun((\C_1)^{op} \times \cdots (\C_i)^{op} \times \D_1 \times \cdots \D_j , \S) \]
on such multivariable adjunctions.

Aside from ordinary adjunctions, we will mainly be interested in (what have come to be called) \textit{adjunctions of two variables} (or simply \textit{two-variable adjunctions}), namely the case $i=2$ and $j=1$.  A two-variable adjunction is thus a trifunctor
\[ \C^{op} \times \D^{op} \times \E \xra{A} \S , \]
in which the co/representability condition furnishes three bifunctors denoted in general as
\[ \twovaradjCDE , \]
which come equipped with uniquely determined natural equivalences
\[ A(c,d,e) \simeq \hom_\C(c,\enrhom_r(d,e)) \simeq \hom_\D(d,\enrhom_l(c,e)) \simeq \hom_\E(c \otimes d,e) \]
in $\Fun(\C^{op} \times \D^{op} \times \E , \S)$ (for $c \in \C$, $d \in \D$, and $e \in \E$).  Just as with ordinary adjunctions, we will often denote a two-variable adjunction simply by listing its constituent bifunctors, leaving the natural equivalences implicit.

\item\label{co/tensoring}

Given an $\infty$-category $\C$, an object $c \in \C$, and a space $Y \in \S$, a \textit{tensor} of $c$ over $Y$ is an object $c \tensoring Y \in \C$ equipped with an equivalence
\[ \hom_\C(c \tensoring Y , -) \simeq \hom_\S(Y,\hom_\C(c,-)) \]
in $\Fun(\C,\S)$.  (We may sometimes write this as $Y \tensoring c \in \C$ for notational convenience.)  Dually, a \textit{cotensor} of $Y$ with $c$ is an object $Y \cotensoring c \in \C$ equipped with an equivalence
\[ \hom_\C(-,Y \cotensoring c) \simeq \hom_\S(Y,\hom_\C(-,c)) \]
in $\Fun(\C^{op},\S)$.  (As this is a sort of generalized mapping object, we will never reverse the order of the objects.)  We will say that $\C$ is \textit{tensored} (over $\S$) if it admits all tensors, and that it is \textit{cotensored} (over $\S$) if it admits all cotensors.  We will say that $\C$ is \textit{bitensored} if it is both tensored and cotensored.

If we denote by $(\S \times \C)^\tensoring \subset \S \times \C$ the full subcategory on those pairs admitting a tensoring, then by definition we can construct the tensoring as a bifunctor via the factorization
\[ \begin{tikzcd}[column sep=3.5cm]
(\S \times \C)^\tensoring \arrow{r}{\hom_\S(-,\hom_\C(-,=))} \arrow[dashed]{rd}[swap]{- \tensoring -} & \P(\C^{op}) \\
& \C \arrow[hook]{u}[swap]{\Yo}
\end{tikzcd} \]
through the fully faithful Yoneda embedding, and we can similarly construct the (maximal) cotensoring as a bifunctor
\[ (\S^{op} \times \C)^\cotensoring \xra{- \cotensoring -} \C . \]
Using the same argument, if $\C$ is tensored (resp.\! cotensored), it is not hard to extend the tensoring (resp.\! cotensoring) bifunctor to an action of the symmetric monoidal $\infty$-category $(\S,\times) \in \CAlg(\Cati)$ (resp.\! $(\S^{op},\times) \in \CAlg(\Cati)$) on the $\infty$-category $\C \in \Cati$.\footnote{If $\C$ is additionally presentable (and hence in particular cocomplete), we can alternatively recover the tensoring action from the symmetric monoidal structure on the $\infty$-category of presentable $\infty$-categories, for which $\S$ is the unit object (see Proposition A.4.8.1.14 and Example A.4.8.1.19).}  If $\C$ is bitensored, then we obtain a two-variable adjunction
\[ \twovaradjSC . \]

By Corollary T.4.4.4.9, considering $Y \in \S \subset \Cati$, we have an equivalence
\[ c \tensoring Y \simeq \colim^\C_Y \const(c) \]
in $\C$ (assuming either side exists).  Thus, a tensoring is a sort of colimit, and hence a cocomplete $\infty$-category is in particular tensored.  Dually, a cotensoring is a sort of limit, and hence a complete $\infty$-category is in particular cotensored.  Similarly, an $\infty$-category which is finitely co/complete is in particular co{\textbackslash}tensored over $\S^\fin \subset \S$.

\item\label{enriched and bitensored via two-variable adjunction}

More generally, suppose that $(\V,\otimes) \in \Alg(\Cati)$ is a closed monoidal $\infty$-category, and suppose that $\C \in \RMod_\V(\Cati)$ is a right $\V$-module.\footnote{For us, a monoidal $\infty$-category being \textit{closed} by definition means that it is both left closed and right closed.  Note that this is actually just a property, not additional structure: left/right closure only demands the existence of certain adjoints.}  Writing
\[ \C \times \V \xra{- \tensoring -} \C \]
for the underlying bifunctor of the right action of $\V$ on $\C$, let us suppose further that this extends to a two-variable adjunction.  Such an extension is precisely the data of an \textit{enrichment} and \textit{bitensoring} of $\C$ over $\V$: the action defines the tensoring, and we write
\[ \C^{op} \times \C \xra{\enrhom_\C(-,-)} \V \]
and
\[ \V^{op} \times \C \xra{- \cotensoring -} \C \]
for the other two constituent bifunctors; as the notation indicates, these come with natural equivalences
\[ \hom_\C(c \tensoring v , d) \simeq \hom_\V(v,\enrhom_\C(c,d)) \simeq \hom_\C(c,v \cotensoring d) \]
in $\Fun(\C^{op} \times \V^{op} \times \C , \S)$ (for $c,d \in \C$ and $v \in \V$).\footnote{If we are simply given a two-variable adjunction of this signature \textit{without} an extension of the first bifunctor to an action of $\V$ on $\C$, then there will not be any compatibility between the symmetric monoidal structure on $\V$ and the bifunctors comprising the two-variable adjunction.}

To see that this gives an enrichment of $\C$, observe first that we have equivalences
\[ \hom_\V(\unit_\V,\enrhom_\C(c,d)) \simeq \hom_\C(c \tensoring \unit_\V , d) \simeq \hom_\C(c,d) \]
by the unitality of the action of $\V$ on $\C$.  The enriched composition maps are obtained from the evaluation maps
\[ \hom_\C( c \tensoring \enrhom_\C(c,d) , d) \simeq \hom_\V(\enrhom_\C(c,d) , \enrhom_\C(c,d)) \xla{\id_{\enrhom_\C(c,d)}} \pt_\S \]
as the composites
\begin{align*}
\hom_\V ( \enrhom_\C(c_0,c_1) \otimes \enrhom_\C(c_1,c_2) , \enrhom_\C(c_0,c_2) )
& \simeq \hom_\C(c_0 \tensoring ( \enrhom_\C(c_0,c_1) \otimes \enrhom_\C(c_1,c_2)) , c_2) \\
& \simeq \hom_\C((c_0 \tensoring \enrhom_\C(c_0,c_1)) \tensoring \enrhom_\C(c_1,c_2) , c_2) \\
& \la \hom_\C(c_1 \tensoring \enrhom_\C(c_1,c_2) , c_2) \\
& \la \hom_\C(c_2,c_2) \\
& \xla{\id_{c_2}} \pt_\S ,
\end{align*}
and the higher composition maps are obtained by essentially this same construction.  It is not hard to see that applying the functor $\hom_\V(\unit_\V,-) : \V \ra \S$, which is canonically lax monoidal, recovers the original composition maps in $\C$.

Then, to see that these functors define \textit{enriched} co/tensors, we check that for an arbitrary test object $w \in \V$,
\begin{align*}
\hom_\V(w,\enrhom_\C(c \tensoring v ,d))
& \simeq \hom_\C((c \tensoring v)\tensoring w,d) \\
& \simeq \hom_\C( c \tensoring (v \otimes w),d)  \\
& \simeq \hom_\V(v\otimes w,\enrhom_\C(c,d)) \\
& \simeq \hom_\V(w,\enrhom_\V(v,\enrhom_\C(c,d)))
\end{align*}
and similarly
\[ \hom_\V(w,\enrhom_\C(c,v \cotensoring d)) \simeq \hom_\V(w,\enrhom_\V(v,\enrhom_\C(c,d))) ; \]
by Yoneda's lemma, we obtain the desired natural equivalences
\[ \enrhom_\C(c \tensoring v , d) \simeq \enrhom_\V(v,\enrhom_\C(c,d)) \simeq \enrhom_\C(c,v\cotensoring d) \]
of enriched hom-objects in $\V$.\footnote{This is the only place we have used that $\V$ is closed; without this assumption, the given data still define a $\V$-enrichment of $\C$ along with \textit{unenriched} co/tensors over $\V$ (in the evident sense).}

Finally, we observe that the cotensoring bifunctor can be canonically extended to a \textit{left} action of $(\V^{op},\otimes^{op}) \in \Alg(\Cati)$ on $\C \in \Cati$ (i.e.\! we can consider $\C \in \LMod_{\V^{op}}(\Cati)$), simply by passing the tensoring action through the adjunction; for instance, for any $c,d \in \C$ and any $v ,w \in \V$ we have a natural string of equivalences
\begin{align*}
\hom_\C(c,w \cotensoring (v \cotensoring d))
& \simeq \hom_\C(c \tensoring w, v \cotensoring d)
\simeq \hom_\C((c \tensoring w) \tensoring v,d)
\\
& \simeq \hom_\C(c \tensoring (w \otimes v),d)
\simeq \hom_\C(c,(w \otimes v) \cotensoring d),
\end{align*}
which by Yoneda's lemma provides a canonical natural equivalence
\[ w \cotensoring (v \cotensoring d) \simeq (w \otimes v) \cotensoring d \]
in $\Fun(\V^{op} \times \V^{op} \times \C,\C)$.

Of course, this same discussion goes through without essential change in the special case that $\V$ is in fact \textit{symmetric} monoidal.

\item\label{generalized matching and latching}

Given a finitely complete $\infty$-category $\C$, its corresponding \textit{generalized matching object} bifunctor
\[ (s\S^{\fin})^{op} \times s\C \xra{\Match_{(-)}(-)} \C \]
is given by
\[ \Match_K(Y) = \int^{[n] \in \bD^{op}} K_n \cotensoring Y_n .  \]
By construction, this comes equipped with an equivalence
\[ \hom_\C( - , \Match_K(Y)) \simeq \hom_{s\S}(K , \hom_\C^\lw(-,Y)) \]
in $\Fun(\C^{op},\S)$, so that in particular when $\C$ is in fact bicomplete we obtain a two-variable adjunction
\[ \twovaradjmatching . \]
Dually, given a finitely cocomplete $\infty$-category $\C$, its corresponding \textit{generalized latching object} bifunctor
\[ s\S^\fin \times c\C \xra{\Latch_{(-)}(-)} \C \]
is given by
\[ \Latch_K(Z) = \int_{[n] \in \bD^{op}} Z^n \tensoring K_n . \]
By construction, this comes equipped with an equivalence
\[ \hom_\C(\Latch_K(Z) , - ) \simeq \hom_{s\S}(K , \hom^\lw_\C(Z,-)) \]
in $\Fun(\C,\S)$, so that in particular when $\C$ is in fact bicomplete we obtain a two-variable adjunction
\[ \twovaradjlatching . \]
These notions are extensions of the usual theory of matching and latching objects in Reedy categories, and correspondingly we make the abbreviations $\Match_n(-) = \Match_{\partial \Delta^n}(-)$ and $\Latch_n(-) = \Latch_{\partial \Delta^n}(-)$.

\end{enumerate}

\subsection{}\label{subsection model cats present infty-cats}

Note that we are considering model categories as objects of study in their own right: they are nothing more than model $\infty$-categories whose hom-spaces are discrete.  However, we will also be using model categories as \textit{presentations} of their underlying $\infty$-categories (as indicated in subitem \ref{different model cats}\ref{relcat as infty-cat}).  Thus, we must also establish our conventions regarding their manipulation in this capacity.

For historical context, we will make some attempt to reference the primary sources for results concerning model categories.  However, the body of literature is vast, and so as catch-all resources we will generally refer to \cite{Hirsch} and \cite{GJ}, especially for the more classical results.\footnote{The former requires that its model categories have functorial factorizations, whereas we do not.  We will never use general results on model categories that depend on functorial factorizations.}

\begin{enumerate}[resume]

\item

As a consistency check, we observe that our consideration of objects of $\strrelcat_\BarKan$ as presentations of $\infty$-categories does indeed identify a relative category $(\R,\bW) \in \strrelcat$ with its $\infty$-categorical localization $\loc{\R}{\bW} \in \Cati$ (as defined in item \ref{free vs left vs right localizations}).  More precisely, the natural commutative diagram
\[ \begin{tikzcd}
\bW \arrow{r} \arrow{d} & \hamd(\bW,\bW) \arrow{d} \\
\R \arrow{r} & \hamd(\R,\bW)
\end{tikzcd} \]
is a homotopy pushout square in $(\strcat_{s\Set})_\Bergner$ (and hence presents a pushout in $\Cati$ (see item \ref{ho-co-lims})), and moreover the object $\hamd(\bW,\bW) \in (\strcat_{s\Set})_\Bergner$ presents the groupoid completion $\bW^\gpd \in \Cati$.\footnote{The proof of this assertion is mostly contained in \cite[3.4]{BK-simploc}.  However, note that there, they do not work in $(\strcat_{s\Set})_\Bergner$, but rather work in the Dwyer--Kan model structure on ``simplicial $O$-categories'' (see \cite[Proposition 7.2]{DKSimpLoc}, though note that their citation for this model structure should actually be to \cite[Chapter II, \sec 4, Theorem 4]{QuillenHA}).  However, the characterization \cite[Proposition 7.6]{DKSimpLoc} of the cofibrations implies that the forgetful functor to $(\strcat_{s\Set})_\Bergner$ preserves cofibrations, so that it also preserves homotopy pushouts (since it also preserves ordinary pushouts).  Moreover, the assertion that $\hamd(\bW,\bW) \in (\strcat_{s\Set})_\Bergner$ presents $\bW^\gpd \in \Cati$ follows from \cite[5.5]{DKSimpLoc} and \cite[Proposition 2.2]{DKCalc}.}\footnote{As described in item \ref{ho-co-lims}, it is only known that homotopy co/limits in \textit{combinatorial simplicial} model categories present co/limits in their underlying $\infty$-categories.  However, even though $(\strcat_{s\Set})_\Bergner$ is not a combinatorial simplicial model category, it is easy enough to show that homotopy pushouts therein coincide up to a zigzag of natural weak equivalences with those computed in the combinatorial simplicial model category $ss\Set_\Rezk$ (using the Quillen equivalence $ss\Set_\Rezk \adjarr (\strcat_{s\Set})_\Bergner$ and the functorial factorizations guaranteed by cofibrant generation).}  More succinctly, we can (apparently circularly but now in fact soundly) summarize this assertion by saying that the localization functor
\[ \strrelcat \ra \loc{\strrelcat}{\bW_\BarKan} \simeq \Cati \]
is \textit{itself} given by localization.

\item\label{choose representative in model category}

In keeping with our general desire for our language to remain independent of any noncanonical choices, when we choose a \textit{representative} in a model category of an object or a map in its underlying $\infty$-category, we will only mean a representative up to equivalence in the underlying $\infty$-category.  When doing so, we indicate this noncanonical choice using ``typewriter text'', so that for instance, given an $\infty$-category $\C \in \Cati$, we might write $\ttC \in s\Set_\Joyal^f$ to denote a quasicategory representing it.

\item\label{sometimes need simplicial}

Given a \textit{simplicial} model category $\M_\bullet$ (with underlying model category $\M$), another notion of ``underlying $\infty$-category'' is given by the full simplicial subcategory $\M^{cf}_\bullet \subset \M_\bullet$ on the bifibrant objects.  By \cite[Proposition 4.8]{DKFunc}, this is weakly equivalent to $\hamd(\M,\bW)$ in $(\strcat_{s\Set})_\Bergner$.\footnote{In the diagram in the statement of \cite[Proposition 4.8]{DKFunc}, the right arrow should also be labeled as a weak equivalence (in $(\strcat_{s\Set})_\Bergner$), as indicated by its proof.}\footnote{This is also proved directly to present the $\infty$-categorical localization $\loc{\M^c}{(\bW^c)}$ as Theorem A.1.3.4.20 (and there is a canonical equivalence $\loc{\M^c}{(\bW^c)} \xra{\sim} \loc{\M}{\bW}$ e.g.\! by \cite[\cref{adjns:inclusion of co/fibrants induces BK weak equivalence}]{adjns}).}  In making connections between model categories and $\infty$-categories, the results of \cite{LurieHTT} generally assume that the given model categories are simplicial.  As a result, some of the connections that we make will carry this same caveat.



\item\label{joyal to JT}

As we have indicated in \cref{ground in qcats}, the primary model category we will use to present the $\infty$-category $\Cati$ will be $s\Set_\Joyal$.  Unfortunately, this does not enjoy all the nice properties that one might hope; in particular, it is not a simplicial model category.  However, all is not lost: there exist both left and right Quillen equivalences to the combinatorial simplicial model category $ss\Set_\Rezk$ given by \cite[Theorems 4.11 and 4.12]{JT}.  These allow us to port many convenient features of $ss\Set_\Rezk$ over to $s\Set_\Joyal$ (such as in items \ref{ho-co-lims} \and \ref{functor model cats present functor infty-cats} below).

\item\label{quillen adjunction induces underlying adjunction}

Suppose that $F : \M \adjarr \N : G$ is a Quillen adjunction.  Note that the functors $F$ and $G$ do not define functors of underlying relative categories: they do not generally preserve weak equivalences.  Nevertheless, we prove as \cite[\cref{adjns:main theorem}]{adjns} that a Quillen adjunction between model categories induces an associated adjunction of quasicategories.\footnote{In the case of a Quillen adjunction of \textit{simplicial} model categories, this result is proved as Proposition T.5.2.4.6.}  By Kenny Brown's lemma (or rather its immediate consequence \cite[Corollary 7.7.2]{Hirsch}), the composites
\[ \M^c \hookra \M \xra{F} \N \]
and
\[ \M \xla{G} \N \hookla \N^f \]
do preserve weak equivalences, and these respectively present the left and right adjoint functors.  As a particular case, we immediately obtain that left Bousfield localizations present left localizations (and dually).

\item

If $\M$ is a model category and $x \xra{f} y$ is any map in $\M$, it is easy to check that the induced adjunction $\M_{x/} \adjarr \M_{y/}$ is automatically a Quillen adjunction.  If $f$ is additionally a weak equivalence, we might hope that this is then a Quillen equivalence.  For this to hold, however, we need for every pushout of $f$ along a cofibration to be a weak equivalence.  This will be true either
\begin{itemizesmall}
\item if $f$ is an \textit{acyclic} cofibration, or
\item if $\M$ is left proper.
\end{itemizesmall}

This observation allows us to partially address the question of when the induced model structure on $\M_{x/}$ present the undercategory $\loc{\M}{\bW}_{x/}$ (or dually, when the induced model structure on $\M_{/y}$ presents the overcategory $\loc{\M}{\bW}_{/y}$): we only establish the connection for \textit{simplicial} model categories, though this will suffice for our purposes.  Namely, let $\M_\bullet$ be a simplicial model category.
\begin{itemize}
\item Suppose that $x \in \M^c$.  If we choose any factorization $x \wcofibn x' \fibn \pt_\M$, then we obtain a Quillen equivalence $(\M_{x/})_\bullet \adjarr (\M_{x'/})_\bullet$ with $x' \in \M^{cf}$.  Since Quillen equivalences induce equivalences of underlying $\infty$-categories, the dual result to Lemma T.6.1.3.13 implies that $(\M_{x/})_\bullet$ (and hence also the underlying model category $\M_{x/}$) presents the undercategory of the object of the underlying $\infty$-category of $\M_\bullet$ corresponding to $x$.
\item On the other hand, if $\M_\bullet$ is left proper, then this statement holds for any $x \in \M$.  Indeed, if we choose any factorization $\es_\M \cofibn x'' \we x$, then we obtain a Quillen equivalence $(\M_{x''/})_\bullet \adjarr (\M_{x/})_\bullet$, which reduces us to the previous case.
\end{itemize}

\item\label{ho-co-lims}

We will use the term \textit{homotopy co/limit} in a model category $\M$ to refer to a (not necessarily commutative) diagram which becomes a (commutative) co/limit diagram in $\loc{\M}{\bW}$.

If $\M_\bullet$ is a combinatorial simplicial model category, then it follows from Remark T.A.3.3.11, Proposition T.A.3.3.12, Remark T.A.3.3.13, and Theorem T.4.2.4.1 that homotopy co/limits in $\M_\bullet$ (in the classical sense) compute co/limits in its underlying $\infty$-category.  (See those results for a precise statement.)

Homotopy co/limits are generally computed using \textit{model structures on functor categories}, of which there are three main examples.\footnote{For details on these, see respectively: \sec T.A.2.8; \cite[\sec 11.6]{Hirsch} and \sec T.A.2.8; \cite[Chapter 15]{Hirsch} and \sec T.A.2.9.}
\begin{itemize}
\item An \textit{injective model structure} on $\Fun(\C,\M)$, denoted $\Fun(\C,\M)_\injective$, has its weak equivalences and cofibrations determined objectwise.  This is guaranteed to exist when $\M$ is combinatorial.
\item A \textit{projective model structure} on $\Fun(\C,\M)$, denoted $\Fun(\C,\M)_\projective$, has its weak equivalences and fibrations determined objectwise.  This is guaranteed to exist when $\M$ is cofibrantly generated.
\item Given a category $\C$ endowed with a Reedy structure, the corresponding \textit{Reedy model structure} on $\Fun(\C,\M)$, denoted $\Fun(\C,\M)_\Reedy$, has its weak equivalences determined objectwise (but its cofibrations and fibrations depend on the Reedy structure), and exists without any additional assumptions on $\M$.\footnote{If the Reedy structure on $\C$ has $\lvec{\C} = \C$, then the Reedy and injective model structures on $\Fun(\C,\M)$ coincide (and both always exist).  Dually, if the Reedy structure on $\C$ has $\rvec{\C} = \C$, then the Reedy and projective model structures on $\Fun(\C,\M)$ coincide (and both always exist).  Thus, in general, the Reedy model structure can be seen as a ``mixture'' of the injective and projective model structures (see Example T.A.2.9.22).}
\end{itemize}
These enjoy the following properties.

\begin{itemize}

\item Whenever these various model structures exist, the identity adjunction gives rise to Quillen equivalences
\[ \begin{tikzcd}[row sep=2cm]
\Fun(\C,\M)_\projective \horizadjntwocolumns \diagadjn & & \Fun(\C,\M)_\injective \\
& \Fun(\C,\M)_\Reedy \antidiagadjn
\end{tikzcd} \]
between them.

\item Applying $\Fun(\C,-)$ to a Quillen adjunction (resp.\! Quillen equivalence) $\M \adjarr \N$ gives rise to another Quillen adjunction (resp.\! Quillen equivalence) with respect to any of these model structures that exist on both $\Fun(\C,\M)$ and $\Fun(\C,\N)$.

\item These various model structures participate in Quillen adjunctions as follows.

\begin{itemize}
\item We have a Quillen adjunction
\[ \const : \M \adjarr \Fun(\C,\M)_\injective : \lim \]
whenever the injective model structure and the limit functor both exist.
\item We have a Quillen adjunction
\[ \colim : \Fun(\C,\M)_\projective \adjarr \M : \const \]
whenever the projective model structure and the colimit functor both exist.
\item If $\C$ is endowed with a Reedy model structure with \textit{cofibrant constants} then we are guaranteed a Quillen adjunction
\[ \const : \M \adjarr \Fun(\C,\M)_\Reedy : \lim , \]
while if $\C$ is endowed with a Reedy model structure with \textit{fibrant constants} then we are guaranteed a Quillen adjunction
\[ \colim : \Fun(\C,\M)_\Reedy \adjarr \M : \const . \]
However, these adjunctions (if they exist) can still be Quillen adjunctions even without these restrictions on $\C$, albeit (necessarily by definition) only for specific choices of $\M$.
\end{itemize}

\end{itemize}
At least when $\M_\bullet$ is a combinatorial simplicial model category, any of these model structures on $\Fun(\C,\M)$ presents the $\infty$-category $\Fun(\C,\loc{\M}{\bW})$ by Proposition T.4.2.4.4 and Remark T.4.2.4.5.\footnote{Moreover, the results of \cite{DuggerSimp} and \cite{RSS} can sometimes be used to replace a model category (via a Quillen equivalence) with a combinatorial simplicial one.}  As the functor $\const : \M \ra \Fun(\C,\M)$ in $\strrelcat_\BarKan$ clearly presents the functor $\const : \loc{\M}{\bW} \ra \Fun(\C,\loc{\M}{\bW})$, combining item \ref{quillen adjunction induces underlying adjunction} with the uniqueness of adjoints shows that the derived functors of these various Quillen adjunctions do indeed compute homotopy co/limits.  (In particular, as foreshadowed in item \ref{joyal to JT}, from here it is straightforward to see that homotopy co/limits in the combinatorial model category $s\Set_\Joyal$ do indeed compute co/limits in $\Cati$.)




\item

As a particular case of item \ref{ho-co-lims}, there is a Reedy structure on the walking span category
\[ \Nerve^{-1}(\Lambda^2_0) = (\bullet \la \bullet \ra \bullet) \]
determined by the degree function described by the picture $(0 \la 1 \ra 2)$.  This has fibrant constants (see e.g.\! the proof of \cite[Proposition 15.10.10]{Hirsch}), so that for any model category $\M$ we obtain a Quillen adjunction
\[ \colim : \Fun(\Nerve^{-1}(\Lambda^2_0),\M)_\Reedy \adjarr \M : \const . \]
Moreover, the cofibrant objects of $\Fun(\Nerve^{-1}(\Lambda^2_0),\M)_\Reedy$ are precisely the diagrams of the form
\[ x \la y \cofibn z \]
for $x,y,z \in \M^c \subset \M$.

Dually, there is a Reedy structure on the walking cospan category
\[ \Nerve^{-1}(\Lambda^2_2) = (\bullet \ra \bullet \la \bullet) \]
determined by the degree function described by the picture $(0 \ra 1 \la 2)$.  This has cofibrant constants, so that for any model category $\M$ we obtain a Quillen adjunction
\[ \const : \M \adjarr \Fun(\Nerve^{-1}(\Lambda^2_2),\M)_\Reedy : \lim . \]
Moreover, the fibrant objects of $\Fun(\Nerve^{-1}(\Lambda^2_2),\M)_\Reedy$ are precisely the diagrams of the form
\[ x \ra y \lfibn z \]
for $x,y,z \in \M^f \subset \M$.

We will refer to either of these dual techniques simply as \textit{the Reedy trick}.

\item\label{functor model cats present functor infty-cats}

We will at times make computations in functor $\infty$-categories using model structures on functor categories; that these present the desired functor $\infty$-categories will always follow from the observations of items \ref{ho-co-lims} \and \ref{joyal to JT}.  We also recall here that the model structures $s(s\Set_\KQ)_\Reedy$ and $s(s\Set_\KQ)_\injective$ coincide by Example T.A.2.9.21.  From this, it follows that the model structures $s(s\Set_\Joyal)_\Reedy$ and $s(s\Set_\Joyal)_\injective$ also coincide: they have the same weak equivalences by definition, and their cofibrations coincide since those of $s\Set_\Joyal$ coincide with those of $s\Set_\KQ$.

\end{enumerate}

\subsection{}

We end \cref{section conventions} by laying out a few other miscellaneous conventions.

\begin{enumerate}[resume]

\item

Whenever we draw a diagram which takes place in a model ($\infty$-)category, we explicitly mention the ambient model structure for emphasis.  However, we will only decorate those aspects of the diagram (e.g.\! a morphism as a co/fibration) which are relevant to the argument.

\item\label{notation opobj}

Given an $\infty$-category $\C$ and an object $c \in \C$, for emphasis we may denote the corresponding object by $c^\opobj \in \C^{op}$: explicitly, if the object $c \in \C$ is selected by a morphism $[0] \xra{\chi} \C$ in $\Cati$, then the object $c^\opobj \in \C^{op}$ is selected by the composite
\[ [0] \xra{\cong} [0]^{op} \xra{\chi^{op}} \C^{op} \]
in $\Cati$.  Similarly, if a morphism $f$ in $\C$ is selected by a morphism $[1] \xra{\varphi} \C$ in $\Cati$, then we may denote by $f^\opobj$ the morphism in $\C^{op}$ selected by the composite
\[ [1] \xra{\cong} [1]^{op} \xra{\varphi^{op}} \C^{op} \]
in $\Cati$, where the isomorphism is determined by the assignments $0 \mapsto 1^\opobj$ and $1 \mapsto 0^\opobj$.  On the other hand, we will sometimes omit these decorations in order not to overburden our notation.\footnote{One might naively hope to simply write e.g.\! $c^{op}$ for the object of $\C^{op}$ corresponding to the object $c \in \C$, but then one would run into trouble as soon as different ``category levels'' begin to mix: for example, the notation $[n]^{op}$ could then either refer to an object of $\Cat$ (which is in fact (equivalent to) the object $[n] \in \bD \subset \Cat$) or to an object of $\bD^{op}$.  Thus, we reserve the superscript $(-)^{op}$ to denote the involution of $\Cati$.  Note, however, that this does not just induce a covariant action on the objects and morphisms of $\Cati$, but also induces a contravariant action on its \textit{2-morphisms}: for any $\C,\D \in \Cati$ we have a canonical identification $\Fun(\C^{op},\D^{op}) \simeq \Fun(\C,\D)^{op}$, so that a pair of functors $F,G : \C \rra \D$ and a natural transformation $\alpha : F \ra G$ in $\Fun(\C,\D)$ corresponds to a pair of functors $F^{op},G^{op} : \C^{op} \rra \D^{op}$ and a natural transformation $\alpha^{op} : G^{op} \ra F^{op}$ in $\Fun(\C^{op},\D^{op})$.}

\item\label{notn lifting properties}

Given a set $I$ of homotopy classes of maps in an $\infty$-category $\C$, we write $\llp(I)$ and $\rlp(I)$ for the sets of (homotopy classes of) maps that have the left or right lifting property with respect to $I$, respectively.  (A lifting property with respect to a subcategory by definition means a lifting property with respect to the homotopy classes of maps contained in that subcategory.)  To be explicit, note that a commutative square in an $\infty$-category is presented by a map from $\Delta^1 \times \Delta^1$ to a quasicategory.  To obtain a lift through that commutative square is then to obtain an extension over the map
\[ \Delta^1 \times \Delta^1 \cong \Delta^{\{013\}} \coprod_{\Delta^{\{03\}}} \Delta^{\{023\}} \cofibn \Delta^3 \]
in $s\Set_\Joyal$.  Alternatively (and invariantly), given a pair of maps $x \xra{i} y$ and $z \xra{p} w$, to say that $i \in \llp(\{p\})$ (or equivalently that $p \in \rlp(\{i\})$) is precisely to say that the induced map
\[ \hom_\C(y,z) \ra
\lim \left(
\begin{tikzcd}
& \hom_\C(y,w) \arrow{d}{i^*} \\
\hom_\C(x,z) \arrow{r}[swap]{p_*} & \hom_\C(x,w)
\end{tikzcd} \right) \]
in $\S$ is a surjection.

\item\label{simplex category}

When working in the cosimplicial indexing category $\bD$, we will often indicate an inclusion simply by specifying its image, so that for instance the notation $[0] \xra{\{i\}} [n]$ refers to the map given by $0 \mapsto i$.  (In particular, we will therefore denote by $\Delta^{\{i_0,\ldots,i_j\}} \subset \Delta^n$ the evident subobject in $s\Set$.)  We will also employ the standard notations
\begin{itemizesmall}
\item $\delta^i_m \in \hom_\bD([m-1],[m])$ for the \textit{coface} maps (for $0 \leq i \leq m$), and
\item $\sigma^j_n \in \hom_\bD([n+1],[n])$ for the \textit{codegeneracy} maps (for $0 \leq j \leq n$),
\end{itemizesmall}
or we may simply write $\delta^i$ or $\sigma^j$ (resp.)\! if the source and/or target are clear from the context.

\item\label{cosimp and simp objs}

Given any $\infty$-category $\C$, we write $c\C = \Fun(\bD,\C)$ for the $\infty$-category of \textit{cosimplicial objects} in $\C$, and we write $s\C = \Fun(\bD^{op},\C)$ for the $\infty$-category of \textit{simplicial objects} in $\C$.  For any objects $Y \in c\C$ and $Z \in s\C$,
\begin{itemizesmall}
\item we denote their constituent objects of $\C$ by $Y^n = Y([n])$ and $Z_n = Z([n]^\opobj)$, and
\item we variously denote their structure maps as follows:
\begin{itemizesmall}
\item a coface map $[m] \xra{\delta^i_m} [m+1]$ in $\bD$ induces
\begin{itemizesmall}
\item a coface map $Y^m \xra{\delta^i_m} Y^{m+1}$ and
\item a face map $Z_{m+1} \xra{\delta_i^m} Z_m$
\end{itemizesmall}
(or simply $\delta^i$ and $\delta_i$, resp.);
\item a codegeneracy map $[n+1] \xra{\sigma^j_n} [n]$ induces
\begin{itemizesmall}
\item a codegeneracy map $Y^{n+1} \xra{\sigma^j_n} Y^n$ and
\item a degeneracy map $Z_n \xra{\sigma_j^n} Z_{n+1}$
(or simply $\sigma^j$ and $\sigma_j$, resp.);
\end{itemizesmall}
\item an arbitrary map $[m] \xra{\varphi} [n]$ (not explicitly identified as a coface or codegeneracy) induces
\begin{itemizesmall}
\item a map $Y^m \xra{\varphi} Y^n$ and
\item a map $Z_n \xra{\varphi} Z_m$
\end{itemizesmall}
(or $Y(\varphi)$ and $Z(\varphi)$ (or even $Z(\varphi^\opobj)$), resp., if we wish to emphasize the functoriality of $Y : \bD \ra \C$ or $Z : \bD^{op} \ra \C$).
\end{itemizesmall}
\end{itemizesmall}

\item\label{notn possibly-omitted decorations}

There are certain decorations which are sometimes useful to include for emphasis or clarity but are at other times useful to exclude for simplicity.  For instance, we may write $(-)^\bullet$ to emphasize that an object is cosimplicial, but we may omit this decoration if we are considering the entire cosimplicial object at once and have no plans to extract its constituents.  We list these here.

\vspace{0pt}

\begin{center}
\bgroup
\def\arraystretch{1.5}
\begin{longtable}{||c|c||}
\hline \hline
\textbf{decoration} & \textbf{meaning} \\ \hline \hline
$(-)^\bullet$ & cosimplicial object \\ \hline
$(-)_\bullet$ & simplicial object \\ \hline
$(-)^\lw$ & functor being taken levelwise \\ \hline
$(-)^\opobj$ & corresponding object or morphism in the opposite $\infty$-category \\
\hline \hline
\end{longtable}
\egroup
\end{center}

\vspace{-20pt}

\noindent (Given a functor $\C \xra{F} \D$, we will sometimes (but not always) write $c\C \xra{F^\lw} c\D$ and $s\C \xra{F^\lw} s\D$ to denote the induced functors on $\infty$-categories of co/simplicial objects given by postcomposition with $F$ (instead of $cF$ or $sF$, resp.).)

\item

We will at times refer to various ``named'' results, both within this sequence of papers and in external citations.  For the reader's convenience, we will always refer to these both by name and by number.  We take the conventions that
\begin{itemize}
\item if the name of the result includes the \textit{type} of result (e.g.\! ``theorem'', ``lemma'', etc.) then we won't repeat it -- so for instance we'll simply refer to ``Kenny Brown's lemma (\Cref{qadjns:kenny brown})'' --, whereas
\item if the name of the result does not include its type, then we will include it -- so for instance we'll refer to ``the small object argument (\cref{small object argument})''.
\end{itemize}

\end{enumerate}

\section{Index of notation}\label{section notation index}

For the reader's convenience, in this appendix we provide an index of all (potentially not-completely-standard) mathematical symbols that we use throughout this sequence of papers.  We list them in alphabetical order to the (greatest extent possible) and indicate where they are defined or first appear (using code names in all cases, including for references in the present paper).  We generally list multi-use decorations as separate entries (e.g.\! the subscript indicating a ``named'' model $\infty$-category) so as to minimize repetition.

\begin{multicols}{2}

{\setlength{\parindent}{0pt}


\indnotn{$|{-}|$}{\Cref{sspaces:section conventions}\ref{colimit notn}, \Cref{hammocks:def geom realizn of sS-cats}}

\indnotn{$\|{-}\|$}{\Cref{sspaces:section conventions}\ref{colimit notn}}


\indnotn{$\cofibn$}{\Cref{sspaces:define model infty-category}}

\indnotn{$\fibn$}{\Cref{sspaces:define model infty-category}}

\indnotn{$\we$}{\Cref{sspaces:define model infty-category}}

\indnotn{$\laxra$}{\Cref{gr:define op/lax nat trans betw fctrs to Cati}}

\indnotn{$\oplaxra$}{\Cref{gr:define op/lax nat trans betw fctrs to Cati}}


\indnotn{$(-)_*$}{\Cref{sspaces:section conventions}\ref{ground in qcats}\ref{initial and terminal objs}}

\indnotn{$(-)_{**}$}{\Cref{hammocks:define doubly-pointed relcats}, \Cref{fundthm:define doubly-pointed model infty-diagram}}

\indnotn{$(-)_{(**)}$}{\Cref{hammocks:define maybe-pointed relcats}, \Cref{fundthm:par ast for maybe}}


\indnotn{$\square$}{\Cref{qadjns:defn various products of maps}}

\indnotn{$\extprod$}{\Cref{sspaces:compare with moerdijk}}  

\indnotn{$\tensoring$}{\Cref{sspaces:section conventions}\ref{co/tensoring}, \Cref{sspaces:section conventions}\ref{enriched and bitensored via two-variable adjunction}, \Cref{hammocks:notn for tensoring of either relcat or relcatp over relcat}, \Cref{fundthm:tensoring of Model over RelCat}}  

\indnotn{$\otimes$}{\Cref{sspaces:section conventions}\ref{multivar adjns}}  

\indnotn{$\cotensoring$}{\Cref{sspaces:section conventions}\ref{co/tensoring}, \Cref{sspaces:section conventions}\ref{enriched and bitensored via two-variable adjunction}}  


\indnotn{$(-)^\bullet$}{\Cref{sspaces:section conventions}\ref{notn possibly-omitted decorations}}

\indnotn{$(-)^{\times(\bullet+1)}$}{\Cref{hammocks:define 0th coskel and pb of simp obj}}

\indnotn{$(-)_\bullet$}{\Cref{sspaces:section conventions}\ref{notn possibly-omitted decorations}}


\indnotn{$\coprod$}{\Cref{sspaces:section conventions}\ref{pushout/pullback and co/product notation}}


\indnotn{$(-)^\opobj$}{\Cref{sspaces:section conventions}\ref{notation opobj}}


\indnotn{$\diamond$}{T.4.2.2.1}


\indnotn{$\es$}{\Cref{sspaces:section conventions}\ref{ground in qcats}\ref{conventions on infty-cats}}

\indnotn{$(-)_\es$}{\Cref{sspaces:section conventions}\ref{ground in qcats}\ref{initial and terminal objs}}


\indnotn{$=$}{\Cref{sspaces:section conventions}\ref{work invariantly with qcats}\ref{reserve equals sign}}

\indnotn{$(=)$}{\Cref{sspaces:section conventions}\ref{notation for adjunct bifunctor}}

\indnotn{$\cong$}{\Cref{sspaces:section conventions}\ref{work invariantly with qcats}\ref{reserve equals sign}}

\indnotn{$(-)^\simeq$}{\Cref{sspaces:section conventions}\ref{ground in qcats}\ref{conventions on infty-cats}}

\indnotn{$\approx$}{\Cref{sspaces:define model infty-category}}


\indnotn{$(-)_!$}{\Cref{sspaces:section conventions}\ref{left kan extn}}


\indnotn{$(-)^\flat$}{\sec T.3.1}


\indnotn{$\int$}{\Cref{sspaces:section conventions}\ref{notation for co/ends}}


\indnotn{$\loc{(-)}{(-)}$}{\cref{sspaces:subsection sspaces and model infty-cats}, \Cref{rnerves:define localization}}

\indnotn{$(-)[(-)^{-1}]$}{\cref{sspaces:subsection sspaces and model infty-cats}}


\indnotn{$(-)^\natural$}{T.3.1.1.8}


\indnotn{$\prod$}{\Cref{sspaces:section conventions}\ref{pushout/pullback and co/product notation}}


\indnotn{$(-)^\sharp$}{\cref{sspaces:section proof of crazy lemma}, \sec T.3.1}


\indnotn{$\adj$}{\Cref{sspaces:section conventions}\ref{adjunction}}


\indnotn{$(-)_{x/}$}{T.1.2.9.5}

\indnotn{$(-)_{/x}$}{T.1.2.9.4}


\indnotn{$\evendash$}{\Cref{gr:half-open interval notn}}

\indnotn{$\odddash$}{\Cref{gr:half-open interval notn}}

\indnotn{$[-,-]$}{\Cref{sspaces:section conventions}\ref{ground in qcats}\ref{conventions on infty-cats}}

\indnotn{$[-;\ldots;-]$}{\Cref{hammocks:define relative word}, \Cref{fundthm:define model word}}

\indnotn{$(- \da_n - )$}{\Cref{gr:define phpb}}

\indnotn{$(-)([1];-,-)$}{\Cref{qadjns:restricted coCart fibns over 1}}


\indnotn{$\word{3}$}{\Cref{hammocks:notn 3}}

\indnotn{$\tilde{\word{3}}$}{\Cref{fundthm:notn special 3 and 7}}

\indnotn{$\word{7}$}{\Cref{fundthm:notn special 3 and 7}}

\vspace{10pt}


\indnotn{$\Adjn$}{\Cref{sspaces:section conventions}\ref{adjunction}, \Cref{sspaces:section conventions}\ref{multivar adjns}}

\indnotn{$\Alg$}{A.4.1.1.9}

\indnotn{$\Alg^\nonu$}{\sec A.5.4.3}

\vspace{10pt}


\indnotn{$\Bar$}{A.4.4.2.7}

\indnotn{$(-)_\Bergner$}{\Cref{sspaces:section conventions}\ref{different model cats}\ref{sset-enr cat as infty-cat}}

\indnotn{$\biCartFib$}{\Cref{qadjns:notation for bicart fibns}}

\indnotn{$(-)_\BarKan$}{\Cref{rnerves:define BarKan rel str on RelCati}, \Cref{sspaces:section conventions}\ref{different model cats}\ref{relcat as infty-cat}}

\vspace{10pt}


\indnotn{$\bC$}{\Cref{sspaces:define model infty-category}}

\indnotn{$\mf{C}$}{\Cref{sspaces:section conventions}\ref{different model cats}\ref{sset-enr cat as infty-cat}}

\indnotn{$\rvec{\C}$}{\Cref{qadjns:define reedy cat}}

\indnotn{$\lvec{\C}$}{\Cref{qadjns:define reedy cat}}

\indnotn{$c(-)$}{\Cref{sspaces:section conventions}\ref{cosimp and simp objs}, \Cref{fundthm:construct cmp}}

\indnotn{$(-)^c$}{\Cref{sspaces:define co/fibrant objects}}

\indnotn{$\CAlg$}{A.2.1.3.1}

\indnotn{$\CAlg^\nonu$}{\sec A.5.4.4}

\indnotn{$(-)_\can$}{\Cref{sspaces:compare with anderson and bousfield--friedlander}}

\indnotn{$\Cat$}{\Cref{sspaces:section conventions}\ref{ground in qcats}\ref{conventions on infty-cats}}

\indnotn{$\Cati$}{\Cref{sspaces:section conventions}\ref{ground in qcats}\ref{conventions on infty-cats}}

\indnotn{$\strcat$}{\Cref{sspaces:section conventions}\ref{work invariantly with qcats}\ref{strict cats}}

\indnotn{$\strcat_{s\Set}$}{\Cref{sspaces:section conventions}\ref{different model cats}\ref{sset-enr cat as infty-cat}}

\indnotn{$(-)\dashcell$}{\Cref{sspaces:rel cell cxes}}

\indnotn{$\CartFib$}{\Cref{gr:notation for grothendieck construction}}

\indnotn{$\CartFib_\Rel$}{\Cref{qadjns:define relative co/cart fibns}}

\indnotn{$\coCartFib$}{\Cref{gr:notation for grothendieck construction}}

\indnotn{$\coCartFib_\Rel$}{\Cref{qadjns:define relative co/cart fibns}}

\indnotn{$(-)\dashcof$}{\Cref{sspaces:cofibrations have llp(rlp(I))}}

\indnotn{$\colim$}{\Cref{sspaces:section conventions}\ref{colimit notn}}

\indnotn{$\const$}{\Cref{sspaces:section conventions}\ref{constant diagram functor}, \Cref{hammocks:def constant sS-enr infty-cat}}

\indnotn{$(-)^{cf}$}{\Cref{sspaces:define co/fibrant objects}}

\indnotn{$\CSS$}{\Cref{rnerves:define CSSs}}

\indnotn{$\CSS_{\X \subset \Y}$}{\Cref{hammocks:compare with barwick's theory of enriched infty-cats}}

\indnotn{$\cyl$}{\Cref{fundthm:define cyl and path}, \Cref{fundthm:cats of cyls and paths}}

\indnotn{$\scyl$}{\Cref{fundthm:define cyl and path}}

\vspace{10pt}


\indnotn{$\bD$}{\sec T.A.2.7, \Cref{sspaces:section conventions}\ref{work invariantly with qcats}\ref{strict cats}}

\indnotn{$\Delta^{\{i_0,\ldots,i_j\}}$}{\Cref{sspaces:section conventions}\ref{simplex category}}

\indnotn{$\Delta^n$}{T.A.2.7.2}

\indnotn{$\delta^i_n$}{\Cref{sspaces:section conventions}\ref{simplex category}}

\indnotn{$\delta^n_i$}{\Cref{sspaces:section conventions}\ref{simplex category}}

\indnotn{$\partial$}{\Cref{qadjns:latching and matching cats}}

\indnotn{$\partial \Delta^n$}{\sec T.A.2.7}

\indnotn{$\diag$}{\Cref{sspaces:section conventions}\ref{diagonal map}}

\indnotn{$\disc$}{\Cref{sspaces:section conventions}\ref{ground in qcats}\ref{conventions on spaces}, \Cref{sspaces:adjunction between sspaces and ssets}}

\indnotn{$(-)_\DK$}{\Cref{hammocks:DK equivces in SsS}, \Cref{hammocks:define sspatial infty-cats}, \cref{hammocks:subsection intro to hammocks}}

\vspace{10pt}


\indnotn{$\bbE_\infty$}{\Cref{sspaces:rem motivic ghost for motivic morava E-theories}}

\indnotn{$\bbE_n$}{\Cref{sspaces:speculn derived koszul duality}}

\indnotn{$\Etwo$}{\cref{sspaces:subsection GHOsT motivation}}

\indnotn{$\ev$}{\Cref{sspaces:section conventions}\ref{evaluation notation}}

\indnotn{$\Ex$}{\Cref{sspaces:define Ex on sspaces}}

\indnotn{$\Ex^\infty$}{\Cref{sspaces:notn Ex-infty}}

\indnotn{$\Ex^n$}{\Cref{sspaces:iterate sd and Ex}}

\vspace{10pt}


\indnotn{$\bF$}{\Cref{sspaces:define model infty-category}}

\indnotn{$(-)^f$}{\Cref{sspaces:define co/fibrant objects}}

\indnotn{$\Fun$}{\Cref{sspaces:section conventions}\ref{ground in qcats}\ref{conventions on infty-cats}}

\indnotn{$\Fun^\dec$}{\Cref{fundthm:decorated variant}}

\indnotn{$\Fun(-,-)^\Model$}{\Cref{fundthm:enr and tensor model over relcat}}

\indnotn{$\Fun(-,-)^\Rel$}{\Cref{rnerves:define internal hom in relcats}}

\indnotn{$\Fun(-,-)^\bW$}{\Cref{rnerves:define internal hom in relcats}, \Cref{fundthm:enr and tensor model over relcat}}

\indnotn{$\Fun^{\colim}$}{\Cref{sspaces:section conventions}\ref{yoneda}}

\indnotn{$\Fun_\Sigma$}{\cref{sspaces:subsection GHOsT motivation}}

\indnotn{$\Fun^{\textup{surj}}$}{\Cref{hammocks:any SS pulled back from its CSS-localizn}}

\indnotn{$\Fun^{\textup{surj mono}}$}{\Cref{rnerves:define rel infty-cat}}

\indnotn{$\strfun$}{\Cref{sspaces:section conventions}\ref{work invariantly with qcats}\ref{strict cats}}

\vspace{10pt}


\indnotn{$\strgpd$}{\Cref{sspaces:section conventions}\ref{work invariantly with qcats}\ref{strict cats}}

\indnotn{$(-)^\gpd$}{\Cref{sspaces:section conventions}\ref{ground in qcats}\ref{conventions on infty-cats}}

\indnotn{$\Gr$}{\Cref{gr:define grothendieck construction}, \Cref{gr:define two-sided Gr}}

\indnotn{$\Gr_\Rel$}{\Cref{qadjns:define relative co/cart fibns}}

\indnotn{$\Grop$}{\Cref{gr:define grothendieck construction}}

\indnotn{$\Grop_\Rel$}{\Cref{qadjns:define relative co/cart fibns}}

\indnotn{$\Grp$}{\Cref{sspaces:compare with anderson and bousfield--friedlander}}

\vspace{10pt}


\indnotn{$h^i_n$}{\Cref{fundthm:left homotopy corepresentation induces left homotopy}}

\indnotn{$\ho$}{\Cref{sspaces:section conventions}\ref{ground in qcats}\ref{conventions on infty-cats}}

\indnotn{$\hom$}{\Cref{sspaces:section conventions}\ref{twisted arrow gives hom bifunctor}}

\indnotn{$\hom^\dec$}{\Cref{fundthm:decorated variant}}

\indnotn{$\enrhom$}{\Cref{sspaces:section conventions}\ref{twisted arrow gives hom bifunctor}, \Cref{sspaces:section conventions}\ref{enriched and bitensored via two-variable adjunction}, \Cref{hammocks:define space of objects and hom-sspaces}}

\indnotn{$\enrhom_l$}{\Cref{sspaces:section conventions}\ref{multivar adjns}}

\indnotn{$\enrhom_r$}{\Cref{sspaces:section conventions}\ref{multivar adjns}}

\indnotn{$\enrhom^\square_l$}{\Cref{qadjns:defn various products of maps}}

\indnotn{$\enrhom^\square_r$}{\Cref{qadjns:defn various products of maps}}

\indnotn{$\hom^\lsim$}{\Cref{fundthm:define spaces of left and right htpy classes of maps}}

\indnotn{$\hom^\rsim$}{\Cref{fundthm:define spaces of left and right htpy classes of maps}}

\vspace{10pt}


\indnotn{$\II$}{\Cref{hammocks:define relative word}}

\indnotn{$(-)\dashinj$}{\Cref{sspaces:injectives have rlp(I)}}

\indnotn{$(-)_\injective$}{\Cref{qadjns:defn proj and inj model strs}, \Cref{sspaces:section conventions}\ref{ho-co-lims}}

\vspace{10pt}


\indnotn{$(-)_\Joyal$}{\Cref{sspaces:section conventions}\ref{different model cats}\ref{qcat as infty-cat}, \Cref{sspaces:joyal model str on sspaces}}

\vspace{10pt}


\indnotn{$(-)_\KQ$}{\Cref{sspaces:define kan--quillen model structure on ssets}, \Cref{sspaces:define kan--quillen model structure on sspaces}}

\indnotn{$(-)_{\KQ_{\textup{medium}}}$}{\Cref{sspaces:other KQ model structures}}

\indnotn{$(-)_{\KQ_{\textup{strong}}}$}{\Cref{sspaces:other KQ model structures}}

\indnotn{$(-)_{\KQ_{\textup{weak}}}$}{\Cref{sspaces:other KQ model structures}}

\vspace{10pt}


\indnotn{$\bbL$}{\Cref{qadjns:defn der functors of q adjn}, \Cref{qadjns:two-var adjunction thm}}

\indnotn{$\locL$}{\Cref{rnerves:define localization}}

\indnotn{$\leftloc$}{\Cref{sspaces:section conventions}\ref{free vs left vs right localizations}}

\indnotn{$\Latch_{(-)}(-)$}{\Cref{sspaces:section conventions}\ref{generalized matching and latching}, \Cref{qadjns:notation latching and matching objects}}

\indnotn{$(-)_\leftloc$}{\Cref{sspaces:left localizations give model structures}}

\indnotn{$\leftloc_\strcatsup$}{\Cref{sspaces:section conventions}\ref{work invariantly with qcats}\ref{strict cats}}

\indnotn{$\leftloc_{\CartFib(\C)}$}{\Cref{gr:notation for grothendieck construction}}

\indnotn{$\leftloc_{\coCartFib(\C)}$}{\Cref{gr:notation for grothendieck construction}}

\indnotn{$\leftloc_\CSS$}{\Cref{rnerves:define CSSs}}

\indnotn{$\leftloc_{\L(\C)}$}{\Cref{gr:notation for grothendieck construction}}

\indnotn{$\leftloc_{\LFib(\C)}$}{\Cref{gr:notation for grothendieck construction}}

\indnotn{$\leftloc_{\R(\C)}$}{\Cref{gr:notation for grothendieck construction}}

\indnotn{$\leftloc_{\RFib(\C)}$}{\Cref{gr:notation for grothendieck construction}}

\indnotn{$\leftloc_\SS$}{\Cref{hammocks:define segal spaces}}

\indnotn{$\LAdjt$}{\Cref{sspaces:section conventions}\ref{adjunction}}

\indnotn{$\Lambda^n_i$}{T.A.2.7.3}

\indnotn{$\Lax$}{\Cref{gr:define op/lax overcat}}

\indnotn{$\Lax(-)^{\colim}$}{\Cref{gr:notn subcat of lax with colim}}

\indnotn{$\LFib$}{\Cref{gr:notation for grothendieck construction}}

\indnotn{$\ham$}{\Cref{hammocks:define hammock localizn}}

\indnotn{$\hamd$}{\Cref{sspaces:section conventions}\ref{different model cats}\ref{relcat as infty-cat}}

\indnotn{$\preham$}{\Cref{hammocks:define hammock localizn}}

\indnotn{$\lim$}{\Cref{sspaces:section conventions}\ref{colimit notn}}

\indnotn{$\llp$}{\Cref{sspaces:section conventions}\ref{notn lifting properties}}

\indnotn{$\LMod$}{A.4.2.1.13}

\indnotn{$\LQAdjt$}{\Cref{qadjns:define infty-cats QAdjn and LQAdjt and RQAdjt}}

\indnotn{$(-)^\lw$}{\Cref{sspaces:section conventions}\ref{notn possibly-omitted decorations}}

\vspace{10pt}


\indnotn{$\Match_{(-)}(-)$}{\Cref{sspaces:section conventions}\ref{generalized matching and latching}, \Cref{qadjns:notation latching and matching objects}}

\indnotn{$\mxm$}{\Cref{rnerves:pre-nerve as sCSS}}

\indnotn{$\word{m}(x,-)$}{\Cref{hammocks:notation for half-doubly-pointed words}}

\indnotn{$\word{m}(x,y)$}{\Cref{hammocks:define zigzags}, \Cref{fundthm:define infty-cat of model zigzags}}

\indnotn{$\word{m}(-,y)$}{\Cref{hammocks:notation for half-doubly-pointed words}}

\indnotn{$\max$}{\Cref{rnerves:trivial relative infty-category structures}}

\indnotn{$\min$}{\Cref{rnerves:trivial relative infty-category structures}}

\indnotn{$\Model$}{\Cref{fundthm:define model infty-diagrams}}

\indnotn{$\Modeli$}{\Cref{fundthm:define model infty-diagrams}}

\indnotn{$(-)_\Moer$}{\Cref{sspaces:compare with moerdijk}}

\vspace{10pt}


\indnotn{$\Nerve$}{\Cref{sspaces:section conventions}\ref{work invariantly with qcats}\ref{strict cats}}

\indnotn{$\Nervei$}{\Cref{rnerves:define CSSs}}

\indnotn{$\Nerve^+$}{\Cref{rnerves:natural mono}}

\indnotn{$\Nervehc$}{\Cref{sspaces:section conventions}\ref{different model cats}\ref{sset-enr cat as infty-cat}}

\indnotn{$\NerveRezk$}{\Cref{rnerves:infty-catl rezk nerve agrees with 1-catl rezk nerve}}

\indnotn{$\NerveRezki$}{\Cref{rnerves:define infty-categorical rezk pre-nerve and rezk nerve}}

\indnotn{$[n]$}{\sec T.A.2.7}

\indnotn{$[n]_\bW$}{\Cref{rnerves:trivial relative infty-category structures}}

\vspace{10pt}


\indnotn{$\Omega^\infty$}{\Cref{sspaces:KQ model structure on sspectra}}

\indnotn{$(-)^{op}$}{\Cref{sspaces:section conventions}\ref{ground in qcats}\ref{conventions on infty-cats}}

\indnotn{$\opLax$}{\Cref{gr:define op/lax overcat}}

\vspace{10pt}


\indnotn{$\P$}{\Cref{sspaces:section conventions}\ref{yoneda}}

\indnotn{$\pth$}{\Cref{fundthm:define cyl and path}, \Cref{fundthm:cats of cyls and paths}}

\indnotn{$\spth$}{\Cref{fundthm:define cyl and path}}

\indnotn{$\Pi_1$}{\Cref{sspaces:compare with anderson and bousfield--friedlander}}

\indnotn{$\Pi_{\geq 1}$}{\Cref{sspaces:compare with anderson and bousfield--friedlander}}

\indnotn{$\pi_0$}{\Cref{sspaces:section conventions}\ref{ground in qcats}\ref{conventions on spaces}, \Cref{sspaces:adjunction between sspaces and ssets}}

\indnotn{$(-)_{\pi_0}$}{\Cref{sspaces:pi-0 model structure on spaces}}

\indnotn{$\pi_{\geq 1}$}{\Cref{sspaces:section conventions}\ref{ground in qcats}\ref{conventions on spaces}}

\indnotn{$\pi_{\geq n}$}{\Cref{sspaces:try to use n-cotruncation of pointed simply connected spaces to get a model structure}}

\indnotn{$\pr_{\Gr(F)}$}{\Cref{gr:define grothendieck construction}}

\indnotn{$\pr_{\Grop(G)}$}{\Cref{gr:define grothendieck construction}}

\indnotn{$\preNerveRezki$}{\Cref{rnerves:define infty-categorical rezk pre-nerve and rezk nerve}}

\indnotn{$(-)_\projective$}{\Cref{qadjns:defn proj and inj model strs}, \Cref{sspaces:section conventions}\ref{ho-co-lims}}

\indnotn{$\PS$}{{\cref{sspaces:subsection GHOsT motivation}}}

\indnotn{$\pt$}{\Cref{sspaces:section conventions}\ref{ground in qcats}\ref{conventions on infty-cats}}

\vspace{10pt}


\indnotn{$\bbQ \unit_\V$}{\Cref{qadjns:defn of monoidal model infty-cat}}

\indnotn{$\QAdjn$}{\Cref{qadjns:defn various products of maps}}

\vspace{10pt}


\indnotn{$\bbR$}{\Cref{qadjns:defn der functors of q adjn}, \Cref{qadjns:two-var adjunction thm}}

\indnotn{$\rightloc$}{\Cref{sspaces:section conventions}\ref{free vs left vs right localizations}}

\indnotn{$(-)_\rightloc$}{\Cref{sspaces:right localizations give model structures}}

\indnotn{$\RAdjt$}{\Cref{sspaces:section conventions}\ref{adjunction}}

\indnotn{$(-)_\Reedy$}{\Cref{qadjns:define reedy model str}, \Cref{sspaces:section conventions}\ref{ho-co-lims}}

\indnotn{$\RelCat$}{\Cref{rnerves:define rel infty-cat}}

\indnotn{$\RelCati$}{\Cref{rnerves:define rel infty-cat}}

\indnotn{$\strrelcat$}{\Cref{rnerves:weaker defn of rel infty-cat}, \Cref{sspaces:section conventions}\ref{work invariantly with qcats}\ref{strict cats}, \Cref{sspaces:section conventions}\ref{different model cats}\ref{relcat as infty-cat}}

\indnotn{$(-)_\res$}{{\cref{sspaces:subsection GHOsT motivation}}}

\indnotn{$(-)_\Rezk$}{\Cref{sspaces:section conventions}\ref{different model cats}\ref{CSS as infty-cat}, \Cref{rnerves:define rezk rel str on ssspaces}}

\indnotn{$\RFib$}{\Cref{gr:notation for grothendieck construction}}

\indnotn{$\rlp$}{\Cref{sspaces:section conventions}\ref{notn lifting properties}}

\indnotn{$\rlp'$}{\Cref{sspaces:pi-0 model structure on spaces is almost cofgen}}

\indnotn{$\RMod$}{A.4.2.1.36}

\indnotn{$\RQAdjt$}{\Cref{qadjns:define infty-cats QAdjn and LQAdjt and RQAdjt}}

\vspace{10pt}


\indnotn{$\S$}{\Cref{sspaces:section conventions}\ref{ground in qcats}\ref{conventions on spaces}}

\indnotn{$\S^{\geq n}$}{\Cref{sspaces:section conventions}\ref{ground in qcats}\ref{conventions on spaces}}

\indnotn{$\S^{\leq n}$}{\Cref{sspaces:section conventions}\ref{ground in qcats}\ref{conventions on spaces}}

\indnotn{$\S^\fin$}{\Cref{sspaces:section conventions}\ref{ground in qcats}\ref{conventions on spaces}}

\indnotn{$\SS$}{\Cref{hammocks:define segal spaces}}

\indnotn{$\SS_{\X \subset \Y}$}{\Cref{hammocks:compare with barwick's theory of enriched infty-cats}}

\indnotn{$\SsS$}{\Cref{hammocks:define Segal sspaces}}

\indnotn{$s$}{\Cref{sspaces:section conventions}\ref{source and target}, \Cref{gr:define phpb}, \Cref{hammocks:define doubly-pointed relcats}, \Cref{fundthm:define doubly-pointed model infty-diagram}}

\indnotn{$s_{\Lax(\C)}$}{\Cref{gr:define op/lax overcat}}

\indnotn{$s_{\opLax(\C)}$}{\Cref{gr:define op/lax overcat}}

\indnotn{$s(-)$}{\Cref{sspaces:section conventions}\ref{cosimp and simp objs}}

\indnotn{$\sd$}{\Cref{sspaces:define sd on sspaces}}

\indnotn{$\sd^n$}{\Cref{sspaces:iterate sd and Ex}}

\indnotn{$\Set$}{\Cref{sspaces:section conventions}\ref{ground in qcats}\ref{conventions on spaces}}

\indnotn{$\sigma^i_n$}{\Cref{sspaces:section conventions}\ref{simplex category}}

\indnotn{$\sigma^n_i$}{\Cref{sspaces:section conventions}\ref{simplex category}}

\indnotn{$\Sp$}{\Cref{sspaces:try to put model structure on bounded spectra}}

\indnotn{$\Sp^{\geq n}$}{\Cref{sspaces:try to put model structure on bounded spectra}}

\indnotn{$\Sp^{\leq n}$}{\Cref{sspaces:try to put model structure on bounded spectra}}

\indnotn{$\Sp^{[m,n]}$}{\Cref{sspaces:try to put model structure on bounded spectra}}

\indnotn{$\spat$}{\Cref{hammocks:spatialization}}

\indnotn{$\srep$}{\Cref{gr:defn simp repl}}

\indnotn{$s\Set^+$}{T.3.1.0.1}

\vspace{10pt}


\indnotn{$t$}{\Cref{sspaces:section conventions}\ref{source and target}, \Cref{gr:define phpb}, \Cref{hammocks:define doubly-pointed relcats}, \Cref{fundthm:define doubly-pointed model infty-diagram}}

\indnotn{$\tau_{\geq n}$}{\Cref{sspaces:section conventions}\ref{ground in qcats}\ref{conventions on spaces}, \Cref{sspaces:try to put model structure on bounded spectra}}

\indnotn{$\tau_{\leq n}$}{\Cref{sspaces:section conventions}\ref{ground in qcats}\ref{conventions on spaces}, \Cref{sspaces:try to put model structure on bounded spectra}}

\indnotn{$(-)_{\tau_{\geq n}}$}{\Cref{sspaces:try to use n-cotruncation of pointed simply connected spaces to get a model structure}}

\indnotn{$(-)_{\tau_{\leq n}}$}{\Cref{sspaces:n-truncation model structure on spaces}}

\indnotn{$(-)_\Thomason$}{\Cref{gr:define Thomason model str}, \Cref{gr:different Thomasons}}

\indnotn{$\Top$}{\Cref{sspaces:section conventions}\ref{ground in qcats}\ref{conventions on spaces}}

\indnotn{$(-)_\triv$}{\Cref{sspaces:trivial model structure}}

\indnotn{$\TwAr$}{A.5.2.1.1}

\vspace{10pt}


\indnotn{$\forget$}{\Cref{sspaces:section conventions}\ref{free vs left vs right localizations}}

\indnotn{$\forget_{\geq n}$}{\Cref{sspaces:section conventions}\ref{ground in qcats}\ref{conventions on spaces}}

\indnotn{$\forget_{\leq n}$}{\Cref{sspaces:section conventions}\ref{ground in qcats}\ref{conventions on spaces}}

\indnotn{$\forget_\C$}{\Cref{gr:notation UC and UC-dagger}}

\indnotn{$\forget_\C^\dagger$}{\Cref{gr:notation UC and UC-dagger}}

\indnotn{$\forget_\Cat$}{\Cref{sspaces:section conventions}\ref{ground in qcats}\ref{conventions on infty-cats}}

\indnotn{$\forget_\CatsS$}{\Cref{hammocks:define sspatial infty-cats}}

\indnotn{$\forget_\strcatsup$}{\Cref{sspaces:section conventions}\ref{work invariantly with qcats}\ref{strict cats}}

\indnotn{$\forget_{\CartFib(\C)}$}{\Cref{gr:notation for grothendieck construction}}

\indnotn{$\forget_{\coCartFib(\C)}$}{\Cref{gr:notation for grothendieck construction}}

\indnotn{$\forget_\CSS$}{\Cref{rnerves:define CSSs}}

\indnotn{$\forget_{\L(\C)}$}{\Cref{gr:notation for grothendieck construction}}

\indnotn{$\forget_{\LFib(\C)}$}{\Cref{gr:notation for grothendieck construction}}

\indnotn{$\forget_{\R(\C)}$}{\Cref{gr:notation for grothendieck construction}}

\indnotn{$\forget_\Rel$}{\Cref{rnerves:trivial relative infty-category structures}}

\indnotn{$\forget_{\RFib(\C)}$}{\Cref{gr:notation for grothendieck construction}}

\indnotn{$\forget_\S$}{\Cref{sspaces:section conventions}\ref{ground in qcats}\ref{conventions on infty-cats}}

\indnotn{$\forget_\SS$}{\Cref{hammocks:define segal spaces}}

\vspace{10pt}


\indnotn{$\bW$}{\Cref{sspaces:define model infty-category}, \Cref{rnerves:define rel infty-cat}}

\indnotn{$\bWgr$}{\cref{sspaces:subsection sspaces and model infty-cats}}

\indnotn{$\bW_{\dfibn y}$}{\Cref{fundthm:notn for weak equivces cofibrationing from x and-or fibrationing to y}}

\indnotn{$\bW_\he$}{\Cref{sspaces:model infty-cat from simp model cat}}

\indnotn{$\bW_\qi$}{\cref{fundthm:subsection intro to fundthm}}

\indnotn{$\bW_\whe$}{\Cref{sspaces:section conventions}\ref{ground in qcats}\ref{conventions on spaces}}

\indnotn{$\bW_{x \dcofibn}$}{\Cref{fundthm:notn for weak equivces cofibrationing from x and-or fibrationing to y}}

\indnotn{$\bW_{x \dcofibn \dfibn y}$}{\Cref{fundthm:notn for weak equivces cofibrationing from x and-or fibrationing to y}}

\indnotn{$\ol{W}$}{\Cref{sspaces:rem ceg-rem}}

\vspace{10pt}


\indnotn{$\chi^{\C_\bullet}_{x_0,\ldots,x_n}$}{\Cref{hammocks:define space of objects and hom-sspaces}}

\vspace{10pt}


\indnotn{$\Yo$}{\Cref{sspaces:section conventions}\ref{yoneda}}

\vspace{10pt}


\indnotn{$\Z$}{\Cref{hammocks:define relative word}}

\indnotn{$\word{z}_n$}{\Cref{gr:define walking zigzag categories}}

}

\end{multicols}

\bibliographystyle{amsalpha}
\bibliography{sspaces}{}

\end{document}